\newtheorem{theorem}{Theorem}[section]
\newtheorem{lemma}[theorem]{Lemma}
\newtheorem{proposition}[theorem]{Proposition}
\newtheorem{corollary}[theorem]{Corollary}
\newtheorem{definition}[theorem]{Definition}
\newtheorem{remark}[theorem]{Remark}
\newcounter{as}[section]
\newcommand{\mc}[1]{{\mathcal #1}}
\newcommand{\mf}[1]{{\mathfrak #1}}
\newcommand{\mb}[1]{{\mathbf #1}}
\newcommand{\bb}[1]{{\mathbb #1}}
\newcommand{\bs}[1]{{\boldsymbol #1}}
\newcommand{\ms}[1]{{\mathscr #1}}
\newcommand{\<}{\langle}
\renewcommand{\>}{\rangle}
\renewcommand{\Cap}{{\rm cap}}
\begin{document}

\title[Metastability of Non-reversible random walks in potential field]
{Metastability of Non-reversible random walks in a potential
  field, the Eyring-Kramers transition rate formula}

\begin{abstract}
  We consider non-reversible random walks evolving on a potential
  field in a bounded domain of $\bb R^d$. We describe the complete
  metastable behavior of the random walk among the landscape of
  valleys, and we derive the Eyring-Kramers formula for the mean
  transition time from a metastable set to a stable set.
\end{abstract}

\author{C. Landim, I. Seo}

\address{\noindent IMPA, Estrada Dona Castorina 110, CEP 22460 Rio de
  Janeiro, Brasil and CNRS UMR 6085, Universit\'e de Rouen, Avenue de
  l'Universit\'e, BP.12, Technop\^ole du Madril\-let, F76801
  Saint-\'Etienne-du-Rouvray, France.  \newline e-mail: \rm
  \texttt{landim@impa.br} }

\address{\noindent Courant Institute of Mathematical Sciences, New York
  University, 251 Mercer Street, New York, N.Y. 10012-1185, USA.
  \newline e-mail: \rm \texttt{insuk@cims.nyu.edu} }

\keywords{Metastability, cycle random walks, non-reversible Markov
  chains, Eyring-Kramer formula}

\maketitle

\section{Introduction}
\label{sec:intro}

Metastability has attracted much attention in these last years in
several different contexts, from spin dynamics to SPDEs, from random
networks to interacting particle systems. We refer to the recent
monographs \cite{OV, BH} for references.

At the same time, some progress has been made on the potential theory
of non-reversible Markov chains. Gaudilli\`ere and Landim \cite{GL}
derived a Dirichlet principle for the capacity of non-reversible
continuous-time Markov chains, and Slowik \cite{Slo} proved a Thomson
principle. 

These advances in the potential theory of Markov chains permitted to
derive the metastable behavior of non-reversible dynamics. The
metastable behavior of the condensate in a totally asymmetric
zero-range process evolving on a fixed one-dimensional ring has been
proved in \cite{Lan2}, and Misturini \cite{Mis} derived the
metastable behavior of the ABC model as the asymmetry increases. In
another perspective, Bouchet and Reygner \cite{BR} provided a formula,
in the context of small perturbations of dynamical systems, for the
Eyring-Kramers mean transition time from a metastable set to a stable
set.

Motivated by the evolution of mean-field planar Potts model
\cite{ls2}, whose dynamics can be mapped to a non-reversible cyclic
random walk evolving on a potential field in a simplex, as the
mean-field Ising model \cite{cgov} is mapped to a one-dimensional
reversible random walk on the interval, we examine in this article the
metastable behavior of non-reversible cyclic random walks evolving in
a potential field defined on a bounded domain of $\bb R^d$.

We present a complete description of the metastable behavior of this
model, as it has been done in the reversible setting in \cite{LMT},
following the works of \cite{BEGK1, BEGK2}. In particular, we prove
the Eyring-Kramers transition rate formula \cite{E,K} which provides
the sharp sub-exponential pre-factor to the expectation of the hitting
time of the stable states starting from a metastable state. This is
done in the general case in which several wells may have the same
depth. We refer to \cite{Ber} for a historical review on the
derivation of the Eyring-Kramers formula.

Since the works of Bovier, Eckhoff, Gayrard and Klein \cite{BEGK1,
  BEGK2}, which established the link between potential theory of
Markov chains and metastability, it is known that one of the major
difficulties in the proof of the metastable behavior consists in
obtaining sharp estimates for the capacity between different sets of
wells. In the present non-reversible context, the Dirichlet and the
Thomson principles \cite{GL, Slo} provide double variational formulas
for the capacity in terms of flows and functions. These results also
identify the optimal flows and functions which solve the variational
problem. In particular, the computation of the capacity is reduced to
the determination of good approximations of the equilibrium potentials
between wells and of the associated flows.

It turns outs that for random walks in potential fields \cite{BEGK1,
  BEGK2, LMT}, the equilibrium potential drastically changes from $0$
to $1$ in a mesoscopic neighborhood around saddle points between local
minima, and that all the analysis is reduced to a detailed
examination of the dynamics around the saddles points.

To our knowledge, this work presents the first rigorous derivation of
the Eyring-Kramers formula in a non-reversible setting. It shows that
the role played by the non-negative eigenvalue of the Hessian of the
potential around the saddle point in reversible dynamics is replaced
in non-reversible dynamics by the non-negative eigenvalue of the
Jacobian of the asymptotic drift.

\section{Notation and Results}
\label{sec:nota-result}

\smallskip\noindent{\bf The domain and potential field.} Let $\Xi$ be
an open, bounded and connected domain of $\mathbb{R}^{d}$ with
piecewise $C^1$ boundary, denoted by $\partial\Xi$.  Denote by
$\overline{\Xi}=\Xi\cup\partial\Xi$ the closure of $\Xi$. Let $F:
\overline{\Xi} \to \bb R$ be a potential such that

\begin{enumerate}
\item $F$ is a twice-differentiable function which has finitely many
  critical points at $\Xi$, and no critical points at
  $\partial\Xi$. Furthermore, $\nabla F(\boldsymbol{x})\cdot
  \bs n(\boldsymbol{x})>0$ for all $\boldsymbol{x} \in \partial \Xi$, where
  $\bs n(\boldsymbol{x})$ represents the exterior normal vector to the boundary.
\item The second partial derivatives of $F$ are Lipschitz continuous on
  every compact subsets of $\Xi$.
\item At each local minimum, all eigenvalues of $\mbox{Hess }F$ are
  strictly positive.
\item At each saddle point, one eigenvalue of $\mbox{Hess }F$ is
  strictly negative, all the others being strictly positive.
\end{enumerate}

Consider a sequence of functions $\{G_{N}: N\ge 1\}$, $G_N:
\overline{\Xi} \to \bb R$. Assume that on each compact subsets of
$\Xi$, the sequence is uniformly Lipschitz, and converges uniformly,
as $N\uparrow\infty$, to a continuous function $G: \overline{\Xi} \to
\bb R$.  Let $F_{N} = F+ (1/N) G_{N}$.

\smallskip\noindent{\bf The dynamics.} Consider a cycle
$\gamma=(\boldsymbol{z}_{0} , \boldsymbol{z}_{1}, \dots,
\boldsymbol{z}_{L}= \boldsymbol{z}_{0})$ in $\mathbb{Z}^{d}$ without
self intersections starting from the origin, $\bs z_0 = \bs 0$. Assume
that the points $\bs w_j = \bs z_{j+1} - \bs z_j$, $0\le j <L$,
generate $\bb Z^d$ in the sense that any point $\bs x\in\bb Z^d$ can
be written as a linear combination of the points $\bs w_j$.  Let
$\gamma^{N}$ be the cycle $\gamma$ scaled by $N^{-1}$,
$\gamma^{N}=N^{-1}\gamma$, and let
$\boldsymbol{z}_{i}^{N}=N^{-1}\boldsymbol{z}_{i}$, $0\le i<L$ be the
vertices of $\gamma^N$.  Denote by $\gamma_{\boldsymbol{x}}^{N}$ the
cycle $\gamma^{N}$ translated by $\boldsymbol{x} \in
(N^{-1}\mathbb{Z})^{d}$: $\gamma_{\boldsymbol{x}}^{N} =\{\bs x + \bs z
: \bs z\in \gamma^N\}$.

Denote by $\widetilde{\Xi}_{N}$ the discretization of
$\overline{\Xi}$: $\widetilde{\Xi}_{N}= \overline{\Xi}
\cap(N^{-1}\mathbb{Z})^{d}$.  Define $\widehat{\Xi}_{N}$ as the set of
points $\bs x\in (N^{-1}\mathbb{Z})^d$ such that
$\gamma_{\boldsymbol{x}}^{N} \subset \widetilde{\Xi}_{N}$:
\begin{equation*}
\widehat{\Xi}_{N} \;=\; 
\{\boldsymbol{x}\in (N^{-1}\mathbb{Z})^d :
\gamma_{\boldsymbol{x}}^{N}\subset\widetilde{\Xi}_{N}\}\;,
\quad\text{and let}\quad 
\Xi_{N}=\bigcup_{\boldsymbol{x}\in\widehat{\Xi}_{N}}\gamma_{\boldsymbol{x}}^{N}\;.
\end{equation*}
In other words, $\Xi_N$ is obtained by removing points of
$\widetilde{\Xi}_N$ which are not visited by cycles
$\gamma_{\bs{x}}^N$, $\bs{x}\in \widehat{\Xi}_N$.  The set
$\widehat{\Xi}_N$ can be regarded as the interior of $\Xi_N$.  We
refer to Figure \ref{fig:00} for this construction.  We define below a
$\Xi_N$-valued continuous-time Markov chain.

\begin{figure}
\centering
\includegraphics[scale=0.13]{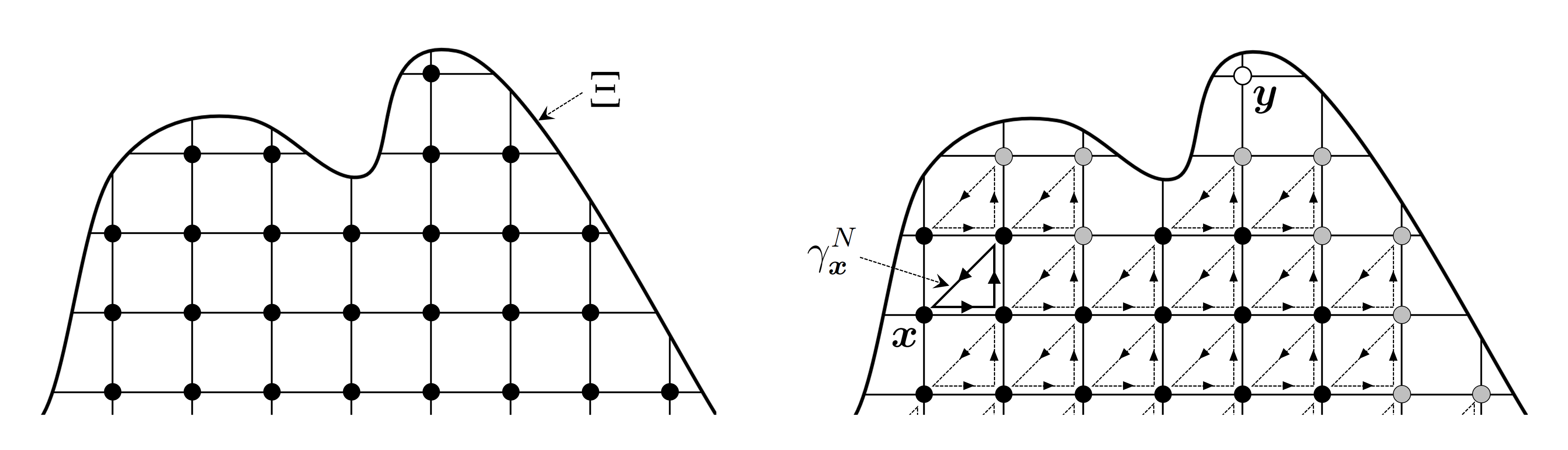}\protect
\caption{\label{fig:00}$\Xi_{N}$ of $\Xi$ with $d=2$
  and $\gamma=((0,0),\,(0,1),\,(1,1),\,(0,\,0)$: The left figure
  represents the set $\Xi$ and its discretization
  $\widetilde{\Xi}_{N}=\overline{\Xi}\cap(N^{-1}\mathbb{Z})^{2}$.  In
  the right figure, points belonging the set $\widehat{\Xi}_{N}$ are
  colored in black. For example, $\boldsymbol{x}\in\widehat{\Xi}_{N}$
  since the cycle $\gamma_{\boldsymbol{x}}^{N}$ is contained in
  $\widetilde{\Xi}_{N}$.  These points also belong to $\Xi_{N}$ by
  definition. In addition, points colored in gray also belong to
  $\Xi_{N}$, since they lie on a cycle $\gamma_{\boldsymbol{z}}^{N}$
  for some $\boldsymbol{z}\in\widehat{\Xi}_{N}$.  Note that
  $\boldsymbol{y}\in\widetilde{\Xi}_{N}$ and
  $\boldsymbol{y}\notin\Xi_{N}$.}
\end{figure}

Let $\mu_N$ be the probability measure on $\Xi_{N}$
given by
\begin{equation}
\label{28}
\mu_{N}(\boldsymbol{x}) \;=\;
\frac 1{Z_{N}} \, \exp\{-NF_{N}(\boldsymbol{x})\}\;,\quad
\boldsymbol{x}\in\Xi_{N}\;,
\end{equation}
where $Z_{N}$ is the normalizing factor. 

For each $\boldsymbol{x}\in\widehat{\Xi}_{N}$, denote by
$\mathcal{L}_{N,\boldsymbol{x}}$ the generator on the cycle
$\gamma_{\boldsymbol{x}}^{N}$ given by
\begin{equation}
\label{29}
(\mathcal{L}_{N,\boldsymbol{x}}f)
(\boldsymbol{x}+\boldsymbol{z}_{j}^{N}) \;=\; 
R_{j}^{N}(\boldsymbol{x})
\left[f(\boldsymbol{x}+\boldsymbol{z}_{j+1}^{N})
-f(\boldsymbol{x}+\boldsymbol{z}_{j}^{N})\right]\;, \quad 0\le j<L\;, 
\end{equation}
where 
\begin{equation*}
R_{j}^{N}(\boldsymbol{x}) \;=\; 
\exp\left\{ -N\left(\overline{F}_{N}(\boldsymbol{x})-
F_{N}(\boldsymbol{x}+\boldsymbol{z}_{j}^{N})\right)\right\} \;, \;\;\;
\overline{F}_{N}(\boldsymbol{x}) \;=\; 
\frac{1}{L}\sum_{i=0}^{L-1}F_{N}(\boldsymbol{x}+\boldsymbol{z}_{i}^{N})\;.
\end{equation*}
We extend the definition of $R_j$ to $(N^{-1} \bb Z)^d$ by setting
$R_j(\bs x)=0$ if $\bs x\not\in \widehat{\Xi}_{N}$.

Clearly, the measure $\mu_N$ restricted to the cycle
$\gamma_{\boldsymbol{x}}^{N}$ is the unique stationary state of the
continuous-time Markov chain whose generator is
$\mathcal{L}_{N,\boldsymbol{x}}$. Denote by $X_N(t)$ the
$\Xi_{N}$-valued, continuous-time Markov chain whose generator is
given by
\begin{equation}
\label{eq: generator}
\mathcal{L}_{N} \;=\; \sum_{\boldsymbol{x}\in\widehat{\Xi}_{N}} 
\mathcal{L}_{N,\boldsymbol{x}}\;.
\end{equation} 
It is easy to check that the measure $\mu_{N}$ given by (\ref{28}) is
a stationary probability measure for the generator $\mathcal{L}_{N}$.
It is reversible if and only if the cycle $\gamma$ has length $2$.

We have three reasons to examine such dynamics.  On the one hand,
cycle dynamics are the simplest generalization of reversible dynamics.
In statistical mechanics, starting from an Hamiltonian, one introduces
a reference measure and then a dynamics which satisfies the detailed
balance conditions to ensure that the evolution is stationary
(actually, reversible) with respect to the reference measure. The
cycle dynamics provide a natural larger class of evolutions which are
stationary with respect to the reference measure. 

These cycle dynamics appeared before in many different contexts. We
refer to Sections 3.3, 5.3 of \cite{klo}, to \cite{Lan2, ls2} and to
the citations of \cite[Section 3.8]{klo} for cycle dynamics in the
context of random walks in random environment and of interacting
particle systems.  Actually, \cite[Lemma 4.1]{lx} asserts that in
finite state spaces the generator of a irreducible Markov chain can be
expressed as the sum of generators of cycle dynamics.

Secondly, cycle random walks is a natural model in which to test the
Dirichlet and the Thomson principles for the capacity in the context
of non-reversible dynamics because these variational problems are
expressed in terms of divergence-free flows whose building blocks are
cycle flows.

Finally, as pointed-out below in Remark \ref{rm6}, in a proper
scaling limit, the cycle dynamics considered here converges to a
non-reversible diffusion. In particular, the approximations of the
optimal flow and of the equilibrium potentials derived in Sections
\ref{sec3}, \ref{sec:flow} provide good insight for the continuum
case.

Denote by $R_N(\bs x, \bs y)$, $\lambda_N(\bs x)$, $\bs x$, $\bs
y\in\Xi_{N}$, the jump rates and the holding rates of the
Markov chain $X_N(t)$, respectively. A simple computation shows 
\begin{equation}
\label{11}
R_{N}(\boldsymbol{x},\boldsymbol{y})
\;=\; \sum_{i=0}^{L-1}\mathbf{1}\{\boldsymbol{z}_{i+1}^{N}- 
\boldsymbol{z}_{i}^{N}=\boldsymbol{y}-\boldsymbol{x}\} 
R_i^N (\bs{x}-\bs{z}_i^N)\;, \quad 
\lambda_N(\bs x) \;=\; \sum_{i=0}^{L-1} R_i^N (\bs{x}-\bs{z}_i^N)   
\;.
\end{equation}

\smallskip\noindent{\bf The law of large numbers}. Denote by
$\mathbb{P}_{\boldsymbol{x}}^{N}$ (resp. $\mb P^N_{\bs x}$), $\bs
x\in\Xi_N$, the law of the Markov chain $X_N(t)$ (resp. the speeded-up
Markov chain $X_N(tN)$) starting at $\boldsymbol{x}$. Expectation with
respect to $\mathbb{P}_{\boldsymbol{x}}^{N}$, $\mb P^N_{\bs x}$ is
represented by $\mathbb{E}_{x}^{N}$, $\mb E^N_{\bs x}$, respectively.

Let $\{\boldsymbol{x}_{N}\}_{N\in\mathbb{N}}$ be a sequence of points
in $\Xi_{N}$ which converges to some $\boldsymbol{x}\in\Xi$. The
sequence $\{\mb P_{\boldsymbol{x}_{N}}^{N} : N\ge 1\}$ converges to
the Dirac mass on the deterministic path $\boldsymbol{x}(t)$, which
solves the ordinary differential equation
\begin{equation*}
\left\{
\begin{aligned}
&\dot{\boldsymbol{x}}(t)\;=\;-\, b(\boldsymbol{x}(t))\;,\\
& \boldsymbol{x}(0)\;=\;\boldsymbol{x}\;,      
\end{aligned}
\right.
\end{equation*}
where 
\begin{equation}
\label{04}
b(\boldsymbol{x}) \;=\; -\sum_{j=0}^{L-1}e^{(\boldsymbol{z}_{j}-
\bar{\boldsymbol{z}})\cdot\nabla F(\boldsymbol{x})}
(\boldsymbol{z}_{j+1}-\boldsymbol{z}_{j})\;, \quad
\bar{\boldsymbol{z}}=\frac{1}{L}\sum_{i=0}^{L-1}\boldsymbol{z}_{i}\;.
\end{equation}

\smallskip\noindent{\bf The time-reversed or adjoint dynamics.}  Denote by
$\mathcal{L}_{N,\boldsymbol{x}}^{*}$, $\mathcal{L}_{N}^{*}$, $\bs
x\in\widehat{\Xi}_{N}$, the adjoints of the generators
$\mathcal{L}_{N,\boldsymbol{x}}$, $\mathcal{L}_{N}$ in $L^2(\mu_N)$,
respectively. An elementary computation shows that for $0\le j<L$,
\begin{equation*}
(\mathcal{L}_{N,\boldsymbol{x}}^{*}f)(\boldsymbol{x}+\boldsymbol{z}_{j}^{N}) 
\;=\; R_{j}^{N}(\boldsymbol{x})
\left[f(\boldsymbol{x}+\boldsymbol{z}_{j-1}^{N})
-f(\boldsymbol{x}+\boldsymbol{z}_{j}^{N})\right]\;, \quad
\mathcal{L}_{N}^{*} \;=\; 
\sum_{\boldsymbol{x}\in\Xi_{N}}\mathcal{L}_{N,\boldsymbol{x}}^{*}\;.
\end{equation*}

Denote by $X_{N}^{*}(t)$ the $\Xi_N$-valued, continuous-time Markov
chain whose generator is $\mathcal{L}_{N}^{*}$.  As for the direct
dynamics, denote by $\mb P^{*,N}_{\bs x}$, $\bs x\in\Xi_N$, the law of
the speeded-up Markov chain $X^*_N(tN)$ starting at $\boldsymbol{x}$.

Let $\{\boldsymbol{x}_{N}\}_{N\in\mathbb{N}}$ be a sequence of points
in $\Xi_{N}$ which converges to some $\boldsymbol{x}\in\Xi$. The
sequence $\{\mb P_{\boldsymbol{x}_{N}}^{*,N} : N\ge 1\}$ converges to
the Dirac mass on the deterministic path $\boldsymbol{x}(t)$, which
solves the ordinary differential equation
\begin{equation*}
\left\{
\begin{aligned}
&\dot{\boldsymbol{x}}(t)\;=\;-\, b^*(\boldsymbol{x}(t))\;,\\
& \boldsymbol{x}(0)\;=\;\boldsymbol{x}\;,      
\end{aligned}
\right.
\end{equation*}
where 
\begin{equation}
\label{04b}
b^*(\boldsymbol{x}) \;=\; -\sum_{j=0}^{L-1}
e^{(\boldsymbol{z}_{j}-\bar{\boldsymbol{z}})\cdot\nabla F(x)}  
\, (\boldsymbol{z}_{j-1}-\boldsymbol{z}_{j}) \;.
\end{equation}

Note that the macroscopic behavior of the dynamics differs completely
from the macroscopic behavior of the time-reversed dynamics. Changing
the clock direction of the jumps affects dramatically the global
behavior of the chain.

\begin{figure}
\centering
\includegraphics[scale=0.17]{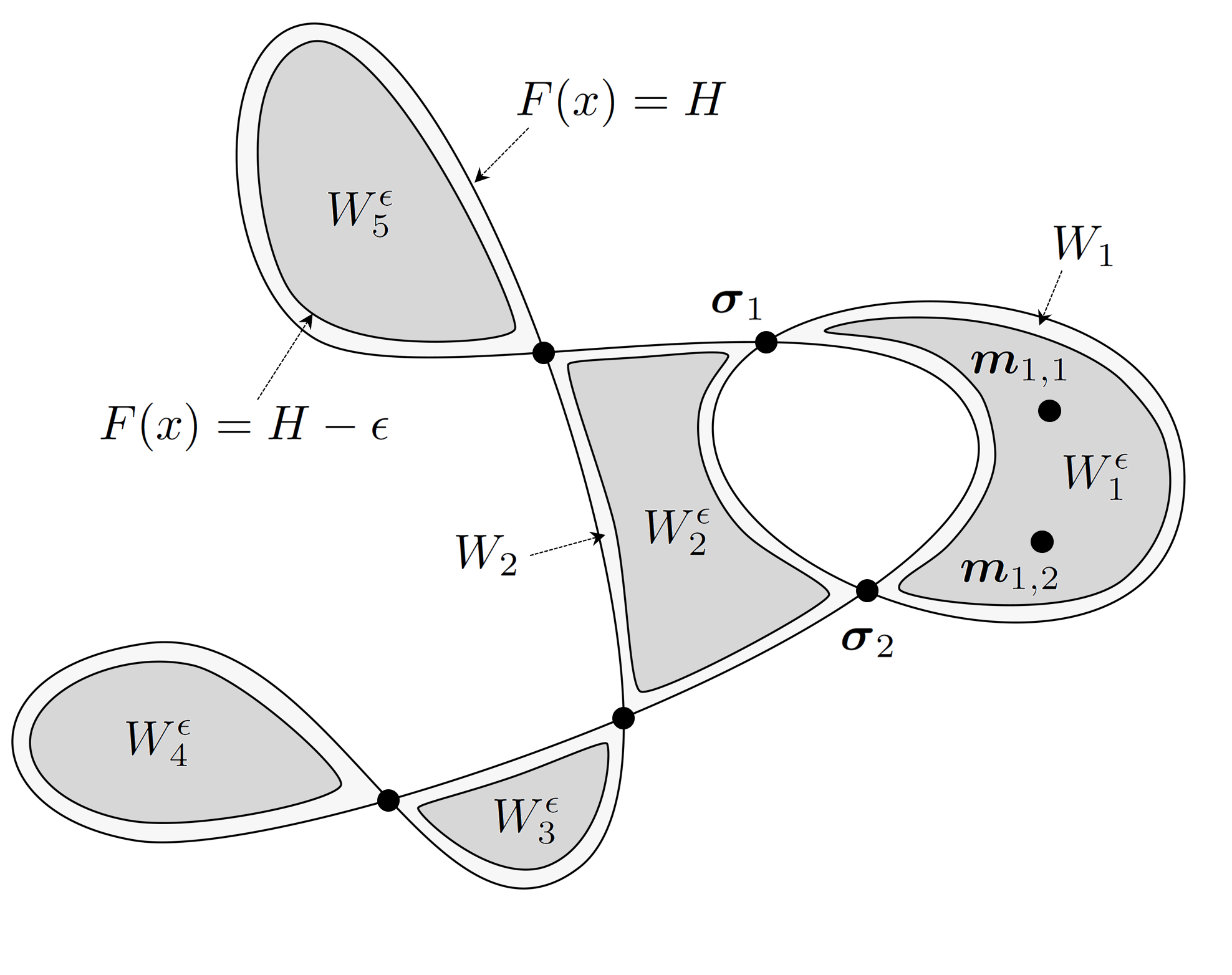}\protect
\caption{\label{fig:21}The structure of the valleys.}
\end{figure}

\smallskip\noindent{\bf The valleys.} Fix $H$ in $\mathbb{R}$, and assume
that the set of saddle points of height $H$, denoted by $\mf S$, is
non-empty:
\begin{equation*}
\mathfrak{S} \;=\; \{\boldsymbol{\sigma}\in\Xi:
\boldsymbol{\sigma}\mbox{ is saddle point of \ensuremath{F}\,and 
\ensuremath{F(\boldsymbol{\sigma})=H}\}} \;\not=\; \varnothing\;.
\end{equation*}
Let
\begin{equation*}
\widehat{\Omega} \;=\; \{\boldsymbol{z}\in\Xi:\,F(\boldsymbol{z})\,\le\,H\}\;, 
\end{equation*}
and let $\Omega$ be one of the connected components of
$\widehat{\Omega}$. The connected components of the interior of
$\Omega$ are denoted by $\mathring{W}_{1}, \dots,\mathring{W}_{M}$,
and the closure of $\mathring{W}_{i}$ is denoted by $W_{i}$. Then,
$\mathfrak{S}_{i,j}=W_{i}\cap W_{j}$, $i\neq j$, is a subset of
$\mathfrak{S}$.  We assume that the sets $\mathfrak{S}_{i,j},\,i\neq
j$, are pairwise disjoint, \textit{i.e}., that no saddle point belongs
to three sets $W_{i}$'s. Fix $\epsilon$ small enough to prevent the
existence of saddle points of $F$ of height between $H-\epsilon$ and
$H$, and denote the connected components of
\begin{equation*}
\Omega^{\epsilon} \;=\; \{\boldsymbol{z}\in\Xi: F(\boldsymbol{z})\,
\le\,H-\epsilon\}
\end{equation*}
by $W_{1}^{\epsilon}, \dots, W_{M}^{\epsilon}$, where
$W_{i}^{\epsilon}\subset W_{i}$. See Figure \ref{fig:21}.

Define the metastable sets $\mathcal{E}_{N}^{1}, \dots, \mathcal{E}_{N}^{M}$
as
\begin{equation*}
\mathcal{E}_{N}^{i} \;=\;  W_{i}^{\epsilon}\cap\Xi_{N} \quad\text{and
  let}\quad 
\mathcal{E}_{N} \;=\; \bigcup_{k=1}^{M}\mathcal{E}_{N}^{k}\;.
\end{equation*}
Denote the deepest local minima of $W_i^{\epsilon}$ by $\{\bs
m_{i,1}, \dots, \bs m_{i,q_{i}}\}$, and set $h_{i}=F(\bs
m_{i,1})$. Let
\begin{equation}
\label{eq:2}
\nu_{i} \;=\;  \sum_{k=1}^{q_{i}} \frac{e^{-G(\bs m_{i,k})}} 
{\sqrt{\det(\mbox{Hess }F) ( \bs m_{i,k})}}\; \cdot
\end{equation}
It is shown in \cite[Section 6]{LMT} that for every $1\le i\le M$,
\begin{equation}
\label{thm: estimate of meta mass}
\frac{Z_{N}}{(2\pi
  N)^{\frac{d}{2}}}e^{Nh_{i}}\mu_{N}(\mathcal{E}_{N}^{i}) \;=\; 
\big[ 1 + o_{N}(1) \big]\, \nu_{i} \;,
\end{equation}
where $o_{N}(1)$ is an expression which vanishes as $N\uparrow\infty$.

Let $\bb A$ be the $d\times d$ matrix given by 
\begin{equation}
\label{45b}
\mathbb{A} \;=\;
\sum_{i=0}^{L-1} (\boldsymbol{z}_{i}-\boldsymbol{z}_{i+1})\,
\boldsymbol{z}_{i}^{\dagger}\;,
\end{equation}
where $M^\dagger$ represents the transposition of the matrix or vector
$M$, and where the points $\bs z_i$ form the cycle $\gamma$ introduced
at the beginning of this section.  Let
$\mathbb{M}_{\boldsymbol{\sigma}} = \mathbb{A}\,
\mathbb{H}_{\boldsymbol{\sigma}}$, where $\bb H_{\bs \sigma} =
(\text{Hess } F)(\bs \sigma)$.  We prove in Lemma \ref{lem: property
  of M} that the matrix $\mathbb{M}_{\boldsymbol{\sigma}}$ has only
one negative eigenvalue, denoted by $- \mu_{\boldsymbol{\sigma}}$.
For each $\boldsymbol{\sigma}\in\mathfrak{S}$, define
\begin{equation}
\label{45}
\omega_{\boldsymbol{\sigma}} \;=\; \frac{\mu_{\boldsymbol{\sigma}}}
{\sqrt{-\det\, \mathbb{H}_{\boldsymbol{\sigma}}}}\,
e^{-G(\boldsymbol{\sigma})} \;, \;\; 
\omega_{i,j}\;=\;\sum_{\boldsymbol{\sigma}\in\mathfrak{S}_{i,j}}
\omega_{\boldsymbol{\sigma}}\;, \;\; \forall i\not = j\;,\;\;\mbox{and}\;\;\;
\omega_{i,i}=0\;,\;\;\forall i\;.
\end{equation}

\smallskip\noindent{\bf Hitting times and capacities.}  
Denote by $H_{\mc A}$, $H^+_{\mc A}$, the hitting time of and the return time to
a subset $\mc A$ of $\Xi_N$, respectively,
\begin{equation}
\label{32}
H_{\mc A} \;=\; \inf\{t>0:\,X_{N}(t)\in \mc A\}\;, \quad
H^+_{\mc A} \;=\; \inf\{t>\tau_1:\,X_{N}(t)\in \mc A\}\;,
\end{equation}
where $\tau_1$ stands for the time of the first jump:
$\tau_1 = \inf\{s>0 : X_N(s) \not = X_N(0)\}$.

For two disjoint subsets $\mc A$, $\mc B$ of $\Xi_{N}$, denote by
$\Cap_N(\mc A , \mc B )$ the capacity between $\mc A$ and $\mc B$ for
the Markov chain $X_N(t)$:
\begin{equation*}
\Cap_N(\mc A ,\mc B ) \;=\; \sum_{\bs x\in A} \mu_N(\bs x) \,
\lambda_N(\bs x) \, \bb P^N_{\bs x} [H_{\mc B} < H^+_{\mc A}]\;.
\end{equation*}

Let $S=\{1,\,2,\,\dots,\,M\}$, $\omega_{i}=\sum_{k\in S}\omega_{i,k}$,
$i\in S$, and $\overline{\omega}_i = \omega_i/\sum_{j\in S}
\omega_j$. Denote by $\{Y(t) : t\ge 0\}$ the $S$-valued,
continuous-time Markov chain which jumps from $i$ to $j$ at rate
$\omega_{i,j}/\overline{\omega}_i$, and by $\mathbf{P}_{k}^{Y}$, $k\in
S$, the law of the chain $Y(t)$ starting from $k$. Note that the
probability measure $\mu(i)=\overline{\omega}_i$, $i\in S$, is
stationary, in fact reversible, for the chain $Y(t)$. Let
$\textup{cap}_{Y}(\cdot,\cdot)$ be the capacity with respect to $Y$:
\begin{equation*}
\Cap_Y(A,B) \;=\; \sum_{i\in A} \omega_i \, 
\mb P^Y_{i} [H_B < H^+_A]\;,
\end{equation*}
where $A$ and $B$ are disjoint subsets of $S$. These capacities can be
computed by solving a system of at most $M-2$ linear equations.

Let $\mathcal{E}_{N}(A)$, $A\subset S$, be given by
\begin{equation*}
\mathcal{E}_{N}(A)\;=\;\bigcup_{i\in A}\mathcal{E}_{N}^{i}\;.
\end{equation*}
The following sharp estimate for capacities between metastable sets is proven in Section \ref{sec7}.

\begin{theorem}
\label{s133} 
For every disjoint subsets $A,\,B$ of $S$, 
\begin{equation*}
\frac{Z_{N}}{(2\pi N)^{\frac{d}{2}-1}}e^{NH}
\textup{cap}_{N}(\mathcal{E}_{N}(A),\mathcal{E}_{N}(B))
\;=\; \big[1+o_N(1)\big] \, \textup{cap}_{Y}(A,B)\;.
\end{equation*}
\end{theorem}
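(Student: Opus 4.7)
The plan is to establish matching upper and lower bounds for $\textup{cap}_N(\mathcal{E}_N(A), \mathcal{E}_N(B))$ by combining the Dirichlet principle of \cite{GL} with the Thomson principle of \cite{Slo}; both express the capacity of a non-reversible chain as the extremal value of a variational problem, over test functions in the first case and over divergence-free flows in the second. I would carry out the argument in four stages.

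First, I would reduce the problem to a local analysis in mesoscopic neighbourhoods of the saddle points $\mathfrak{S}$. Because $F$ has no critical points on $\partial\Xi$ and the drift $b$ of (\ref{04}) points inward on the boundary, the equilibrium potential $h_N(\bs x)=\mathbb{P}_{\bs x}^N[H_{\mathcal{E}_N(B)}<H_{\mathcal{E}_N(A)}]$ is exponentially close to a constant $f_i\in[0,1]$ on each well $W_i$, with $f_i=\mathbf{1}\{i\in B\}$ for $i\in A\cup B$, and undergoes a sharp transition across a window of size $O(N^{-1/2})$ around each $\bs\sigma\in\mathfrak{S}_{i,j}$. The effective values $\{f_i\}_{i\notin A\cup B}$ will coincide with the equilibrium potential of the reduced chain $Y$ on $S$ between $A$ and $B$.

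For the upper bound I would build a test function $\phi_N$ that equals $f_i$ on $W_i^\epsilon\cap\Xi_N$ off the mesoscopic windows and equals $f_i+(f_j-f_i)\,p_{\bs\sigma}$ near each $\bs\sigma\in\mathfrak{S}_{i,j}$, where $p_{\bs\sigma}$ solves the linearized non-reversible Poisson problem at $\bs\sigma$ associated with the non-symmetric matrix $\mathbb{M}_{\bs\sigma}=\mathbb{A}\mathbb{H}_{\bs\sigma}$, with boundary values $0$ and $1$ on opposite sides of the separatrix. By Lemma \ref{lem: property of M}, $\mathbb{M}_{\bs\sigma}$ has a unique negative eigenvalue $-\mu_{\bs\sigma}$, so $p_{\bs\sigma}$ is effectively one-dimensional, varying along the corresponding unstable direction on scale $N^{-1/2}$. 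Plugging $\phi_N$ into the Dirichlet-type functional provided by \cite{GL} decouples the energy saddle-by-saddle; Laplace asymptotics in the $d-1$ stable directions yield the factor $1/\sqrt{-\det\mathbb{H}_{\bs\sigma}}$, while integration along the unstable direction produces the factor $\mu_{\bs\sigma}$. Combined with the prefactor $e^{-NH-G(\bs\sigma)}/Z_N$ and the normalisation (\ref{thm: estimate of meta mass}), the total energy becomes $[1+o_N(1)]\,(2\pi N)^{d/2-1}e^{-NH}Z_N^{-1}\sum_{\bs\sigma}\omega_{\bs\sigma}(f_i-f_j)^2$, and minimising over the interior values $\{f_i\}$ reproduces $\textup{cap}_Y(A,B)$.

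For the lower bound I would dualise. Starting from the optimal unit flow on $S$ from $A$ to $B$ for the reversible chain $Y$, whose Thomson energy equals $1/\textup{cap}_Y(A,B)$, I would lift it to a divergence-free cycle flow $\psi_N$ on $\Xi_N$: within each well, mass is transported along the deterministic trajectories of $b$ and $b^*$ (which flow between the minima and the saddles), and across each saddle the flow is concentrated on a mesoscopic profile along the unstable eigendirection of $\mathbb{M}_{\bs\sigma}$ with Gaussian transverse weights, the whole construction being calibrated so that the discrete cycle divergence vanishes at every $\bs x\notin\mathcal{E}_N(A)\cup\mathcal{E}_N(B)$. The Thomson energy of $\psi_N$ is then evaluated saddle-by-saddle by the same Laplace asymptotics; matching the upper bound, Slowik's principle delivers the required lower bound.

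The main obstacle, and the technical heart of the proof, is the construction and matching of the two local objects at each saddle in the non-reversible regime. In the reversible setting of \cite{BEGK1, BEGK2, LMT} the optimal test function and the optimal flow are linked by self-adjointness and can be obtained from a single one-dimensional profile; here $\mathcal{L}_N\neq\mathcal{L}_N^*$, and the antisymmetric part of the drift introduces a rotational component tangent to the level sets of $F$ near $\bs\sigma$. Handling it requires a spectral analysis of the non-symmetric matrix $\mathbb{M}_{\bs\sigma}$, in which the single negative eigenvalue $-\mu_{\bs\sigma}$ plays the role previously played by the unique negative Hessian eigenvalue. The delicate points are the inversion of the linearised non-symmetric operator at $\bs\sigma$, the verification that the lifted cycle flow really is discretely divergence-free up to errors negligible in the Thomson functional, and the control of boundary terms at the interface between the mesoscopic windows around saddles and the bulk of each well; once these are in place, the $o_N(1)$ matching of the upper and lower bounds follows from standard Laplace asymptotics and (\ref{thm: estimate of meta mass}).
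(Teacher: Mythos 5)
Your overall strategy (localisation at the saddles, Gaussian one-dimensional profiles governed by the spectral analysis of $\mathbb{M}_{\bs\sigma}$, the reduced chain $Y$ fixing the plateau values on wells outside $A\cup B$, matching Dirichlet and Thomson bounds) is the strategy of the paper. But there is a genuine gap in how you invoke the two variational principles. You describe them as a variational problem ``over test functions in the first case and over divergence-free flows in the second.'' In the non-reversible setting both are \emph{double} variational problems over pairs: the Dirichlet principle is $\inf_{f}\inf_{\phi}\Vert\Phi_f-\phi\Vert^2$ and the Thomson principle is $\sup_{g}\sup_{\psi}\Vert\Phi_g-\psi\Vert^{-2}$. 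Plugging a test function alone into the Dirichlet functional (i.e.\ taking $\phi=0$) gives $\Vert\Phi_f\Vert^2$, which is \emph{not} the Dirichlet form $\mathcal D_N(f)$: since $\Phi_f(\bs x,\bs y)=f(\bs x)c(\bs x,\bs y)-f(\bs y)c(\bs y,\bs x)$ does not vanish when $f$ is constant, $\Vert\Phi_f\Vert^2$ picks up the full mass of the wells and is exponentially larger than the capacity. The companion flow must be chosen so that $\Phi_f-\phi$ collapses to the symmetric flow $\Psi_{h_N}$, whose norm \emph{is} the Dirichlet form; in the paper this forces $f_N=\tfrac12(h_N+h_N^*)$ and $\phi_N=\tfrac12(\Upsilon_N^*-\Upsilon_N)$, where $h_N^*$, $\Upsilon_N^*$ are built from the \emph{adjoint} equilibrium potential approximation $V_N^*$ (whose transition direction is the eigenvector $\bs v^*$ of $\bb A\bb H$, distinct from $\bs v$). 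Symmetrically, for the Thomson lower bound the companion function $g_N\propto(h_N^*-h_N)/2$ is indispensable: with $g=0$ the functional $1/\Vert\psi\Vert^2$ does not reduce to $1/\mathcal D_N(h_N)$ and the sharp constant is lost. Your proposal never constructs the adjoint objects, so neither test pair can be assembled.

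A second, smaller imprecision: the transition profile at a saddle is a Gaussian error function of $\sqrt{\alpha N}\,(\bs x\cdot\bs v)$ where $\bs v$ is the eigenvector of $\mathbb M^\dagger=\bb H\bb A^\dagger$ for the eigenvalue $-\mu_{\bs\sigma}$, not the unstable eigendirection of $\mathbb M_{\bs\sigma}$ itself; your phrasing suggests the latter, although solving the linearised equation $\widetilde{\mathcal L}_N p_{\bs\sigma}=0$ correctly would produce the former. Everything else in your outline (the Laplace asymptotics yielding $\mu_{\bs\sigma}/\sqrt{-\det\mathbb H_{\bs\sigma}}$, the divergence-correction of the lifted flow along good paths, the identification of the interior plateau values with $q_{A,B}$) is consistent with the paper's proof once the double variational structure is repaired.
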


\smallskip\noindent{\bf Metastability.}  Let $\hat{\theta}_{i}=
H-h_{i}$, $i\in S$, be the depth of the metastable set
$\mathcal{E}_{N}^{i}$, and let $\theta_{1}<\theta_{2} <\cdots<
\theta_{l}$ be the increasing enumeration of the sequence
$\hat{\theta}_{i}$, $i\in S$. Hence, $\theta_{i}$ represents the
$i$-th smallest depth of the metastable sets and
\begin{equation*}
\{\hat{\theta}_{i}:i\in S\}\;=\;\{\theta_{m}:1\le m\le l\}\;.
\end{equation*}

Let $T_{m}=\{i\in S: \hat{\theta}_{i}= \theta_{m}\}$ be the set of
metastable sets whose depth is equal to $\theta_m$, and let
$S_{m}=\bigcup_{m\le k \le l}T_{k}$, $1\le m\le l$. Note that
$S_{1}=S$.  The metastable behavior corresponding to the depth
$\theta_{m}$ can be represented as a continuous time Markov chain on
$S_{m}$ where the points of $S_{m+1}$ form the set of absorbing points
for this chain.

For $1\le m\le l$ and $i\neq j\in S_{m}$, let 
\begin{equation}
\label{c_m}
c_{m}(i,j)\;=\; \frac{1}{2} \big\{ \textup{cap}_{Y}
(\{i\},S_{m}\setminus\{i\}) \,+\, \textup{cap}_{Y}
(\{j\},S_{m}\setminus\{j\}) \,-\,
\textup{cap}_{Y}(\{i,j\}, S_{m}\setminus\{i,j\})
\big\}\;.
\end{equation}
Denote by $\Psi_{N}^{(m)}:\Xi_{N}\rightarrow S_{m}\cup\{N\}$ the
projection given by
\begin{equation*}
\Psi_{N}^{(m)}(\boldsymbol{x})\;=\;
\sum_{i\in S_{m}}i\,
\mathbf{1}\{\boldsymbol{x}\in\mathcal{E}_{N}^{i}\}
\;+\; N\, \mathbf{1}\{\boldsymbol{x}\in\Xi_{N}
\setminus\mathcal{E}_{N} (S_{m})\}\;,
\end{equation*}
and by $Y_{N}^{(m)}(t)$ the $S_{m}\cup\{N\}$-valued, hidden
Markov chain obtained by projecting the random walk $X_N(t)$ with
$\Psi_{N}^{(m)}$:
\begin{equation*}
Y_{N}^{(m)}(t)\;=\;\Psi_{N}^{(m)}(X_{N}(t))\;.
\end{equation*}
Here and below we use the notation $X$, $Y$ to represent
continuous-time Markov chains whose state space is are subsets of
$\Xi_N$, $S\cup \{N\}$, respectively.

Denote by $\mathbf{Q}_{k}^{(m)}$, $k\in S_{m}$, the law of the
$S_m$-valued, continuous-time Markov chain which starts from $k$ and
whose jump rates are given by
\begin{equation}
\label{r_m}
r_{m}(i,j)\;=\;\mathbf{1}\{i\in T_{m}\}
\frac{c_{m}(i,j)}{\nu_{i}}\;, \;\; i\neq j\in S_m \;.
\end{equation}
Note that the points in $S_{m}\setminus T_{m}=S_{m+1}$ are absorbing
states for $\mathbf{Q}_{k}^{(m)}$. Finally, let 
\begin{equation}
\label{eq:1}
\beta_{N}^{(m)}\;=\; 2\pi N\, \exp\{\theta_{m}N\}
\end{equation}
and recall from \cite{Lan1} the definition of the soft topology. 

\begin{theorem}
\label{mr1}
Fix $1\le m\le l$, $k\in S_{m}$ and a sequence of points
$\{\boldsymbol{x}_{N}\}$, $\boldsymbol{x}_{N}\in\mathcal{E}_{N}^{k}$
for all $N$. Then, under $\mathbb{P}_{\boldsymbol{x}_{N}}^{N}$, the
law of the rescaled projected process $Y_{N}^{(m)}(\beta_{N}^{(m)}t)$
converges to $\mathbf{Q}_{k}^{(m)}$ in the soft topology.
\end{theorem}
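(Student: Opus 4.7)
The plan is to apply the potential-theoretic approach to metastability introduced in \cite{BEGK1, BEGK2} and refined in \cite{Lan1, LMT}: convergence of the projected process in the soft topology is reduced to (a) sharp capacity estimates, already provided by Theorem \ref{s133}; (b) identification of the jump rates of the trace process on $\mathcal{E}_{N}(S_{m})$; and (c) negligibility of the fraction of time spent outside $\mathcal{E}_{N}(S_{m})$ on the rescaled time scale $\beta_{N}^{(m)}$.

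For (b), I would combine Theorem \ref{s133} with the mass estimate (\ref{thm: estimate of meta mass}). Since $\beta_{N}^{(m)} = 2\pi N e^{\theta_{m} N}$ and $h_{i} = H - \hat{\theta}_{i}$, the prefactors $Z_{N}/(2\pi N)^{d/2-1}$ from the capacity and $(2\pi N)^{d/2} e^{-N h_{i}}/Z_{N}$ from the mass combine to give
\begin{equation*}
\beta_{N}^{(m)}\, \frac{\textup{cap}_{N}(\mathcal{E}_{N}^{i}, \mathcal{E}_{N}(A))}{\mu_{N}(\mathcal{E}_{N}^{i})}
\;\longrightarrow\; \mathbf{1}\{i\in T_{m}\}\, \frac{\textup{cap}_{Y}(\{i\},A)}{\nu_{i}}
\end{equation*}
for any $A\subset S_{m}\setminus\{i\}$; the indicator arises because for $i \in S_{m+1}$ there remains a factor $e^{(\theta_{m} - \hat{\theta}_{i})N} \to 0$, reflecting the fact that wells deeper than $\theta_{m}$ are absorbing on this time scale. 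Polarizing this limit via formula (\ref{c_m}) yields precisely the rates $r_{m}(i,j)$ defined in (\ref{r_m}). The trace of $X_{N}(\beta_{N}^{(m)}t)$ on $\mathcal{E}_{N}(S_{m})$ then converges to the Markov chain $\mathbf{Q}_{k}^{(m)}$ by the standard argument based on capacities and tightness.

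For (c), I would follow \cite{LMT} and decompose $\Xi_{N} \setminus \mathcal{E}_{N}(S_{m}) = (\Xi_{N} \setminus \mathcal{E}_{N}) \cup \bigcup_{i\in S\setminus S_{m}}\mathcal{E}_{N}^{i}$. The time spent in $\Xi_{N} \setminus \mathcal{E}_{N}$ is controlled by the mass estimate: under $\mu_{N}$, this set carries vanishing mass coming from sub-exponential boundary corrections, so on scale $\beta_{N}^{(m)}$ its total contribution is $o(1)$ by stationarity. The time spent in a shallow valley $\mathcal{E}_{N}^{i}$ with $i\in S\setminus S_{m}$, that is, $\hat{\theta}_{i}<\theta_{m}$, is controlled by the capacity-to-mass ratio, which is of order $\beta_{N}^{(i)} \ll \beta_{N}^{(m)}$; this says that each excursion into such a well is negligibly short on the scale $\beta_{N}^{(m)}$.

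The main obstacle will be to upgrade these stationary-state negligibility estimates to pointwise statements starting from $\boldsymbol{x}_{N}\in \mathcal{E}_{N}^{k}$, as is actually required for convergence of the trajectory in the soft topology. In the non-reversible setting the equilibrium potential is not monotone along trajectories, so one cannot localize well-by-well as in the reversible analysis of \cite{LMT, BEGK1, BEGK2}. Instead, one must exploit the explicit approximations of the equilibrium potential and of the optimal flow constructed in Sections \ref{sec3} and \ref{sec:flow}, together with the convergence of $\mb P_{\boldsymbol{x}_{N}}^{N}$ and $\mb P_{\boldsymbol{x}_{N}}^{*,N}$ to the deterministic drifts (\ref{04}) and (\ref{04b}), to rule out pathological trajectories lingering near saddle points or drifting along ridges connecting wells. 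Combining the pointwise versions of (b) and (c) then yields the convergence of $Y_{N}^{(m)}(\beta_{N}^{(m)}t)$ to $\mathbf{Q}_{k}^{(m)}$ in the soft topology of \cite{Lan1}.
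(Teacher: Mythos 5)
Your reduction to (i) convergence of the trace process on $\mathcal{E}_{N}(S_{m})$, (ii) negligibility of the time spent outside $\mathcal{E}_{N}(S_{m})$, and (iii) passage to the soft topology via \cite{Lan1} matches the paper's architecture (Theorem \ref{bl2-th1}, conditions \textbf{(H0)}, \textbf{(H1)}, \textbf{(M3)}, Proposition \ref{l2-p1}). The genuine gap is in your step (b). Polarizing the capacity asymptotics of Theorem \ref{s133} through formula \eqref{c_m} does \emph{not} identify the individual jump rates: for the (non-reversible) trace process one has the exact identity
\begin{equation*}
\textup{cap}_{N}\big(\mathcal{E}_{N}^{i},\cdot\big)
+\textup{cap}_{N}\big(\mathcal{E}_{N}^{j},\cdot\big)
-\textup{cap}_{N}\big(\mathcal{E}_{N}^{i}\cup\mathcal{E}_{N}^{j},\cdot\big)
\;=\;\mu_{N}(\mathcal{E}_{N}^{i})\,r_{N}^{(m)}(i,j)
\;+\;\mu_{N}(\mathcal{E}_{N}^{j})\,r_{N}^{(m)}(j,i)\;,
\end{equation*}
where $\cdot$ denotes $\mathcal{E}_{N}$ of the complementary indices in $S_{m}$. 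Without detailed balance this determines only the \emph{symmetrized} rate, not $r_{N}^{(m)}(i,j)$ itself; that the asymmetry vanishes in the limit (equivalently, that the limiting chain is reversible with the rates \eqref{r_m}) is precisely what must be proved, and it is the main new difficulty flagged in Remark \ref{rm02}. The paper's route is to write $r_{N}^{(m)}(i,j)/\lambda_{N}^{(m)}(i)$ as the hitting probability $\overline{\mathbb{P}}_{\mathfrak{o}}^{N}[H_{\mathcal{E}_{N}^{j}}<H_{\mathcal{E}_{N}(S_{m}\setminus\{i,j\})}]$ for the chain with $\mathcal{E}_{N}^{i}$ collapsed to a point $\mathfrak{o}$ (display \eqref{12}), i.e.\ as the value at $\mathfrak{o}$ of an equilibrium potential, and then to show in Proposition \ref{s05} that this value converges to $q_{\{j\},S_{m}\setminus\{i,j\}}(i)$. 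This requires the entire collapsed-process apparatus of Section \ref{sec8}: Lemma \ref{p002}, the sector condition for the collapsed generator (Lemma \ref{lem:col_sector}), and an $L^{2}$ comparison between the collapsed test function $\overline{h}_{N}^{A,B}$ and the true collapsed equilibrium potential. Your proposed substitute — using the law of large numbers for the drifts \eqref{04}, \eqref{04b} to rule out pathological trajectories — controls paths but gives no quantitative handle on the value of a harmonic function at a well, so it cannot close this step.

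Two secondary points. First, hypothesis \eqref{bl2b} (condition \textbf{(H1)}) requires a lower bound on $\textup{cap}_{N}(\{\boldsymbol{m}_{i}^{N}\},\{\boldsymbol{y}\})$ for \emph{every} $\boldsymbol{y}\in\mathcal{E}_{N}^{i}$, which is not contained in Theorem \ref{s133}; the paper obtains it by reducing to the symmetrized dynamics via Lemma \ref{lem: sector condition} and \cite[Lemmata 2.5 and 2.6]{GL}, and then invoking \cite[Lemma 6.2]{LMT}. Second, your treatment of (c) is essentially the paper's, which adapts \cite[Proposition 6.1]{LMT}; no pointwise upgrade beyond what \textbf{(M3)} states is needed, since Proposition \ref{l2-p1} takes exactly \textbf{(M3)} plus trace convergence as input.
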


\begin{remark}
\label{rm01}
A computation of the capacity of the chain $Y(t)$ shows that
$c_{1}(i,j)=\omega_{i,j}$.
\end{remark}

\begin{remark}
\label{rm02}
In view of (\ref{c_m}) and (\ref{r_m}), the metastable behavior of the
random walk $X_N(t)$ is similar to the one of the reversible random
walk in a potential field, discussed in \cite{LMT}, except for the
definition of the quantity $\omega_{\boldsymbol{\sigma}}$.

However, the proofs in the non-reversible case present two major
additional difficulties compared to the reversible case. On the one
hand, the computation of the capacities, presented in Theorem
\ref{s133}, which rely on the Dirichlet and on the Thomson principles,
are much more delicate, as these principles involve double
variational problems.

On the other hand, while in the reversible case the asymptotic jump
rates (\ref{r_m}) can be expressed in terms of the capacities,
computed in Theorem \ref{s133}, in the non-reversible case, the
derivation of the asymptotic jump rates requires a detailed analysis
of the behavior of the equilibrium potential. We present in Section
\ref{sec8} a robust framework, which can be useful in other contexts,
to obtain sharp estimates of the mean jump rate in case of the
non-reversible dynamics.
\end{remark}

\smallskip\noindent{\bf{Eyring-Kramers formula}}. Fix $1\le u\le l$
and $i\in T_{u}$.  Select a minimum $\boldsymbol{m}_{i,k}$, $1\le k\le
q_{i}$ of $F$ on $W_{i}^{\epsilon}$ and denote this point by
$\boldsymbol{m}_{i}$. Define the set of local minima of $F$ on
$\Xi\setminus W_{i}$ by $\mathcal{\widehat{M}}_{i}$, and let $\mc M_i$
be the points in $\mathcal{\widehat{M}}_{i}$ which are below $\bs
m_i$:
\begin{equation*}
\mathcal{M}_{i}\;=\;\left\{ \boldsymbol{x}\in\widehat{\mathcal{M}}_{i}
:F(\boldsymbol{x})\le F(\bs m_i) \right\}\;.
\end{equation*}

Denote by $[\boldsymbol{x}]_{N}$ the nearest point in $\Xi_{N}$ of
$\boldsymbol{x}\in\Xi$. If there are several nearest points, choose
one of them arbitrarily. Denote by $\left[\mathcal{M}_{i}\right]_{N}$
the discretization of the set $\mathcal{M}_{i}$:
$\left[\mathcal{M}_{i}\right]_{N}=\left\{
  [\boldsymbol{x}]_{N}:\boldsymbol{x}\in\mathcal{M}_{i}\right\} $.

\begin{theorem}
\label{ek}
For $1\le u\le l$ and $i\in T_{u}$, 
\begin{equation}
\label{ek1}
\mathbb{E}_{[\boldsymbol{m}_{i}]_{N}}^{N}
\left[H_{\left[\mathcal{M}_{i}\right]_{N}}\right]\;=\;
\big[1+o_{N}(1)\big] \, \frac {\nu_i \, \beta_{N}^{(u)}}
{\sum_{j\in S_{u}}c_{u}(i,j)}\;\cdot 
\end{equation}
\end{theorem}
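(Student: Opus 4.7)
The plan is to deduce the Eyring-Kramers formula \eqref{ek1} from the metastability statement Theorem \ref{mr1}, using two auxiliary ingredients: a reduction of $[\mc M_i]_N$ to $\mc E_N(S_u\setminus\{i\})$, and a uniform tail estimate upgrading distributional convergence to convergence of expectations.

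First I would show that
\begin{equation*}
  \mathbb E^N_{[\bs m_i]_N}[H_{[\mc M_i]_N}]
  \;=\; \mathbb E^N_{[\bs m_i]_N}[H_{\mc E_N(S_u\setminus\{i\})}]
  \;+\; o(\beta_N^{(u)})\;.
\end{equation*}
Because $i \in T_u$, a local minimum $\bs x \in \widehat{\mc M}_i$ lies in $\mc M_i$ exactly when the well containing it is indexed by some $j \in S_u \setminus \{i\}$, and every such $W_j$ contains its deepest minima $\bs m_{j,k} \in \mc M_i$. Applying the strong Markov property at $H_{\mc E_N(S_u \setminus \{i\})}$, the reduction amounts to a uniform bound, over $\bs x \in \mc E_N^j$ with $j \in S_u \setminus \{i\}$, on the expected time to reach $[\mc M_i]_N \cap W_j$. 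Inside $W_j^\epsilon$ the drift $-b$ from \eqref{04} is essentially gradient descent of $F$, so the chain hits a neighborhood of a local minimum of $F$ in $W_j^\epsilon$ in time polynomial in $N$, and a local random-walk argument bounds the residual time to the specific lattice point, still of order $o(\beta_N^{(u)}) = o(Ne^{N\theta_u})$.

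Next, rescale time by $\beta_N^{(u)}$ and invoke Theorem \ref{mr1}: under $\mathbb P^N_{[\bs m_i]_N}$, the trajectory $Y_N^{(u)}(\beta_N^{(u)} \cdot)$ converges in the soft topology to $\mathbf Q_i^{(u)}$. In the limit chain the exit time from the initial state $i \in T_u$ is exponential with total rate $\sum_{j \in S_u \setminus \{i\}} r_u(i,j) = \sum_{j \in S_u} c_u(i,j)/\nu_i$ and hence mean $\nu_i/\sum_{j\in S_u} c_u(i,j)$. Since this limit chain has no auxiliary state, the soft-topology convergence implies in particular that $H_{\mc E_N(S_u\setminus\{i\})}/\beta_N^{(u)}$ converges in distribution to the corresponding exponential law. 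To promote this to convergence of expectations, I would prove a uniform exponential tail
\begin{equation*}
  \sup_{N \ge N_0}\;\sup_{\bs x \in \mc E_N^i}\;
  \mathbb P^N_{\bs x}\!\left[H_{\mc E_N(S_u \setminus \{i\})} > t\,\beta_N^{(u)}\right]
  \;\le\; C\,e^{-\alpha t}\;,\qquad t \ge 0\;,
\end{equation*}
by a renewal decomposition of returns to $\mc E_N^i$: each excursion reaches $\mc E_N(S_u \setminus \{i\})$ within a time of order $\beta_N^{(u)}$ with probability bounded below, a quantitative fact extracted from the sharp capacity asymptotics of Theorem \ref{s133}, and iterating yields the geometric tail. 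Uniform integrability then combines with the reduction above to give \eqref{ek1}.

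The main obstacle will be this uniform integrability. Theorem \ref{mr1} is a purely distributional statement in the soft topology, and, in the absence of reversibility, one cannot invoke a spectral-gap bound to control moments of hitting times. The quantitative tails must instead be extracted from the potential-theoretic estimates underlying Theorem \ref{s133}, together with a detailed analysis of the equilibrium potential near each saddle point, which is presumably the content of the robust framework the authors announce in Section \ref{sec8}.
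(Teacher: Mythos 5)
Your route is genuinely different from the paper's, and as written it has a real gap precisely at the point you flag. The paper does not pass through Theorem \ref{mr1} at all: it applies the identity of Proposition \ref{pbl} (from \cite[Proposition A.2]{BL2}) with $\mc A=\{[\bs m_i]_N\}$, $\mc B=[\mc M_i]_N$ and $g\equiv 1$, for which the harmonic measure on a singleton is a Dirac mass, so that
\begin{equation*}
\mathbb{E}_{[\boldsymbol{m}_{i}]_{N}}^{N}\big[H_{[\mathcal{M}_{i}]_{N}}\big]
\;=\;\frac{E_{\mu_{N}}\big[V^{*}_{\{[\bs m_i]_N\},[\mc M_i]_N}\big]}
{\textup{cap}_{N}\big(\{[\bs m_i]_N\},[\mc M_i]_N\big)}\;,
\end{equation*}
and then computes the denominator (Lemma \ref{lem102}, a variant of Theorem \ref{s133} for point-to-point capacities) and the numerator (Lemma \ref{lem103}, showing the equilibrium potential is $1+o_N(1)$ on $\mc E_N^i$ and negligible elsewhere, so the numerator is asymptotic to $\mu_N(\mc E_N^i)$). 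The expectation is thus obtained exactly, with no limit theorem for the law of the hitting time and no uniform integrability argument.

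The gap in your proposal is the uniform exponential tail. Theorem \ref{s133} controls $\sum_{\bs x\in\mc A}\mu_N(\bs x)\lambda_N(\bs x)\,\mathbb P^N_{\bs x}[H_{\mc B}<H^+_{\mc A}]$, i.e.\ an escape probability averaged against the (restricted) stationary measure; it does not by itself give a lower bound, uniform over starting points $\bs x\in\mc E_N^i$, on the probability that an excursion reaches $\mc E_N(S_u\setminus\{i\})$ within time $C\beta_N^{(u)}$. The standard way to launch such a renewal argument is Markov's inequality applied to $\sup_{\bs x}\mathbb E^N_{\bs x}[H_{\mc B}]$, and in the non-reversible setting the only available handle on that quantity is again the mean-hitting-time identity of Proposition \ref{pbl} together with pointwise control of the equilibrium potential (the content of \eqref{e1032}--\eqref{e1034}); so your route does not bypass the paper's computation, it presupposes it. Two further points need care even granting the tail bound: (i) Theorem \ref{mr1} is stated in the soft topology, in which hitting times are not continuous functionals, so the distributional convergence of $H_{\mc E_N(S_u\setminus\{i\})}/\beta_N^{(u)}$ must be extracted from the Skorohod convergence of the trace process together with condition \textbf{(M3)}, not from Theorem \ref{mr1} directly; and (ii) $\mc M_i$ need not coincide with the set of deepest minima of the wells indexed by $S_u\setminus\{i\}$ (it may contain non-deepest local minima of height at most $h_i$, and minima outside $\Omega$), so the reduction step also requires an argument that these extra targets do not change the expectation to leading order. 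Since Theorem \ref{mr1} is itself downstream of the hardest part of the paper (the mean jump rates of Section \ref{sec8}), the proposed route would in any case be longer, not shorter, than the direct one.
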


If the potential $F$ has only two local minima and one saddle point between them, the right hand side of the previous equation takes the form of the celebrated Eyring-Kramers formula.  More precisely, assume 
that $M=2$ and that the wells $W_{1}$ and $W_{2}$ contain
only one local minima, denoted by $\boldsymbol{m}_{1}$ and
$\boldsymbol{m}_{2}$, respectively. Assume that
$F(\boldsymbol{m}_{1})\ge F(\boldsymbol{m}_{2})$, and denote by
$\boldsymbol{\sigma}$ the unique saddle point located between
$\boldsymbol{m}_{1}$ and $\boldsymbol{m}_{2}$:
$\{\boldsymbol{\sigma}\}=\overline{W}_{1}\cap\overline{W}_{2}$.  By
\eqref{eq:2}, \eqref{45}, \eqref{eq:1}, \eqref{ek1} and Remark
\ref{rm01},
\begin{align*}
&\mathbb{E}_{[\boldsymbol{m}_{1}]_{N}}^{N}
\left[H_{[\boldsymbol{m}_{2}]_{N}}\right]\\
&\quad =\;
\big[1+o_{N}(1)\big] \, e^{N[F(\boldsymbol{\sigma})-F(\boldsymbol{m}_{1})]} \, \frac{2\pi N}{\mu_{\boldsymbol{\sigma}}}
\sqrt{\frac {- \det(\text{Hess }F)(\bs \sigma)}
{\det(\textup{Hess }F)(\boldsymbol{m}_{1})}}
\,e^{G(\bs{\sigma})-G(\bs{m}_1)}\;.
\end{align*}

The notable difference of this formula with respect to the
Eyring-Kramers formula for the reversible dynamics is the appearance
of $\mu_{\boldsymbol{\sigma}}$, instead of the absolute value of the
negative eigenvalue of the Hessian of the potential $F$ at
$\boldsymbol{\sigma}$. This replacement was anticipated by the recent
work of Bouchet and Reygner \cite{BR} in the context of non-reversible
Freidlin-Wentzell type of diffusions. Another difference is the
appearance of $\exp\{G(\bs{\sigma})-G(\bs{m}_1)\}$. This new term
coincide with the so-called non-Gibbsianness factor of \cite[display
(1.10)]{BR}. This term is introduced in order to take into account the
fact that the invariant measure $\mu_N$ is not exactly Gibbsian. To
our knowledge, Theorem \ref{ek} is the first rigorous proof of the
Eyring-Kramers formula for non-reversible dynamics.

\smallskip\noindent{\bf Applications, remarks and extensions.} We
conclude this section with some comments on the results. 

In a forthcoming paper \cite{ls2}, we use the results presented in
this paper to investigate the metastable behavior of a planar,
mean-field Potts model.

\begin{remark}[Reversibility]
\label{rmf1}
The case in which the cycle $\gamma$ has length $2$ and the dynamics
is reversible case has been considered in \cite{BEGK2}. In this sense,
the results presented here provide a generalization of \cite{BEGK2}.
\end{remark}

\begin{remark}[Diffusive limit]
\label{rm6}
Consider the dynamics defined by the generator \eqref{eq: generator}
with the rates $R^N_j(\bs x)$ replaced by
\begin{equation*}
r_{j}^{N}(\boldsymbol{x}) \;=\; 
\exp\left\{ -\left(\overline{F}_{N}(\boldsymbol{x})-
F_{N}(\boldsymbol{x}+\boldsymbol{z}_{j}^{N})\right)\right\}\;.
\end{equation*}
Note that the factor $N$ in the exponent has been removed. In this
case the rescaled process $X_N(tN^2)$ converges to the diffusion on
$\Xi$ whose generator $\mc L$ is given by
\begin{equation*}
(\mc{L} f) (\bs{x}) \;=\;-\, [\bb{A} \, \nabla F(\bs{x})] \cdot 
(\nabla f) (\bs{x}) \,+\, (1/2) \sum_{1\le i,j\le d}
\bb{S}_{ij} \, (\partial^2_{x_{i},x_{j}} f)(\bs{x})\;,
\end{equation*}
where $\bb S$ is the matrix
\begin{equation*}
\bb{S} \;=\; \sum_{j=0}^{L-1} (\bs z_{j+1} - \bs z_j)\, (\bs z_{j+1} -
\bs z_j)^\dagger\;.  
\end{equation*}
In this context, the matrix $\bb M_{\bs \sigma}$, introduced above
\eqref{45}, is the Jacobian of the drift $b = \bb{A} \, \nabla
F(\bs{x})$ at $\bs{\sigma}$ We investigate in \cite{lms} the metastability behavior
of such diffusions.
\end{remark}

\begin{remark}
\label{rm1}
All results of this article hold if the jump rates
$R_{j}^{N}(\boldsymbol{x})$ introduced in \eqref{29} are replaced by
\begin{equation*}
\widetilde{R}_{j}^{N}(\boldsymbol{x}) \;=\; 
\exp\left\{ -N\left(F_{N}(\boldsymbol{x}+\bar{\boldsymbol{z}}^{N})
-F_{N}(\boldsymbol{x}+\boldsymbol{z}_{j}^{N})\right)\right\} \;,
\end{equation*}
where $\bar{\boldsymbol{z}}^{N} = L^{-1} \sum_{0\le i<L}
\boldsymbol{z}_{i}^{N}$.
\end{remark}

\begin{remark}
\label{rm2}
Although our presentation is limited for a specific level $H$ of
saddle points, the complete description of the structure of the wells
and of the saddle points corresponding to the potential $F(\cdot)$,
presented in the reversible setting in \cite{LMT}, holds for the model
introduced in this article.
\end{remark}

\begin{remark}[Multiple cycles]
\label{rm3}
Let $\gamma^{(1)}, \dots, \gamma^{(K)}$ be cycles on $\mathbb{Z}^{d}$
such that edges of these cycles generate $\mathbb{Z}^d$.  Denote by
$\mathcal{L}_{N}^{(k)}$, $1\le k\le K$, the corresponding cycle
generators and by $\widetilde{\mc L}_N$ the sum of these $K$
generators.  Denote by $\mathbb{A}^{(k)}$ the matrix introduced in
(\ref{45b}) associated to the cycle $\gamma^{(k)}$ and let
$\mathbb{\widetilde{A}} = \sum_{1\le k\le K} \mathbb{A}^{(k)}$.  The
matrix $\widetilde{\mathbb{A}}$ satisfies the condition of Lemma
\ref{lem: property of M} and therefore
$\mathbb{\widetilde{M}}_{\boldsymbol{\sigma}} =
\widetilde{\mathbb{A}}\mathbb{H}_{\boldsymbol{\sigma}}$ has only one
negative eigenvalue, denoted by
$-\widetilde{\mu}_{\boldsymbol{\sigma}}$.  By replacing the matrix
$\mathbb{A}$ by $\widetilde{\mathbb{A}}$, the arguments presented in
the next sections hold for this general model. The only required
modification in the statement of Theorem \ref{mr1} is the replacement
of $\mu_{\boldsymbol{\sigma}}$ in the definition of
$\omega_{\boldsymbol{\sigma}}$ in (\ref{45}) by
$\widetilde{\mu}_{\boldsymbol{\sigma}}$, defined above.
\end{remark}

\begin{remark}[Generator with weights]
\label{rm5}
Let $\widetilde{\mathcal{L}}_{N}$ be the generator given by
\[
\widetilde{\mathcal{L}}_{N} \;=\; 
\sum_{\boldsymbol{x}\in\widehat{\Xi}_{N}} w_{N}(\boldsymbol{x})\,
\mathcal{L}_{N,\boldsymbol{\boldsymbol{x}}}\;,
\]
where the weights $w_{N}:\Xi\rightarrow\mathbb{R}$
satisfy the following two conditions:
\begin{enumerate}
\item The sequence $w_{N}$ converges uniformly on every compact subset
  of $\Xi$ to a smooth function $w:\Xi\rightarrow\mathbb{R}$;
\item The sequence $w_{N}$ is uniformly Lipschitz on every compact
  subset of $\Xi$. 
\end{enumerate}

The core of the proof of Theorems \ref{s133} and \ref{mr1} consists in
a mesoscopic analysis around the saddle points. Under the conditions
above, in a mesoscopic neighborhood of a saddle point $\bs{\sigma}$,
the weights $w_{N}(\boldsymbol{x})$ are uniformly close to
$w(\boldsymbol{\sigma})$ and all the arguments of the next sections
can be carried through. The assertions of Theorems 
\ref{s133} and \ref{mr1} hold for this model by replacing
$\omega_{\boldsymbol{\sigma}}$ in (\ref{45}) by
\begin{equation*}
\omega_{\boldsymbol{\sigma}} \;=\; \frac{\mu_{\boldsymbol{\sigma}}}
{\sqrt{-\det\mathbb{H}_{\boldsymbol{\sigma}}}}\, 
w(\boldsymbol{\sigma})\, e^{-G(\boldsymbol{\sigma})}\;.
\end{equation*}
\end{remark}

The planar Potts model examined in \cite{ls2} is an example
of dynamics which enters in this framework. 

\begin{remark}[Generalized Potential]
\label{rm4}
In \cite{BEGK2}, the authors assumed two properties \textbf{(R1)} and
\textbf{(R2)} for the potential $F_{N}$, which are satisfied by the
potential of Curie-Weiss model with random external field.  We
acknowledge here that our result also holds under their assumption,
without changing the arguments or the statements of the results.
\end{remark}

We present at the end of the next section a sketch of the proof and a
brief description of the content of each section of the article.

\section{Sector condition, flows and capacities}
\label{sec:Preliminaries}

The proofs of Theorem \ref{s133} rely on variational formulas for the
capacities in terms of functions and flows, recently obtained in
\cite{GL, Slo}. We present in this section these formulas as well as
some properties of the generator needed in the next sections.

\smallskip\noindent{\bf Dirichlet Form and Sector Condition.}  Denote
by $\mathcal{D}_{N}(\cdot)$ the Dirichlet form corresponding to the
generator $\mathcal{L}_{N}$, namely
\begin{equation*}
\mathcal{D}_{N}(f) \;=\; 
\left\langle f,-\mathcal{L}_{N}f\right\rangle _{\mu_{N}}\;,
\end{equation*}
where $\< \cdot, \cdot\>_{\mu_N}$ represents the scalar product in
$L^2(\mu_N)$, and $f$ is a real function on $\Xi_{N}$. By (\ref{eq:
  generator}), we can decompose this Dirichlet form as
\begin{equation*}
\mathcal{D}_{N}(f) \;=\; \sum_{\boldsymbol{x}\in\widehat{\Xi}_{N}}
\left\langle f,-\mathcal{L}_{N,\boldsymbol{x}}f\right\rangle
_{\mu_{N}} \;=\;\sum_{\boldsymbol{x}\in\widehat{\Xi}_{N}}
\mathcal{D}_{N,\boldsymbol{x}}(f)\;,
\end{equation*}
where
\begin{equation}
\label{eq: Decomposition of Diri Form -2}
\mathcal{D}_{N,\,\boldsymbol{x}}(f) \;=\; \frac{1}{2 Z_{N}}
\, e^{-N\overline{F}_{N}(\boldsymbol{x})}
\sum_{i=0}^{L-1}\left[f(\boldsymbol{x}+\boldsymbol{z}_{i+1}^{N})-
f(\boldsymbol{x}+\boldsymbol{z}_{i}^{N})\right]^{2}\;.
\end{equation}

The next result states that the generator $\mathcal{L}_{N}$ satisfies a
sector condition. This means that the eigenvalues of the operator
$\mathcal{L}_{N}$ complexified belong to the sector $\{z\in \bb C :
|\text{Im } z| \le 2L \text{Re } z \}$  (cf. \cite[Proposition 2.13]{R})

\begin{lemma}
\label{lem: sector condition}
For every function $f,\,g :\Xi_{N} \to \bb R$,
\begin{equation*}
\left\langle f,-\mathcal{L}_{N}g\right\rangle _{\mu_{N}}^{2} \;\le\; 
4\, L^{2}\, \mathcal{D}_{N}(f)\, \mathcal{D}_{N}(g)\;.
\end{equation*}
\end{lemma}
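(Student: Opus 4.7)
The plan is to reduce the sector condition to a cycle-by-cycle estimate, exploiting the fact that on each cycle $\gamma_{\bs x}^N$ the stationary mass times the jump rate is constant (i.e.\ the cycle current does not depend on the edge). Once this is established, the bound reduces to an elementary inequality about sums around a cycle.

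\textbf{Step 1: reduce to a single cycle.} Decompose $\langle f, -\mathcal{L}_N g\rangle_{\mu_N} = \sum_{\bs x\in\widehat{\Xi}_N} \langle f, -\mathcal{L}_{N,\bs x} g\rangle_{\mu_N}$. Fix $\bs x\in\widehat{\Xi}_N$ and write $f_j = f(\bs x + \bs z_j^N)$, $g_j = g(\bs x + \bs z_j^N)$, with the convention $f_L=f_0$, $g_L=g_0$. Using \eqref{28} and the definition of $R_j^N(\bs x)$, a direct computation gives
\begin{equation*}
\mu_N(\bs x + \bs z_j^N)\, R_j^N(\bs x) \;=\; \frac{1}{Z_N}\, e^{-N \overline{F}_N(\bs x)} \;=:\; \pi(\bs x)\;,
\end{equation*}
independent of $j$. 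Consequently, by \eqref{29},
\begin{equation*}
\langle f, -\mathcal{L}_{N,\bs x} g\rangle_{\mu_N} \;=\; \pi(\bs x) \sum_{j=0}^{L-1} f_j\,(g_j - g_{j+1})\;,
\end{equation*}
while from \eqref{eq: Decomposition of Diri Form -2} one has $\mathcal{D}_{N,\bs x}(f) = \tfrac{1}{2}\pi(\bs x)\sum_{j=0}^{L-1}(f_{j+1}-f_j)^2$, and similarly for $g$.

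\textbf{Step 2: cycle inequality.} Since $\sum_j (g_j - g_{j+1}) = 0$, I may replace $f_j$ by $f_j - f_0$ inside the sum without changing its value. Writing $f_j - f_0 = \sum_{k=0}^{j-1}(f_{k+1}-f_k)$ and applying Cauchy--Schwarz entrywise yields $(f_j - f_0)^2 \le j \sum_{k=0}^{j-1}(f_{k+1}-f_k)^2$, hence
\begin{equation*}
\sum_{j=0}^{L-1}(f_j - f_0)^2 \;\le\; (L-1)^2 \sum_{k=0}^{L-1}(f_{k+1}-f_k)^2\;.
\end{equation*}
A further Cauchy--Schwarz on the cycle gives
\begin{equation*}
\Bigl|\sum_{j=0}^{L-1}(f_j - f_0)(g_j - g_{j+1})\Bigr|
\;\le\; (L-1)\,\sqrt{\sum_k (f_{k+1}-f_k)^2}\,\sqrt{\sum_j (g_j - g_{j+1})^2}\;.
\end{equation*}
Multiplying by $\pi(\bs x)$ and identifying the cycle Dirichlet forms yields the single-cycle sector bound
\begin{equation*}
|\langle f, -\mathcal{L}_{N,\bs x} g\rangle_{\mu_N}| \;\le\; 2(L-1)\,\sqrt{\mathcal{D}_{N,\bs x}(f)\,\mathcal{D}_{N,\bs x}(g)}\;.
\end{equation*}

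\textbf{Step 3: assemble.} Summing over $\bs x\in\widehat{\Xi}_N$ and using the Cauchy--Schwarz inequality for the sum,
\begin{equation*}
|\langle f, -\mathcal{L}_N g\rangle_{\mu_N}|
\;\le\; 2(L-1) \sum_{\bs x} \sqrt{\mathcal{D}_{N,\bs x}(f)\,\mathcal{D}_{N,\bs x}(g)}
\;\le\; 2(L-1)\,\sqrt{\mathcal{D}_N(f)\,\mathcal{D}_N(g)}\;,
\end{equation*}
and squaring gives the claim (indeed with the slightly better constant $4(L-1)^2$).

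The one non-routine ingredient is Step 1: recognizing that $\mu_N R_j^N$ is $j$-independent, so that the bilinear form on a cycle is essentially $\sum f_j(g_j-g_{j+1})$ with a uniform weight. This is what makes the telescoping argument in Step 2 work; without it, an asymmetry in the weights would leave a non-negligible term not controllable by the symmetric Dirichlet forms. Everything else is Cauchy--Schwarz and the identity $\sum_j(g_j-g_{j+1})=0$.
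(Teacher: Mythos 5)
Your proof is correct and follows essentially the same route as the paper's: reduce to a single cycle using the constancy of the conductance $\mu_N R_j^N$ there, subtract a constant from $f$ (you use $f_0$, the paper uses the cycle mean $L^{-1}\sum_j f_j$), and combine Cauchy--Schwarz with a discrete Poincar\'e inequality on the cycle before reassembling via Cauchy--Schwarz over cycles. The only difference is that you derive the Poincar\'e bound explicitly by telescoping, where the paper simply invokes it, and you thereby get the marginally sharper constant $4(L-1)^2$.
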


\begin{proof}
We first prove the sector condition for each
$\mathcal{L}_{N,\boldsymbol{x}}$.  Fix $\bs{x}\in\Xi_{N}$. By
definition, 
\begin{equation*}
\langle f, \mathcal{L}_{N,\boldsymbol{x}}g\rangle _{\mu_{N}}
\;=\;
\frac{1}{Z_N} \, e^{-N\overline{F}_{N}(\boldsymbol{x})} 
\sum_{i=0}^{L-1}f(\boldsymbol{x}+\boldsymbol{z}_{i}^{N})
\left[g(\boldsymbol{x}+\boldsymbol{z}_{i+1}^{N})-
g(\boldsymbol{x}+\boldsymbol{z}_{i}^{N})\right]\;.
\end{equation*}
We may rewrite the previous sum as
\begin{equation*}
\frac{1}{Z_N} \, e^{-N\overline{F}_{N}(\boldsymbol{x})}
\sum_{i=0}^{L-1} \Big[ f(\boldsymbol{x}+\boldsymbol{z}_{i}^{N})
-\frac{1}{L}\sum_{j=1}^{L-1}f(\boldsymbol{x}+\boldsymbol{z}_{j}^{N})\Big]
\left[g(\boldsymbol{x}+\boldsymbol{z}_{i+1}^{N})-
g(\boldsymbol{x}+\boldsymbol{z}_{i}^{N})\right]\;.
\end{equation*}
By the Cauchy-Schwarz inequality and the discrete Poincar\'{e}
inequality we obtain that
\begin{equation*}
\left\langle f,-\mathcal{L}_{N,\boldsymbol{x}}\,g\right\rangle
_{\mu_{N}}^{2} 
\;\le\; 4\, L^{2}\,\mathcal{D}_{N,\boldsymbol{x}}(f)
\, \mathcal{D}_{N,\boldsymbol{x}}(g)\;.
\end{equation*}
The statement of the lemma follows from this estimate and from Schwarz
inequality. 
\end{proof}

\smallskip\noindent{\bf Flows.} Fix a point $\boldsymbol{z}$ in
$\widehat{\Xi}_{N}$. Denote by
$c_{\boldsymbol{z}}(\boldsymbol{z}+\boldsymbol{z}_{i}^{N},
\boldsymbol{z}+\boldsymbol{z}_{i+1}^{N})$, $0\le i<L$, the conductance
of the edge $(\boldsymbol{z}+\boldsymbol{z}_{i}^{N},
\boldsymbol{z}+\boldsymbol{z}_{i+1}^{N})$ induced by the cycle
dynamics $\mc{L}_{N,\bs{z}}$ on $\gamma_{\bs{z}}^{N}$:
\begin{equation}
\label{eq:3}
 c_{\boldsymbol{z}}(\boldsymbol{z}+\boldsymbol{z}_{i}^{N},
\boldsymbol{z}+\boldsymbol{z}_{i+1}^{N})\;=\;\mu_{N}
(\boldsymbol{z}+\boldsymbol{z}_{i}^{N})\,
R_{i}^{N}(\boldsymbol{z})\;=\;Z_{N}^{-1}\,
e^{-N\overline{F}_{N}(\boldsymbol{z})}\;.  
\end{equation}
Note that the conductance is constant over the cycle
$\gamma_{\bs{z}}^N$. We extend the definition of the conductance to
the other edges by setting it to be $0$:
$c_{\boldsymbol{z}}(\boldsymbol{x},\boldsymbol{y})=0$ if
$(\boldsymbol{x},\boldsymbol{y})\not
=(\boldsymbol{z}+\boldsymbol{z}_{i}^{N},
\boldsymbol{z}+\boldsymbol{z}_{i+1}^{N})$ for some $0\le
i<L$. Finally, the conductance $c(\boldsymbol{x},\boldsymbol{y})$,
$\boldsymbol{x}$, $\boldsymbol{y}\in\Xi_{N}$, is defined by
\begin{equation}
\label{capacity}
c(\boldsymbol{x},\boldsymbol{y})\;=\;
\sum_{\boldsymbol{z}\in\widehat{\Xi}_{N}}
c_{\boldsymbol{z}}(\boldsymbol{x},\boldsymbol{y})\;.
\end{equation}
The symmetric conductance $c^{s}(\boldsymbol{x},\boldsymbol{y})$ is
defined by
\begin{equation*}
c^{s}(\boldsymbol{x},\boldsymbol{y}) \;=\; 
\frac{1}{2}\,\{c(\boldsymbol{x},\boldsymbol{y})
\;+\; c(\boldsymbol{y},\boldsymbol{x})\}\;.
\end{equation*}
Note here that $c(\bs x , \bs y)=0$ if $\bs y - \bs x\not\in \{\bs
z^N_{j+1} -\bs z^N_j : 0\le j < L\}$.

Let $E_{N}$ be the set of oriented edges defined by
\begin{equation}
\label{00E}
E_{N} \;=\; \{(\boldsymbol{x},\boldsymbol{y})\in\Xi_{N}\times\Xi_{N}:
\,c_{s}(\boldsymbol{x},\boldsymbol{y})>0\}\;.
\end{equation}
Clearly, $E_{N}$ is the collection of all oriented edges of the cycles
$\gamma_{\boldsymbol{x}}^{N}$, $\boldsymbol{x}\in\widehat{\Xi}_{N}$.  An
anti-symmetric function $\phi:E_{N}\to\mathbb{R}$ is called a
$\textit{flow}$. The $\textit{divergence}$ of a flow $\phi$ at
$\boldsymbol{x}\in\Xi_{N}$ is defined as
\begin{equation}
\label{08}
(\mbox{div}\,\phi)(\boldsymbol{x}) \;=\;
\sum_{\boldsymbol{y}:(\boldsymbol{x},\boldsymbol{y})\in E_{N}}
\phi (\boldsymbol{x},\boldsymbol{y})\;,
\end{equation}
while its divergence on a set $\mc{A}\subset\Xi_N$ is given by
\begin{equation*}
(\text{div }\phi)(\mc{A}) \;=\; \sum_{\boldsymbol{x}\in \mc{A}}
(\text{div}\,\phi)(\boldsymbol{x})\;. 
\end{equation*}
The flow $\phi$ is said to be \textit{divergence-free at}
$\boldsymbol{x}$ if $(\mbox{div}\,\phi)(\boldsymbol{x})=0$.

Denote by $\mathcal{F}_{N}$ the set of flows endowed with the scalar
product given by
\begin{equation*}
\left\langle \phi,\psi\right\rangle _{\mathcal{F}_{N}} \;=\; 
\frac{1}{2}\sum_{(\boldsymbol{x},\boldsymbol{y})\in E_{N}}
\frac{\phi(\boldsymbol{x},\boldsymbol{y})\,
\psi(\boldsymbol{x},\boldsymbol{y})}
{c^{s}(\boldsymbol{x},\boldsymbol{y})}\;,\quad
\text{and let} \quad \left\Vert \phi\right\Vert _{\mathcal{F}_{N}}^{2} 
\;=\; \left\langle \phi,\phi\right\rangle _{\mathcal{F}_{N}}\;. 
\end{equation*}
From now on, we omit $\mc{F}_N$ from the notation above and we write
$\langle \phi,\psi\rangle$, $\Vert \phi\Vert$ for $\langle
\phi,\psi\rangle _{\mathcal{F}_{N}}$, $\Vert \phi\Vert
_{\mathcal{F}_{N}}$, respectively.

\smallskip\noindent{\bf Dirichlet and Thomson Principles.}
For a function $f:\Xi_{N}\rightarrow\mathbb{R}$, define the flows
$\Phi_{f}$, $\Phi_{f}^{*}$ and $\Psi_{f}$ by
\begin{equation}
\label{21} 
\begin{aligned}
& \Phi_{f}(\boldsymbol{x},\boldsymbol{y}) \;=\; 
f(\boldsymbol{x})\, c(\boldsymbol{x},\boldsymbol{y}) \,-\,
f(\boldsymbol{y})\, c(\boldsymbol{y},\boldsymbol{x})\;, \\
& \quad \Phi_{f}^{*}(\boldsymbol{x},\boldsymbol{y}) \;=\; 
f(\boldsymbol{x})\, c(\boldsymbol{y},\boldsymbol{x}) \,-\,
f(\boldsymbol{y})\, c(\boldsymbol{x},\boldsymbol{y})\;,\\
& \qquad \Psi_{f}(\boldsymbol{x},\boldsymbol{y}) \;=\; 
c^{s}(\boldsymbol{x},\boldsymbol{y})\, (f(\boldsymbol{x})-f(\boldsymbol{y}))\;.  
\end{aligned}
\end{equation}
It follows from the definition of these flows that for all functions
$f:\Xi_{N}\to\bb R$, $g:\Xi_{N}\to\bb R$,
\begin{equation}
\label{22}
\langle \Psi_{f} , \Phi_g \rangle  \;=\;
\< (- \mc L_N) f , g \>_{\mu_N} \;, \qquad 
\langle \Psi_{f} , \Phi^*_g \rangle \;=\;
\< (- \mc L^*_N) f , g \>_{\mu_N}\;. 
\end{equation}

Fix two disjoint subsets $\mc{A},\,\mc{B}$ of $\Xi_{N}$ and two real numbers
$a$, $b$. Denote by $\mathfrak{C}_{a,b}(\mc{A},\mc{B})$ the set of functions
which are equal to $a$ on $\mc{A}$ and $b$ on $\mc{B}$:
\begin{equation}
\label{eq:C_ab}
\mathfrak{C}_{a,b}(\mc{A},\mc{B}) \;=\; \left\{ f:\Xi_{N}\rightarrow\mathbb{R}:
f|_{\mc{A}}\equiv a,\,f|_{\mc{B}}\equiv b\right\} \;,
\end{equation}
and let $\mathfrak{U}_{a}(\mc{A},\mc{B})$ be the set of flows from $\mc{A}$ to $\mc{B}$ with strength $a$:
\begin{equation}
\label{eq:U_ab}
\begin{aligned}
\mathfrak{U}_{a}(\mc{A},\mc{B})\;=\;
\bigl\{\phi\in\mathcal{F}_{N}:(\mbox{div }\phi)(\mc{A})
=a & =-(\mbox{div }\phi)(\mc{B})\,,\\
 & (\mbox{div}\,\phi)(\boldsymbol{x})=0\,,\,
\boldsymbol{x}\in(\mc{A}\cup \mc{B})^{c}\bigl\}\;.
\end{aligned}
\end{equation}
In particular, $\mathfrak{U}_{1}(\mc{A},\mc{B})$ is the set of unitary
flows from $\mc{A}$ to $\mc{B}$.

Recall from \eqref{32} that we represent by $H_\mc{A}$ the hitting
time of a subset $\mc{A}$ of $\Xi_N$.  Denote by
$V_{\mc{A},\mc{B}}:\Xi_{N}\rightarrow[0,\,1]$ the \textit{equilibrium
  potential} between two disjoint subsets $\mc{A}$, $\mc{B}$ of
$\Xi_N$:
\begin{equation*}
V_{\mc{A},\mc{B}}(\boldsymbol{x}) \;=\; 
\mathbb{P}_{\boldsymbol{x}}^{N}[H_{\mc{A}}<H_{\mc{B}}]\;.
\end{equation*}
Let $V_{\mc{A},\mc{B}}^{*}$ be the equilibrium potential corresponding
to the adjoint dynamics. The proof of next theorem can be found in
\cite{GL}.

\begin{theorem}[Dirichlet principle]
\label{s07}
For any disjoint and non-empty subsets $\mc{A}$, $\mc{B}$ of $\Xi_{N}$,
\begin{equation*}
\mbox{\textup{cap}}_{N}(\mc{A},\mc{B}) \;=\; 
\inf_{f\in\mathfrak{C}_{1,0}(\mc{A},\mc{B})}\,
\inf_{\phi\in\mathfrak{U}_{0}(\mc{A},\mc{B})}
\left\Vert \Phi_{f}-\phi\right\Vert ^{2}\;.
\end{equation*}
Furthermore, the unique optimizers of the variational problem 
are given by 
\begin{equation*}
f_{0}\,=\,\frac{1}{2}(V_{\mc{A},\mc{B}}+V_{\mc{A},\mc{B}}^{*})\;\;\mbox{ and }\;\;
\phi_{0}\,=\,\frac{1}{2}(\Phi_{V_{\mc{A},\mc{B}}^{*}}-\Phi_{V_{\mc{A},\mc{B}}}^{*})\;.
\end{equation*}
\end{theorem}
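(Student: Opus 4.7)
The approach is to recognize this as a convex optimization over a linear space of competitors and to establish an orthogonality relation pinning down the minimizer. Beyond (\ref{21}) and (\ref{22}), the two structural inputs needed are: (i) the divergence identities $(\operatorname{div}\Phi_{f})(\bs x)=\mu_N(\bs x)(-\mc L_N^{\ast} f)(\bs x)$ and $(\operatorname{div}\Phi_{f}^{\ast})(\bs x)=\mu_N(\bs x)(-\mc L_N f)(\bs x)$, both immediate from the stationarity identity $\sum_{\bs y}\mu_N(\bs y)R_N(\bs y,\bs x)=\mu_N(\bs x)\lambda_N(\bs x)$; and (ii) the algebraic identity $\Phi_f+\Phi_f^{\ast}=2\Psi_f$, which follows from (\ref{21}). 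Writing $V=V_{\mc A,\mc B}$, $V^{\ast}=V_{\mc A,\mc B}^{\ast}$, I would use the standard identities $\text{cap}_N(\mc A,\mc B)=\langle -\mc L_N V,V\rangle_{\mu_N}=\langle -\mc L_N^{\ast} V^{\ast},V^{\ast}\rangle_{\mu_N}=\langle -\mc L_N V,V^{\ast}\rangle_{\mu_N}$.

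I would first verify that $(f_0,\phi_0)$ is admissible and that its value matches $\text{cap}_N(\mc A,\mc B)$. Linearity gives $f_0\in\mathfrak C_{1,0}(\mc A,\mc B)$. By (i), the divergence of $\phi_0=\tfrac12(\Phi_{V^{\ast}}-\Phi_V^{\ast})$ equals $\tfrac12\mu_N\bigl[\mc L_N V-\mc L_N^{\ast} V^{\ast}\bigr]$: it vanishes on $(\mc A\cup\mc B)^c$ by harmonicity of $V$ and $V^{\ast}$, while its sums over $\mc A$ (and $\mc B$) cancel because both $V$ and $V^{\ast}$ produce the same capacity. Thus $\phi_0\in\mathfrak U_0$. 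By (ii),
\begin{equation*}
\Phi_{f_0}-\phi_0 \;=\; \tfrac12(\Phi_V+\Phi_{V^{\ast}})-\tfrac12(\Phi_{V^{\ast}}-\Phi_V^{\ast}) \;=\; \tfrac12(\Phi_V+\Phi_V^{\ast}) \;=\; \Psi_V\;,
\end{equation*}
so (\ref{22}) yields $\|\Phi_{f_0}-\phi_0\|^2=\tfrac12\langle -\mc L_N V,V\rangle_{\mu_N}+\tfrac12\langle -\mc L_N^{\ast} V,V\rangle_{\mu_N}=\text{cap}_N(\mc A,\mc B)$.

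For the lower bound, any admissible pair can be written as $f=f_0+g$ with $g\in\mathfrak C_{0,0}(\mc A,\mc B)$ and $\phi=\phi_0+\psi$ with $\psi\in\mathfrak U_0(\mc A,\mc B)$, since these two sets of increments are linear spaces. Expanding,
\begin{equation*}
\|\Phi_f-\phi\|^2 \;=\; \|\Psi_V\|^2 \,+\, \|\Phi_g-\psi\|^2 \,+\, 2\langle\Psi_V,\Phi_g-\psi\rangle\;.
\end{equation*}
The cross term splits into two vanishing pieces. By (\ref{22}), $\langle\Psi_V,\Phi_g\rangle=\langle -\mc L_N V,g\rangle_{\mu_N}=0$ because $\mc L_N V\equiv 0$ on $(\mc A\cup\mc B)^c$ and $g\equiv 0$ on $\mc A\cup\mc B$. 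A summation by parts exploiting antisymmetry of $\psi$ gives $\langle\Psi_V,\psi\rangle=\sum_{\bs x}V(\bs x)(\operatorname{div}\psi)(\bs x)$, which vanishes since $V=1$ on $\mc A$, $V=0$ on $\mc B$, $\operatorname{div}\psi\equiv 0$ off $\mc A\cup\mc B$, and $(\operatorname{div}\psi)(\mc A)=0$. Hence $\|\Phi_f-\phi\|^2\ge\text{cap}_N(\mc A,\mc B)$, with equality forcing $\Phi_g=\psi$; then (i) gives $\operatorname{div}\Phi_g=\mu_N(-\mc L_N^{\ast} g)=0$ on $(\mc A\cup\mc B)^c$, and together with $g|_{\mc A\cup\mc B}=0$ the uniqueness of the adjoint Dirichlet problem yields $g\equiv 0$, whence $\psi=0$, proving uniqueness of the optimizers.

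The main obstacle is the conceptual one: identifying the correct candidate pair so that the algebraic collapse $\Phi_{f_0}-\phi_0=\Psi_V$ occurs, reducing a non-reversible problem to the symmetric flow $\Psi_V$ that pairs cleanly with both $\mathfrak C_{0,0}$ and $\mathfrak U_0$. Once (i) and (ii) are in hand, the rest is a Hilbert-space decomposition argument mirroring the reversible case.
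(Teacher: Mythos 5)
Your proof is correct. Note that the paper itself gives no proof of this theorem, deferring entirely to Gaudilli\`ere--Landim \cite{GL}; your argument --- verifying that the pair $(f_0,\phi_0)$ is admissible with value $\Vert\Psi_{V_{\mc A,\mc B}}\Vert^2=\textup{cap}_N(\mc A,\mc B)$ via the collapse $\Phi_{f_0}-\phi_0=\Psi_{V_{\mc A,\mc B}}$, and then killing the cross terms $\langle\Psi_{V},\Phi_g\rangle$ and $\langle\Psi_{V},\psi\rangle$ by harmonicity and the divergence constraints --- is precisely the standard orthogonal-decomposition proof given there, including the uniqueness step via the adjoint Dirichlet problem.
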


Next theorem is due to Slowik \cite{Slo}.

\begin{theorem}[Thomson principle]
\label{s10}
For any disjoint and non-empty subsets $\mc{A}$, $\mc{B}$ of $\Xi_{N}$,  
\begin{equation*}
\mbox{\textup{cap}}_{N}(\mc{A},\mc{B}) \;=\; 
\sup_{g\in\mathfrak{C}_{0,0}(\mc{A},\mc{B})}\,
\sup_{\psi\in\mathfrak{U}_{1}(\mc{A},\mc{B})}
\frac{1}{\left\Vert \Phi_{g}-\psi\right\Vert ^{2}}\;\cdot
\end{equation*}
Furthermore, the unique optimizers of the variational problem are
given by
\begin{equation*}
g_{0}\,=\,\frac{V_{\mc{A},\mc{B}}^{*}-V_{\mc{A},\mc{B}}}{2\,\mbox{\textup{cap}}_{N}(\mc{A},\mc{B})}
\;\;\mbox{ and }\;\;\psi_{0}\,=\,\frac{\Phi_{V_{\mc{A},\mc{B}}^{*}}+
\Phi_{V_{\mc{A},\mc{B}}}^{*}}{2\,\mbox{\textup{cap}}_{N}(\mc{A},\mc{B})}\;\cdot
\end{equation*}
\end{theorem}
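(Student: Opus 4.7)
The plan is to derive the Thomson principle from the Dirichlet principle (Theorem \ref{s07}) via a single Cauchy--Schwarz bound, using the symmetric flow $\Psi_V$ attached to $V := V_{\mc A,\mc B}$ as the pivot. Writing also $V^* := V^*_{\mc A,\mc B}$, the algebraic identity $\Phi_V + \Phi_V^* = 2\Psi_V$---an immediate consequence of \eqref{21} and the fact that $c^s$ is the symmetric part of $c$---collapses the candidate difference flow to
\begin{equation*}
\Phi_{g_0} - \psi_0 \;=\; \frac{1}{2\,\textup{cap}_N(\mc A,\mc B)}\bigl[(\Phi_{V^*}-\Phi_V) - (\Phi_{V^*}+\Phi_V^*)\bigr] \;=\; -\,\frac{\Psi_V}{\textup{cap}_N(\mc A,\mc B)}\;.
\end{equation*}
Applying \eqref{22} with $f = g = V$, together with $\langle V, -\mc L_N V\rangle_{\mu_N} = \langle V, -\mc L_N^* V\rangle_{\mu_N}$ and $\Psi_V = (\Phi_V + \Phi_V^*)/2$, gives $\|\Psi_V\|^2 = \mc D_N(V)$. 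The same computation applied to the Dirichlet optimizers shows $\Phi_{f_0}-\phi_0 = \Psi_V$, so Theorem \ref{s07} identifies $\mc D_N(V) = \textup{cap}_N(\mc A,\mc B)$, and consequently $\|\Phi_{g_0}-\psi_0\|^2 = 1/\textup{cap}_N(\mc A,\mc B)$.

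Next I would verify feasibility. The condition $g_0 \equiv 0$ on $\mc A\cup\mc B$ is immediate from $V = V^* = 1$ on $\mc A$ and $V = V^* = 0$ on $\mc B$. For $\psi_0 \in \mathfrak U_1(\mc A,\mc B)$, I would derive from stationarity and \eqref{08} the identities $(\mathrm{div}\,\Phi_f)(\bs x) = -\mu_N(\bs x)(\mc L_N^* f)(\bs x)$ and $(\mathrm{div}\,\Phi_f^*)(\bs x) = -\mu_N(\bs x)(\mc L_N f)(\bs x)$; since $\mc L_N V = 0 = \mc L_N^* V^*$ off $\mc A\cup\mc B$, both $\Phi_{V^*}$ and $\Phi_V^*$ are divergence-free there, and summing over $\mc A$---recognizing the results as $\textup{cap}_N(\mc A,\mc B)$ and its time-reversed counterpart (which coincide)---yields $(\mathrm{div}\,\psi_0)(\mc A) = 1$. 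The crux of the argument is then the universal pairing
\begin{equation*}
\langle \Psi_V,\, \Phi_g - \psi\rangle \;=\; -1 \qquad \text{for every admissible pair}\ (g,\psi)\;.
\end{equation*}
Here \eqref{22} gives $\langle \Psi_V, \Phi_g\rangle = \langle -\mc L_N V, g\rangle_{\mu_N}$, which vanishes because $-\mc L_N V$ is supported on $\mc A\cup\mc B$ while $g$ vanishes there; and a short discrete integration by parts using antisymmetry of $\psi$ and the symmetry of $E_N$ gives $\langle \Psi_V, \psi\rangle = \sum_{\bs x} V(\bs x)(\mathrm{div}\,\psi)(\bs x) = 1$.

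Cauchy--Schwarz now closes the argument:
\begin{equation*}
1 \;=\; |\langle \Psi_V, \Phi_g-\psi\rangle|^2 \;\le\; \|\Psi_V\|^2\,\|\Phi_g-\psi\|^2 \;=\; \textup{cap}_N(\mc A,\mc B)\,\|\Phi_g-\psi\|^2\;,
\end{equation*}
so $\|\Phi_g-\psi\|^2 \ge 1/\textup{cap}_N(\mc A,\mc B)$, with equality forcing $\Phi_g-\psi$ proportional to $\Psi_V$ and the pairing value fixing the constant as $-1/\textup{cap}_N(\mc A,\mc B)$. For uniqueness of $(g_0,\psi_0)$ itself, if two admissible pairs yield the same difference flow, their componentwise differences satisfy $\Phi_{g_1-g_2} = \psi_1-\psi_2$ with $g_1-g_2 \in \mathfrak C_{0,0}(\mc A,\mc B)$ and $\psi_1-\psi_2 \in \mathfrak U_0(\mc A,\mc B)$; the divergence identity then forces $\mc L_N^*(g_1-g_2) \equiv 0$ everywhere, and the zero boundary data on $\mc A\cup\mc B$ give $g_1 = g_2$, hence $\psi_1 = \psi_2$. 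The main conceptual step is recognizing $\Psi_V$ as the correct pivot---once that is in place, the only genuinely non-obvious algebraic ingredient is the discrete integration-by-parts formula $\langle \Psi_f,\psi\rangle = \sum_{\bs x} f(\bs x)(\mathrm{div}\,\psi)(\bs x)$, and everything else is bookkeeping around the Dirichlet optimizer.
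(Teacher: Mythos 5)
Your proof is correct. Note that the paper itself offers no proof of this theorem --- it is quoted from Slowik \cite{Slo} --- so there is nothing internal to compare against; your derivation is essentially the standard duality argument that underlies that reference. All the key steps check out: the identity $\Phi_f+\Phi^*_f=2\Psi_f$, the computation $\Phi_{g_0}-\psi_0=-\Psi_{V}/\textup{cap}_N(\mc A,\mc B)$, the divergence formulas $(\textup{div}\,\Phi_f)=-\mu_N\,\mc L^*_Nf$ and $(\textup{div}\,\Phi^*_f)=-\mu_N\,\mc L_Nf$ (which, with $\textup{cap}_N=\textup{cap}^*_N$, give $\psi_0\in\mathfrak U_1$), the universal pairing $\langle\Psi_V,\Phi_g-\psi\rangle=-1$ via \eqref{22} and summation by parts, and the identification $\Vert\Psi_V\Vert^2=\mc D_N(V)=\textup{cap}_N(\mc A,\mc B)$ (which you could also get directly from $\mc D_N(V)=-\sum_{\bs a\in\mc A}\mu_N(\bs a)(\mc L_NV)(\bs a)$, without invoking the optimizer clause of Theorem \ref{s07}). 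The only imprecision is in the uniqueness step: the equality $\Phi_{g_1-g_2}=\psi_1-\psi_2$ forces $\mc L^*_N(g_1-g_2)=0$ only on $(\mc A\cup\mc B)^c$, since flows in $\mathfrak U_0$ need not be pointwise divergence-free on $\mc A\cup\mc B$; but harmonicity on the complement together with the zero boundary data on $\mc A\cup\mc B$ and irreducibility already yields $g_1=g_2$ by the maximum principle, so the conclusion stands.
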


\smallskip\noindent{\bf Comments on the Proof.}  In view of Theorems
\ref{s07} and \ref{s10}, the proof of Theorem \ref{s133} consists in
finding functions $f_N$, $g_N$ and flows $\phi_N$, $\psi_N$ satisfying
the constraints of the variational problems of Theorems \ref{s07} and
\ref{s10} with $\mc{A}=\mc{E}_N (A),\,\mc{B}=\mc{E}_N (B)$, and such that
\begin{align*}
& \frac{Z_{N}}{(2\pi N)^{\frac{d}{2}-1}}e^{NH}
\left\Vert \Phi_{f_{N}}-\phi_{N}\right\Vert ^{2} \;\le\;
\big[1+o_{N}(1) \big] \textup{cap}_Y(A,B)\;, \\
&\quad \frac{Z_{N}}{(2\pi N)^{\frac{d}{2}-1}}\, e^{NH}
\frac{1}{\left\Vert \Phi_{g_{N}}-\psi_{N}\right\Vert ^{2}}
\;\ge\; \big[1+o_{N}(1) \big] \textup{cap}_Y(A,B)\;.
\end{align*}
The crucial point of the argument is the definition of these functions
and flows close to the saddle points where the equilibrium potential
between two wells exhibits a non-trivial behavior, changing abruptly
from $0$ to $1$.

The main difficulty of the proof of Theorem \ref{mr1} consist in
computing the mean jump rates. While in the reversible case, the mean
jump rates are expressed in terms of capacities, in the non-reversible
setting they appear as the value on a metastable set of the
equilibrium potential between two other metastable sets. To estimate
this value is delicate because, in contrast with capacities, there is
no variational formula for the value at one point of an equilibrium
potential. 

Theorem \ref{ek} is a straightforward consequence of Theorem
\ref{s133} and of the fact that the equilibrium potential is close to
a constant on each well.

\smallskip\noindent{\bf Summary of the article.} In Section
\ref{sec3}, we construct a function in a mesoscopic neighborhood of a
saddle point between two local minima of the potential $F$ which
approximates in this neighborhood the equilibrium potential between
the local minima. Denote this function by $V_N$ and the local minima
by $\bs m_1$, $\bs m_2$. In Section \ref{sec:flow}, we present a flow,
denoted here by $\Phi_N$, which approximates the flow $\Phi^*_{V_N}$
and which is divergence-free on $\{\bs m_1, \bs m_2\}^c$. The
functions $V_N$ and the flows $\Phi_N$ are the building blocks with
which we produce, in the next sections, the approximating functions
and flows described above in the summary of the proof. In Section
\ref{sec6}, we use these functions and flows to prove Theorem
\ref{s133} in the case where $A$ is a singleton and $B=S\setminus A$.
In Section \ref{sec7}, we extend the analysis of the previous section
to the general case and prove Theorem \ref{s133}. One of the steps
 consists in determining the value of the equilibrium potential
between $\mc E_N(A)$ and $\mc E_N(B)$ in the other wells $\mc E^i_N$,
$i\not\in A\cup B$. In Section \ref{sec8}, we compute the asymptotic
mean jump rates which describe the scaling limit of the random walk
$X_N(t)$. As we stressed above, this analysis requires the estimation
of the value at $\mc E^i_N$ of the equilibrium potential between $\mc
E_N(A)$ and $\mc E_N(B)$. We present in that section a general
strategy to obtain a sharp estimate for this quantity. In Section
\ref{sec9}, we prove the metastable behavior of $X_N(t)$ in all
time-scales $\beta^{(m)}_N$ by showing that all conditions required in
the main result of \cite{BL2,Lan1} are fulfilled. Finally, in the
appendix, we present a generalization of Sylvester's law of inertia.

\section{The equilibrium potential around saddle points} 
\label{sec3}

In this section, we introduce a function $V_N$ which approximates the
equilibrium potential $V_{\mathcal{E}_N^i,\mathcal{E}_N^j}$ around a
saddle point $\bs \sigma \in \mf S_{i,j}$. To fix ideas, let
$(i,j)=(1,2)$ and assume that $\boldsymbol{\sigma}$ is the origin
$\boldsymbol{0}$, so that $H=F(\bs 0)$ is the height of the saddle
point $\bs 0$.  Throughout the remaining part of this article, $C$ and
$C_0$ represent constants which do not depend on $N$ and whose value
may change from line to line.

\subsection{The geometry around the saddle point}
\label{sub:geometry}

Recall from \eqref{04} the definition of the matrix $b$ and from
\eqref{45b} that we represent by $\bb A^\dagger$, $\bs w^\dagger$ the
transpose of the matrix $\bb A$, vector $\bs w$, respectively.  The
Jacobian of $b$ at the origin, denoted by $\bb M$, is given by
\begin{equation}
\label{36} 
\mathbb{M}=\mathbb{A}\mathbb{H} \quad\text{where}\quad 
\mathbb{A} \;=\;
\sum_{i=0}^{L-1}(\boldsymbol{z}_{i}-\boldsymbol{z}_{i+1})\, 
\boldsymbol{z}_{i}^{\dagger}\quad \mbox{and}\quad
\mathbb{H}=(\mbox{Hess}\, F) (\boldsymbol{0})\,.
\end{equation}
Remark that 
\begin{equation*}
\frac{1}{2}(\mathbb{A}+\mathbb{A}^{\dagger})\;=\; 
\frac 12 \sum_{j=0}^{L-1}(\boldsymbol{z}_{i}-\boldsymbol{z}_{i+1})
\, (\boldsymbol{z}_{i}-\boldsymbol{z}_{i+1})^{\dagger}
\end{equation*}
is a positive-definite matrix because, by assumption, the vectors
$\bs{z}_{i+1}-\bs{z}_i$ generate $\bb Z^d$. It follows from this last
observation, from the assumption that $\bb H$ has only one negative
eigenvalue and from Lemma \ref{lem: property of M} that $\mathbb{M}$
has only one negative eigenvalue. Denote this eigenvalue by $-\mu$ and
denote by $\boldsymbol{v}$ the eigenvector of $\bb M^\dagger = \bb H
\bb A^\dagger$ associated to the eigenvalue $-\mu$.

Let
\begin{equation}
\label{06}
\alpha \;=\; \frac{\mu}{\boldsymbol{v}^{\dagger}\mathbb{A} \boldsymbol{v}}  
\end{equation}
and note that $\alpha>0$ because $\mathbb{A}$ is a positive-definite
matrix. Moreover,
\begin{equation}
\label{lem:pre_vH^-1v}
\boldsymbol{v}^{\dagger}\mathbb{H}^{-1}\boldsymbol{v} \;=\; - \frac
1\alpha
\end{equation}
because
\begin{equation*}
\boldsymbol{v}^{\dagger}\mathbb{H}^{-1}\boldsymbol{v} \;=\; 
\boldsymbol{v}^{\dagger}\mathbb{A}^{\dagger} 
(\mathbb{H}\mathbb{A}^{\dagger})^{-1}
\boldsymbol{v}\,=\,\boldsymbol{v}^{\dagger}\mathbb{A}^{\dagger}
\left(-\frac{1}{\mu}\boldsymbol{v}\right)\,=\,-\frac{1}{\alpha}\;\cdot 
\end{equation*}

Denote by $-\lambda_{1}, \lambda_{2}, \dots, \lambda_{d}$ the
eigenvalues of $\mathbb{H}$, and by $\boldsymbol{u}_{1},
\boldsymbol{u}_{2}, \dots, \boldsymbol{u}_{d}$ the corresponding
eigenvectors, where $-\lambda_{1}$ is the unique negative eigenvalue
of $\mathbb{H}$.  Denote by $v_1, \dots, v_d$ the coordinates of $\bs
v$ in the basis $\bs u_1, \dots, \bs u_d$:
\begin{equation}
\label{eq: rel u v}
\boldsymbol{v} \;=\; \sum_{i=1}^{d}v_{i}\, \boldsymbol{u}_{i}\;.
\end{equation}
With this notation, \eqref{lem:pre_vH^-1v} can be rewritten as
\begin{equation}
\label{02}
\frac{v_{1}^{2}}{\lambda_{1}} \;=\;
\sum_{k=2}^{d}\frac{v_{k}^{2}}{\lambda_{k}}
\;+\; \frac{1}{\alpha}\;\cdot
\end{equation}
In particular, as $\alpha>0$, $v_1\not =0$. This proves that the
vectors $\bs v$ and $\bs u_1$ are not orthogonal. Assume, without loss
of generality, that $v_{1} = \bs v \cdot \bs u_1 := \bs v^\dagger \bs
u_1 >0$.

\begin{lemma}
\label{s17}
The $d\times d$ matrix
$\mathbb{H}+\alpha\boldsymbol{v}\boldsymbol{v}^{\dagger}$ is
non-negative definite and
$\det(\mathbb{H}+\alpha\boldsymbol{v}\boldsymbol{v}^{\dagger})=0$.
The matrix $\mathbb{H}+2\alpha\boldsymbol{v}\boldsymbol{v}^{\dagger}$
is positive definite and
$\det(\mathbb{H}+2\alpha\boldsymbol{v}\boldsymbol{v}^{\dagger})=-\det(\mathbb{H})$.
\end{lemma}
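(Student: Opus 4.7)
The plan is to base everything on the matrix determinant lemma (Sylvester's identity) combined with a simple monotonicity argument for eigenvalues under a rank-one perturbation. Since $\mathbb{H}$ has spectrum $\{-\lambda_{1},\lambda_{2},\dots,\lambda_{d}\}$ with $\lambda_{k}>0$ for $k\ge 2$, it is invertible and $\det \mathbb{H}=-\lambda_{1}\prod_{k\ge 2}\lambda_{k}<0$. The matrix determinant lemma then gives, for every $c\in\mathbb{R}$,
\begin{equation*}
\det\bigl(\mathbb{H}+c\,\boldsymbol{v}\boldsymbol{v}^{\dagger}\bigr)
\;=\;\det(\mathbb{H})\,\bigl(1+c\,\boldsymbol{v}^{\dagger}\mathbb{H}^{-1}\boldsymbol{v}\bigr)
\;=\;\det(\mathbb{H})\,\Bigl(1-\tfrac{c}{\alpha}\Bigr),
\end{equation*}
where the second equality uses \eqref{lem:pre_vH^-1v}. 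Setting $c=\alpha$ yields $\det(\mathbb{H}+\alpha\,\boldsymbol{v}\boldsymbol{v}^{\dagger})=0$ and setting $c=2\alpha$ yields $\det(\mathbb{H}+2\alpha\,\boldsymbol{v}\boldsymbol{v}^{\dagger})=-\det(\mathbb{H})>0$, settling both determinantal assertions at once.

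For the definiteness statements I would then analyze the signatures of the one-parameter family $M(c):=\mathbb{H}+c\,\boldsymbol{v}\boldsymbol{v}^{\dagger}$, $c\ge 0$. Since $\boldsymbol{v}\boldsymbol{v}^{\dagger}\succeq 0$, the Courant--Fischer min-max characterization implies that each eigenvalue $\lambda_{i}(c)$ of $M(c)$, in increasing order, is a continuous non-decreasing function of $c$. At $c=0$ we have $\lambda_{1}(0)=-\lambda_{1}<0$ while $\lambda_{2}(0),\dots,\lambda_{d}(0)>0$, so by monotonicity these latter eigenvalues stay strictly positive for all $c\ge 0$. The determinant computation forces $\prod_{i}\lambda_{i}(\alpha)=0$, so some eigenvalue vanishes at $c=\alpha$; since $\lambda_{2},\dots,\lambda_{d}$ cannot, this must be $\lambda_{1}$, and hence $\lambda_{1}(\alpha)=0$. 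This yields non-negative definiteness of $\mathbb{H}+\alpha\,\boldsymbol{v}\boldsymbol{v}^{\dagger}$, with a simple zero eigenvalue. For $c=2\alpha>\alpha$, monotonicity gives $\lambda_{1}(2\alpha)\ge 0$, and because $\det M(2\alpha)\neq 0$ no eigenvalue can vanish, so $\lambda_{1}(2\alpha)>0$; hence $\mathbb{H}+2\alpha\,\boldsymbol{v}\boldsymbol{v}^{\dagger}$ is positive definite.

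I do not expect a real obstacle here: the only nontrivial input is the identity \eqref{lem:pre_vH^-1v}, which is already established, and the rest is standard linear algebra. The subtlest point is identifying \emph{which} eigenvalue reaches zero at $c=\alpha$, but this is forced by monotonicity combined with the fact that $\mathbb{H}$ has exactly one negative eigenvalue — precisely the hypothesis on the Hessian at a saddle point.
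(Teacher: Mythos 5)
Your proof is correct. The determinantal part coincides with the paper's: both apply the rank-one update formula $\det(\mathbb{H}+c\boldsymbol{v}\boldsymbol{v}^{\dagger})=(1+c\,\boldsymbol{v}^{\dagger}\mathbb{H}^{-1}\boldsymbol{v})\det\mathbb{H}$ together with \eqref{lem:pre_vH^-1v}. Where you genuinely diverge is in the definiteness claims. The paper establishes non-negative definiteness of $\mathbb{H}+\alpha\boldsymbol{v}\boldsymbol{v}^{\dagger}$ by a direct analysis of the quadratic form: it splits into the cases $\sum_{k\ge 2}v_k^2=0$ and $\sum_{k\ge 2}v_k^2>0$, and in the second case minimizes over the coordinate $x_1$ and invokes the Cauchy--Schwarz inequality together with \eqref{02}; positive definiteness of $\mathbb{H}+2\alpha\boldsymbol{v}\boldsymbol{v}^{\dagger}$ then follows by adding the PSD matrix $\alpha\boldsymbol{v}\boldsymbol{v}^{\dagger}$ and ruling out a zero eigenvalue via the determinant. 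You instead run the whole signature analysis through Weyl monotonicity of the ordered eigenvalues of $M(c)=\mathbb{H}+c\boldsymbol{v}\boldsymbol{v}^{\dagger}$: the top $d-1$ eigenvalues stay strictly positive for all $c\ge 0$, the determinant identity forces the bottom eigenvalue to vanish exactly at $c=\alpha$ and to be strictly positive at $c=2\alpha$. Your route is shorter, avoids the case analysis and the Cauchy--Schwarz step entirely, and as a by-product identifies the zero eigenvalue at $c=\alpha$ as simple (which the paper proves separately in Lemma \ref{s21}); the paper's computation is more elementary and self-contained, and the explicit minimization over $x_1$ is reused in spirit elsewhere (e.g.\ in the proof of Lemma \ref{s14}). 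Both arguments rest only on \eqref{lem:pre_vH^-1v}, $\alpha>0$, and the signature hypothesis on $\mathbb{H}$, so there is no gap.
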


\begin{proof}
We first prove that
$\mathbb{H}+\alpha\boldsymbol{v}\boldsymbol{v}^{\dagger}$ is
non-negative definite:
\begin{equation*}
\boldsymbol{x}^{\dagger}\mathbb{H}\boldsymbol{x}
+\alpha(\boldsymbol{x}\cdot\boldsymbol{v})^{2} \;
\ge \; 0\;, \quad \forall\, \boldsymbol{x}\in\mathbb{R}^{d}\;.
\end{equation*}

We consider two cases. Suppose first that $\sum_{2\le k\le
  d}v^2_{k}=0$. Under this hypothesis, by \eqref{02}, $v^2_1 =
\lambda_1/\alpha$ and, writing $\boldsymbol{x}$ as
$\sum_{i=1}^{d}x_{i}\boldsymbol{u}_{i}$, the previous sum becomes
\begin{equation*}
-\lambda_{1}x_{1}^{2} \;+\;
\sum_{k=2}^{d}\lambda_{k}x_{k}^{2} \;+\;
\lambda_1 x^2 _{1} \;\ge\; 0 \; .
\end{equation*}

Suppose, on the other hand, that $\sum_{2\le k\le d}v^2_{k}>0$.
Writing again $\boldsymbol{x}$ as
$\sum_{i=1}^{d}x_{i}\boldsymbol{u}_{i}$, we obtain that
\begin{equation*}
\boldsymbol{x}^{\dagger}\mathbb{H}\boldsymbol{x}
+\alpha(\boldsymbol{x}\cdot\boldsymbol{v})^{2} \;=\;
-\lambda_{1}x_{1}^{2} \;+\;
\sum_{k=2}^{d}\lambda_{k}x_{k}^{2} \;+\;
\alpha\Big(\sum_{i=1}^{d}x_{i}v_{i}\Big)^{2} \; .
\end{equation*}
By \eqref{02}, this expression is convex in $x_1$.
By optimizing this sum over $x_{1}$ and by using \eqref{02}, we show
that it is bounded below by
\begin{equation*}
\sum_{k=2}^{d}\lambda_{k}x_{k}^{2}\;-\;
\frac{\left(\sum_{k=2}^{d}x_{k}v_{k}\right)^{2}}
{\sum_{k=2}^{d} (v_{k}^{2}/\lambda_{k})}\;\cdot
\end{equation*}
The denominator is well-defined by assumption.  By the Cauchy-Schwarz
inequality, this difference is non-negative, which proves the first
assertion of the lemma.

We turn to the determinants. Recall the well-known formula:
\begin{equation}
\label{eq: rank one upd}
\det(\mathbb{B}+\boldsymbol{u}\boldsymbol{w}^{\dagger})
\;=\; (1+\boldsymbol{w}^{\dagger}\mathbb{B}^{-1}\boldsymbol{u})
\, \det\mathbb{B}\;,
\end{equation}
where $\mathbb{B}$ is any $d\times d$ non-singular matrix and
$\boldsymbol{u},\,\boldsymbol{w}$ are any $d$-dimensional
vectors. It follows from this identity that for any $r\in\mathbb{R}$ 
\begin{equation*}
\det(\mathbb{H}+r\alpha\boldsymbol{v}\boldsymbol{v}^{\dagger}) \;=\; 
(1+r\alpha\boldsymbol{v}^{\dagger}\mathbb{H}^{-1}\boldsymbol{v})\,
\det\mathbb{H}\,=\,(1-r)\, \det\mathbb{H}\;,
\end{equation*}
where the last equality is due to \eqref{lem:pre_vH^-1v}. This proves
that
$\det(\mathbb{H}+\alpha\boldsymbol{v}\boldsymbol{v}^{\dagger})=0$,
$\det(\mathbb{H}+2\alpha\boldsymbol{v}\boldsymbol{v}^{\dagger})=-\det\mathbb{H}$.

Finally, the positive definiteness of
$\mathbb{H}+2\alpha\boldsymbol{v}\boldsymbol{v}^{\dagger}$ follows by
the non-negative definiteness of $\mathbb{H}+\alpha \bs v\bs
v^{\dagger}$, $\alpha \bs v\bs v^{\dagger}$ and from the fact that
$\det(\mathbb{H} + 2\alpha\boldsymbol{v} \boldsymbol{v}^{\dagger}) =
-\det\mathbb{H}>0$.
\end{proof}

For vectors $\bs w_1, \dots, \bs w_k$ in $\bb R^d$, denote by $\<\bs
w_1, \dots, \bs w_k\>$ the linear space generated by these vectors.

\begin{lemma}
\label{s21}
The null space of $\mathbb{H}+ \alpha\boldsymbol{v}
\boldsymbol{v}^{\dagger}$ is one-dimensional and is given by
$\left\langle \mathbb{H}^{-1}\boldsymbol{v}\right\rangle $.
\end{lemma}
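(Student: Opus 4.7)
The plan is to verify both inclusions directly, using only \eqref{lem:pre_vH^-1v} and the fact that $\mathbb{H}$ is invertible (since its eigenvalues $-\lambda_{1},\lambda_{2},\dots,\lambda_{d}$ are all nonzero by the saddle-point hypothesis). First I would check that $\mathbb{H}^{-1}\boldsymbol{v}$ lies in the kernel by the simple computation
\begin{equation*}
(\mathbb{H}+\alpha\boldsymbol{v}\boldsymbol{v}^{\dagger})\,\mathbb{H}^{-1}\boldsymbol{v}
\;=\; \boldsymbol{v} \,+\, \alpha\, (\boldsymbol{v}^{\dagger}\mathbb{H}^{-1}\boldsymbol{v})\,\boldsymbol{v}
\;=\; \boldsymbol{v} \,-\, \boldsymbol{v} \;=\; \boldsymbol{0}\;,
\end{equation*}
where the second equality is precisely \eqref{lem:pre_vH^-1v}. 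Since $\boldsymbol{v}\neq \boldsymbol{0}$ and $\mathbb{H}$ is invertible, $\mathbb{H}^{-1}\boldsymbol{v}\neq \boldsymbol{0}$, so $\langle \mathbb{H}^{-1}\boldsymbol{v}\rangle$ is a one-dimensional subspace of the null space.

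For the converse inclusion, suppose $\boldsymbol{w}$ satisfies $(\mathbb{H}+\alpha\boldsymbol{v}\boldsymbol{v}^{\dagger})\boldsymbol{w}=\boldsymbol{0}$, so that
\begin{equation*}
\mathbb{H}\boldsymbol{w}\;=\;-\,\alpha\,(\boldsymbol{v}^{\dagger}\boldsymbol{w})\,\boldsymbol{v}\;.
\end{equation*}
If $\boldsymbol{v}^{\dagger}\boldsymbol{w}=0$, then $\mathbb{H}\boldsymbol{w}=\boldsymbol{0}$, and the invertibility of $\mathbb{H}$ forces $\boldsymbol{w}=\boldsymbol{0}$. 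Otherwise, applying $\mathbb{H}^{-1}$ gives $\boldsymbol{w}=-\alpha(\boldsymbol{v}^{\dagger}\boldsymbol{w})\,\mathbb{H}^{-1}\boldsymbol{v}$, a scalar multiple of $\mathbb{H}^{-1}\boldsymbol{v}$. In either case $\boldsymbol{w}\in\langle \mathbb{H}^{-1}\boldsymbol{v}\rangle$, which completes the proof.

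There is really no obstacle here: the only subtlety is that one must use \eqref{lem:pre_vH^-1v} both to verify that $\mathbb{H}^{-1}\boldsymbol{v}$ is in the kernel and (implicitly) to ensure $\alpha$ is well-defined so that $\mathbb{H}^{-1}\boldsymbol{v}$ makes sense. Note that this argument is independent of Lemma \ref{s17}; it does, however, reconfirm that $\det(\mathbb{H}+\alpha\boldsymbol{v}\boldsymbol{v}^{\dagger})=0$ and is consistent with the non-negative definiteness established there, since a symmetric non-negative definite matrix with a one-dimensional kernel has rank $d-1$.
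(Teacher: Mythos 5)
Your proof is correct and follows essentially the same route as the paper's: both inclusions are verified directly, using the invertibility of $\mathbb{H}$ to rewrite the kernel equation as $\boldsymbol{w}=-\alpha(\boldsymbol{v}\cdot\boldsymbol{w})\mathbb{H}^{-1}\boldsymbol{v}$, and using \eqref{lem:pre_vH^-1v} for the reverse inclusion. Your explicit treatment of the case $\boldsymbol{v}^{\dagger}\boldsymbol{w}=0$ is a small extra care the paper leaves implicit, but it is the same argument.
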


\begin{proof}
Suppose that $\boldsymbol{w}\in\mathbb{R}^{d}$ satisfies
$(\mathbb{H}+\alpha\boldsymbol{v}\boldsymbol{v}^{\dagger})\boldsymbol{w}=0$.
Since $\bb H$ is invertible, this equation can be rewritten as $\boldsymbol{w} =
- \alpha (\boldsymbol{v}\cdot\boldsymbol{w})\mathbb{H}^{-1}\boldsymbol{v}$ and
therefore $\boldsymbol{w}\in\left\langle
  \mathbb{H}^{-1}\boldsymbol{v}\right\rangle $.  

On the other hand, it follows from \eqref{lem:pre_vH^-1v} that any
vector $\boldsymbol{w}=a\mathbb{H}^{-1}\boldsymbol{v}$,
$a\in\mathbb{R}$, satisfies $(\mathbb{H}+ \alpha\boldsymbol{v}
\boldsymbol{v}^{\dagger}) \boldsymbol{w}=0$.
\end{proof}

\subsection{The neighborhood of a saddle point.} 
Consider the $(d-1)$-dimensional hyperplane $\left\langle
  \boldsymbol{u}_{1}\right\rangle ^{\perp}=\left\langle
  \boldsymbol{u}_{2}, \boldsymbol{u}_{3}, \dots,
  \boldsymbol{u}_{d}\right\rangle $.  Clearly, $\mathcal{E}_{N}^{1}$
and $\mathcal{E}_{N}^{2}$ are on different sides of $\left\langle
  \boldsymbol{u}_{1}\right\rangle ^{\perp}$. Since $\<\bs v , \bs
u_1\> >0$, without loss of generality, we may assume that
$\boldsymbol{u}_{1}$ and $\boldsymbol{v}$ are directed toward
$\mathcal{E}_{N}^{1}$, with respect to $\left\langle
  \boldsymbol{u}_{1}\right\rangle ^{\perp}$.  See Figure \ref{fig:41}.

\begin{figure}
\centering
\includegraphics[scale=0.24]{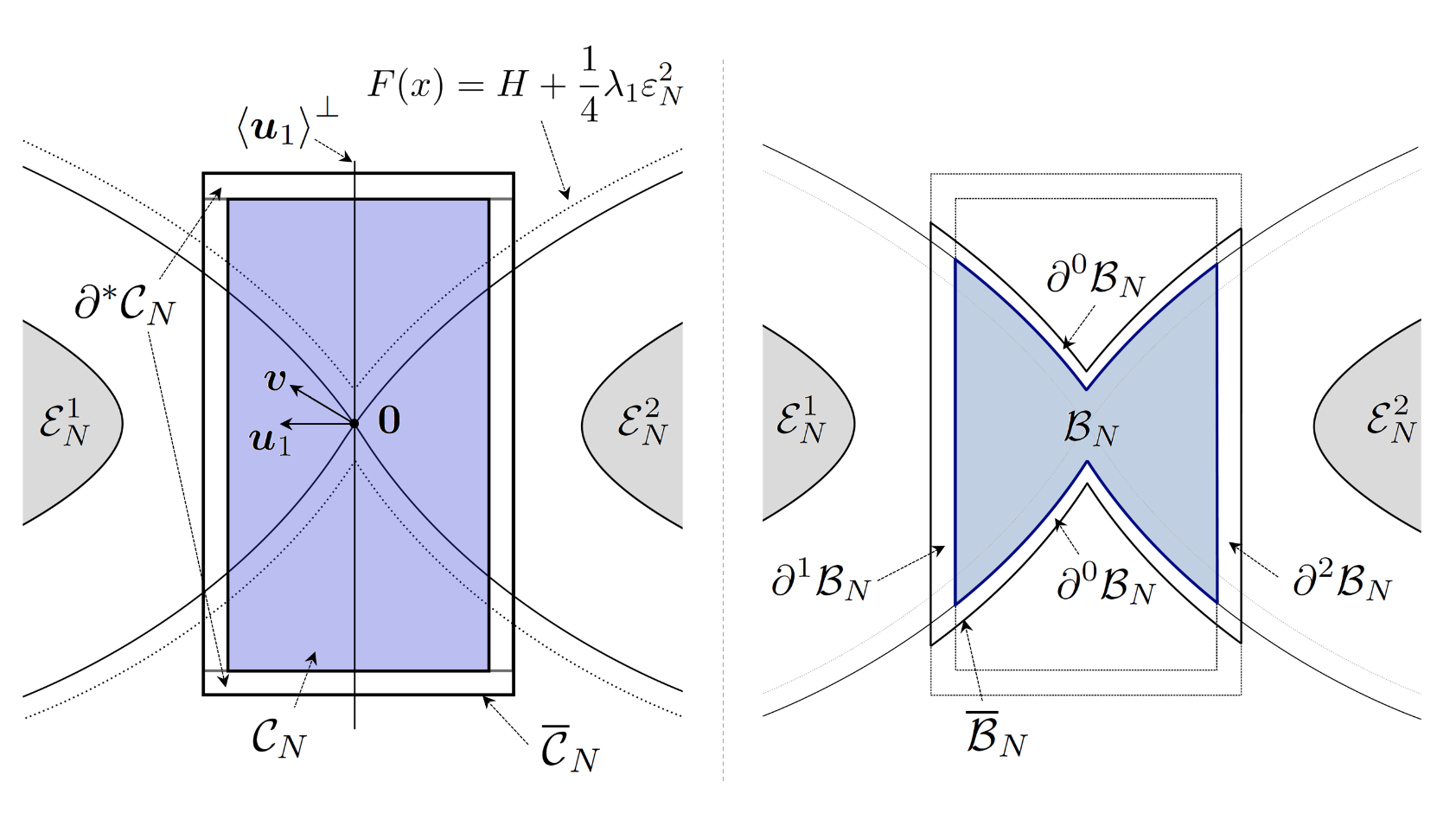}\protect
\caption{\label{fig:41} The neighborhoods $\mc C_N$ and $\mc B_N$ of a saddle point}
\end{figure}

For a subset $\mc G_N$ of $\Xi_N$, denote by $\mathring{\mc G}_{N}$,
$\overline{\mc G}_{N}$ and $\partial \mc G_{N}$ the core, the
closure, and the boundary of $\mc G_{N}$, respectively,
\begin{equation*}
\mathring{\mc G}_{N} \;=\; \{\boldsymbol{x}\in\widehat{\Xi}_{N}:
\,\gamma_{\boldsymbol{x}}^{N}\cap \mc G_{N}\neq \varnothing\}\;,\quad
\overline{\mc G}_{N} \;=\; \bigcup_{\boldsymbol{x}\in\mathring{\mc G}_{N}}
\gamma_{\boldsymbol{x}}^{N}\;,\quad
\partial \mc G_{N} \;=\; \overline{\mc G}_{N}\setminus \mc G_{N}\;.
\end{equation*}
One can easily check that $\mc G_N \subset \mathring{\mc G}_N \subset
\overline{\mc G}_N$, and that for two subsets $\mc G_N$, $\mc G_N'$ of
$\Xi_N$, $\partial (\mc G_N \cap \mc G_N') \subset \partial \mc G_N
\cup \partial\mc G_N'$.

In order to define the approximation of the equilibrium potential and
the related flow, as in the reversible case \cite{LMT}, we introduce a
mesoscopic neighborhood of the saddle point. 

Let
$\varepsilon_{N}\downarrow0$ be a sequence such that
\begin{align}
 & N\varepsilon_{N}^{2}\uparrow\infty\,\,
\mbox{and \,\ensuremath{N\varepsilon_{N}^{3}\downarrow0}\,\,as}
\,\,N\uparrow\infty\;,\label{vare1}\\
 & \forall a>0,\, \exp\{-aN\varepsilon_{N}^{2}\}
\downarrow 0\mbox{ as }N\uparrow \infty
\mbox{ faster than any polynomials of \ensuremath{N}.}\label{vare2}
\end{align}
One can take, for instance, $\varepsilon_{N}=N^{-\frac{2}{5}}$. 

Denote by $C_N$, $\mc C_N$, the mesoscopic neighborhoods of the origin
given by
\begin{equation}
\label{C_N}
C_{N} \;=\; \Big\{ \sum_{i=1}^{d} x_{i} \boldsymbol{u}_{i} : \,|x_{1}|\le
\varepsilon_{N} \,,\, |x_{i}|\le\sqrt{\frac{ 2\lambda_{1}}
{\lambda_{i}}} \, \varepsilon_{N} \,,\, 2\le i\le d \Big\} 
\;, \;\; \mc C_{N}\;=\; C_{N}\cap\Xi_{N}\;.
\end{equation}
Let $\partial^{*} \mc C_{N}$ be the piece of the boundary $\partial
\,\mc C_{N}$ given by
\begin{equation*}
\partial^{*}\mc C_{N} \;=\; \bigcup_{i=2}^{d} \Big\{
\boldsymbol{x}\in\partial \, \mc C_{N}:
\,|\boldsymbol{x}\cdot\boldsymbol{u}_{i}|>
\sqrt{\frac{2 \lambda_{1}}{\lambda_{i}}}\varepsilon_{N}\Big\} \;.
\end{equation*}
We claim that 
\begin{equation}
\label{26}
\inf_{\boldsymbol{x}\in\partial^{*} \mc C_{N}}F(\boldsymbol{x})
\;\ge\; H+ \big[ 1 + o_N(1) \big]\,
\frac{1}{2}\lambda_{1}\varepsilon_{N}^{2}\;.
\end{equation}
Indeed, it follows from the definition of $\mc C_N$ that
$|\boldsymbol{x}\cdot\boldsymbol{u}_{1}| \le
\varepsilon_{N}+O(N^{-1})$ for $\boldsymbol{x}\in\partial^{*} \mc
C_{N}$. Since $F(\bs{0})=H$, by the Taylor expansion of $F$ at
$\bs{0}$,
\begin{align*}
F(\bs{x})-H \;& =\; \frac{1}{2}\,
\boldsymbol{x}^{\dagger}\mathbb{H}\boldsymbol{x}
\;+\; o(\varepsilon_{N}^{2}) \\
& =\; - \frac{1}{2} \,
(\boldsymbol{x}\cdot\boldsymbol{u}_{1})^{2}\lambda_{1}
\;+\; \frac{1}{2} \,
\sum_{i=2}^{d}(\boldsymbol{x}\cdot\boldsymbol{u}_{i})^{2}
\lambda_{i} \;+\; o(\varepsilon_{N}^{2}) \\
& \ge\; \frac{1}{2} (-\lambda_{1}+ 2 \lambda_{1})
\varepsilon_{N}^{2} \;+\; o(\varepsilon_{N}^{2})\;,
\end{align*} 
as claimed. The previous Taylor expansion holds also for
$F_{N}(\boldsymbol{x})$ and for $\overline{F}_{N}(\boldsymbol{x})$
with exactly same form and these Taylor expansions will be frequently
used later.

Denote by $\mc B_N$ the discrete mesoscopic neighborhoods of the
origin given by
\begin{equation}
\label{25}
\mathcal{B}_{N} \;=\; \mc C_{N}\cap \Big\{ \boldsymbol{x}\in\Xi_N:\,
F(\boldsymbol{x})\le H+\frac{1}{4}\lambda_{1}\varepsilon_{N}^{2}\Big\} \;.
\end{equation}
Divide the boundary
$\partial\mathcal{B}_{N}$ in three pieces,
$\partial^{0}\mathcal{B}_{N},\,\partial^{1}\mathcal{B}_{N}$ and
$\partial^{2}\mathcal{B}_{N}$, as follows
\begin{align*}
& \partial^{0}\mathcal{B}_{N} \;=\; \big\{ 
\boldsymbol{x}\in\partial\mathcal{B}_{N}:
\,F(\boldsymbol{x})> H + (1/4)
\lambda_{1}\varepsilon_{N}^{2}\big\} \;,\\
&\quad
\partial^{1}\mathcal{B}_{N} \;=\; \big\{ \boldsymbol{x}
\in\partial\mathcal{B}_{N}\setminus\partial^{0}\mathcal{B}_{N}:
\,\boldsymbol{x}\cdot\boldsymbol{u}_{1} > \varepsilon_{N} \big\} \;,\\
&\qquad \partial^{2}\mathcal{B}_{N} \;=\; \big\{ 
\boldsymbol{x}\in\partial\mathcal{B}_{N}\setminus\partial^{0}
\mathcal{B}_{N}:\,\boldsymbol{x}\cdot\boldsymbol{u}_{1} < -\varepsilon_{N}
\big\} \;.
\end{align*}
Note that $\partial^{i}\mathcal{B}_{N}$, $i=1,\,2$, is the portion of
$\partial\mathcal{B}_{N}$ close to the metastable set
$\mathcal{E}_{N}^{i}$.  This decomposition is visualized in Figure
\ref{fig:41}. Note also that it follows from the definitions of
$\mathcal{B}_{N}$ and $\partial^{0}\mathcal{B}_{N}$, that 
\begin{equation}
\label{27}
F(\boldsymbol{x}) \;=\; H+ \frac{1}{4}\lambda_{1}\varepsilon_{N}^{2}
\;+\; O(N^{-1}) \quad\text{for}\quad
\boldsymbol{x}\in\partial^{0}\mathcal{B}_{N} \;.
\end{equation}
Of course, the same estimate is valid for $F_{N}$ or
$\overline{F}_{N}$.

We claim that $\partial\mathcal{B}_{N} = \cup_{0\le i\le
  2} \partial^i\mc B_{N}$. Indeed, one of the inclusions follows by
definition. To prove the other one, fix $\bs x\in \partial\mc B_N$ and
assume that $\bs x \not \in \partial^0\mc B_{N}$, i.e., that $F(\bs x)
\le H + (1/4) \lambda_{1} \varepsilon_{N}^{2}$. We have to show that
$\bs x \in \partial^1\mc B_{N} \cup \partial^2\mc B_{N}$. Clearly,
$\partial\mc B_N \subset \overline{\mc B}_N \subset \overline{\mc C}_N
= \mc C_N \cup \partial \mc C_N$ so that $\bs x\in \mc C_N
\cup \partial \mc C_N$. Assume that $\bs{x}\in \mc C_N$. Then, since
$\bs{x}\notin \mc{B}_N$, we have $\bs{x}\in \mc C_N \setminus
\mc{B}_N$ and thus $F(\bs{x})> (1/4)\lambda_1 \varepsilon_N^2$, by
definition of $\mc{B}_N$, which is a contradiction. This proves that
$\bs x \in \partial \mc C_N$.  It remains to recall the estimate
\eqref{26} to conclude that $|\<\bs x, \bs u_1\>| > \varepsilon_N$,
so that $\bs x\in \partial^1\mathcal{B}_{N}
\cup \partial^2\mathcal{B}_{N}$.

\begin{lemma}
\label{s25}
We have that 
\begin{equation*}
\sum_{\boldsymbol{x}\in \mathring{\mc B}_{N}}
e^{-\frac{N}{2}\boldsymbol{x}^{\dagger}(\mathbb{H}+2\alpha\boldsymbol{v}
\boldsymbol{v}^{\dagger})\boldsymbol{x}} 
\;=\; \big[1+o_{N}(1)\big] \, \frac{(2\pi
  N)^{\frac{d}{2}}}{\sqrt{-\det\mathbb{H}}}\;\cdot 
\end{equation*}
\end{lemma}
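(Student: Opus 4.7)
By Lemma \ref{s17}, the matrix $\mathbb{Q}:=\mathbb{H}+2\alpha\boldsymbol{v}\boldsymbol{v}^{\dagger}$ is positive definite with $\det\mathbb{Q}=-\det\mathbb{H}>0$, so the summand is a genuine Gaussian density (in $\bs x$) at inverse temperature $N$. The plan is to interpret the sum as a Riemann approximation of a Gaussian integral and to show that on the one hand $\mathring{\mc B}_N$ essentially captures the full mass of the Gaussian, and on the other hand the $1/N$-lattice is fine enough for the Riemann approximation to be sharp.

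First I would quantify the mesh error. The lattice spacing on $(N^{-1}\mathbb{Z})^d$ is $1/N$, while the gradient of $\tfrac{N}{2}\boldsymbol{x}^{\dagger}\mathbb{Q}\boldsymbol{x}$ at $\boldsymbol{x}$ has norm $O(N\Vert\mathbb{Q}\Vert\,\Vert\bs x\Vert)$, which is $O(N\varepsilon_N)$ on $\mc B_N$. The oscillation of the exponent over a cube of side $1/N$ is therefore $O(\varepsilon_N)=o(1)$, uniformly in $\bs x\in\mathring{\mc B}_N$. Hence
\begin{equation*}
\sum_{\boldsymbol{x}\in\mathring{\mc B}_N}
e^{-\frac{N}{2}\boldsymbol{x}^{\dagger}\mathbb{Q}\boldsymbol{x}}
\;=\;[1+o_N(1)]\,N^{d}\int_{D_N}
e^{-\frac{N}{2}\boldsymbol{x}^{\dagger}\mathbb{Q}\boldsymbol{x}}\,d\boldsymbol{x}\;,
\end{equation*}
where $D_N$ is the union of unit cubes (of side $1/N$) centered at the lattice points of $\mathring{\mc B}_N$. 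The $O(1/N)$ fringe coming from the cycle-enlargement in the definition of $\mathring{\mc B}_N$ (versus $\mc B_N\cap\Xi_N$) is absorbed in the error, since $1/N\ll 1/\sqrt N$.

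Next I would extend the integration domain to $\mathbb{R}^d$. The key geometric input is that $\mc B_N$ contains a ball $B(\bs 0,c\varepsilon_N)$ for some $c>0$: indeed, by the Taylor expansion of $F$ at $\bs 0$ used to derive \eqref{26}, one has $F(\bs x)-H\le C\Vert\bs x\Vert^2$, so $F(\bs x)\le H+\tfrac14\lambda_1\varepsilon_N^2$ whenever $\Vert\bs x\Vert\le c\varepsilon_N$, and such points also lie in $C_N$ if $c$ is small. After the change of variables $\boldsymbol{y}=\sqrt{N}\,\boldsymbol{x}$, the rescaled domain $\sqrt N D_N$ contains $B(\bs 0,c\sqrt N\,\varepsilon_N)$ with $\sqrt N\,\varepsilon_N\to\infty$ by \eqref{vare1}. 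The Gaussian tail outside this ball is bounded by $\exp\{-c'N\varepsilon_N^2\}$ for some $c'>0$, which by \eqref{vare2} is smaller than any polynomial in $N$, hence negligible compared to the leading order $N^{d/2}$. Therefore
\begin{equation*}
N^{d}\int_{D_N}e^{-\frac{N}{2}\boldsymbol{x}^{\dagger}\mathbb{Q}\boldsymbol{x}}\,d\boldsymbol{x}
\;=\;[1+o_N(1)]\,N^{d/2}\int_{\mathbb{R}^{d}}
e^{-\frac{1}{2}\boldsymbol{y}^{\dagger}\mathbb{Q}\boldsymbol{y}}\,d\boldsymbol{y}
\;=\;[1+o_N(1)]\,\frac{(2\pi N)^{d/2}}{\sqrt{\det\mathbb{Q}}}\;,
\end{equation*}
and the conclusion follows from $\det\mathbb{Q}=-\det\mathbb{H}$.

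The only mildly delicate point is the lower bound $\mc B_N\supset B(\bs 0,c\varepsilon_N)$: one must check that this ball lies inside $C_N$, i.e., that its component along each eigenvector $\bs u_i$ is within the allowed range $\sqrt{2\lambda_1/\lambda_i}\,\varepsilon_N$, which is automatic for $c\le\min_i\sqrt{2\lambda_1/\lambda_i}$. Everything else is a standard Laplace/Riemann-sum computation, and no additional non-trivial ingredient is needed.
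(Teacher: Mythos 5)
Your proposal is correct and follows essentially the same route as the paper: the paper invokes Lemma \ref{s17} for the positive definiteness and the identity $\det(\mathbb{H}+2\alpha\boldsymbol{v}\boldsymbol{v}^{\dagger})=-\det\mathbb{H}$, checks that $\mc B_N$ contains a box of linear size comparable to $\varepsilon_N$, and then cites the Riemann-sum/Gaussian computation of \cite[Assertion 3.B]{LMT}, which is exactly the mesh-error and tail-extension argument you spell out.
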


\begin{proof}
Let $\mathbb{K}$ be a positive-definite matrix, and let $\mc A_N$ be a
sequence of subsets of $\Xi_N$ such that
\begin{equation*}
\Xi_N \cap \Big\{ \sum_{i=1}^{d} x_{i}\bs u_{i} : |x_{i}|\le a_i 
\varepsilon_N\,,\,1\le i\le d \Big\} 
\;\subset\;
 \mc A_N
 \;\subset\; \mc C_N
\end{equation*}
for some $a_i>0$, $1\le i\le d$ and for all large enough $N$. It
follows from the proof of \cite[Assertion 3.B]{LMT} that
\begin{equation*}
\sum_{\boldsymbol{x}\in \mc A_{N}}e^{-\frac{N}{2}\boldsymbol{x}^{\dagger}
\mathbb{K}\boldsymbol{x}} \;=\; \big[1+o_{N}(1)\big] \, 
\frac{(2\pi N)^{\frac{d}{2}}}{\sqrt{\det\mathbb{K}}}\;.
\end{equation*}

The assertion of the lemma follows from this identity, from
Lemma \ref{s17}, and from the fact that
\begin{equation*}
\Xi_N \cap \Big\{ \sum_{i=1}^{d}a_{i}\boldsymbol{u}_{i} : 
|a_{i}|\le\sqrt{\frac{\lambda_1}{3(d-1)\lambda_i}} 
\varepsilon_N\,,\,1\le i\le d \Big\}  
\subset \;\mc B_N \subset \mathring{\mc B}_N\;, 
\end{equation*}
where the first inclusion can be easily proven by the Taylor expansion. 
\end{proof}

A similar estimation for $\mathbb{H}+ \alpha\boldsymbol{v}
\boldsymbol{v}^{\dagger}$ is needed. Since, by Lemma
\ref{s21}, the rank of the matrix $\mathbb{H}+\alpha
\boldsymbol{v} \boldsymbol{v}^{\dagger}$ is $d-1$, denote by $\ms
P_{t}$, $t\in\mathbb{R}$, the $(d-1)$-dimensional hyperplane given by
\begin{equation*}
\ms P_{t} \;=\; t\boldsymbol{u}_{1}+
\left\langle \boldsymbol{u}_{2}, \,\boldsymbol{u}_{3},\, \dots,\,
  \boldsymbol{u}_{d} \right\rangle \;.
\end{equation*}
For any $\boldsymbol{w}\in\mathbb{R}^{d}$, denote by $\ms
P_t(\boldsymbol{w})$ the region located between the hyperplanes $\ms
P_t$ and $\ms P_t+\boldsymbol{w}$:
\begin{equation}
\label{35}
\ms P_t(\boldsymbol{w}) \;=\; \{ \bs x\in\bb R^d : t\le \bs x \cdot
\bs u_1 < t+ (\bs w \cdot \bs u_1) \}\;,   
\end{equation}
provided $\bs w \cdot \bs u_1\ge 0$, with an analogous definition in
the case $\bs w \cdot \bs u_1 < 0$.  Note that $\ms
P_t(\boldsymbol{w})$ includes $\ms P_t$ but excludes $\ms
P_t+\boldsymbol{w}$.

Let $\mc A_N(a)$, $a>0$, be the set defined by
\begin{equation}
\label{f05}
\mc A_N (a) \;=\; \Big\{\bs x\in \Xi_N : \sum_{i=2}^d \lambda_j 
(\bs x \cdot \bs u_i)^2 \le (1+a) \lambda_1 \varepsilon^2_N \Big\}\;.
\end{equation}

\begin{lemma}
\label{s14}
Let $H_N(t,\bs{z},a)$, $t\in\mathbb{R}$, $\bs{z}\in\mathbb{R}^d$, $a>0$,
be given by
\begin{equation*}
H_N(t,\bs{z},a)\;=\;\frac{1}{(2\pi N)^{\frac{d-1}{2}}}
\sum_{\boldsymbol{x}\in \ms P_t(N^{-1} \boldsymbol{z})\cap {\mc A}_{N}(a)}
e^{-\frac{N}{2}\boldsymbol{x}^{\dagger}(\mathbb{H}+
\alpha\boldsymbol{v}\boldsymbol{v}^{\dagger})\boldsymbol{x}}\;. 
\end{equation*}
Then, for any $\bs{z}\in\mathbb{Z}^d$, $a>0$, $r>0$ and sequence
$\{t_N: N\ge 1\}$, such that $|t_N|\le\varepsilon_{N}+ r/N$,
\begin{equation}
\label{s141}
H_N(t_N,\bs{z}, a)\;=\; \big[1+o_{N}(1)\big] \,
\frac{|\boldsymbol{u}_{1}\cdot\boldsymbol{z}|}
{|\boldsymbol{u}_{1}\cdot\boldsymbol{v}|}
\sqrt{\frac{\lambda_{1}}{\alpha\prod_{k=2}^{d}\lambda_{k}}}\;. 
\end{equation}
\end{lemma}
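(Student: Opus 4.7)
I would evaluate the sum by reducing to a Gaussian integral on the hyperplane transverse to $\bs u_{1}$. The key structural fact is that $\bb H+\alpha\bs v\bs v^{\dagger}$ is positive semi-definite with one-dimensional null space $\langle\bb H^{-1}\bs v\rangle$ (Lemma \ref{s21}); although this null direction has nonzero projection on $\bs u_{1}$, the restriction of the quadratic form to any hyperplane $\{\bs u_{1}\cdot\bs x = t\}$ is strictly positive definite and integrates to a finite value.

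To this end, decompose $\bs x = t\,\bs u_{1} + \bs y_{\perp}$ with $t = \bs u_{1}\cdot\bs x$ and $\bs y_{\perp} = \sum_{k\ge 2}(\bs u_{k}\cdot\bs x)\,\bs u_{k}$. Setting $\bb D = \mathrm{diag}(\lambda_{2},\dots,\lambda_{d})$, $\bs v_{\perp} = (v_{2},\dots,v_{d})^{\dagger}$, and $s = \alpha\,\bs v_{\perp}^{\dagger}\bb D^{-1}\bs v_{\perp}$, identity \eqref{02} is equivalent to $\alpha v_{1}^{2}/\lambda_{1} = 1+s$. Expanding the quadratic form and completing the square via the Sherman--Morrison formula, I would derive
\begin{equation*}
\bs x^{\dagger}(\bb H + \alpha\bs v\bs v^{\dagger})\bs x
\;=\; (\bs y_{\perp}-\bs y_{*}(t))^{\dagger}\,(\bb D + \alpha\bs v_{\perp}\bs v_{\perp}^{\dagger})\,(\bs y_{\perp}-\bs y_{*}(t))\;,
\end{equation*}
where $\bs y_{*}(t) = -\tfrac{\alpha v_{1}t}{1+s}\,\bb D^{-1}\bs v_{\perp}$ and the minimum value vanishes exactly for every $t$, consistent with Lemma \ref{s21}.

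Next, I would approximate the lattice sum by $N^{d}$ times the corresponding Lebesgue integral, in the spirit of the sum-to-integral approximation invoked in the proof of Lemma \ref{s25}. Since $\ms P_{t_{N}}(N^{-1}\bs z)$ is a slab of thickness $|c|/N$ in the $\bs u_{1}$-direction (with $c = \bs u_{1}\cdot\bs z$), Fubini's theorem followed by the rescaling $\bs y_{\perp} = \bs y_{*}(t) + \bs w/\sqrt{N}$ yields
\begin{equation*}
H_{N}(t_{N},\bs z,a)
\;=\; \big[1+o_{N}(1)\big]\,
\frac{|c|}{\sqrt{\det(\bb D + \alpha\bs v_{\perp}\bs v_{\perp}^{\dagger})}}\;.
\end{equation*}
The extension of the $\bs w$-integration to all of $\mathbb{R}^{d-1}$ is justified because $|\bs y_{*}(t)| = O(\varepsilon_{N})$ remains strictly inside $\mc A_{N}(a)$ at distance $\Theta(\varepsilon_{N})$ from the boundary, while $\sqrt{N}\,\varepsilon_{N}\uparrow\infty$. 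The matrix determinant lemma \eqref{eq: rank one upd} then gives $\det(\bb D + \alpha\bs v_{\perp}\bs v_{\perp}^{\dagger}) = \big(\prod_{k=2}^{d}\lambda_{k}\big)(1+s) = \big(\prod_{k=2}^{d}\lambda_{k}\big)\,\alpha v_{1}^{2}/\lambda_{1}$; together with $|c|/|v_{1}| = |\bs u_{1}\cdot\bs z|/|\bs u_{1}\cdot\bs v|$, this produces \eqref{s141}.

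The principal technical obstacle is the sum-to-integral approximation on a slab whose $\bs u_{1}$-thickness $|c|/N$ is comparable to the lattice spacing, so only $O(1)$ lattice layers fit in the slab and a naive midpoint rule is not available. I would handle this in the spirit of the proof of Lemma \ref{s25}: translation by the lattice vector $N^{-1}\bs z$ is a symmetry of $\Xi_{N}$, so consecutive parallel slabs produce sums differing only through the slow variation of the Gaussian on the scale $N^{-1/2}\gg N^{-1}$, and a telescoping estimate across the $\Theta(N\varepsilon_{N})$ slabs tiling $\mc A_{N}(a)$ in the $\bs u_{1}$-direction delivers the required $o_{N}(1)$ error bound uniformly in $|t_{N}|\le\varepsilon_{N}+r/N$.
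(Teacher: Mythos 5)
Your algebra is sound and reproduces the paper's own reduction: the completion of the square in the transverse variables (your $\bs y_{*}(t)$ is exactly the paper's shift $-\theta_{k}$ with $\theta_{k}=t_{N}v_{k}\lambda_{1}/(\lambda_{k}v_{1})$), the vanishing of the minimum on every hyperplane, the check that the recentered point stays well inside $\mc A_{N}(a)$, and the determinant computation via \eqref{eq: rank one upd} all lead correctly to \eqref{s141}. Where you genuinely depart from the paper is the treatment of the lattice sum over the thin slab. The paper first takes $\bs z=\pm\mf e_{i}$ and projects each lattice point of the slab along $\mf e_{i}$ onto $\ms P_{t_{N}}$, turning the sum into a bona fide $(d-1)$-dimensional Riemann sum for a surface integral (whence the Jacobian $|\bs u_{1}\cdot\mf e_{i}|$), and then reaches general $\bs z$ by induction on $|\bs z|$ through a decomposition of the slab. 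Your volume-based averaging via translation by $N^{-1}\bs z$ would, if completed, handle general $\bs z$ in one stroke and dispense with the induction, which is an attractive simplification.

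The one step that does not close as written is the final ``telescoping across the $\Theta(N\varepsilon_{N})$ slabs.'' The crude estimate for the ratio of consecutive slab sums is $1+O(\varepsilon_{N})$ (the exponent changes by $\bs z^{\dagger}(\bb H+\alpha\bs v\bs v^{\dagger})\bs x=O(\varepsilon_{N})$ per lattice step at distance $O(\varepsilon_{N})$ from the origin), and compounding this over $\Theta(N\varepsilon_{N})$ slabs gives a factor $e^{O(N\varepsilon_{N}^{2})}$, which diverges by \eqref{vare1}; so knowing the total over all slabs does not pin down an individual slab sum. Two repairs are available. The clean one is a two-scale window: take $m$ consecutive slabs around $t_{N}$ with $1\ll m\ll\varepsilon_{N}^{-1}$ (say $m=\varepsilon_{N}^{-1/2}$), so that within the window the slab sums agree to within $1+O(m\varepsilon_{N})=1+o(1)$, while the window is thick enough ($m$ lattice layers) that its total is a legitimate full-dimensional Riemann sum with relative boundary error $O(1/m)=o(1)$; divide by $m$. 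The sharper alternative is to improve the consecutive ratio to $1+O(1/N)$ by noting that $\bb M'\bs x_{*}=0$ on the null line of $\bb H+\alpha\bs v\bs v^{\dagger}$ and that the linear term $\bs z^{\dagger}\bb M'(\bs x-\bs x_{*})$ averages out against the approximately centered Gaussian weight on the slab — but that cancellation itself requires knowing the slab sum is asymptotically a centered Gaussian, so it is more delicate to set up without circularity. With the window argument supplied, your proof is complete.
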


\begin{proof}
Fix $a>0$ and $r>0$.  We may assume that $\bs{z} \cdot \bs u_1 \neq0$
since, if this is not the case, $H_N(t,\bs{z},a)=0$ for all
$t\in\mathbb{R}$ so that \eqref{s141} is trivial. We begin by proving
\eqref{s141} for $\boldsymbol{z} \in \{\pm \mf e_1, \dots, \pm \mf
e_d\}$, where $(\mf e_1, \dots, \mf e_d)$ stands for the canonical
basis of $\bb R^d$. It is enough to consider the case
$\boldsymbol{z}=\mathfrak{e}_{i}$, the proof for
$\boldsymbol{z}=-\mathfrak{e}_{i}$ being analogous. As explained
above, we may also assume that $\mf e_i \cdot \bs u_1 \neq0$.

For each $\boldsymbol{x}\in \ms P_{t_N}(N^{-1}\mathfrak{e}_{i})\cap
{\mc A}_{N}(a)$, there exists $\delta(\boldsymbol{x})\in(-1/N,\,1/N)$
such that $T(\boldsymbol{x})= \boldsymbol{x}+ \delta(\boldsymbol{x})
\mathfrak{e}_{i}\in \ms P_{t_N}$, so that
\begin{align*}
 & \frac{1}{(2\pi N)^{\frac{d-1}{2}}}\,
\sum_{\boldsymbol{x}\in\ms
  P_{t_N}(N^{-1}\mathfrak{e}_{i})\cap{\mc A}_{N}(a)}
e^{-\frac{N}{2}\boldsymbol{x}^{\dagger}(\mathbb{H}+
\alpha\boldsymbol{v}\boldsymbol{v}^{\dagger})\boldsymbol{x}}\\
 & =\;\frac{1+o_{N}(1)}{(2\pi N)^{\frac{d-1}{2}}}
\,\sum_{\boldsymbol{x}\in\ms
  P_{t_N}(N^{-1}\mathfrak{e}_{i})\cap{\mc A}_{N}(a)}
e^{-\frac{N}{2}T(\boldsymbol{x})^{\dagger}(\mathbb{H}+
\alpha\boldsymbol{v}\boldsymbol{v}^{\dagger})T(\boldsymbol{x})}\;.
\end{align*}
Replacing $\sqrt{N}T(\bs{x})$ by $\bs y$, and approximating the sum
appearing on the right hand side by a Riemann integral on the
hyperplane $\ms P_{t_N}$, the previous sum becomes
\begin{equation}
\label{07}
\big[1+o_{N}(1)\big]\,
\frac{|\boldsymbol{u}_{1}\cdot \mf e_i|}{(2\pi)^{\frac{d-1}{2}}}
\int_{\sqrt{N}(\ms P_{t_N}\cap A_{N})}
e^{-\frac{1}{2}\boldsymbol{y}^{\dagger}(\mathbb{H}+
\alpha\boldsymbol{v}\boldsymbol{v}^{\dagger})\boldsymbol{y}}
\, dS(\boldsymbol{y})\;,
\end{equation}
where $dS$ represents the $(d-1)$-dimensional surface integral. In
this formula, $A_N$ is the set given by
\begin{equation*}
A_N \;=\; \Big\{\bs x\in \Xi : \sum_{i=2}^d \lambda_j 
(\bs x \cdot \bs u_i)^2 \le (1+a) \lambda_1 \varepsilon^2_N \Big\}\;,
\end{equation*}
and $|\boldsymbol{u}_{1}\cdot \mf e_i|$ appeared to take into
account the tilt of the hyperplane $\ms P_{t_N}$. 

Fix $\bs x \in \sqrt{N}(\ms P_{t_N}\cap A_N)$ and write $\bs x=
\sqrt{N} \big( t_N \boldsymbol{u}_1 + \sum_{2\le j\le d}
x_{k}\boldsymbol{u}_{k} \big)$. It follows from \eqref{02}, that
\begin{align*}
&\boldsymbol{x}^{\dagger}(\mathbb{H} \;+\;
\alpha\boldsymbol{v}\boldsymbol{v}^{\dagger}) \boldsymbol{x} \\
&= N\bigg[ \; \sum_{k=2}^{d} \lambda_{k} \,x_{k}^{2} \;+\;
\alpha  t_N^2   \lambda_1 \sum_{k=2}^{d} \frac{v_{k}^{2}}{\lambda_{k} } \;+\;
2 \alpha t_N   v_1  \sum_{k=2}^{d} v_{k}\, x_{k}
\;+\; \alpha \Big( \sum_{k=2}^{d} v_{k}\, x_{k}\Big)^{2} \bigg]\;.
\end{align*}
Using again \eqref{02}, we show that the previous expression is equal
to
\begin{equation*}
N\bigg[ \sum_{k=2}^{d} \lambda_{k} \, \big( x_k + \theta_{k})^2 \;+\; \alpha\,
\Big\{\sum_{j=2}^d (x_j+ \theta_{j}) v_j \Big\}^2 \bigg] \;,
\end{equation*}
where $\theta_{k}= t_N (v_k \lambda_1)/(\lambda_k v_1)$. Hence, by the
change of variable, \eqref{07} can be written as
\begin{equation}
\label{601}
\big[1+o_{N}(1)\big]\,
\frac{|\boldsymbol{u}_{1}\cdot \mf e_i|}{(2\pi)^{\frac{d-1}{2}}}
\int_{\sqrt{N}A'_N}
e^{-\frac{1}{2}\boldsymbol{y}^{\dagger}(\mathbb{L}+
\alpha\boldsymbol{p}\boldsymbol{p}^{\dagger})\boldsymbol{y}}
\, dy_{2} \, dy_{3}\, \cdots \, dy_{d}\;,
\end{equation}
where $\mathbb{L}=\text{diag }(\lambda_{2},\dots,\lambda_{d})$,
$\boldsymbol{p}=(v_{2},\dots,v_{d})$ and
\begin{equation*}
A'_{N} \;=\; \Big\{ (y_{2}, \dots, y_{d})\in \mathbb{R}^{d-1}:
t_N \bs{u}_1 + \sum_{k=2}^d (y_k -\theta_k) \bs{u}_k \in A_N
\Big\} \;.
\end{equation*}

The point $\bs{w}_N = t_N \bs u_1 - \sum_{k=2}^d \theta_{k} \bs u_k$
belongs to the interior of $A_N$ because, by definition of $\theta_k$
and by \eqref{02},
\begin{equation*}
\sum_{j=2}^d \lambda_j \theta^2_j \;=\; t^2_N \sum_{j=2}^d
\lambda_j \Big(\frac{v_j \lambda_1 }{\lambda_j v_1}\Big)^2 
\;=\; t^2_N \frac{\lambda^2_1}{v^2_1} 
\Big(\frac{v_1^2}{\lambda_1} - \frac 1\alpha \Big) \;<\; 
t^2_N \lambda_1 \;\le\; [1+o_N(1)]\, \lambda_1\, \varepsilon^2_{N}
\;. 
\end{equation*}
The set $A'_{N}$ contains therefore a $(d-1)$-dimensional ball
centered at the origin and of radius $\delta \varepsilon_{N}$ for some
$\delta>0$. In particular, since $\varepsilon_{N} \sqrt{N}
\uparrow\infty$, the expression \eqref{601} is equal to
\begin{equation*}
\big[1+o_{N}(1)\big]\,
|\boldsymbol{u}_{1}\cdot \mf e_i|  \,
\frac{1}{\sqrt{\det(\mathbb{L}+\alpha\boldsymbol{p}^{\dagger}\boldsymbol{p})}}
\;.
\end{equation*}
To complete the proof, it remains to recall from (\ref{eq: rank one
  upd}) that 
\begin{equation*}
\det(\mathbb{L}+\alpha\boldsymbol{p}^{\dagger}\boldsymbol{p}) \;=\; 
(1+\alpha\boldsymbol{p}^{\dagger}\mathbb{L}^{-1}\boldsymbol{p})\det\mathbb{L}
\;=\; \alpha\, \Big(\frac{1}{\alpha}+\sum_{k=2}^{d}\frac{v_{k}^{2}}
{\lambda_{k}}\Big)\, \prod_{j=2}^{d}\lambda_{j} \;=\;
\alpha\frac{v_{1}^{2}}{\lambda_{1}}\prod_{j=2}^{d}\lambda_{j}\;. 
\end{equation*}
This proves the lemma in the case $\bs z\in \{\pm \mf e_1, \dots, \pm
\mf e_d\}$.

In order to extend the result \eqref{s141} to general $\bs{z}\in\mathbb{Z}^d$, we
proceed by induction on $|\bs{z}| = \sum_{1\le i \le d} |z_i|$. The
case $|\bs z|=1$ has been established above. For $|\bs{z}|>1$, we can
decompose $\bs z$ as $\bs{z}=\bs{w}_1 +\bs{w}_2$ where $\bs{w}_1, \bs{w}_2 \in
\mathbb{Z}^d$ and $0<|\bs{w}_1|$, $|\bs{w}_2|<|\bs{z}|$. First, if the
signs of $\bs{u}_1\cdot \bs{w}_1$ and $\bs{u}_1\cdot\bs{w}_2$ are
same, then $\ms{P}_{t_N}(N^{-1}(\bs{w}_1+\bs{w}_2))$ can be decomposed
into two disjoint sets $\ms{P}_{t_N}(N^{-1}\bs{w}_1)$ and
$\ms{P}_{t_N+N^{-1}\bs{w}_1}(N^{-1}\bs{w}_2)$. By the induction
hypothesis, \eqref{s141} holds for these two sets and therefore we can
verify that \eqref{s141} holds for $\bs{z}=\bs{w}_1 +\bs{w}_2$ as
well. On the other hand, if the signs of $\bs{u}_1\cdot \bs{w}_1$ and
$\bs{u}_1\cdot\bs{w}_2$ are different, since by assumption $\bs u_1
\cdot \bs z \neq 0$, assume, without loss of generality, that
$|\bs{u}_1\cdot \bs{w}_1|>|\bs{u}_1\cdot \bs{w}_2|$. Then,
$\ms{P}_{t_N} ( N^{-1}\bs{w}_1)$ can be decomposed into two disjoint
sets $\ms{P}_{t_N}(N^{-1}(\bs{w}_1+\bs{w}_2))$ and
$\ms{P}_{t_N+N^{-1}(\bs{w}_1+\bs{w}_2)}(-N^{-1}\bs{w}_2)$. Again by
the induction hypothesis we can check that \eqref{s141} is valid for
$\bs{z}$ as well. This completes the proof.
\end{proof}

\begin{corollary}
\label{s26}
There exists a finite constant $C_0$ such that
\begin{equation*}
\frac{1}{N \varepsilon_{N}} \, \frac{1}{N^{\frac{d-1}{2}}}
\sum_{\boldsymbol{x}\in{\mc B}_{N}}
e^{-\frac{N}{2}\boldsymbol{x}^{\dagger}(\mathbb{H}
+\alpha\boldsymbol{v}\boldsymbol{v}^{\dagger})\boldsymbol{x}}\;\le\;
C_0\;.   
\end{equation*}
\end{corollary}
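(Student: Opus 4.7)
My plan is to slice $\mc B_N$ into the hyperplane slabs introduced in Lemma \ref{s14}, each of thickness of order $1/N$ in the $\bs u_1$-direction, and to apply that lemma to each slab. Since $\mc B_N\subset \mc C_N$ has $\bs u_1$-width at most $\varepsilon_N+O(1/N)$, roughly $N\varepsilon_N$ slabs are needed to cover it, and this is exactly the $N\varepsilon_N$ factor appearing in the normalization of the corollary.

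Concretely, I would fix a canonical basis vector $\mf e_i$ with $\delta:=\mf e_i\cdot \bs u_1>0$ (which exists since $\bs u_1\ne \boldsymbol{0}$). Setting $t_k=k\delta/N$ for $k\in\mathbb{Z}$, the slabs $\ms P_{t_k}(N^{-1}\mf e_i)$ are pairwise disjoint and cover $\bb R^d$; only the $K=O(N\varepsilon_N)$ of them with $|t_k|\le \varepsilon_N+O(1/N)$ meet $\mc B_N$. A short Taylor expansion, in the spirit of the one used to verify \eqref{26}, shows that $\mc C_N\subset \mc A_N(a)$ for some fixed $a>0$ (e.g.\ $a=2d-3$), hence $\mc B_N\subset \mc A_N(a)$. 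For each relevant $k$, Lemma \ref{s14} applied with $\bs z=\mf e_i$ and this $a$ then gives
\begin{equation*}
\sum_{\bs x\in \ms P_{t_k}(N^{-1}\mf e_i)\cap \mc A_N(a)} e^{-\frac{N}{2}\bs x^{\dagger}(\bb H+\alpha\bs v\bs v^{\dagger})\bs x} \;=\; (2\pi N)^{\frac{d-1}{2}}\, H_N(t_k,\mf e_i,a) \;\le\; C\,(2\pi N)^{\frac{d-1}{2}},
\end{equation*}
and summing this bound over the $K=O(N\varepsilon_N)$ relevant values of $k$ yields $\sum_{\bs x\in \mc B_N}e^{-\frac{N}{2}\bs x^{\dagger}(\bb H+\alpha\bs v\bs v^{\dagger})\bs x}\le C_0\, N\varepsilon_N\, N^{\frac{d-1}{2}}$, which is exactly the claim after division.

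The only delicate point I anticipate is that Lemma \ref{s14} is phrased as an asymptotic along a single sequence $\{t_N\}$, whereas here I need a uniform upper bound on $H_N(t,\mf e_i,a)$ over all $|t|\le \varepsilon_N+r/N$ and all $N$. This uniformity follows from a routine subsequence extraction: if the supremum of $H_N(\cdot,\mf e_i,a)$ over this interval were unbounded, one could choose a sequence $t_N^{\star}$ with $H_N(t_N^{\star},\mf e_i,a)\to\infty$, directly contradicting \eqref{s141}. I expect this to be the only technical nuisance; once it is in hand, the slab decomposition makes the corollary immediate.
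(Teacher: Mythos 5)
Your proposal is correct and follows essentially the same route as the paper: the paper's proof also chooses $a$ so that $\mc A_N(a)\supset \mc B_N$, slices $\mc B_N$ into $O(N\varepsilon_N)$ slabs of the form $\ms P_t(N^{-1}\mf e_i)$ with $\mf e_i\cdot\bs u_1\neq 0$, and invokes Lemma \ref{s14} together with the uniformity of its estimates over $|t_N|\le\varepsilon_N+r/N$. Your subsequence-extraction argument is just a slightly more explicit way of justifying that uniformity.
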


\begin{proof}
Choose $a$ large enough for $\mc A_N(a)$ to contain $\mc B_N$.
Divide the set ${\mc B}_{N}$ into $O(N\varepsilon_{N})$ slices of the form
$\ms P_t(N^{-1}\mathfrak{e}_{i})$ for some $\mathfrak{e}_{i}$ such
that $\mathfrak{e}_{i}\cdot\boldsymbol{u}_{1}\neq 0$. The results
follows from Lemma \ref{s14}, by observing that all the estimates were
uniform on $|t_N|\le \varepsilon_N + r/N$.
\end{proof}

\subsection{The equilibrium potential near a saddle point.}

We now derive an approximation of the equilibrium
potential on the box $\overline{\mathcal{B}}_{N}$. By definition of the
generator, for a smooth function $f:\Xi\rightarrow\mathbb{R}$,
\begin{equation*}
\mathcal{L}_{N}f(\boldsymbol{x}) \;=\; \sum_{i=0}^{L-1}
e^{-N\left[\overline{F}_{N}(\boldsymbol{x}-\boldsymbol{z}_{i}^{N})-
F_{N}(\boldsymbol{x})\right]}
\left[f(\boldsymbol{x}+(\boldsymbol{z}_{i+1}^{N}
-\boldsymbol{z}_{i}^{N}))-f(\boldsymbol{x})\right]\;.
\end{equation*}
Performing a second-order Taylor expansion, and recalling that
$|\boldsymbol{x}| =O(\varepsilon_{N})$ for $\boldsymbol{x}
\in\overline{\mathcal{B}}_{N}$, we obtain that $\mathcal{L}_{N}
f(\boldsymbol{x}) = (1+\varepsilon_{N}) \widetilde {\mathcal{L}}_{N}
f(\boldsymbol{x})$, where $\widetilde {\mathcal{L}}_{N}$ is the
second-order differential operator given by
\begin{equation*}
\widetilde{\mathcal{L}}_{N}f(\boldsymbol{x}) \;=\; 
\frac{1}{2N^{2}} \, \sum_{i=0}^{L-1}\boldsymbol{w}_{i}^{\dagger}
[D^{2}f(\boldsymbol{x})] \boldsymbol{w}_{i} \;-\; \frac{1}{N}\,
\mathbb{M}\boldsymbol{x}\cdot\nabla f(\boldsymbol{x})\;,
\end{equation*}
where $\boldsymbol{w}_{i}=\boldsymbol{z}_{i+1}-\boldsymbol{z}_{i}$,
$0\le i<L$. It is not difficult to check that the function
$V_{N}:\overline{\mathcal{B}}_{N} \rightarrow(0,\,1)$ defined by
\begin{equation}
\label{06b}
V_{N}(\boldsymbol{x}) \;=\; \int_{-\infty}^{\sqrt{\alpha
    N}(\boldsymbol{x}\cdot\boldsymbol{v})}
\frac{1}{\sqrt{2\pi}}e^{-\frac{1}{2}y^{2}}dy\;,
\end{equation}
solves the equation $\widetilde{\mathcal{L}}_{N}V_{N}=0$ on
$\mathcal{B}_{N}$.  The function $V_{N}$ is therefore the natural
candidate for the approximation of the equilibrium potential
$V_{\mathcal{E}_{N}^{1}, \mathcal{E}_{N}^{2}}$ on $\mathcal{B}_{N}$.

The next result states that $\mathcal{L}_{N}V_{N}(\boldsymbol{x})$ is
small in the set $\mathcal{B}_{N}$, as expected from its definition.

\begin{lemma}
\label{s01}
There exists a finite constant $C_0<\infty$, independent of $N$,
such that
\begin{equation*}
|\mathcal{L}_{N}V_{N}(\boldsymbol{x})| \;\le\; 
C_0\, \frac{\varepsilon_{N}^{2}}{\sqrt{N}} \, 
\exp\Big\{ - \frac 12\, \alpha N(\boldsymbol{x}
\cdot\boldsymbol{v})^{2}\Big\} 
\end{equation*}
for all $\boldsymbol{x}\in\mathcal{B}_{N}$.
\end{lemma}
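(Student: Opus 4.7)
The plan is to start from the jump-rate expression
\begin{equation*}
\mathcal{L}_{N}V_{N}(\boldsymbol{x})\;=\;\sum_{j=0}^{L-1}R_{j}^{N}(\boldsymbol{x}-\boldsymbol{z}_{j}^{N})\,\bigl[V_{N}(\boldsymbol{x}+N^{-1}\boldsymbol{w}_{j})-V_{N}(\boldsymbol{x})\bigr]
\end{equation*}
provided by \eqref{11}, to Taylor-expand both the rate and the increment of $V_{N}$, and to observe that the leading terms cancel out exactly, leaving only remainders of the desired order. This cancellation is in fact just the identity $\widetilde{\mathcal{L}}_{N}V_{N}=0$ that motivated the definition of $V_{N}$.

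Exploiting $|\boldsymbol{x}|=O(\varepsilon_{N})$ on $\mathcal{B}_{N}$ and the Lipschitz regularity of $\mathrm{Hess}\,F$, the Taylor expansion of $F_{N}$ about $\boldsymbol{x}$ gives $R_{j}^{N}(\boldsymbol{x}-\boldsymbol{z}_{j}^{N})=1+(\boldsymbol{z}_{j}-\overline{\boldsymbol{z}})\cdot\mathbb{H}\boldsymbol{x}+O(\varepsilon_{N}^{2})$. Setting $y=\sqrt{\alpha N}\,(\boldsymbol{x}\cdot\boldsymbol{v})$, $h_{j}=\sqrt{\alpha/N}\,(\boldsymbol{w}_{j}\cdot\boldsymbol{v})$, and $\phi(y)=(2\pi)^{-1/2}e^{-y^{2}/2}$, Taylor expansion of the normal c.d.f.\ together with $\phi'=-y\phi$ and $\phi''=(y^{2}-1)\phi$ yields
\begin{equation*}
V_{N}(\boldsymbol{x}+N^{-1}\boldsymbol{w}_{j})-V_{N}(\boldsymbol{x})\;=\;h_{j}\phi(y)\,-\,\tfrac{1}{2}h_{j}^{2}\,y\,\phi(y)\;+\;O\!\bigl((1+y^{2})N^{-3/2}\phi(y)\bigr),
\end{equation*}
the cubic remainder bound being valid because $\phi(\xi)\le C\phi(y)$ on the Taylor interval, which follows from $|h_{j}|=O(N^{-1/2})$ and $|y|\le C\sqrt{N}\varepsilon_{N}$.

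Multiplying the two expansions and summing over $j$, three non-trivial contributions arise from the leading pieces. The term $1\cdot h_{j}\phi(y)$ vanishes after summation because the cycle $\gamma$ is closed, \textit{i.e.}, $\sum_{j}\boldsymbol{w}_{j}=\boldsymbol{0}$. The linear-rate times linear-increment term equals $-\sqrt{\alpha/N}\,\phi(y)\,\boldsymbol{v}^{\dagger}\mathbb{M}\boldsymbol{x}=\mu\,y\phi(y)/N$, using $\sum_{j}\boldsymbol{w}_{j}\boldsymbol{z}_{j}^{\dagger}=-\mathbb{A}$ and $\mathbb{M}^{\dagger}\boldsymbol{v}=-\mu\boldsymbol{v}$. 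The constant-rate times quadratic-increment term equals $-\alpha\,(\boldsymbol{v}^{\dagger}\mathbb{A}\boldsymbol{v})\,y\phi(y)/N=-\mu\,y\phi(y)/N$, by $\sum_{j}\boldsymbol{w}_{j}\boldsymbol{w}_{j}^{\dagger}=\mathbb{A}+\mathbb{A}^{\dagger}$ and the identity $\alpha\,\boldsymbol{v}^{\dagger}\mathbb{A}\boldsymbol{v}=\mu$ from \eqref{06}. These two cancel, which is precisely the assertion $\widetilde{\mathcal{L}}_{N}V_{N}=0$.

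All remaining contributions are error terms: the $O(\varepsilon_{N}^{2})$ remainder of the rate multiplied by each piece of the increment expansion, the cross term $[(\boldsymbol{z}_{j}-\overline{\boldsymbol{z}})\cdot\mathbb{H}\boldsymbol{x}]\cdot h_{j}^{2}\,y\phi(y)$, and the cubic Taylor remainder of $V_{N}$. Using $|\boldsymbol{z}_{j}-\overline{\boldsymbol{z}}|=O(1)$, $|\mathbb{H}\boldsymbol{x}|=O(\varepsilon_{N})$, $|h_{j}|=O(N^{-1/2})$, $|y|\le C\sqrt{N}\varepsilon_{N}$ and $N\varepsilon_{N}^{2}\uparrow\infty$, each such contribution is dominated by $C\,\varepsilon_{N}^{2}N^{-1/2}\phi(y)$. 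Recalling $\phi(y)=(2\pi)^{-1/2}\exp\{-\alpha N(\boldsymbol{x}\cdot\boldsymbol{v})^{2}/2\}$ yields the claim. The main obstacle is purely bookkeeping: one must verify systematically that none of the product-remainder terms beats the threshold $\varepsilon_{N}^{2}N^{-1/2}$, which uses in an essential way the relation $N\varepsilon_{N}^{2}\uparrow\infty$ from \eqref{vare1}.
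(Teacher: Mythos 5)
Your proof is correct and follows essentially the same route as the paper: both arguments reduce $\mathcal{L}_{N}V_{N}$ to $\mathcal{L}_{N}V_{N}-\widetilde{\mathcal{L}}_{N}V_{N}$ via the identity $\widetilde{\mathcal{L}}_{N}V_{N}=0$ (which you verify explicitly through the cancellation of the $\mu\,y\phi(y)/N$ terms) and then control the discrepancy by Taylor expansion of the rates and of $V_{N}$, using $|\boldsymbol{x}|=O(\varepsilon_{N})$ and $N\varepsilon_{N}^{2}\uparrow\infty$. The only cosmetic difference is that you track the one-dimensional Gaussian density in the variable $y=\sqrt{\alpha N}(\boldsymbol{x}\cdot\boldsymbol{v})$ where the paper quotes bounds on the partial derivatives $\partial^{k}V_{N}$, $k\le 3$.
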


\begin{proof}
A straightforward calculation gives that there exists a finite
constant $C_0$ such that
\begin{equation*}
\big|\, \partial^k V_{N}(\boldsymbol{x})\,\big| \;\le\;
C_0 \, \sqrt{N}  \, (\varepsilon_N N)^{k-1} \, \exp\Big\{ - \frac 12\,
\alpha N(\boldsymbol{x}\cdot\boldsymbol{v})^{2}\Big\} 
\end{equation*}
for all $1\le k\le 3$, $\bs x\in \overline{\mathcal{B}}_N$, where $\partial^k
V_{N}$ represents any partial derivative of $V_N$ of order $k$.  On
the other hand, by the Taylor expansion, $\partial_j F(\boldsymbol{x})
\;=\; (\bb H \, \bs x)_j + O(\varepsilon^2_{N})$ so that
\begin{equation*}
-N\, [\overline{F}_{N}(\boldsymbol{x}-\boldsymbol{z}_{i}^{N})
-F_{N}(\boldsymbol{x})] \;=\;
(\boldsymbol{z}_{i}-\bar{\boldsymbol{z}})\cdot
\bb H\, \boldsymbol{x} \,+\, O(\varepsilon^2_{N})\;.
\end{equation*}
It follows from these estimates and from the fact that
$\widetilde{\mathcal{L}}_{N} V_{N}$ vanishes on $\mc{B}_N$ that
\begin{equation*}
\big|\, (\mc L_N V_{N}) (\boldsymbol{x})\,\big| \;=\;
\big|\, (\mc L_N V_{N}) (\boldsymbol{x}) - 
(\widetilde{\mc L}_N V_{N}) (\boldsymbol{x})\,\big| \;\le\;
C_0 \, N^{-1/2} \varepsilon_{N}^{2} \, e^{- (1/2) \alpha N
(\boldsymbol{x}\cdot\boldsymbol{v})^{2}}\;,
\end{equation*}
as claimed.
\end{proof}

The next lemma asserts that the value of the function $V_{N}$ at
the boundary of $\mc B_N$ is close to the one of the equilibrium
potential. For $r>0$, let 
\begin{align*}
&\partial_r^1 \mc B_N \;=\; \big\{\bs{x}\in\overline{\mc B}_N: 
\textup{dist}(\bs{x},\ms{P}_{\varepsilon_N})\le N^{-1}r
\big\}\;,\\
&\qquad \partial_r^2 \mc B_N\; =\; \big\{\bs{x}\in\overline{\mc B}_N: 
\textup{dist}(\bs{x},\ms{P}_{-\varepsilon_N})\le N^{-1}r
\big\}\;,
\end{align*}
where $\textup{dist}(\cdot,\cdot)$ denotes the usual Euclidean
distance. 

\begin{lemma}
\label{s18}
For every $r>0$, there exist constants $0<c_0<C_0$, independent of
$r$ and $N$, and $N_0=N_0(r)$ such that
\begin{align*}
& e^{-N[\overline{F}_{N}(\boldsymbol{x})-H]}\left[1-V_{N}(\boldsymbol{x})\right]^{2}
\;\le\; C_0\, e^{-c_0 N\varepsilon_{N}^{2}}\;, \quad \forall\, 
\boldsymbol{x}\in\partial_r^1 \mc B_N \;, \\
&\qquad e^{-N[\overline{F}_{N}(\boldsymbol{x})-H]}
V_{N}(\boldsymbol{x})^{2}
\;\le\;  C_0\, e^{-c_0 N\varepsilon_{N}^{2}}\;,\quad
\forall\, \bs{x}\in\partial_r^2 \mc B_N \;,
\end{align*}
for all $N\ge N_0$.
\end{lemma}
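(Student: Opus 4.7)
The plan is to prove the bound on $\partial_r^1\mc B_N$; the one on $\partial_r^2\mc B_N$ is entirely symmetric under the swap $V_N\leftrightarrow 1-V_N$, $\varepsilon_N\leftrightarrow -\varepsilon_N$. Since $F$ is $C^2$ with Lipschitz Hessian near $\bs 0$, $F(\bs 0)=H$, $\nabla F(\bs 0)=0$, and $\overline{F}_N=F+O(1/N)$, the Taylor expansion gives, for every $\bs x\in\overline{\mc B}_N$,
\[
-N[\overline{F}_N(\bs x) - H] \;=\; -\tfrac{N}{2}\,\bs x^\dagger \bb H\,\bs x \;+\; o(N\varepsilon_N^2),
\]
using $N\varepsilon_N^3\downarrow 0$ to absorb the cubic remainder. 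I then split on the sign of $\bs x\cdot\bs v$.

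In the \emph{easy case} $\bs x\cdot\bs v\ge 0$, the Gaussian tail bound $1-\Phi(a)\le e^{-a^2/2}$ for $a\ge 0$, applied to \eqref{06b}, yields $(1-V_N(\bs x))^2\le e^{-\alpha N(\bs x\cdot\bs v)^2}$. Multiplying this into the expansion above converts the exponent into $-(N/2)\,\bs x^\dagger(\bb H + 2\alpha\bs v\bs v^\dagger)\,\bs x + o(N\varepsilon_N^2)$; the positive definiteness of $\bb H+2\alpha\bs v\bs v^\dagger$ from Lemma~\ref{s17} combined with $|\bs x|^2\ge (\bs x\cdot\bs u_1)^2\ge (\varepsilon_N-r/N)^2 \ge \tfrac12\varepsilon_N^2$ (valid for $N\ge N_0(r)$) yields the claimed bound with an exponential rate determined by the smallest eigenvalue of $\bb H+2\alpha\bs v\bs v^\dagger$.

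In the \emph{hard case} $\bs x\cdot\bs v<0$, the Gaussian tail is useless and the work is to show that the sign constraint itself forces $\overline{F}_N(\bs x)-H$ to be positive and of order $\varepsilon_N^2$. Since $v_1>0$ and $\bs x\cdot\bs u_1\ge \varepsilon_N-r/N>0$, the inequality $v_1(\bs x\cdot\bs u_1) + \sum_{k\ge 2}v_k(\bs x\cdot\bs u_k)<0$ gives $\sum_{k\ge 2}v_k(\bs x\cdot\bs u_k) \le -v_1(\varepsilon_N-r/N)<0$, and, in particular, $\sigma := \sum_{k\ge 2} v_k^2/\lambda_k > 0$. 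Cauchy--Schwarz then yields $\sum_{k\ge 2}\lambda_k(\bs x\cdot\bs u_k)^2 \ge v_1^2(\varepsilon_N-r/N)^2/\sigma$, so that
\[
2[\overline{F}_N(\bs x)-H] \;\ge\; \Big(\frac{v_1^2}{\sigma} - \lambda_1\Big)(\varepsilon_N-r/N)^2 \;+\; o(\varepsilon_N^2).
\]
Rearranging \eqref{02} gives $v_1^2/\sigma - \lambda_1 = \lambda_1/(\alpha\sigma) > 0$, which is the crux: it secures a strictly positive coefficient. Combined with the trivial bound $(1-V_N(\bs x))^2\le 1$, this closes the case.

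Taking the smaller of the two exponential rates provides $c_0$, while $C_0$ absorbs the constants from the Taylor expansion, the Gaussian-tail bound, and the $o(N\varepsilon_N^2)$ terms. The main obstacle is clearly the second case: the Gaussian bound offers no decay when $\bs x\cdot\bs v<0$, and one must exploit the coupling between the saddle geometry and the direction $\bs v$ (captured by \eqref{02}) to show that $\bs x\cdot\bs v<0$ with $\bs x\cdot\bs u_1\approx \varepsilon_N$ necessarily drives $\bs x$ into the ``expensive'' stable directions of $F$, making $\overline{F}_N$ genuinely larger than $H$ by an amount comparable to $\varepsilon_N^2$.
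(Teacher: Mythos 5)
Your proof is correct, and it runs on the same engine as the paper's: the Gaussian tail bound combined with the positive definiteness of $\bb H+2\alpha\bs v\bs v^\dagger$ (Lemma \ref{s17}) on one side, and Cauchy--Schwarz together with the identity \eqref{02} forcing $\bs x^\dagger\bb H\bs x\gtrsim\varepsilon_N^2$ on the other. The one genuine difference is the decomposition. The paper splits according to whether $\bs x\cdot\bs v\ge\delta\varepsilon_N$ or $\bs x^\dagger\bb H\bs x\ge\delta\varepsilon_N^2$, where $\delta$ is chosen through the inequality $(v_1^2/\lambda_1-1/\alpha)(\lambda_1+\delta)<(v_1-\delta)^2$, and the dichotomy itself is established by contradiction. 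You split at $\bs x\cdot\bs v=0$: since $1-\Phi(a)\le e^{-a^2/2}$ holds for all $a\ge 0$ (not just $a\gtrsim\sqrt{N}\varepsilon_N$), the Gaussian-tail case absorbs the whole half-space $\{\bs x\cdot\bs v\ge 0\}$, and on the complementary half-space the same Cauchy--Schwarz computation is run forwards rather than as a reductio, with the quantitative gap $v_1^2/\sigma-\lambda_1=\lambda_1/(\alpha\sigma)$ read off directly from \eqref{02}. This buys you a cleaner argument — no auxiliary $\delta$, no contradiction — at the cost of nothing; you also correctly note that the hard case forces $\sigma=\sum_{k\ge 2}v_k^2/\lambda_k>0$, so the division by $\sigma$ is always legitimate. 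The only bookkeeping point worth making explicit is that bounding $-\lambda_1(\bs x\cdot\bs u_1)^2$ from below uses $\bs x\cdot\bs u_1\le\varepsilon_N+r/N$, whence the common factor in your display should be $(\varepsilon_N+r/N)^2$ for that term; the discrepancy is $O(r\varepsilon_N/N)=o(\varepsilon_N^2)$ and is harmlessly absorbed in your error term.
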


\begin{proof}
We prove the first estimate, the second one being identical. By
definition of $V_{N}$ and by a Taylor expansion of $F_N$, it suffices to
show that there exist $c_0>0$ such that
\begin{equation}
\label{05}
e^{-\frac{N}{2}\boldsymbol{x}^{\dagger}\mathbb{H}\boldsymbol{x}}
\Big( \int_{\sqrt{\alpha
    N}(\boldsymbol{x}\cdot\boldsymbol{v})}^{\infty}
\frac{1}{\sqrt{2\pi}}e^{-\frac{1}{2}y^{2}}dy\Big)^{2}
\;\le\; e^{-c_0 N\varepsilon_{N}^{2}}
\end{equation}
for all $\bs x\in \partial_r^{1}\mathcal{B}_{N}$.

In view of \eqref{02}, let $0<\delta<v_1$ such that
\begin{equation*}
\Big(\frac{v_{1}^{2}}{\lambda_{1}}-\frac{1}{\alpha}\Big)\, 
(\lambda_{1}+\delta) \;<\; (v_{1}-\delta)^{2}\;.
\end{equation*}
We claim that there exists $N_0 = N_0(r)$ such that for all $N\ge N_0$
and for all $\boldsymbol{x} \in\partial_r^{1} \mathcal{B}_{N}$ either
$\boldsymbol{x}\cdot\boldsymbol{v} \ge \delta\varepsilon_{N}$ or
$\boldsymbol{x}^{\dagger} \mathbb{H}\boldsymbol{x} \ge
\delta\varepsilon_{N}^{2}$ holds.  

Indeed, fix $N\ge 1$, $\bs x \in \partial_r^{1}\mathcal{B}_{N}$ and
suppose that $\boldsymbol{x}\cdot\boldsymbol{v}<\delta\varepsilon_{N}$
and $\boldsymbol{x}^{\dagger} \mathbb{H} \boldsymbol{x}
<\delta\varepsilon_{N}^{2}$. Since $\boldsymbol{x}$ belongs to
$\partial_r^{1}\mathcal{B}_{N}$, on the basis $(\bs u_1, \dots, \bs
u_d)$ it can be expressed as
\begin{equation*}
\boldsymbol{x} \;=\; \varepsilon_{N} (1+r_N) \boldsymbol{u}_{1} \;+\;
\varepsilon_{N} \sum_{k=2}^{d}x_{k} \boldsymbol{u}_{k} \;,
\end{equation*}
where $\varepsilon_{N} |r_N| \le r / N$. Since $\bs v = \sum_i v_i \bs
u_i$, the two conditions on $\bs x$ can be written as 
\begin{equation*}
(1+r_N) v_{1}-\delta \;<\; -\sum_{k=2}^{d}x_{k}v_{k}
\quad\text{and}\quad 
\sum_{k=2}^{d}x_{k}^{2}\lambda_{k} \;<\; \lambda_{1}(1+r_N)^2 \;+\; \delta\;.
\end{equation*}
Since $\delta<v_1$, there exists $N_1=N_1(\delta, r)$ such that
$(1+r_N) v_{1}-\delta >0$ for all $N\ge N_1$. Hence, by taking the
square in the first inequality and by applying the Cauchy-Schwarz
inequality and recalling \eqref{02}, we have that
\begin{align*}
&\big[(1+r_N) v_{1}-\delta\big]^2 \;\le\;  
\Big( -\sum_{k=2}^{d}x_{k}v_{k} \Big)^{2}\;\le\; 
\sum_{k=2}^{d}\frac{v_{k}^{2}}{\lambda_{k}}\,
\sum_{k=2}^{d}x_{k}^{2}\lambda_{k}  \\
&\quad<\;
\big[\lambda_{1}(1+r_N)^2 \;+\; \delta \big]
\sum_{k=2}^{d}\frac{v_{k}^{2}}{\lambda_{k}} 
\;\le\; \big[\lambda_{1}(1+r_N)^2 \;+\; \delta \big]
\Big(\frac{v_{1}^{2}}{\lambda_{1}}-\frac{1}{\alpha}\Big)
\;.
\end{align*}
Since $r_N\to 0$, there exists $N_2=N_2(r)$ such that for all $N\ge
N_2$ this relation is a contradiction with the definition of
$\delta$, which proves the claim.

Assume first that $\boldsymbol{x} \cdot\boldsymbol{v}
\ge\delta\varepsilon_{N}$. In this case, since $N\varepsilon_{N}^2
\uparrow\infty$, for $N$ sufficiently large the left hand side of
\eqref{05} is bounded above by
\begin{equation*}
e^{-\frac{N}{2}\boldsymbol{x}^{\dagger}\mathbb{H}\boldsymbol{x}}
\Big(\frac{1}{\sqrt{\alpha N}(\boldsymbol{x}\cdot\boldsymbol{v})}
\frac{1}{\sqrt{2\pi}}e^{-\frac{1}{2}\alpha N
(\boldsymbol{x}\cdot\boldsymbol{v})^{2}}\Big)^{2} \;\le\; 
e^{-\frac{N}{2}\boldsymbol{x}^{\dagger}(\mathbb{H}+2\alpha\boldsymbol{v}
\boldsymbol{v}^{\dagger})\boldsymbol{x}}
\;\le\; e^{-\frac{\tau}{2}N|\boldsymbol{x}|^{2}}\;,
\end{equation*}
where $\tau$ is the smallest eigenvalue of the positive definite
matrix $\mathbb{H}+2\alpha\boldsymbol{v}\boldsymbol{v}^{\dagger}$. To
complete the proof of \eqref{05}, with $c_0=\tau/2$ and under the
hypothesis that $\boldsymbol{x} \cdot\boldsymbol{v}
\ge\delta\varepsilon_{N}$, it remains to recall that
$|\boldsymbol{x}|\ge\varepsilon_{N}- r/N$ for
$\boldsymbol{x}\in\partial_r^{1}\mathcal{B}_{N}$.

Suppose now that $\boldsymbol{x}^{\dagger}
\mathbb{H}\boldsymbol{x}\ge\delta\varepsilon_{N}^{2}$. In this case,
the left hand side of \eqref{05} is bounded above by $ \exp\{- (N/2)
\boldsymbol{x}^{\dagger}\mathbb{H} \boldsymbol{x}\} \le \exp\{ -(N/2)
\delta \varepsilon_{N}^{2}\}$, which completes the proof of the lemma.
\end{proof}

Denote by $\mathcal{D}_{N}(f;U)$ the Dirichlet form of a function $f:
\Xi_{N} \to \bb R$ restricted to a subset
$U\subset\widehat{\Xi}_{N}$:
\begin{equation*}
\mathcal{D}_{N}(f;U) \;=\; \sum_{\bs{x}\in
  U} \mathcal{D}_{N,\boldsymbol{x}}(f) \;.
\end{equation*}
Let $\{\kappa_{N}: N\ge 1\}$ be the sequence
\begin{equation}
\label{13}
\kappa_{N} \;=\; \frac 1{Z_{N}}\, (2\pi N)^{\frac{d}{2}-1}\, e^{-NH}\;,
\end{equation}
where $H$ represents the height of the saddle points.  We also recall from \eqref{45} that
\begin{equation*}
\omega_{\bs{0}}=e^{-G(\bs{0})}\frac{\mu}{\sqrt{-\det\mathbb{H}}}\;.
\end{equation*}

\begin{proposition}
\label{s02}
We have that
\begin{equation*}
\mathcal{D}_{N}(V_{N};\,\mathring{\mathcal{B}}_{N}) 
\; =\; \big[ 1+ o_N(1) \big]\, \kappa_N \,\omega_{\bs{0}}\;.
\end{equation*}
\end{proposition}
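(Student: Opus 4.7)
The plan is to obtain a uniform pointwise asymptotic
\begin{equation*}
\mathcal{D}_{N,\bs x}(V_N)\;=\;[1+o_N(1)]\,\frac{\mu\, e^{-G(\bs 0)}}{2\pi N\, Z_N}\,e^{-NH}\,
\exp\Big\{-\frac{N}{2}\bs x^\dagger(\mathbb{H}+2\alpha\bs v\bs v^\dagger)\bs x\Big\}
\end{equation*}
on $\mathring{\mc B}_N$, and then to sum over $\bs x$ via Lemma \ref{s25}. The arithmetic cancellation $\mu\, e^{-G(\bs 0)}/\sqrt{-\det\mathbb{H}}=\omega_{\bs 0}$ and $(2\pi N)^{d/2}/(2\pi N)=(2\pi N)^{d/2-1}$, combined with the $e^{-NH}/Z_N$ factor, assembles into $\kappa_N\,\omega_{\bs 0}$.

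For the bracket $\sum_i[V_N(\bs x+\bs z_{i+1}^N)-V_N(\bs x+\bs z_i^N)]^2$, I would Taylor expand the increments of $V_N$. Differentiation gives $\nabla V_N(\bs y)=\sqrt{\alpha N/(2\pi)}\,\bs v\,\exp\{-\alpha N(\bs y\cdot\bs v)^2/2\}$, and a mean value argument yields
\begin{equation*}
V_N(\bs x+\bs z_{i+1}^N)-V_N(\bs x+\bs z_i^N)\;=\;N^{-1}\,\bs w_i\cdot\nabla V_N(\widetilde{\bs y}_i)\;,\qquad \bs w_i=\bs z_{i+1}-\bs z_i\;,
\end{equation*}
for an intermediate point $\widetilde{\bs y}_i$ with $|\widetilde{\bs y}_i-\bs x|=O(1/N)$. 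Since $|\bs x|=O(\varepsilon_N)$ on $\mathring{\mc B}_N$, the exponents satisfy $\alpha N[(\widetilde{\bs y}_i\cdot\bs v)^2-(\bs x\cdot\bs v)^2]=O(\varepsilon_N)$, so that $\nabla V_N(\widetilde{\bs y}_i)=[1+O(\varepsilon_N)]\nabla V_N(\bs x)$. Squaring, summing, and using the identity for $(\mathbb{A}+\mathbb{A}^\dagger)/2$ stated right after \eqref{36} together with \eqref{06} give
\begin{equation*}
\sum_{i=0}^{L-1}(\bs w_i\cdot\bs v)^2\;=\;\bs v^\dagger(\mathbb{A}+\mathbb{A}^\dagger)\bs v\;=\;2\bs v^\dagger\mathbb{A}\bs v\;=\;\frac{2\mu}{\alpha}\;,
\end{equation*}
so that the full sum of squares equals $[1+O(\varepsilon_N)]\,(\mu/\pi N)\,e^{-\alpha N(\bs x\cdot\bs v)^2}$.

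For the exponential prefactor $e^{-N\overline{F}_N(\bs x)}$, I would Taylor expand $F_N$ at the saddle: using $\nabla F(\bs 0)=0$, $F(\bs 0)=H$, the uniform convergence of $G_N$ to $G$, and the fact that the Hessian terms involving $\bs z_i^N$ contribute only $O(\varepsilon_N/N)$ after averaging over $i$, one obtains
\begin{equation*}
N\overline{F}_N(\bs x)\;=\;NH\;+\;\tfrac{N}{2}\bs x^\dagger\mathbb{H}\bs x\;+\;G(\bs 0)\;+\;O(\varepsilon_N)\;+\;O(N\varepsilon_N^3)
\end{equation*}
uniformly on $\mathring{\mc B}_N$. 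Both error terms are $o_N(1)$ by \eqref{vare1}. Substituting into $\mathcal{D}_{N,\bs x}(V_N)=(2Z_N)^{-1}e^{-N\overline{F}_N(\bs x)}\sum_i[\cdots]^2$, and rewriting $\alpha N(\bs x\cdot\bs v)^2=(N/2)\bs x^\dagger(2\alpha\bs v\bs v^\dagger)\bs x$, produces the announced pointwise asymptotic. Summation over $\mathring{\mc B}_N$ via Lemma \ref{s25} closes the argument.

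The main technical point is verifying that the error terms $O(\varepsilon_N)$ from the $V_N$-increment expansion and $O(N\varepsilon_N^3)$ from the cubic Taylor remainder of $F_N$ are uniformly $o_N(1)$ on all of $\mathring{\mc B}_N$; this is precisely what the scaling \eqref{vare1} was engineered to guarantee, and is the reason for the particular choice of mesoscopic window $\mc B_N$.
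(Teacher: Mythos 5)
Your proposal is correct and follows essentially the same route as the paper's proof: decompose the Dirichlet form over cycles, Taylor expand the increments of $V_N$ uniformly on $\mathring{\mc B}_N$, invoke the identity $\sum_i(\bs w_i\cdot\bs v)^2=\bs v^\dagger(\mathbb A+\mathbb A^\dagger)\bs v=2\mu/\alpha$, expand $\overline F_N$ at the saddle, and sum the resulting Gaussian weight via Lemma \ref{s25}. The arithmetic assembling $\kappa_N\,\omega_{\bs 0}$ checks out, and your error accounting ($O(\varepsilon_N)$ and $O(N\varepsilon_N^3)$, both $o_N(1)$ by \eqref{vare1}) is exactly the control the paper relies on.
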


\begin{proof}
By (\ref{eq: Decomposition of Diri Form -2}), we can write Dirichlet
form as 
\begin{equation*}
\mathcal{D}_{N}(V_{N};\,\mathring{\mathcal{B}}_{N}) 
\;=\; \frac{1}{2}\sum_{\boldsymbol{x}\in\mathring{\mathcal{B}}_{N}}
Z_{N}^{-1}e^{-N\overline{F}_{N}(\boldsymbol{x})}\sum_{i=0}^{L-1}
[V_{N}(\boldsymbol{x}+\boldsymbol{z}_{i+1}^{N})-
V_{N}(\boldsymbol{x}+\boldsymbol{z}_{i}^{N})]^{2}.
\end{equation*}
In view of the definition \eqref{06b} of $V_N$ and by the Taylor expansion, 
\begin{equation*}
V_{N}(\boldsymbol{x}+\boldsymbol{z}_{i+1}^{N})-
V_{N}(\boldsymbol{x}+\boldsymbol{z}_{i}^{N}) \;=\; 
\big[1+o_{N}(1)\big]\, \sqrt{\frac{\alpha}{2\pi N}}\,
\big[(\boldsymbol{z}_{i+1}-\boldsymbol{z}_{i}) \cdot
\boldsymbol{v}\big]
\, e^{-\frac{1}{2}\alpha N(\boldsymbol{x}\cdot\boldsymbol{v})^{2}}
\end{equation*}
where the error term is $o_N(1)$ uniformly in $x\in \mc{B}_N$ and $0\le i<L$. 
In particular, the right hand side of the penultimate formula can be
rewritten as
\begin{equation*}
\big[1+o_{N}(1)\big]  \, 
\frac{\alpha \,e^{-NH-G(\bs{0})}}{4\pi N Z_{N}} \sum_{i=0}^{L-1}
\big[(\boldsymbol{z}_{i+1}-\boldsymbol{z}_{i})\cdot\boldsymbol{v}\big]^{2}
\sum_{x\in\mathring{\mathcal{B}}_{N}}
e^{-N\left\{ \frac 1 2 \, \boldsymbol{x}^{\dagger}
\mathbb{H}\boldsymbol{x}+ \alpha(\boldsymbol{x}\cdot\boldsymbol{v})^{2}\right\}}\;.  
\end{equation*}
To complete the proof, it remains to use the relation
\begin{equation*}
\sum_{i=0}^{L-1}\left[(\boldsymbol{z}_{i+1}-\boldsymbol{z}_{i})
\cdot\boldsymbol{v}\right]^{2} \;=\;
\boldsymbol{v}^{\dagger}[\mathbb{A} + \bb A^\dagger] \boldsymbol{v}\;=\;
2\boldsymbol{v}^{\dagger}\mathbb{A}\boldsymbol{v}\;=\;\frac{2\mu}{\alpha}\;,
\end{equation*}
where the last identity follows from the definition \eqref{06} of
$\alpha$, and to recall the statement of Lemma \ref{s25}.
\end{proof}

\subsection{Adjoint dynamics}
\label{ad}

We have presented in this section an approximation $V_N$ of the
equilibrium potential $V_{\mc E^1_N,\mc E^2_N}$. All the arguments
presented in this section can be carried to the adjoint process,
providing an approximation, denoted by $V^*_N$, of the equilibrium
potential $V^*_{\mc E^1_N,\mc E^2_N}$.

Indeed, denote by $\mathbb{M}^{*}$ the Jacobian of the adjoint drift
$b^*$: $\mathbb{M}^{*}: =Db^{*} (\boldsymbol{\bs \sigma})$. The
Jacobian can be written as $\mathbb{M}^{*}= \mathbb{A}^{\dagger}
\mathbb{H}= \mathbb{H}^{-1} \mathbb{M}^{\dagger} \mathbb{H}$, and thus
the eigenvalues of $(\mathbb{M}^{*})^{\dagger}$ coincide with the ones
of $\bb M$. In particular, the unique negative eigenvalue of $\bb
M^*$, denoted by $-\mu^*$, is equal to $-\mu$. Let $\bs v^*$ be the
associated eigenvector.

We may define $\alpha^*$ as $\alpha$ has been defined in \eqref{02} by
replacing $\bs v$ by $\bs v^*$ and $\bb A$ by $\bb
A^\dagger$. Clearly, all identities presented for $\alpha$ also hold
for $\alpha^*$ with $\bs v$ replaced by $\bs v^*$. Lemmata \ref{s17}
and \ref{s21} are also in force with the ad-hoc modifications.

The definition of the sets $\mc C_N$ and $\mc B_N$ depend only on the
Hessian of $F$ at the saddle point, and therefore coincide for the
adjoint dynamics. Lemma \ref{s25} to Corollary \ref{s26} holds for the
adjoint if we replace everywhere $\alpha$, $\bs v$ by $\alpha^*$, $\bs
v^*$, respectively.

Finally, if we define the function $V^*_{N}:\overline{\mathcal{B}}_{N}
\rightarrow (0,1)$ by
\begin{equation*}
V^*_{N}(\boldsymbol{x}) \;=\; \int_{-\infty}^{\sqrt{\alpha^*
    N}(\boldsymbol{x}\cdot\boldsymbol{v}^*)}
\frac{1}{\sqrt{2\pi}}e^{-\frac{1}{2}y^{2}}dy\;,
\end{equation*}
we can prove all statements presented from Lemma \ref{s01} to
Proposition \ref{s02} with $\mc L^*_N$, $V^*_N$, $\alpha^*$, $\bs v^*$
replacing $\mc L_N$, $V_N$, $\alpha$, $\bs v$, respectively.

\section{Flows at saddle points} 
\label{sec:flow}

In the previous section, for a fixed saddle point $\bs \sigma \in\mf
S_{1,2}$, we introduced the functions $V_N$, $V^*_N$ which approximate
the equilibrium potential $V_{\mc E^1_N, \mc E^2_N}$, $V^*_{\mc E^1_N,
  \mc E^2_N}$, respectively, in a mesoscopic neighborhood
$\overline{\mc B}_N$ of $\bs{\sigma}$.  In this section we present
flows which approximate the flows $\Phi^*_{V_N}$, $\Phi_{V^*_N}$ in
$\mc B_N$.  These flows, indexed by the saddle points, are the
building blocks on which we construct, in the next sections, the flows
approximating $\Phi^*_{V_{\mc E_N(A), \mc E_N(B)}}$, $\Phi_{V^*_{\mc
    E_N(A), \mc E_N(B)}}$, $A$, $B\subset S$, $A\cap B= \varnothing$.

We introduce below, in \eqref{40}, the flow $\Phi_N$ and we present in
Section \ref{sub51} its main properties. In Section \ref{sub54} we
present the flow $\Phi^*_N$.

For $\bs z\in \widehat\Xi_N$ and a function
$f:\Xi_{N}\rightarrow\mathbb{R}$, define a flow
$\Phi_{f,\boldsymbol{z}}^{*}$, supported on
$\gamma_{\boldsymbol{z}}^{N}$, by
\begin{equation}
\label{34}
\Phi_{f,\boldsymbol{z}}^{*} (\boldsymbol{x},\boldsymbol{y})
\;=\;f(\boldsymbol{x})
c_{\boldsymbol{z}}(\boldsymbol{y},\boldsymbol{x})
-f(\boldsymbol{y})c_{\boldsymbol{z}}(\boldsymbol{x},\boldsymbol{y})\;,
\quad \bs x\,,\; \bs y \in\Xi_N\;.
\end{equation}
Fix a saddle point $\bs \sigma\in \mf S_{1,2}$ and let $\Phi_{N}$ be
the flow defined by
\begin{equation}
\label{40}
\Phi_{N}\;=\;\sum_{\boldsymbol{z}\in\mathring{\mathcal{B}}_{N}}
\Phi_{V_{N},\boldsymbol{z}}^{*}\;,
\end{equation}
where $V_N$ is the approximation of the equilibrium potential
introduced in the previous section. Recall from Section 2 that we
denote by $\bs{m}_i$ one of the global minima of $F$ on $W_i$ and by
$[\bs{m}_i]_N =:\bs{m}_i^N$ the discrete approximation of $\bs{m}_i$.

\begin{theorem}
\label{s24} 
Assume without loss of generality that $\bs \sigma$ belongs to $\mf
S_{1,2}$.  There exist a flow $\widetilde{\Phi}_{N}
\in\mathcal{F}_{N}$ which is divergence-free on
$\{\bs{m}_1^N ,\bs{m}_2^N \}^{c}$, and such that
\begin{gather*}
\big(\textup{div}\,
  \widetilde{\Phi}_{N}\big)(\bs{m}_1^N) 
\;=\;  - \big(\textup{div}\,
  \widetilde{\Phi}_{N}\big)(\bs{m}_2^N)
\;=\;\kappa_N \, \omega_{\bs{0}}\;, \\
\Vert \widetilde{\Phi}_{N} -  \Phi_N \Vert^2 \;=\; \kappa_N\,o_N(1)\;.
\end{gather*}
\end{theorem}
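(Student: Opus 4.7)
The plan is to set $\widetilde\Phi_N = \Phi_N + \chi_N$, where $\chi_N$ is a small-norm correction flow that cancels the divergence of $\Phi_N$ outside $\{\bs m_1^N, \bs m_2^N\}$ and installs divergence $\pm\kappa_N\omega_{\bs 0}$ at those two minima. The two key ingredients are (i) an identification of the net divergence of $\Phi_N$ on each side of the saddle as $\pm\kappa_N\omega_{\bs 0}(1+o_N(1))$, drawing on Proposition~\ref{s02}, and (ii) a routing of $\chi_N$ through the deep wells $W_1, W_2$, where the conductances are exponentially larger than at the saddle, which will guarantee that $\|\chi_N\|^2 = \kappa_N o_N(1)$.

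For step (i), observe that since $\mu_N$ restricted to $\gamma_{\bs z}^{N}$ is stationary for $\mathcal L_{N,\bs z}$, a direct computation yields
\[(\text{div}\,\Phi^*_{V_N,\bs z})(\bs x) \;=\; -\,\mu_N(\bs x)\,(\mathcal L_{N,\bs z}V_N)(\bs x)\,\mathbf{1}\{\bs x\in\gamma_{\bs z}^{N}\}.\]
At any $\bs x$ every incident cycle of which has center in $\mathring{\mathcal B}_N$, summing over $\bs z$ gives $(\text{div}\,\Phi_N)(\bs x) = -\mu_N(\bs x)(\mathcal L_N V_N)(\bs x)$, which Lemma~\ref{s01} controls by $C\mu_N(\bs x)(\varepsilon_N^2/\sqrt N) e^{-\alpha N(\bs x\cdot\bs v)^2/2}$; near $\partial\mathcal B_N$ the partial cycle sums contribute corrections of the same Gaussian form. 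Setting $\overline{\mathcal B}_N^{\pm} := \overline{\mathcal B}_N \cap \{\pm\bs x\cdot\bs u_1 > 0\}$, antisymmetry of $\Phi_N$ gives $(\text{div}\,\Phi_N)(\overline{\mathcal B}_N) = 0$. To pin down the common magnitude of $(\text{div}\,\Phi_N)(\overline{\mathcal B}_N^{\pm})$ I invoke the Abel-type identity
\[\mathcal D_N(V_N;\mathring{\mathcal B}_N) \;=\; \sum_{\bs x} V_N(\bs x)\,(\text{div}\,\Phi_N)(\bs x),\]
which follows from the same stationarity. Lemma~\ref{s18} shows that $V_N$ is exponentially close to $1$ on $\overline{\mathcal B}_N^{+}$ and to $0$ on $\overline{\mathcal B}_N^{-}$ outside a mesoscopic transition neighborhood of $\bs 0$, and the Gaussian estimate combined with Corollary~\ref{s26} gives $\sum|\text{div}\,\Phi_N| = o(\kappa_N)$. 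Hence the right-hand side of the identity differs from $(\text{div}\,\Phi_N)(\overline{\mathcal B}_N^{+})$ by $o(\kappa_N)$, and Proposition~\ref{s02} yields $(\text{div}\,\Phi_N)(\overline{\mathcal B}_N^{+}) = \kappa_N\omega_{\bs 0}(1+o_N(1))$.

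For step (ii), for each $\bs x \in \overline{\mathcal B}_N^{+}$ with nonzero divergence I fix a lattice path $\pi_{\bs x} \subset W_1$ of length $O(N)$ from $\bs m_1^N$ to $\bs x$ staying at potential $\le H$ (such paths exist by connectedness of $W_1^\epsilon$), and route along $\pi_{\bs x}$ a signed flow of strength $-(\text{div}\,\Phi_N)(\bs x)$. The resulting $\chi_N^{+}$ has divergence $-\text{div}\,\Phi_N$ on $\overline{\mathcal B}_N^{+}\setminus\{\bs m_1^N\}$ and $(\text{div}\,\Phi_N)(\overline{\mathcal B}_N^{+})$ at $\bs m_1^N$; a mirror construction in $W_2$ produces $\chi_N^{-}$, and a tiny adjustment flow of strength $o(\kappa_N)$ between $\bs m_1^N$ and $\bs m_2^N$ corrects the two divergences to be exactly $\pm\kappa_N\omega_{\bs 0}$. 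Since $c^s \ge c_0 Z_N^{-1} e^{-Nh_1}$ along any $\pi_{\bs x} \subset W_1$, each path flow contributes at most $((\text{div}\,\Phi_N)(\bs x))^2 \cdot N \cdot Z_N\, e^{Nh_1}/c_0$ to $\|\chi_N^{+}\|^2$; summing over $\bs x$ via the Gaussian estimate of step (i) yields
\[\|\chi_N\|^2 \;\le\; C\,\kappa_N^2\,\mathrm{poly}(N)\,Z_N\,e^{N\max(h_1,h_2)} \;=\; \kappa_N\cdot\mathrm{poly}(N)\cdot e^{-N(H-\max(h_1,h_2))} \;=\; \kappa_N\, o_N(1),\]
since $h_i < H$. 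The principal obstacle is step (i): the careful accounting of partial cycle sums near $\partial\mathcal B_N$ and the verification that the transition neighborhood of the saddle contributes negligibly to the Abel identity, so that the leading-order divergence is unambiguously $\kappa_N\omega_{\bs 0}$; once this identification is secured, the path-flow construction and its norm bound are routine potential-theoretic estimates.
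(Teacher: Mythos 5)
Your step (i) is sound and in fact takes a genuinely different route from the paper's: instead of the direct boundary computation of Proposition \ref{s03} (which identifies $\sum_{\bs x\in\partial^1\mc B_N}(\textup{div}\,\Phi_N)(\bs x)$ through the slice sums of Lemma \ref{s14} and the algebraic identity $\bs u_1^\dagger\bb A^\dagger\bs v=\mu v_1/\lambda_1$), you read off the net divergence on each side of the saddle from the summation-by-parts identity $\sum_{\bs x}V_N(\bs x)(\textup{div}\,\Phi_N)(\bs x)=\mc D_N(V_N;\mathring{\mc B}_N)$ combined with Proposition \ref{s02}, using Lemmata \ref{s19}, \ref{s11} and \ref{s18} to discard the transition region. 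This is correct and shorter, although it still relies on the Gaussian sum of Lemma \ref{s25} hidden inside Proposition \ref{s02}.

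The gap is in step (ii). First, the inequality $c^s\ge c_0Z_N^{-1}e^{-Nh_1}$ along a path from $\bs m_1^N$ into $\overline{\mc B}_N^{+}$ is false: the conductance of an edge equals $Z_N^{-1}e^{-N\overline F_N(\bs z)}$ for the cycle carrying it, and at the saddle end of the path $\overline F_N\approx H$, so the conductance there is only of order $Z_N^{-1}e^{-NH}$, exponentially smaller than $Z_N^{-1}e^{-Nh_1}$. With the correct per-edge cost $r^2\,Z_N\,e^{NF_N(\bs x)}$, your crude input $\sum_{\bs x}r_{\bs x}^2\le(\sum_{\bs x}|r_{\bs x}|)^2\le C\kappa_N^2$ yields only $\Vert\chi_N\Vert^2\le\kappa_N\,\mathrm{poly}(N)$, which is not $\kappa_N o_N(1)$. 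Second, even with the sharper pointwise bound $e^{NF_N(\bs x)}[(\textup{div}\,\Phi_N)(\bs x)]^2\le C\,Z_N^{-2}\varepsilon_N^4N^{-1}e^{-NH}e^{-\frac N2\bs x^\dagger(\bb H+2\alpha\bs v\bs v^\dagger)\bs x}$, routing the \emph{interior} divergence of $\mc B_N$ directly to $\bs m_1^N$ fails: all $\sim(N\varepsilon_N)^d$ paths merge at $\bs m_1^N$, so the overlap count $M_N$ in the Cauchy--Schwarz bound of Lemma \ref{s16} is at least of order $(N\varepsilon_N)^d$, and the resulting estimate $\kappa_N L_NM_N\varepsilon_N^4\sim\kappa_N N^{d+1}\varepsilon_N^{d+4}$ diverges for $d\ge2$ under the constraint $N\varepsilon_N^2\uparrow\infty$ of \eqref{vare1}. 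This is precisely why the paper's proof of Proposition \ref{s15} transfers the divergence in two stages: first along short paths in the $\bs u_1$-direction with length and overlap $O(N\varepsilon_N)$ (yielding the factor $N^2\varepsilon_N^6=o_N(1)$), and only afterwards---once the remaining divergence sits on $\overline{\mc B}_N\setminus\widehat{\mc B}_N$, where it is bounded pointwise by $Z_N^{-1}e^{-NH}e^{-a N\varepsilon_N^2}$ (estimates \eqref{eq: est4} and \eqref{eq: est5})---along the long, heavily overlapping paths to the minima. Finally, your ``tiny adjustment flow'' removing the $o_N(1)$ in the divergence also requires an argument: a single path of strength $\delta_N\kappa_N$ through the saddle has squared norm $\delta_N^2\kappa_N\,\mathrm{poly}(N)$ with $\delta_N$ unquantified, which is why the paper invokes the Thomson principle for the symmetrized dynamics together with the sector condition and the capacity lower bound of Lemma \ref{s09} (Lemma \ref{s20}).
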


The proof of Theorem \ref{s24} is given in Section \ref{sub53}.  As in
the previous section, we assume below that $\bs{\sigma}=\bs{0}$.

\subsection{The divergence of $\Phi_N$.} 
\label{sub51}

In this subsection we examine the divergence of the flow $\Phi_N$.

\begin{lemma}
\label{lf1}
The flow $\Phi_N$ is divergence-free on $\Xi_{N}\setminus
\overline{\mathcal{B}}_{N}$, and for every $\bs x$ in $\mc B_N$
\begin{equation*}
(\textup{div }\Phi_{N})(\boldsymbol{x})\;=\;-\,
\frac 1{Z_N}\, e^{-NF_{N}(\boldsymbol{x})}
(\mathcal{L}_{N}V_{N}) (\boldsymbol{x}) \;.
\end{equation*}
\end{lemma}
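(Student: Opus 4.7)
The plan is to first analyze the divergence of a single summand $\Phi_{V_N,\bs{z}}^{*}$ and then sum over $\bs{z}\in\mathring{\mc B}_N$, exploiting the fact that each cycle supports its own flow and that the conductance along the cycle is constant by \eqref{eq:3}.

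First I would observe that $\Phi_{V_N,\bs{z}}^{*}$ is supported on the oriented edges of $\gamma_{\bs{z}}^{N}$. Indeed, by \eqref{34}, $\Phi_{V_N,\bs{z}}^{*}(\bs{x},\bs{y})\neq 0$ requires $c_{\bs{z}}(\bs{x},\bs{y})+c_{\bs{z}}(\bs{y},\bs{x})>0$, which by the convention after \eqref{eq:3} means $(\bs{x},\bs{y})$ is one of the edges of $\gamma^N_{\bs{z}}$. In particular $(\textup{div}\,\Phi_{V_N,\bs{z}}^{*})(\bs{x})=0$ whenever $\bs{x}\notin\gamma^N_{\bs{z}}$.

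Next, fix $\bs{x}=\bs{z}+\bs{z}_j^N$ on the cycle. The only neighbors contributing to the divergence are $\bs{z}+\bs{z}_{j+1}^N$ and $\bs{z}+\bs{z}_{j-1}^N$. Using the convention that $c_{\bs z}$ is nonzero only in the forward direction of the cycle, together with \eqref{eq:3},
\begin{align*}
\Phi_{V_N,\bs{z}}^{*}(\bs{z}+\bs{z}_j^N,\bs{z}+\bs{z}_{j+1}^N)
&=-V_N(\bs{z}+\bs{z}_{j+1}^N)\,Z_N^{-1}e^{-N\overline F_N(\bs{z})}\;,\\
\Phi_{V_N,\bs{z}}^{*}(\bs{z}+\bs{z}_j^N,\bs{z}+\bs{z}_{j-1}^N)
&=+V_N(\bs{z}+\bs{z}_j^N)\,Z_N^{-1}e^{-N\overline F_N(\bs{z})}\;,
\end{align*}
so that
\begin{equation*}
(\textup{div}\,\Phi_{V_N,\bs{z}}^{*})(\bs{z}+\bs{z}_j^N)
\;=\;Z_N^{-1}e^{-N\overline F_N(\bs{z})}\bigl[V_N(\bs{z}+\bs{z}_j^N)-V_N(\bs{z}+\bs{z}_{j+1}^N)\bigr].
\end{equation*}
Comparing with $(\mc L_{N,\bs{z}}V_N)(\bs{z}+\bs{z}_j^N)=R_j^N(\bs{z})\bigl[V_N(\bs{z}+\bs{z}_{j+1}^N)-V_N(\bs{z}+\bs{z}_j^N)\bigr]$ and using once more that $\mu_N(\bs{z}+\bs{z}_j^N)R_j^N(\bs{z})=Z_N^{-1}e^{-N\overline F_N(\bs{z})}$, I obtain the single-cycle identity
\begin{equation*}
(\textup{div}\,\Phi_{V_N,\bs{z}}^{*})(\bs{x})
\;=\;-\,\mu_N(\bs{x})\,(\mc L_{N,\bs{z}}V_N)(\bs{x})
\;=\;-\,Z_N^{-1}e^{-NF_N(\bs{x})}(\mc L_{N,\bs{z}}V_N)(\bs{x})\;.
\end{equation*}

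Finally I would sum over $\bs{z}\in\mathring{\mc B}_N$. For $\bs{x}\in\Xi_N\setminus\overline{\mc B}_N$, by the definition $\overline{\mc B}_N=\bigcup_{\bs{z}\in\mathring{\mc B}_N}\gamma^N_{\bs{z}}$, no cycle $\gamma^N_{\bs{z}}$ with $\bs{z}\in\mathring{\mc B}_N$ contains $\bs{x}$, so every term vanishes by the support observation, giving $(\textup{div}\,\Phi_N)(\bs{x})=0$. For $\bs{x}\in\mc B_N$, any $\bs{z}$ with $\bs{x}\in\gamma^N_{\bs{z}}$ automatically satisfies $\gamma^N_{\bs{z}}\cap\mc B_N\ni\bs{x}$, hence $\bs{z}\in\mathring{\mc B}_N$; so the restriction $\bs{z}\in\mathring{\mc B}_N$ is vacuous and summing the single-cycle identity over all $\bs{z}\in\widehat\Xi_N$ with $\bs{x}\in\gamma^N_{\bs{z}}$ reproduces $\mc L_N V_N(\bs{x})=\sum_{\bs{z}}\mc L_{N,\bs{z}}V_N(\bs{x})$, yielding the claimed formula. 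There is no substantive obstacle here; the only point requiring care is the bookkeeping between the sets $\mathring{\mc B}_N$ and the cycles passing through $\bs{x}$, which is exactly what forces the distinction between $\mc B_N$ and $\overline{\mc B}_N$ in the statement.
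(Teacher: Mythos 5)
Your proposal is correct and follows essentially the same route as the paper: compute the divergence of each single-cycle flow $\Phi^*_{V_N,\bs z}$ explicitly, identify it with $-\mu_N(\bs x)(\mc L_{N,\bs z}V_N)(\bs x)$ via the constancy of the conductance \eqref{eq:3}, and then sum over $\bs z\in\mathring{\mc B}_N$, observing that for $\bs x\in\mc B_N$ every cycle through $\bs x$ has its base point in $\mathring{\mc B}_N$ so the restriction is vacuous. The bookkeeping between $\mc B_N$, $\mathring{\mc B}_N$ and $\overline{\mc B}_N$ that you flag is exactly the point the paper handles with the indicator $\mathbf 1\{\bs x-\bs z_i^N\in\mathring{\mc B}_N\}$.
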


\begin{proof}
Fix a point $\bs z\in \mathring{\mathcal{B}}_{N}$.  Since the
conductance $c_{\bs z}(\bs x, \bs y)$ vanishes unless $(\bs x, \bs y)
= (\bs z + \bs z^N_i, \bs z + \bs z^N_{i+1})$ for some $0\le i<L$,
\begin{equation*}
\Phi_{V_N,\bs z}^{*} (\boldsymbol{x},\boldsymbol{y})
\;=\; 
\begin{cases}
-V_N(\bs{y}) \, c_{\bs z} (\bs x, \bs y)
 & \text{ if $(\bs x, \bs y) = (\bs z + \bs z^N_i, \bs z + \bs
   z^N_{i+1})$}\;, \\
V_N(\bs y) \, c_{\bs z} (\bs x, \bs y)
 & \text{ if $(\bs x, \bs y) = (\bs z + \bs z^N_{i+1}, \bs z + \bs
   z^N_{i})$}\;, \\
0 & \text{otherwise}\;.
\end{cases}
\end{equation*}
In particular, $\Phi_{V_N,\bs z}^{*} (\bs x, \bs y) = 0$ if $\bs x$ does
not belong to the cycle $\gamma^N_{\bs z}$ so that
\begin{equation*}
(\textup{div }\Phi_{V_N,\boldsymbol{z}}^{*})(\boldsymbol{x}) \;=\; 0
\quad\text{if } \bs x \not\in \gamma^N_{\bs z}\;.
\end{equation*}

By the additivity of the divergence functional, since $\bs x$ belongs
only to the cycles $\gamma^N_{\bs x-\bs z^N_i}$, $0\le i<L$, for every
$\bs x\in\Xi_N$,
\begin{equation*}
(\textup{div }\Phi_{N})(\boldsymbol{x}) \;=\;
\sum_{\boldsymbol{z}\in\mathring{\mathcal{B}}_{N}}
(\textup{div }\Phi_{V_{N},\boldsymbol{z}}^{*})(\boldsymbol{x})
\;=\; \sum_{i=0}^{L-1} (\textup{div }\Phi_{V_{N},\boldsymbol{x}-\boldsymbol{z}_{i}^{N}}^{*})
(\boldsymbol{x}) \, \mathbf{1}\{\boldsymbol{x}-\boldsymbol{z}_{i}^{N}
\in\mathring{\mathcal{B}}_{N}\}\;.
\end{equation*}
Therefore, by definition of $\overline{\mc B}_N$, $(\textup{div
}\Phi_{N})(\boldsymbol{x})=0$ for all $\boldsymbol{x}\in \Xi_{N}
\setminus \overline{\mathcal{B}}_{N}$, which is the firs assertion of
the lemma.

Fix $\boldsymbol{x}\in \overline{\mc B}_N$, $0\le i < L$ and assume
that $\bs x - \bs z^N_i$ belongs to $\mathring{\mc B}_N$. It follows
from the first formula of the proof and from the explicit formula
\eqref{eq:3} for the conductance that
\begin{align*}
&(\textup{div }\Phi_{V_{N},\boldsymbol{x}-\boldsymbol{z}_{i}^{N}}^{*})
(\boldsymbol{x}) \\
&\quad =\; 
\Phi_{V_{N},\boldsymbol{x}-\boldsymbol{z}_{i}^{N}}^{*} 
(\boldsymbol{x},\boldsymbol{x}-\boldsymbol{z}_{i}^{N}+\boldsymbol{z}_{i+1}^{N})
\;-\; \Phi_{V_{N},\boldsymbol{x}-\boldsymbol{z}_{i}^{N}}^{*}
(\boldsymbol{x}-\boldsymbol{z}_{i}^{N}+\boldsymbol{z}_{i-1}^{N},\boldsymbol{x})
\\
&\quad 
=\;  \frac 1{Z_N}\,e^{-N\overline{F}_{N}(\boldsymbol{x}-\boldsymbol{z}_{i}^{N})}
\left[-V_{N}(\boldsymbol{x}-\boldsymbol{z}_{i}^{N}+
\boldsymbol{z}_{i+1}^{N})+V_{N}(\boldsymbol{x})\right]\;.
\end{align*}
Thus, summing over $i$ and in view of the penultimate displayed
equation, for every $\boldsymbol{x}\in \overline{\mc B}_N$,
\begin{align}
\label{eq:div(Phi_N)}
&(\textup{div }\Phi_{N})(\boldsymbol{x})\\
&=\; \frac 1{Z_N}
\sum_{i=0}^{L-1}
e^{-N\overline{F}_{N}(\boldsymbol{x}-\boldsymbol{z}_{i}^{N})}
\left[V_{N}(\boldsymbol{x})-V_{N}(\boldsymbol{x}+\boldsymbol{z}_{i+1}^{N}
-\boldsymbol{z}_{i}^{N})\right]\, 
\mathbf{1}\{\boldsymbol{x}-\boldsymbol{z}_{i}^{N}\in\mathring{\mathcal{B}}_{N}\}\;.
\nonumber
\end{align}
If $\bs x$ belongs to $\mc B_N$, $\bs x - \bs z^N_i \in \mathring{\mc
  B}_N$ for all $0\le i<L$, and we may remove the indicator in the
previous formula. This completes the proof of the lemma.
\end{proof}

By definition of the flow $\Phi_N$, and in view of \eqref{34},
since the function $V_N$ is absolutely bounded by $1$, $|\Phi_{N}
(\boldsymbol{x}, \bs y)| \le 2 c^s (\bs x, \bs y)$. Hence, by
the explicit form of the symmetric conductance, and since $|F_N(\bs w)
- \overline{F}_N(\bs z)|\le C_0/N$ if $\bs w$ belongs to the cycle
$\gamma^N_{\bs z}$,
\begin{equation}
\label{37}
\big |\Phi_{N} (\boldsymbol{x}, \bs y) \big | \;\le\;
C_0 \, \frac 1{Z_N}\, e^{-N F_N(\bs x)} \quad\text{and}\quad
\big |\Phi_{N} (\boldsymbol{x}, \bs y) \big | \;\le\;
C_0 \, \frac 1{Z_N}\, e^{-N F_N(\bs y)} 
\end{equation}
for a finite constant $C_0$ independent of $N$.

The next lemma of this section asserts that the divergence of
$\Phi_{N}$ on $\mathcal{B}_{N}$ is small. This result is in accordance
with the fact that $\Phi_{V_{\mathcal{E}_N^1, \mathcal{E}_N^2}}^{*}$
is divergence-free on $\mathcal{B}_N$.

\begin{lemma}
\label{s19}
We have that $\sum_{\boldsymbol{x}\in\mathcal{B}_{N}}
\left| (\mbox{\textup{div} }\Phi_{N})(\boldsymbol{x})\right| =
\kappa_N \, o_N(1)$. 
\end{lemma}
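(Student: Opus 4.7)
The plan is to start from the explicit formula in Lemma \ref{lf1}, which gives
$(\mathrm{div}\,\Phi_N)(\bs x)=-Z_N^{-1}e^{-NF_N(\bs x)}(\mc L_N V_N)(\bs x)$ for $\bs x\in\mc B_N$, and then combine the pointwise bound on $\mc L_N V_N$ from Lemma \ref{s01} with the Gaussian sum estimate of Corollary \ref{s26}. The only genuine input beyond these two results is a Taylor expansion of $F_N$ around the saddle point $\bs 0$.

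First I would write, using Lemma \ref{lf1} and Lemma \ref{s01},
\begin{equation*}
\sum_{\bs x\in\mc B_N}\bigl|(\mathrm{div}\,\Phi_N)(\bs x)\bigr|
\;\le\; C_0\,\frac{\varepsilon_N^{2}}{\sqrt N}\,\frac{1}{Z_N}
\sum_{\bs x\in\mc B_N}e^{-NF_N(\bs x)}\,
\exp\Bigl\{-\tfrac12\alpha N(\bs x\cdot\bs v)^{2}\Bigr\}\;.
\end{equation*}
Next I would use the Taylor expansion at the saddle: since $F_N=F+N^{-1}G_N$, $F(\bs 0)=H$, $\nabla F(\bs 0)=0$, $\mathrm{Hess}\,F(\bs 0)=\mathbb{H}$, and $|\bs x|=O(\varepsilon_N)$ on $\mc B_N$, one gets
\begin{equation*}
NF_N(\bs x)\;=\;NH \,+\, \tfrac{N}{2}\bs x^{\dagger}\mathbb{H}\bs x \,+\,G(\bs 0) \,+\, O(N\varepsilon_N^{3}) \,+\, O(\varepsilon_N)\;,
\end{equation*}
uniformly for $\bs x\in\mc B_N$. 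By hypothesis \eqref{vare1}, $N\varepsilon_N^{3}\to 0$, so the error is $o_N(1)$ and
$e^{-NF_N(\bs x)}=[1+o_N(1)]\,e^{-NH}e^{-G(\bs 0)}e^{-\frac N2\bs x^{\dagger}\mathbb{H}\bs x}$.

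Substituting and collecting the two exponentials into the quadratic form $\mathbb{H}+\alpha\bs v\bs v^{\dagger}$ yields
\begin{equation*}
\sum_{\bs x\in\mc B_N}\bigl|(\mathrm{div}\,\Phi_N)(\bs x)\bigr|
\;\le\; C_0\,\frac{\varepsilon_N^{2}}{\sqrt N}\,\frac{e^{-NH}e^{-G(\bs 0)}}{Z_N}
\sum_{\bs x\in\mc B_N}
e^{-\frac N2\bs x^{\dagger}(\mathbb{H}+\alpha\bs v\bs v^{\dagger})\bs x}\;.
\end{equation*}
Now Corollary \ref{s26} bounds the remaining sum by $C_0\,N\varepsilon_N\,N^{(d-1)/2}$, giving an overall bound of order $\varepsilon_N^{3}\,N^{d/2}\,e^{-NH}/Z_N = (N\varepsilon_N^{3})\,\kappa_N/(2\pi)^{d/2-1}$. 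Since $N\varepsilon_N^{3}\to 0$ by \eqref{vare1}, this is $\kappa_N\,o_N(1)$, as required.

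The main (and essentially only) obstacle is bookkeeping: making sure the multiplicative errors coming from the Taylor expansion of $F_N$ are indeed $o_N(1)$ uniformly on $\mc B_N$, which reduces to the scaling hypothesis $N\varepsilon_N^{3}\downarrow 0$ in \eqref{vare1}. Everything else is a direct combination of previously established estimates.
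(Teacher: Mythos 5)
Your proposal is correct and follows essentially the same route as the paper: combine Lemma \ref{lf1} with the pointwise bound of Lemma \ref{s01}, Taylor-expand $F_N$ at the saddle to replace $e^{-NF_N(\bs x)}$ by the Gaussian factor (using $N\varepsilon_N^3\to 0$ for the error), and then apply Corollary \ref{s26} to conclude the sum is $O(\kappa_N N\varepsilon_N^3)=\kappa_N o_N(1)$. This is exactly the paper's argument.
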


\begin{proof}
By Lemma \ref{s01}, by the second assertion of Lemma \ref{lf1}, and by
a Taylor expansion of $F_{N}$, there exists a finite constant $C_0$
such that for all $\boldsymbol{x}\in\mathcal{B}_N$,
\begin{equation}
\label{31}
|(\textup{\mbox{div}}\,\Phi_{N})(\boldsymbol{x})| \;\le\; 
C_0 e^{-NH}\, \frac{\varepsilon_{N}^{2}}{\sqrt{N}} \,\frac 1{Z_{N}}\, 
e^{-\frac{N}{2}\boldsymbol{x}^{\dagger}(\mathbb{H}+\alpha\boldsymbol{v}
\boldsymbol{v}^{\dagger})\boldsymbol{x}}\;.
\end{equation}
Hence, the sum appearing in the statement of the lemma is bounded
above by
\begin{equation*}
C_0\, \kappa_N \, \varepsilon_{N}^{2} \frac {1}{N^{(d-1)/2}}
\sum_{\boldsymbol{x}\in\mathcal{B}_{N}}
e^{-\frac{N}{2}\boldsymbol{x}^{\dagger}(\mathbb{H}
+\alpha\boldsymbol{v}\boldsymbol{v}^{\dagger})\boldsymbol{x}} \;. 
\end{equation*}
By Corollary \ref{s26}, this expression is less than or
equal to $C_0 \kappa_N N \varepsilon_{N}^{3}$ for some finite constant $C_0$
independent of $N$.  By (\ref{vare1}), $N
\varepsilon_{N}^{3}=o_N(1)$ and the proof of the proposition is completed.
\end{proof}

\begin{proposition}
\label{s03}
We have that 
\begin{align*}
& \sum_{\boldsymbol{x}\in\partial^{1}\mathcal{B}_{N}}
(\textup{\mbox{div}}\,\Phi_{N})(\boldsymbol{x})
\;=\; \big[1+ o_{N}(1)\big]\, \kappa_N\,\omega_{\bs{0}} \;, \\
&\quad \sum_{\boldsymbol{x}\in\partial^{2}\mathcal{B}_{N}}
(\textup{\mbox{div}}\,\Phi_{N})(\boldsymbol{x}) \;=\; 
-\, \big[1+ o_{N}(1)\big]\, \kappa_N \,\omega_{\bs{0}}\;.
\end{align*}
\end{proposition}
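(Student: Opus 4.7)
The plan is to establish an integration-by-parts identity and combine it with Proposition \ref{s02}, with the boundary terms controlled by Lemmata \ref{s18}--\ref{s19} and \eqref{37}. By the antisymmetry of $\Phi_N$ and the fact noted after \eqref{eq:3} that the conductance $c_{\bs z}(\cdot,\cdot)$ is constant along the cycle $\gamma^N_{\bs z}$, a direct cycle-by-cycle computation using the telescoping identity $a(a-b) = \frac{1}{2}[(a-b)^2 + a^2 - b^2]$ yields
\begin{equation*}
\sum_{\bs x\in\overline{\mc B}_N}V_N(\bs x)\,(\mathrm{div}\,\Phi_N)(\bs x) \;=\; \mc D_N(V_N;\mathring{\mc B}_N)\;,
\end{equation*}
and by Proposition \ref{s02} the right-hand side equals $[1+o_N(1)]\kappa_N\omega_{\bs 0}$.

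Writing $\overline{\mc B}_N = \mc B_N \sqcup \partial^0\mc B_N \sqcup \partial^1\mc B_N \sqcup \partial^2\mc B_N$, I would show that the contributions from $\mc B_N$, $\partial^0 \mc B_N$ and $\partial^2 \mc B_N$ to the above weighted sum are of size $\kappa_N o_N(1)$, and that on $\partial^1 \mc B_N$ the factor $V_N$ can be replaced by $1$ at the same precision. The contribution on $\mc B_N$ is controlled by Lemma \ref{s19} since $|V_N|\le 1$. On $\partial^0 \mc B_N$, the estimate \eqref{27} gives $F_N(\bs x) \ge H + (1/4)\lambda_1\varepsilon_N^2 + O(1/N)$; combining the crude bound \eqref{37} with $|\partial^0 \mc B_N| = O((N\varepsilon_N)^d)$ and the super-polynomial decay of $e^{-cN\varepsilon_N^2}$ from \eqref{vare2} gives the desired estimate. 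On $\partial^1 \mc B_N$ (respectively $\partial^2 \mc B_N$), Lemma \ref{s18} provides pointwise control on $1 - V_N$ (respectively $V_N$), which combined with \eqref{37} and the Taylor lower bound $F_N(\bs x) \ge H - (\lambda_1/2)\varepsilon_N^2 + o(\varepsilon_N^2)$ on these boundary pieces produces the remaining boundary estimates. This chain of arguments yields the first identity $\sum_{\partial^1 \mc B_N}(\mathrm{div}\,\Phi_N) = [1+o_N(1)]\kappa_N\omega_{\bs 0}$.

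For the second identity, I would invoke the global vanishing $\sum_{\bs x\in\Xi_N}(\mathrm{div}\,\Phi_N)(\bs x) = 0$, immediate from the antisymmetry of $\Phi_N$. Since by Lemma \ref{lf1} the divergence is supported on $\overline{\mc B}_N$ and the contributions from $\mc B_N$ and $\partial^0 \mc B_N$ are $\kappa_N o_N(1)$ by the bounds above, subtracting these together with the first identity forces $\sum_{\partial^2 \mc B_N}(\mathrm{div}\,\Phi_N) = -[1+o_N(1)]\kappa_N\omega_{\bs 0}$. The main technical obstacle is verifying that the exponential decay rate $c_0$ from Lemma \ref{s18} strictly dominates the enhancement $e^{N\lambda_1\varepsilon_N^2/4}$ arising from the weight $e^{-NF_N}$ on $\partial^1$ and $\partial^2 \mc B_N$; this may require a sharper pointwise bound on $\mathrm{div}\,\Phi_N$ exploiting the smallness of $V_N(\bs x) - V_N(\bs x + \bs w_i)$ on the bulk of $\partial^i \mc B_N$ where $|\bs x \cdot \bs v| \gtrsim \varepsilon_N$, or a suitable weighted Cauchy--Schwarz argument matching the two estimates.
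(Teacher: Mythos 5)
Your proposal is correct, and it takes a genuinely different route from the paper. The paper attacks the boundary sum head-on: starting from \eqref{eq:div(Phi_N)} it rewrites $\sum_{\bs x\in\partial^{1}\mc B_N}(\textup{div}\,\Phi_N)(\bs x)$ as a sum of Gaussian surface sums over thin slabs $\ms P_t(N^{-1}\bs z)\cap \mc A_N$, evaluates these with Lemma \ref{s14}, and recovers the constant $\omega_{\bs 0}$ through the algebraic identity $\sum_i(\bs z_i\cdot\bs v)\,(\bs z_i-\bs z_{i-1})\cdot\bs u_1=\mu v_1/\lambda_1$; the $\partial^{2}$ case is then done symmetrically. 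You instead use summation by parts: your cycle-by-cycle telescoping computation is valid precisely because the conductance is constant along each cycle, so $\sum_{\bs x}V_N(\bs x)(\textup{div}\,\Phi_N)(\bs x)=\mc D_N(V_N;\mathring{\mc B}_N)$ holds exactly, the constant is imported from Proposition \ref{s02} rather than re-derived, and the second identity comes for free from $\sum_{\bs x}(\textup{div}\,\Phi_N)(\bs x)=0$. This is shorter and bypasses Lemma \ref{s14} and the slab geometry entirely, at the price of the weighted boundary estimates you flag at the end. That obstacle is real: Lemma \ref{s18} controls $e^{-N[F_N-H]}(1-V_N)^2$, and combining it with the crude bound \eqref{37} only yields $e^{-N[F_N-H]}(1-V_N)\le e^{(\lambda_1/4-c_0/2)N\varepsilon_N^2}$, which need not decay. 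But your proposed fix closes the gap: the gradient estimate from the proof of Lemma \ref{s01} gives $|(\textup{div}\,\Phi_N)(\bs x)|\le C_0\,Z_N^{-1}N^{-1/2}e^{-NF_N(\bs x)}e^{-\frac{1}{2}\alpha N(\bs x\cdot\bs v)^2}$ on $\partial^{1}\mc B_N\cup\partial^{2}\mc B_N$ as well, and running the dichotomy from the proof of Lemma \ref{s18} (either $|\bs x\cdot\bs v|\ge\delta\varepsilon_N$, in which case the product of the Gaussian factors is bounded by $e^{-\frac{N}{2}\bs x^{\dagger}(\bb H+2\alpha\bs v\bs v^{\dagger})\bs x}\le e^{-\tau N\varepsilon_N^2/2}$, or $\bs x^{\dagger}\bb H\bs x\ge\delta\varepsilon_N^2$, in which case $e^{-N[F_N-H]}$ itself decays) makes every boundary term super-polynomially small, so the sums over the polynomially many boundary points are $\kappa_N\,o_N(1)$ by \eqref{vare2}.
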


\begin{proof}
We prove the first estimate, the arguments for the second one being
analogous.  By (\ref{eq:div(Phi_N)}) and by a change of variables, the
first sum appearing in the statement of the proposition is equal to
\begin{align*}
& \frac 1{Z_N}\sum_{\boldsymbol{x}\in\partial^{1}\mathcal{B}_{N}}
 \, \sum_{i=0}^{L-1}
e^{-N\overline{F}_{N}(\boldsymbol{x}-\boldsymbol{z}_{i}^{N})}
\left[V_{N}(\boldsymbol{x})-V_{N}(\boldsymbol{x}+\boldsymbol{z}_{i+1}^{N}
-\boldsymbol{z}_{i}^{N})\right] \, 
\mathbf{1}\{\boldsymbol{x}-\boldsymbol{z}_{i}^{N}\in\mathring{\mathcal{B}}_{N}\} \\
&\qquad =  \; \frac 1{Z_N} \sum_{i=0}^{L-1}
\sum_{\substack{\boldsymbol{x}+\boldsymbol{z}_{i}^{N}\in
\partial^{1}\mathcal{B}_{N}}} 
e^{-N\overline{F}_{N}(\boldsymbol{x})}\,
\left[V_{N}(\boldsymbol{x}+\boldsymbol{z}_{i}^{N})-
V_{N}(\boldsymbol{x}+\boldsymbol{z}_{i+1}^{N})\right]\,
\mathbf{1}\{\boldsymbol{x}\in\mathring{\mathcal{B}}_{N}\}\;.
\end{align*}
We may rewrite the last sum as
\begin{equation}
\label{f02}
\frac 1{Z_N} 
\sum_{\boldsymbol{x}\in\mathring{\mathcal{B}}_{N}}
e^{-N\overline{F}_{N}(\boldsymbol{x})}\, 
\sum_{i:\boldsymbol{x}+\boldsymbol{z}_{i}^{N}
\in\partial^{1}\mathcal{B}_{N}}
\left[V_{N}(\boldsymbol{x}+\boldsymbol{z}_{i}^{N})-
V_{N}(\boldsymbol{x}+\boldsymbol{z}_{i+1}^{N})\right] \;,
\end{equation}
where the second sum is carried over the indices $i$ which satisfy the
conditions appearing below the sum.  

Recall the definition of the set $\mc A_N(a)$, $a>0$, introduced in
\eqref{f05} and set $\mc A_N = \mc A_N(1/4)$.  By Taylor expansion,
every point $\bs x \in \mc A^c_N$ such that $\bs x\cdot \bs u_1 =
[1+o_N(1)] \varepsilon_N$,
\begin{equation}
\label{f04}
F_N(\bs x) - H \;\ge \; [1+o_N(1)]\, \frac 18 \, \lambda_1
\varepsilon^2_N\;. 
\end{equation}

We claim that we may restrict the sum appearing in \eqref{f02} to
points $\bs x$ in $\mathring{\mc B}_N \cap \mc A_N$. Indeed, fix $\bs
x$ in $\mathring{\mc B}_N \setminus \mc A_N$. Since
$\boldsymbol{x}+\boldsymbol{z}_{i}^{N}$ belongs to
$\partial^{1}\mathcal{B}_{N}$, $\bs x\cdot \bs u_1 = [1+o_N(1)]
\varepsilon_N$. Hence, by the previous paragraph, $F_N(\bs x) - H
\;\ge \; [1+o_N(1)]\, (1/8) \, \lambda_1 \varepsilon^2_N$, and the
sum \eqref{f02} carried over points $\bs x\in\mathring{\mc B}_N
\setminus \mc A_N$ is of order $\kappa_N \, o_N(1)$ because $V_N$ is
bounded by $1$.

Consider the sum \eqref{f02} carried over points $\bs x$ in
$\mathring{\mc B}_N \cap \mc A_N$.  By Taylor's expansion, this
expression is equal to
\begin{equation}
\label{eq:reat}
\big[1+o_{N}(1)\big] \, \frac 1{N\, Z_N}
\sum_{\boldsymbol{x}\in\mathring{\mathcal{B}}_{N}\cap\mc A_N}
e^{-N F_{N}(\boldsymbol{x})}\sum_{i:\boldsymbol{x}+\boldsymbol{z}_{i}^{N}
\in\partial^{1}\mathcal{B}_{N}}
(\boldsymbol{z}_{i}-\boldsymbol{z}_{i+1})\cdot\nabla V_{N}(\boldsymbol{x})\;.
\end{equation}
Note that $\overline{F}_{N}(\boldsymbol{x})$ has been replaced by
$F_{N}(\boldsymbol{x})$.  Changing variables and taking advantage of
the cancellations, the sum over $i$ can be rewritten as
\begin{equation}
\label{f03}
\sum_{\substack{\boldsymbol{x}+\boldsymbol{z}_{i}^{N}\in\partial^{1}\mathcal{B}_{N}\\
\boldsymbol{x}+\boldsymbol{z}_{i-1}^{N}\notin\partial^{1}\mathcal{B}_{N}}}
\boldsymbol{z}_{i}\cdot\nabla V_{N}(\boldsymbol{x}) \;-\;
\sum_{\substack{\boldsymbol{x}+\boldsymbol{z}_{i}^{N}\notin\partial^{1}\mathcal{B}_{N}\\
\boldsymbol{x}+\boldsymbol{z}_{i-1}^{N}\in\partial^{1}\mathcal{B}_{N}}}
\boldsymbol{z}_{i}\cdot\nabla V_{N}(\boldsymbol{x})\;,
\end{equation}
where both sums are carried over the indices $i$ which satisfy the
conditions appearing below the sums. 

In view of \eqref{f03}, fix a point $\bs x$ in $\mathring{\mc
  B}_N\cap\mc A_N$ such that $\bs x + \bs z^N_j \in \partial^1 \mc
B_N$ for some $0\le j<L$ and that $\bs x + \bs z^N_k\not
\in \partial^1 \mc B_N$ for some $0\le k<L$. We claim that $\bs x +
\bs z^N_k \in \mc B_N$.

Indeed, since $\bs x\in \mathring{\mc B}_N$, $\bs x + \bs z^N_k\in
\overline{\mc B}_N$. Therefore, to prove that $\bs x + \bs z^N_k \in
\mc B_N$ it is enough to show that $\bs x + \bs z^N_k
\not\in \partial{\mc B}_N$. By the paragraph succeeding \eqref{27},
$\partial{\mc B}_N = \partial^0{\mc B}_N \cup \partial^1{\mc B}_N
\cup \partial^2{\mc B}_N$. 

Since $\bs x$ belongs to $\mathring{\mc B}_N\cap\mc A_N$ and $\bs x +
\bs z^N_j \in \partial^{1} \mc B_N$ for some $0\le j<L$, as in
\eqref{f04}, a Taylor expansion shows that $F(\bs x + \bs z^N_k) - H
\le [1+o_N(1)] (1/8) \lambda_1 \varepsilon^2_N$. In particular, by
\eqref{27}, $\bs x + \bs z^N_k$ does not belong to $\partial^0{\mc
  B}_N$. The point $\bs x + \bs z^N_k$ does not belong to
$\partial^1{\mc B}_N$ by assumption, and can not belong to
$\partial^2{\mc B}_N$ because $\bs x + \bs z^N_j$ belongs to
$\partial^1{\mc B}_N$. This proves the claim

\begin{figure}
\centering
\includegraphics[scale=0.21]{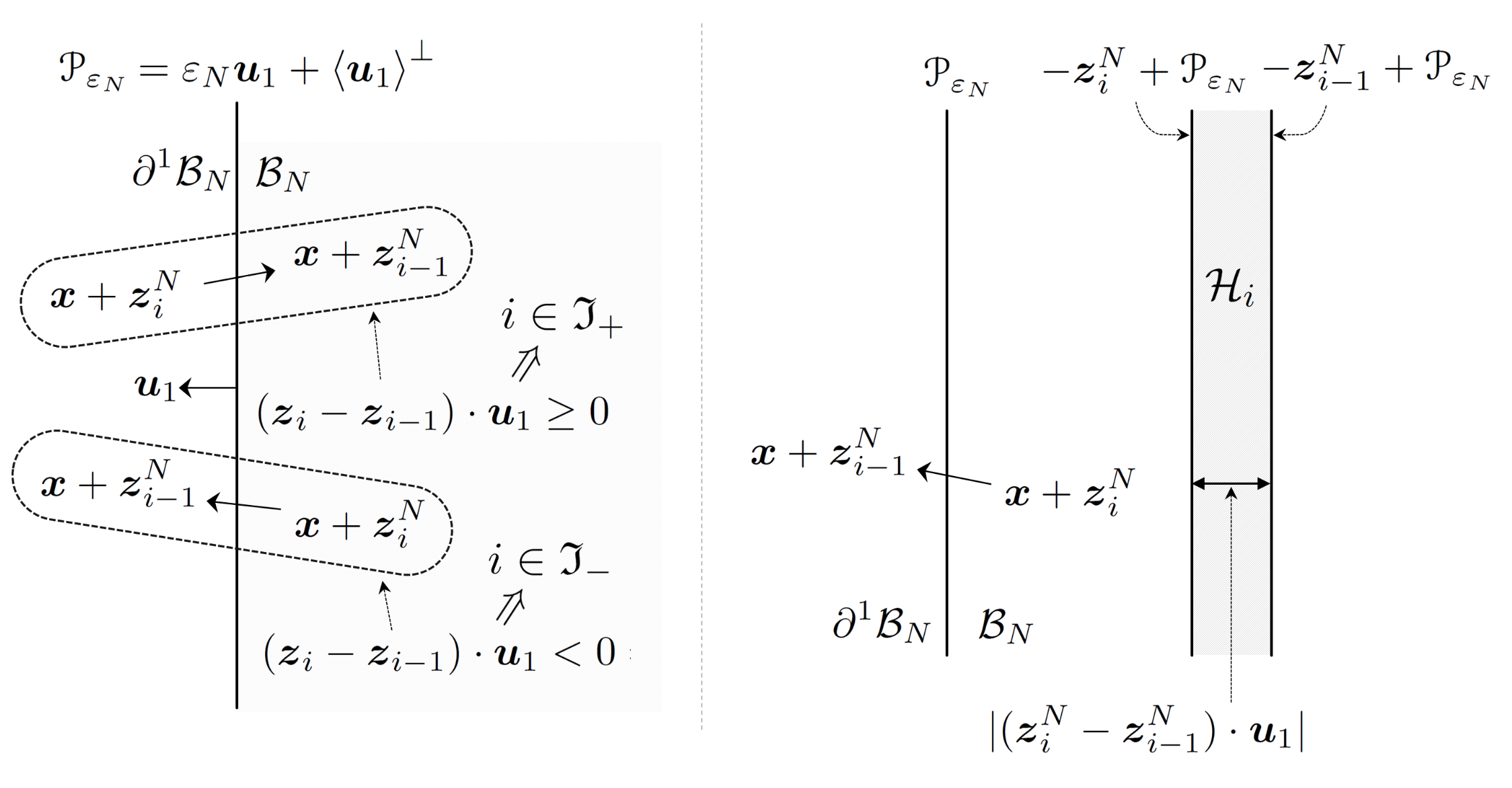}
\protect\caption{\label{fig:divergence}
The sets $\mathcal{C}_{i},\,\mathfrak{I}_{+}$
and $\mathfrak{I}_{-}$} 
\end{figure}

It follows from the previous claim that we may replace in \eqref{f03}
the conditions $\bs x+\bs z^N_j \notin\partial^{1}\mathcal{B}_{N}$ by
the condition $\bs x+\bs z^N_j \in\mathcal{B}_{N}$.  Let
\begin{equation*}
\mathfrak{I}_{+} \;=\;
\{i:(\boldsymbol{z}_{i}-\boldsymbol{z}_{i-1})\cdot \bs u_{1} >
0\}\;,\quad
\mathfrak{I}_{-} \;=\;
\{i:(\boldsymbol{z}_{i}-\boldsymbol{z}_{i-1})\cdot \bs u_{1}<0\}\;.
\end{equation*}
If $\boldsymbol{x}+ \boldsymbol{z}_{i}^{N}
\in\partial^{1}\mathcal{B}_{N}$ and $\boldsymbol{x}+
\boldsymbol{z}_{i-1}^{N} \in\mathcal{B}_{N}$, then
$(\boldsymbol{x}+\boldsymbol{z}_{i}^{N}) \cdot \bs u_1 >
\varepsilon_N$ and $(\boldsymbol{x}+\boldsymbol{z}_{i-1}^{N}) \cdot
\bs u_1 \le \varepsilon_N$ so that $i\in \mf I_+$. The first sum in
\eqref{f03} may be restricted to indices $i$ in $\mf
I_+$. Analogously, the second sum in \eqref{f03} may be restricted to
indices $i$ in $\mf I_-$. Therefore, in view of the explicit
expression \eqref{06b} of the function $V_N$, we can rewrite
(\ref{eq:reat}) as
\begin{equation*}
[1+o_{N}(1)] \, 
\sqrt{\frac{\alpha N}{2\pi}} \, \frac 1{N\, Z_N} 
\sum_{i\in \mf I_+ \cup \mf I_-}
\mf s(i)\, (\boldsymbol{z}_{i}\cdot\boldsymbol{v})
\sum_{\boldsymbol{x}\in\mathcal{H}_{i}} e^{-N F_{N}(\boldsymbol{x})}
e^{-\frac{1}{2}\alpha N(\boldsymbol{x}
\cdot\boldsymbol{v})^{2}}\;,
\end{equation*}
where $\mf s(i) = 1$ if $i\in\mf I_+$ and $\mf s(i) = -1$ if $i\in\mf
I_-$. In this formula, the set $\mc H_i$, $i\in \mf I_+ \cup \mf I_-$,
is given by
\begin{equation*}
\mathcal{H}_{i} \; =\; \begin{cases}
\{\boldsymbol{x}\in\mathring{\mathcal{B}}_{N} \cap \mc A_N:\,
\boldsymbol{x}+\boldsymbol{z}_{i-1}^{N}\in
\mathcal{B}_{N},\,\boldsymbol{x}+\boldsymbol{z}_{i}^{N}\in
\partial^{1}\mathcal{B}_{N}\}\, & \mbox{if }i\in\mathfrak{I}_{+}\;, \\
\{\boldsymbol{x}\in\mathring{\mathcal{B}}_{N} \cap \mc A_N:\,\boldsymbol{x}+
\boldsymbol{z}_{i-1}^{N}\in\partial^{1}\mathcal{B}_{N},\,
\boldsymbol{x}+\boldsymbol{z}_{i}^{N}\in\mathcal{B}_{N}\}\, 
& \mbox{if }i\in\mathfrak{I}_{-}\;.
\end{cases}
\end{equation*}
By Taylor expansion, the previous sum is equal to
\begin{equation*}
[1+o_{N}(1)] \, \frac{\sqrt{\alpha}\, e^{-G(\bs{0})}}{(2\pi
  N)^{(d-1)/2}} \,\kappa_N
\sum_{i\in \mf I_+ \cup \mf I_-} \mf s(i)\, 
(\boldsymbol{z}_{i}\cdot\boldsymbol{v})
\sum_{\boldsymbol{x}\in\mathcal{H}_{i}} 
e^{- (N/2) \boldsymbol{x}^{\dagger}(\mathbb{H}+\alpha vv^{\dagger})\boldsymbol{x}}\;. 
\end{equation*}

Fix $i\in \mf I_+$, the argument for $i\in \mf I_-$ being
analogous. We claim that the set $\mc H_i$ can be rewritten as
\begin{equation*}
\mathcal{H}_{i} \; =\; 
\{\boldsymbol{x}\in\mathring{\mathcal{B}}_{N} \cap \mc A_N:\,
\varepsilon_N - \boldsymbol{z}_{i}^{N} \cdot \bs u_1  
< \boldsymbol{x} \cdot \bs u_1 \le \varepsilon_N -
\boldsymbol{z}_{i-1}^{N} \cdot \bs u_1 \}\;.
\end{equation*}
Since all points $\bs y$ in $\mc B_N$ are such that $\bs y \cdot \bs
u_1 \le \varepsilon_N$ and since all points $\bs w$ in
$\partial^{1}\mc B_N$ are such that $\bs w \cdot \bs u_1 >
\varepsilon_N$, the set $\mc H_i$ is clearly contained in the set
appearing on the right hand side of the previous equality.  On the
other hand, if a point $\bs y$ belongs to this latter set, $\bs y
\cdot \bs u_1 = \varepsilon_N + O(1/N)$. Hence, since $\bs y$ belongs
to $\mc A_N$, $F(\bs y) -H \le [1+o_N(1)] (1/8) \lambda_1
\varepsilon^2_N$. A point $\bs y$ in $\overline{\mc B}_N \cap \mc A_N$
satisfying these two properties is in $\mc B_N$ if $\bs y \cdot\bs u_1
\le \varepsilon_N$ and is in $\partial^{1}\mc B_N$ otherwise. This
proves the claim

Finally, we claim that $\mathcal{H}_{i} = \widehat{\mathcal{H}}_{i}$,
where $\widehat{\mathcal{H}}_{i}$ is the set given by
\begin{equation*}
\widehat{\mathcal{H}}_{i} \; =\; \{\boldsymbol{x}\in \mc A_N:\,
\varepsilon_N - \boldsymbol{z}_{i}^{N} \cdot \bs u_1  
< \boldsymbol{x} \cdot \bs u_1 \le \varepsilon_N -
\boldsymbol{z}_{i-1}^{N} \cdot \bs u_1 \}\;.
\end{equation*}
We have to show that $\widehat{\mathcal{H}}_{i} \subset
\mathcal{H}_{i}$, equivalently, that any point in
$\widehat{\mathcal{H}}_{i}$ belongs to $\mathring{\mc B}_N$. Fix $\bs
y \in \widehat{\mathcal{H}}_{i}$. Since $\bs y\in\mc A_N$, $\sum_{2\le
  j\le d} \lambda_j (\bs y \cdot \bs u_j)^2 \le (5/4) \lambda_1
\varepsilon^2_N$ and, by definition of $\widehat{\mathcal{H}}_{i}$,
$(\bs y + \boldsymbol{z}_{i-1}^{N}) \cdot \bs u_1 \le
\varepsilon_N$. These two conditions imply that $\bs y +
\boldsymbol{z}_{i-1}^{N} \in \mc B_N$ so that $\bs y\in
\mathring{\mathcal{B}}_{N}$ as claimed.

Recall the definition of the hyperplanes $\ms P_t$ introduced above
equation \eqref{35}.  The set $\widehat{\mathcal{H}}_{i}$ consists of
the points in $\mc A_{N}$ which lies between the hyperplanes
$-\boldsymbol{z}_{i}^{N} + \ms P_{\varepsilon_{N}}$ and
$-\boldsymbol{z}_{i-1}^{N} + \ms P_{\varepsilon_{N}}$ (cf. Figure
\ref{fig:divergence}):
\begin{equation*}
\widehat{\mc H}_i \;=\; \mc A_N \,\cap\, \ms P_{\varepsilon_N - \bs
  z^N_i \cdot \bs u_1}(\bs z_i^N -\bs z_{i-1}^N)\;.
\end{equation*}
Therefore, by Lemma \ref{s14}, for $i\in \mf I_+$,
\begin{align*}
&\frac{1}{(2\pi N)^{(d-1)/2}} \sum_{\boldsymbol{x}\in\mathcal{H}_{i}}
e^{-(N/2) \boldsymbol{x}^{\dagger}(\mathbb{H}+ \alpha\boldsymbol{v} 
\boldsymbol{v}^{\dagger})\boldsymbol{x}} \\
&\qquad \;=\; 
\big[1+o_{N}(1)\big] \,
\frac{(\boldsymbol{z}_{i}-\boldsymbol{z}_{i-1})
\cdot\boldsymbol{u}_{1}}{v_{1}} \, \frac 1{\sqrt{\alpha}} 
\, \sqrt{\frac{\lambda_{1}}{\prod_{k=2}^{d}\lambda_{k}}}\;\cdot
\end{align*}

Repeating the same argument for $i\in\mf I_-$, and since
$(\boldsymbol{z}_{i}-\boldsymbol{z}_{i-1})\cdot\boldsymbol{u}_{1} = 0$
for $i\not\in \mf I_- \cup \mf I_+$, we obtain that \eqref{eq:reat} is
equal to
\begin{equation*}
\big[1+o_{N}(1) \big]\, \frac{\kappa_N e^{-G(\bs{0})}}{v_{1}} \,
\sqrt{\frac{\lambda_{1}}{\prod_{k=2}^{d}\lambda_{k}}} \,
\sum_{i=1}^{L}(\boldsymbol{z}_{i}\cdot\boldsymbol{v}) \,
(\boldsymbol{z}_{i}-\boldsymbol{z}_{i-1})\cdot\boldsymbol{u}_{1}\;. 
\end{equation*}
By the definition \eqref{36} of the matrix $\bb A$ and since $\bs u_1$
(resp. $\bs v$) is an eigenvector of $\bb H$ (resp. $\bb H \bb
A^{\dagger}$) with eigenvalue $-\lambda_1$ (resp. $-\mu$), the last
sum can be written as
\begin{equation*}
\sum_{i=1}^{L} \boldsymbol{u}_{1}^{\dagger}
(\boldsymbol{z}_{i}-\boldsymbol{z}_{i-1})\, 
\boldsymbol{z}_{i}^{\dagger}\boldsymbol{v}
\;=\; \boldsymbol{u}_{1}^{\dagger}\mathbb{A}^{\dagger}\boldsymbol{v}
\;=\; -\, \frac{1}{\lambda_{1}}\,
\boldsymbol{u}_{1}^{\dagger}\mathbb{H}
\mathbb{A}^{\dagger}\boldsymbol{v}
\;=\;\frac{\mu}{\lambda_{1}}\boldsymbol{u}_{1}^{\dagger}
\boldsymbol{v}\;=\;\frac{\mu}{\lambda_{1}}\, v_1 \;.
\end{equation*}
Hence, the penultimate formula becomes 
\begin{equation*}
\big[1+o_{N}(1) \big]\, \kappa_N e^{-G(\bs{0})}
\frac{\mu}{\sqrt{\prod_{k=1}^{d}\lambda_{k}}}
\;=\; \big[1+o_N (1)\big] \,\kappa_N\,\omega_{\bs{0}}\;,
\end{equation*}
which completes the proof of the proposition. 
\end{proof}

The divergences along the boundary $\partial^{0}\mathcal{B}_{N}$ is
negligible.

\begin{lemma}
\label{s11}
We have that $\sum_{\boldsymbol{x}\in \partial^0 \mathcal{B}_{N}} \big
|\, (\text{\rm div } \Phi_{N}) (\boldsymbol{x})\, \big | =\kappa_N \,
o_N(1)$.
\end{lemma}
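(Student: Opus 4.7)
The plan is to use the crude pointwise bound \eqref{37} on the flow $\Phi_N$, together with the fact that on $\partial^0 \mc B_N$ the potential $F_N$ exceeds the saddle height $H$ by an amount of order $\varepsilon_N^2$, to show that each divergence is exponentially small compared to $\kappa_N$. The super-polynomial decay hypothesis \eqref{vare2} then absorbs the polynomial growth coming from the cardinality of the boundary.

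More precisely, first I apply formula \eqref{eq:div(Phi_N)} to each $\bs x \in \partial^0 \mc B_N \subset \overline{\mc B}_N$. Since $0 < V_N < 1$, the triangle inequality yields
\begin{equation*}
\big|(\text{div }\Phi_N)(\bs x)\big| \;\le\; \frac{L}{Z_N}\, \max_{0\le i<L}\,
e^{-N\overline{F}_N(\bs x - \bs z_i^N)} \;.
\end{equation*}
Because $\overline{F}_N$ and $F_N$ differ by at most $O(N^{-1})$ on the cycle $\gamma^N_{\bs x - \bs z_i^N}$, and because \eqref{27} gives $F_N(\bs x) = H + (1/4)\lambda_1 \varepsilon_N^2 + O(N^{-1})$ on $\partial^0\mc B_N$, one obtains a uniform bound
\begin{equation*}
\big|(\text{div }\Phi_N)(\bs x)\big| \;\le\; C_0\, Z_N^{-1} \,
e^{-NH}\, e^{-(N/4)\lambda_1 \varepsilon_N^2}\;,\quad \bs x\in\partial^0\mc B_N\;.
\end{equation*}

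Next I estimate $|\partial^0 \mc B_N|$. Since $\partial^0 \mc B_N \subset \overline{\mc B}_N \subset \overline{\mc C}_N$, and the continuous box $C_N$ has diameter $O(\varepsilon_N)$ in each direction, a standard volume count gives $|\partial^0 \mc B_N|\le C_0 (N\varepsilon_N)^{d-1}$. Combining these two bounds and recalling from \eqref{13} that $Z_N^{-1}e^{-NH} = \kappa_N (2\pi N)^{-(d/2-1)}$,
\begin{equation*}
\sum_{\bs x\in\partial^0 \mc B_N}\big|(\text{div }\Phi_N)(\bs x)\big|
\;\le\; C_0\, \kappa_N \, N^{d/2}\, \varepsilon_N^{d-1}\,
e^{-(N/4)\lambda_1 \varepsilon_N^2}\;.
\end{equation*}
Finally, the assumption \eqref{vare2} ensures that $e^{-(N/4)\lambda_1 \varepsilon_N^2}$ decays faster than any polynomial of $N$, so the right hand side is $\kappa_N\,o_N(1)$, completing the proof.

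There is no real obstacle here: unlike the more delicate estimate of Lemma \ref{s19}, where the divergence is only polynomially small (and needed the sharp Lemma \ref{s01} and Corollary \ref{s26} to capture cancellations), on $\partial^0 \mc B_N$ the factor $e^{-(N/4)\lambda_1 \varepsilon_N^2}$ provided by \eqref{27} already beats every polynomial factor, so only routine pointwise bounds are required.
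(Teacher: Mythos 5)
Your argument is correct and is essentially the paper's proof: a crude pointwise bound on $|(\text{div}\,\Phi_N)(\bs x)|$ of order $Z_N^{-1}e^{-NH}e^{-(1/4)\lambda_1 N\varepsilon_N^2}$ (the paper quotes \eqref{37} directly where you rederive it from \eqref{eq:div(Phi_N)}), combined with the super-polynomial decay \eqref{vare2}. The only quibble is that your volume count really gives $|\partial^0\mc B_N|\le C_0(N\varepsilon_N)^{d}$ rather than $(N\varepsilon_N)^{d-1}$, but this is immaterial since \eqref{vare2} beats any polynomial factor.
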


\begin{proof}
In view of \eqref{37} and since $F_N(\bs x) \ge H+(1/4) \lambda_1
\varepsilon^2_N$ on $\partial^0 \mathcal{B}_{N}$, on this set we have
that
\begin{equation*}
| (\text{\rm div } \Phi_{N}) (\boldsymbol{x}) | \le C_0 Z_N^{-1}e^{-NH}
\exp\{- (1/4) \lambda_1 N \varepsilon^2_N\}\;.
\end{equation*}
To complete the proof of
the proposition it remains to recall (\ref{vare2}).
\end{proof}

\subsection{Divergence-free flow.} 
\label{sub52}

In this subsection, we transfer the divergence of the flow $\Phi_{N}$
to $\bs{m}_1^N$ and $\bs{m}_2^N$.

\begin{definition}
\label{def1}
A sequence $\{\mathfrak{f}_N : N\ge 1\}$,
$\mathfrak{f}_N\in\mathcal{F}_N$, of flows is said to be negligible if
$\Vert\mathfrak{f}_N\Vert^2 \,=\, \kappa_N\, o_N(1)$.
\end{definition}

The following proposition is a weaker version of Theorem \ref{s24}.

\begin{proposition}
\label{s15}
There exists a flow $\widetilde{\Phi}_{N}$ which is divergence-free on
$\{\bs{m}_1^N, \bs{m}_2^N\}^{c}$, and such that
\begin{equation}
\label{eq: div of crr}
\big(\textup{div}\,
  \widetilde{\Phi}_{N}\big)
(\bs{m}_1^N) \;=\; -\big(\textup{div}\,
  \widetilde{\Phi}_{N}\big)
(\bs{m}_2^N)\;=\; \big[ 1+o_{N}(1)\big] \,\kappa_N \, \omega_{\bs{0}}\;.
\end{equation}
Moreover, $\widetilde{\Phi}_{N} - \Phi_{N}$ is negligible. 
\end{proposition}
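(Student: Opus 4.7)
The plan is to define $\psi_N := \widetilde{\Phi}_N - \Phi_N$ as a superposition of unit path flows that transport the divergences of $\Phi_N$, which by Lemma \ref{lf1} are supported in $\overline{\mc B}_N$, to $\bs m_1^N$ and $\bs m_2^N$. Split $\overline{\mc B}_N$ into $\overline{\mc B}_N^{(i)} := \{\bs x \in \overline{\mc B}_N : (-1)^{i+1}\bs x \cdot \bs u_1 \geq 0\}$, $i = 1, 2$, the half closer to $W_i$. For each $\bs x \in \overline{\mc B}_N^{(i)}$ with $d(\bs x) := (\textup{div}\,\Phi_N)(\bs x) \neq 0$, choose a nearest-neighbor path $\pi_{\bs x}$ in $(\Xi_N, E_N)$ from $\bs x$ to $\bs m_i^N$ consisting of a short initial segment of length $O(N\varepsilon_N)$ (leaving $\overline{\mc B}_N$ through its $\partial^i$-side and reaching a fixed exit point $\bs y_i\in W_i^\epsilon$) followed by a common ``highway'' $\eta_i$ of length $O(N)$ from $\bs y_i$ to $\bs m_i^N$ contained in a fixed compact subset of $W_i$ on which $F \le H - \delta_0$ for some $\delta_0 > 0$. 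The initial segments are routed to be mutually edge-disjoint (possible for $d \ge 2$). Let $\phi_\pi$ denote the unit path flow along $\pi$, and set
\begin{equation*}
\psi_N \;:=\; -\sum_{i=1,2}\sum_{\bs x \in \overline{\mc B}_N^{(i)}} d(\bs x)\,\phi_{\pi_{\bs x}}, \qquad \widetilde{\Phi}_N \;:=\; \Phi_N + \psi_N.
\end{equation*}

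By telescoping along each path, $\widetilde{\Phi}_N$ is divergence-free on $\{\bs m_1^N, \bs m_2^N\}^c$ and $(\textup{div}\,\widetilde{\Phi}_N)(\bs m_i^N) = \sum_{\bs x \in \overline{\mc B}_N^{(i)}} d(\bs x)$. Proposition \ref{s03} evaluates the contribution of $\partial^i \mc B_N \subset \overline{\mc B}_N^{(i)}$ as $(-1)^{i+1}[1+o_N(1)]\kappa_N \omega_{\bs 0}$, while Lemmas \ref{s19} and \ref{s11} bound the remaining contributions from $\mc B_N \cap \overline{\mc B}_N^{(i)}$ and $\partial^0 \mc B_N \cap \overline{\mc B}_N^{(i)}$ by $\kappa_N o_N(1)$. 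This yields \eqref{eq: div of crr}.

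The main step is the norm bound $\|\psi_N\|^2 = \kappa_N o_N(1)$, which splits by construction into a highway part and a near part. On the highway $\eta_i$, each edge $e$ has $c^s(e) \geq c_0 Z_N^{-1} e^{-N(H-\delta_0)}$, and the cumulative flow on $e$ is bounded by $\sum_{\bs x \in \overline{\mc B}_N^{(i)}} |d(\bs x)| = O(\kappa_N)$. Summing the $O(N)$ highway edges,
\begin{equation*}
\|\psi_N^{\textup{hwy}}\|^2 \;\le\; C\, N\, \kappa_N^2\, Z_N\, e^{N(H - \delta_0)} \;=\; C\, \kappa_N\, N\, (2\pi N)^{d/2-1}\, e^{-N\delta_0} \;=\; \kappa_N\, o_N(1),
\end{equation*}
since $e^{-N\delta_0}$ beats any polynomial in $N$. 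On the near portion, edge-disjointness gives $\|\psi_N^{\textup{near}}\|^2 = \sum_{\bs x} d(\bs x)^2\, \|\phi_{\pi_{\bs x}^{\textup{near}}}\|^2$ with $\|\phi_{\pi_{\bs x}^{\textup{near}}}\|^2 \le C\, N\varepsilon_N\, Z_N\, e^{NH}$ (length $O(N\varepsilon_N)$ edges, each of conductance $\ge c_0 Z_N^{-1} e^{-NH}$). The pointwise bound \eqref{31} combined with Corollary \ref{s26} yields $\sum_{\bs x} d(\bs x)^2 \le C\, \kappa_N^2\, \varepsilon_N^5\, N^{(3-d)/2}$, and multiplying,
\begin{equation*}
\|\psi_N^{\textup{near}}\|^2 \;\le\; C\, \kappa_N\, \varepsilon_N^6\, N^{3/2} \;=\; C\, \kappa_N\, (N\varepsilon_N^3)^2\, N^{-1/2} \;=\; \kappa_N\, o_N(1)
\end{equation*}
by $N\varepsilon_N^3 \to 0$ from \eqref{vare1}.

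The main obstacle is the near-saddle norm estimate: a naive bound on the flow per edge via the total mass $\sum |d(\bs x)|$ would blow up by a factor $(N\varepsilon_N)^{d-1}$ and fail. It is essential to exploit the edge-disjointness of initial segments, together with the sharp pointwise bound \eqref{31} and its aggregated version from Corollary \ref{s26}, which extract the decisive factor $\varepsilon_N^4/N$ ensuring that $\sum d(\bs x)^2$ is small enough to overcome the factor $Z_N e^{NH} = (2\pi N)^{d/2-1}/\kappa_N$ from the near-saddle conductances. Once the near part is controlled, the highway portion is essentially free thanks to the uniform gap $\delta_0$ between the highway and the saddle height.
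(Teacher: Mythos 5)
Your overall architecture --- transporting the divergence of $\Phi_N$ along unit path flows, with a near-saddle piece and a low-lying ``highway'' piece killed by the energy gap $\delta_0$ --- is the same as the paper's (its Lemma \ref{s16} and the two-step proof of Proposition \ref{s15}), and your computation of \eqref{eq: div of crr} and your treatment of the highway are sound. The near-saddle norm estimate, however, which you correctly identify as the crux, has two genuine gaps. First, the edge-disjointness of the $\sim(N\varepsilon_N)^d$ initial segments is not merely unjustified but impossible in low dimension: every path from $\overline{\mc B}{}_N^{(1)}$ to $\bs{m}_1^N$ must cross any fixed hypersurface separating the saddle from $\bs{m}_1$, which meets only $O(N^{d-1})$ lattice edges, whereas for $d=2$ you need $(N\varepsilon_N)^2=N\cdot N\varepsilon_N^2\gg N$ disjoint crossings since $N\varepsilon_N^2\to\infty$. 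The paper never claims disjointness; its Lemma \ref{s16} carries a multiplicity factor $M_N$, kept at $O(N\varepsilon_N)$ in Step~1 by using short paths parallel to $\bs u_1$ that stop at $\partial^1\widehat{\mc B}_N\cup\partial^2\widehat{\mc B}_N$, and allowed to be polynomially large in Step~2 because by then every divergence being moved is super-exponentially small.

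Second, and more seriously, your bound $\sum_{\bs x}d(\bs x)^2\le C\kappa_N^2\varepsilon_N^5N^{(3-d)/2}$ rests on \eqref{31}, which holds only for $\bs x\in\mc B_N$, while $\textup{div}\,\Phi_N$ is supported on all of $\overline{\mc B}_N$. On $\partial^1\mc B_N\cup\partial^2\mc B_N$ the divergence is not small: by Proposition \ref{s03} it sums to $\pm[1+o_N(1)]\kappa_N\omega_{\bs 0}$ over $O((N\varepsilon_N)^{d-1})$ points, so Cauchy--Schwarz gives $\sum_{\partial^1\mc B_N}d(\bs x)^2\ge c\,\kappa_N^2(N\varepsilon_N)^{1-d}$, which exceeds your claimed bound by a positive power of $N$, and multiplying by the per-path cost $N\varepsilon_N Z_Ne^{NH}$ yields $O(\kappa_N)$ rather than $o_N(1)\kappa_N$ when $d=2$. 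What rescues this --- and what the paper's Step~2 uses --- is the separate estimate $e^{N[F_N(\bs x)+H]}d(\bs x)^2\le Z_N^{-2}e^{-aN\varepsilon_N^2}$ on the boundary pieces, coming from Lemma \ref{s18} (the equilibrium potential is already exponentially close to $0$ or $1$ there); this must be invoked explicitly and cannot be subsumed under \eqref{31}. Two smaller slips in the same part of the argument: an initial segment of length $O(N\varepsilon_N)$ cannot reach a point of $W_i^\epsilon$, which lies at macroscopic distance from the saddle; and the uniform conductance bound $c^s\ge c_0Z_N^{-1}e^{-NH}$ fails for edges near $\partial^0\mc B_N$, where $F_N-H$ can be of order $\lambda_1\varepsilon_N^2/4$ and $e^{N\lambda_1\varepsilon_N^2/4}$ grows faster than any polynomial --- one needs the good-path condition $F_N(\bs x_k)\le F_N(\bs x_0)+C_0/N$ of {\bf(P2)} to control conductances relative to the starting point.
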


The proof of this proposition relies on the displacement of the
divergence of a flow along \textit{good paths}.  Fix a constant
$C_{0}>0$. A path $\Gamma= (\boldsymbol{x}_{0}, \boldsymbol{x}_{1},
\dots, \boldsymbol{x}_{M}) \subset\Xi_{N}$ with no self-intersections
is called a good path connecting $\boldsymbol{x}_{0}$ to
$\boldsymbol{x}_{M}$ if
\begin{enumerate}
\item[{\bf (P1)}] Each edge $(\bs{x}_i, \bs{x}_{i+1})$, $0\le i<M$,
  belongs to $E_N$, where $E_N$ has been introduced in \eqref{00E}.
\item[{\bf (P2)}] $F_{N}(\boldsymbol{x}_{k}) \le
  F_{N}(\boldsymbol{x}_{0}) + C_{0}/N$ for all $0\le k\le M$.
\end{enumerate}

Let $\Gamma= (\boldsymbol{x}_{0}, \boldsymbol{x}_{1}, \dots,
\boldsymbol{x}_{M})$ be a good path and let $r\in\bb R$. 
Denote by $\chi_{\Gamma, r}$ the flow defined by
\begin{equation}
\label{eq:chi_Ga}
\chi_{\Gamma,r}(\boldsymbol{x}_{k},\boldsymbol{x}_{k+1}) \;=\; r
\;=\; -\, \chi_{\Gamma,r}(\boldsymbol{x}_{k+1},\boldsymbol{x}_{k})\;,
\quad 0\le k < M \;,
\end{equation}
and $\chi_{\Gamma,r}(\boldsymbol{x},\boldsymbol{y})=0$, otherwise.
The divergence of the flow $\chi_{\Gamma,r}$ vanishes at all points
except at $\bs x_0$ and at $\bs x_M$, where $(\text{div }
\chi_{\Gamma,r})(\bs x_0) = r = - (\text{div } \chi_{\Gamma,r})(\bs
x_M)$. Therefore, by linearity, if $\psi$ is a flow, the divergence of
the flow $\psi + \chi_{\Gamma,r}$ coincides with the one of $\psi$ at
all points except at $\bs x_0$ and $\bs x_M$, where it is modified by
$\pm r$.

\begin{lemma}
\label{s16}
Consider two disjoint subsets $\mc{A}$, $\mc{B}$ of $\Xi_{N}$, and a flow
$\psi\in\mathcal{F}_{N}$. Suppose that for each $\boldsymbol{a}\in \mc A$
there exists a good path $\Gamma_{\boldsymbol{a}}$ connecting
$\boldsymbol{a}$ to a point in $\mc B$. Denote by $L_{N}$ the maximal
length of these good paths, and assume that each edge of $E_{N}$ is an
edge of at most $M_{N}$ of these paths. Then, there exists a flow,
denoted by $\chi_{\mc A}$, which is divergence-free on $(\mc A\cup \mc B)^{c}$,
and such that
\begin{align*}
& (\textup{div }\chi_{\mc A})(\boldsymbol{a}) \;=\; 
-\, (\textup{div }\psi)(\boldsymbol{a})\;, \quad  \bs{a}\in \mc A\;, \\
&\quad |(\textup{div }\chi_{A})(\boldsymbol{b})| \;\le\; 
\sum_{\boldsymbol{a}\in \mc A}|(\textup{div }\psi)(\boldsymbol{a})|\;,
\quad \bs{b}\in \mc B\;.
\end{align*}
In particular, the flow $\psi+\chi_{\mc{A}}$ is divergence-free on $\mc{A}$.
Moreover, there exists a finite constant $C$, independent of $N$, such that
\begin{equation*}
\left\Vert \chi_{\mc{A}}\right\Vert ^{2} \;\le\; C L_{N}
M_{N}Z_{N}\sum_{\boldsymbol{a}\in \mc A}e^{NF_{N}(\boldsymbol{a})}
\left[(\textup{div }\psi)(\boldsymbol{a})\right]^{2}\;.
\end{equation*}
\end{lemma}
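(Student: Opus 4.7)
The plan is to construct $\chi_{\mc A}$ by superposing the unit path-flows $\chi_{\Gamma_{\bs a}, r_{\bs a}}$ introduced in \eqref{eq:chi_Ga}, with the scalar weight $r_{\bs a}$ chosen precisely so as to cancel the divergence of $\psi$ at $\bs a$. Concretely, I would set
\begin{equation*}
\chi_{\mc A} \;=\; \sum_{\bs a \in \mc A}
\chi_{\Gamma_{\bs a},\, -(\textup{div } \psi)(\bs a)}\;.
\end{equation*}
Because each $\chi_{\Gamma_{\bs a}, r}$ has nonzero divergence only at its two endpoints, namely $+r$ at $\bs a$ and $-r$ at the endpoint in $\mc B$, and because the $\Gamma_{\bs a}$ all start from distinct points of $\mc A$, the first two assertions follow immediately by linearity of the divergence: summing over $\bs a$ gives $(\textup{div }\chi_{\mc A})(\bs a) = -(\textup{div }\psi)(\bs a)$ for each $\bs a \in \mc A$, zero divergence at every interior point of every path, and at each $\bs b \in \mc B$ a contribution equal to the sum of $-(\textup{div }\psi)(\bs a)$ over those $\bs a$ whose path terminates at $\bs b$, bounded in absolute value by $\sum_{\bs a} |(\textup{div }\psi)(\bs a)|$.

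The main work will be the norm estimate. For each edge $e=(\bs x, \bs y) \in E_N$, let $I_e \subset \mc A$ denote the set of indices $\bs a$ such that $\Gamma_{\bs a}$ traverses $e$. By hypothesis $|I_e| \le M_N$, and by construction $|\chi_{\mc A}(e)| \le \sum_{\bs a\in I_e}|(\textup{div }\psi)(\bs a)|$. The Cauchy--Schwarz inequality yields
\begin{equation*}
\chi_{\mc A}(e)^{2} \;\le\; M_N \sum_{\bs a\in I_e} \bigl[(\textup{div }\psi)(\bs a)\bigr]^{2}\;.
\end{equation*}
Dividing by $c^s(e)$, summing over $e \in E_N$, and exchanging the order of summation gives
\begin{equation*}
\Vert \chi_{\mc A} \Vert^{2} \;\le\; \frac{M_N}{2} \sum_{\bs a \in \mc A} \bigl[(\textup{div }\psi)(\bs a)\bigr]^{2} \, \sum_{e \in \Gamma_{\bs a}} \frac{1}{c^{s}(e)}\;.
\end{equation*}

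It remains to control $1/c^{s}(e)$ uniformly along each good path $\Gamma_{\bs a}$. By the explicit formula \eqref{eq:3} for the conductance and the fact that $\overline{F}_N(\bs z)$ differs from $F_N$ at any vertex of $\gamma_{\bs z}^N$ by at most $O(1/N)$ (here the uniform Lipschitz bound on $G_N$ enters), every edge $(\bs x, \bs y) \in E_N$ satisfies $c^{s}(\bs x, \bs y) \ge c\, Z_N^{-1} e^{-N F_N(\bs x)}$ for some universal constant $c>0$. Property \textbf{(P2)} of good paths then upgrades this to $c^{s}(e) \ge c'\, Z_N^{-1} e^{-N F_N(\bs a)}$ uniformly for $e \in \Gamma_{\bs a}$, so the inner sum is bounded by $C\, L_N Z_N e^{N F_N(\bs a)}$. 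Inserting this estimate yields the desired bound
\begin{equation*}
\Vert \chi_{\mc A} \Vert^{2} \;\le\; C\, L_N M_N Z_N \sum_{\bs a \in \mc A} e^{N F_N(\bs a)} \bigl[(\textup{div }\psi)(\bs a)\bigr]^{2}\;.
\end{equation*}
The only genuinely delicate point will be verifying the uniform lower bound on $c^s(e)$ along good paths; once \textbf{(P2)} is exploited to freeze $F_N$ at its value at $\bs a$ up to a harmless multiplicative constant, the remainder is bookkeeping.
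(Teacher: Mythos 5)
Your proposal is correct and follows essentially the same route as the paper: the same superposition $\chi_{\mc A}=\sum_{\bs a}\chi_{\Gamma_{\bs a},-(\textup{div}\,\psi)(\bs a)}$, the same Cauchy--Schwarz argument with the multiplicity bound $M_N$, and the same use of property \textbf{(P2)} together with the explicit conductance formula \eqref{eq:3} to control $1/c^s(e)$ along each good path. The only difference is the order of the two steps (you apply Cauchy--Schwarz edgewise before bounding conductances, the paper bounds $\Vert\chi_{\bs a}\Vert^2$ path by path first), which is immaterial.
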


\begin{proof}
For each $\boldsymbol{a}\in \mc{A}$, let $\chi_{\boldsymbol{a}}$ be the
flow $\chi_{\Gamma,r}$ constructed in \eqref{eq:chi_Ga}, where
$\Gamma$ is a good path which connects $\boldsymbol{a}$ to a point in
$\mc{B}$, and $r= - (\text{div } \psi) (\bs a)$. Let $\chi_\mc{A} = \sum_{\bs
  a\in \mc{A}} \chi_{\bs a}$. The assertions concerning the divergence of
$\chi_\mc{A}$ follows from the definition of the flows $\chi_{\Gamma,r}$.

We turn to the proof of the last assertion of the lemma. Fix $\bs a\in
\mc{A}$, and let $ (\boldsymbol{a}=\boldsymbol{x}_{0}, \boldsymbol{x}_{1},
\dots,\boldsymbol{x}_{k} =\boldsymbol{b})$ be the good path which
connects $\bs a$ to $\mc{B}$. By property {\bf{(P2)}} of good paths
and by the fact that $k\le L_{N}$,
\begin{align*}
\left\Vert \chi_{\boldsymbol{a}}\right\Vert ^{2}  \;&\le\;
C\sum_{i=0}^{k-1} Z_{N }e^{NF_{N}(\boldsymbol{x}_{i})}
\left[(\textup{div }\psi)(\boldsymbol{a})\right]^{2}\nonumber \\
& \le\;  C \, L_{N} \, Z_{N} \, e^{NF_{N}(\boldsymbol{a})}
\left[(\textup{div }\psi)(\boldsymbol{a})\right]^{2}\;.
\end{align*}
Assume that an edge $(\boldsymbol{x}, \boldsymbol{y})\in E_{N}$ is
used by the good paths $\Gamma_{\boldsymbol{a}_{1}},
\Gamma_{\boldsymbol{a}_{2}}, \dots,\Gamma_{\boldsymbol{a}_{m}}$. By
the Cauchy-Schwarz inequality and since $m\le M_{N}$,
\begin{equation*}
\big[\chi_{\mc{A}}(\boldsymbol{x},\boldsymbol{y}) \big]^{2} \;=\;  
\Big( \sum_{i=1}^{m} \chi_{\boldsymbol{a}_{i}}
(\boldsymbol{x},\boldsymbol{y}) \Big)^{2}
\;\le\;M_{N}\sum_{i=1}^{m} \big[\chi_{\boldsymbol{a}_{i}} 
(\boldsymbol{x},\boldsymbol{y})\big]^{2}
\;=\;M_{N}\sum_{\bs a\in \mc{A}} \big[\chi_{\boldsymbol{a}} 
(\boldsymbol{x},\boldsymbol{y})\big]^{2}\;.
\end{equation*}
By dividing this inequality by $c^s(\bs x, \bs y)$ and summing over
all edges, we obtain that
\begin{equation*}
\left\Vert \chi_{\mc{A}}\right\Vert ^{2} \;\le\; 
M_{N}\sum_{\boldsymbol{a}\in \mc{A}}\left\Vert \chi_{\boldsymbol{a}}\right\Vert ^{2}\;.
\end{equation*}
Putting together the two previous estimates, we complete the proof of
the second assertion of the lemma.
\end{proof}

\begin{figure}
\centering
\includegraphics[scale=0.23]{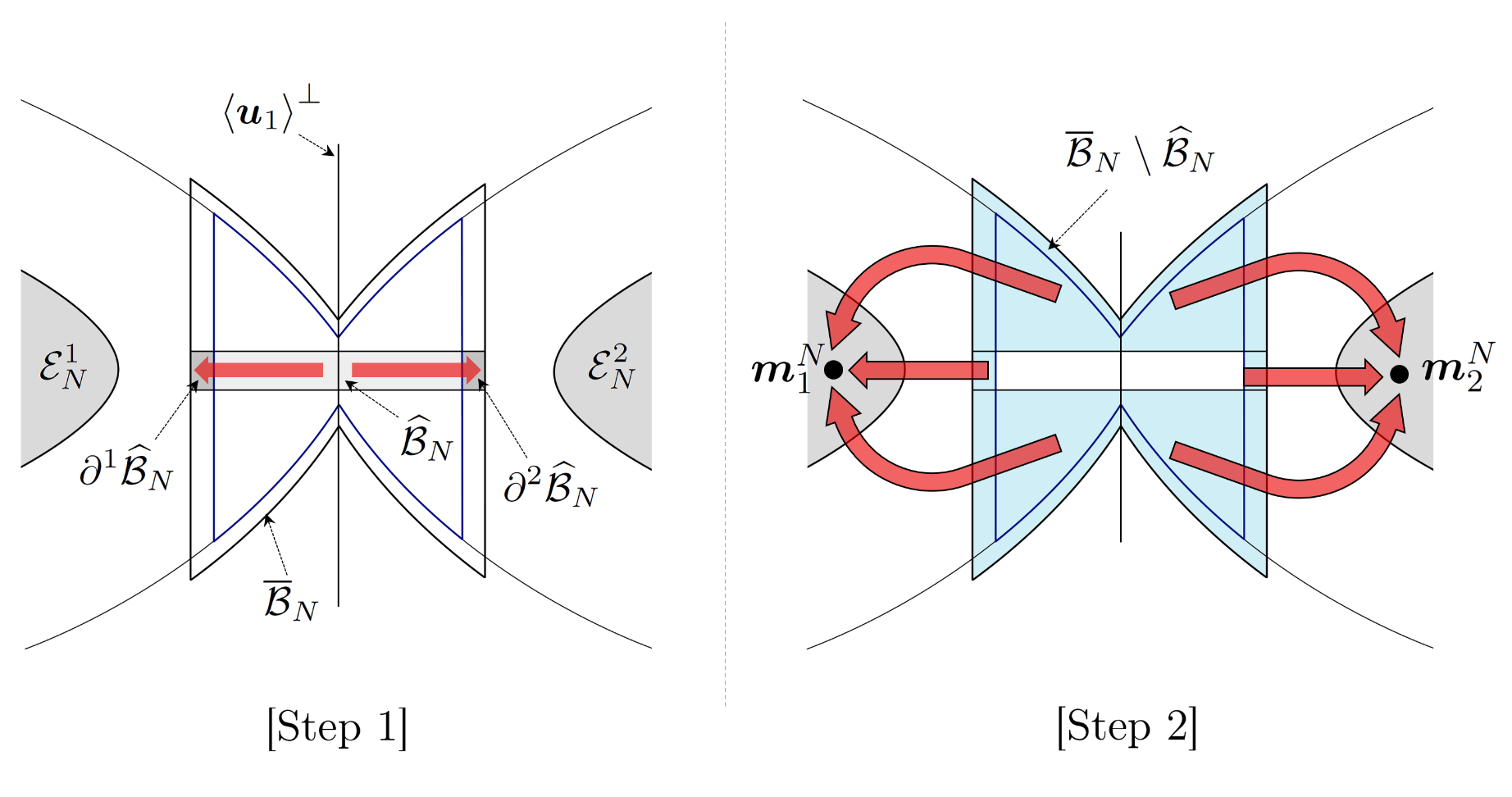}\protect
\caption{\label{fig:cleaning} Transferring the divergence of the flow
  $\Phi_N$ from $\mathcal{\overline{B}}_{N}$ to $\mc E^1_N \cup \mc E^2_N$}
\end{figure} 

\begin{proof}[Proof of Proposition \ref{s15}]
Let
\begin{equation}
\label{eq: hat CN}
\widehat{\mc C}_{N} \;=\; \Big\{ \boldsymbol{x}\in\Xi_{N}:
\,|\boldsymbol{x}\cdot\boldsymbol{u}_{k}|
<\sqrt{\frac{\lambda_{1}}{2(d-1)\lambda_{k}}}
\varepsilon_{N}\,\,\mbox{for all\,\,}2\le k\le d
\Big\} 
\end{equation}
and let $\widehat{\mathcal{B}}_{N}= \widehat{\mc C}_{N}
\cap\mathcal{B}_{N}$, $\partial^{i}\widehat{\mathcal{B}}_{N}
=\widehat{\mc C}_{N} \cap \partial^{i}\mathcal{B}_{N}$, $i=1$, $2$.

The proof is divided in two steps. We first push the divergence of
$\widehat{\mathcal{B}}_{N}$ into $\partial^{1}
\widehat{\mathcal{B}}_{N} \cup\partial^{2} \widehat{\mathcal{B}}_{N}$.
Then, we send all the divergences in $\overline{\mathcal{B}}_{N}
\setminus\widehat{\mathcal{B}}_{N}$ to minima $\bs m_1^N$ and $\bs
m_2^N$ of $\mc E_N^1$ and $\mc E_N^2$, respectively. This procedure is
visualized in Figure \ref{fig:cleaning}.

\noindent\textit{Step 1. Transfer of the divergences of $\widehat{\mathcal{B}}_{N}$
into $\partial^{1} \widehat{\mathcal{B}}_{N} \cup \partial^{2}
\widehat{\mathcal{B}}_{N}$.}  
We start by introducing the good paths connecting points in
$\widehat{\mathcal{B}}_{N}$ to points in $\partial^i
\widehat{\mathcal{B}}_{N}$, $i=1,\,2$.

Fix $\boldsymbol{x}\in \widehat{\mathcal{B}}_{N}$ and assume, without
loss of generality, that $\boldsymbol{x}\cdot\boldsymbol{u}_{1}\ge 0$.
Consider the line $l_{\boldsymbol{x}}= \{\boldsymbol{x}+
t\boldsymbol{u}_{1}: 0\le t\le\varepsilon_{N}\}$, and let
$\Gamma_{\boldsymbol{x}}= (\boldsymbol{x}= \boldsymbol{x}_{0},
\boldsymbol{x}_{1}, \dots, \boldsymbol{x}_{m}\in\partial^{1}
\widehat{\mathcal{B}}_{N})$ be a path such that
\begin{enumerate}
\item[(a)] The edge $(\boldsymbol{x}_{i},\boldsymbol{x}_{i+1})$
  belongs to $E_N$ for all $0\le i<m$;
\item[(b)] $\textup{dist}(\boldsymbol{x}_{i},\,l_{\boldsymbol{x}})\le C_0/N$ for
  all $0\le i<m$; 
\item[(c)] $m\le C_0 N \varepsilon_{N}$ 
\end{enumerate}
for some finite constant $C_0$ independent of $N$.  Since
$\boldsymbol{u_{1}}$ is a decreasing direction of $F$, there exists a
finite constant $C_0$ such that $F_{N}(\boldsymbol{y})\le
F_{N}(\boldsymbol{x})+C_0/N$ for all $\boldsymbol{y}\in
l_{\boldsymbol{x}}$. Hence, in view of (a) and (b) ,
$\Gamma_{\boldsymbol{x}}$ is a good path for all
$\bs{x}\in\widehat{\mc{B}}_N$.

Denote by $\chi_N$ the flow $\chi_{\mc{A}}$ given by Lemma \ref{s16}
for $\mc A:=\widehat{\mc{B}}_{N}$, $\mc B:=\partial^{1}
\widehat{\mc{B}}_{N} \cup \partial^{2} \widehat{\mc{B}}_{N}$,
$\psi=\Phi_{N}$, and good paths
$\{\Gamma_{\bs{x}}:\bs{x}\in\widehat{\mc{B}}_N\}$.  By construction,
$L_{N} \le C_0 N\varepsilon_{N}$. We can observe from (b) that a path
$\Gamma_{\boldsymbol{x}}$,
$\boldsymbol{x}\in\widehat{\mathcal{B}}_{N}$, visit $\boldsymbol{y}$
only if $\textup{dist}(\boldsymbol{y},\,l_{\boldsymbol{x}})\le C_{0}/N$.  From
this observation, it is clear that there are at most
$C_{0}N\varepsilon_{N}$ paths visiting $\boldsymbol{y}$. This implies
$M_{N}\le C_{0}N\varepsilon_{N}$. Therefore, by Lemma \ref{s16},
$\widehat{\Phi}_{N} = \Phi_{N} + \chi_{N}$ is divergence-free on
$\widehat{\mc{B}}_{N}$ and
\begin{equation*}
\big\Vert \chi_{N}\big\Vert ^{2} \;\le\; C_0 \, N^{2} 
\, \varepsilon_{N}^{2} \, Z_{N} \sum_{\boldsymbol{x}\in\widehat{\mc{B}}_{N}} 
e^{NF_{N}(\boldsymbol{x})} \big[(\textup{div
}\Phi_{N})(\boldsymbol{x})\big]^{2}\;. 
\end{equation*}
By \eqref{31}, by a Taylor expansion of $F_{N}(\boldsymbol{x})$, and
by Lemma \ref{s25}, this expression is less than or equal
to
\begin{equation*}
C_0 Z_{N}^{-1} e^{-NH} N  \varepsilon_{N}^{6} 
\sum_{\boldsymbol{x}\in\mc{B}_{N}}
e^{-\frac{N}{2}\boldsymbol{x}^{\dagger}(\mathbb{H}+
2\alpha\boldsymbol{v}\boldsymbol{v}^{\dagger})\boldsymbol{x}}
\;\le\;  C_0 Z_{N}^{-1} e^{-NH} N^{\frac{d}{2}+1}\, \varepsilon_{N}^{6}\;.
\end{equation*}
Therefore, by (\ref{vare1}), 
\begin{equation}
\label{eq: contro1}
\left\Vert \chi_{N}\right\Vert^{2} 
\;\le\; C_0 \kappa_N N^{2}\, \varepsilon_{N}^{6}
\;=\;  \kappa_N  o_{N}(1)\;,
\end{equation}
and hence $\chi_N$ is a negligible flow.

\noindent\textit{Step 2. Transfer of the divergences of
  $\overline{\mc{B}}_{N} \setminus \widehat{\mc{B}}_{N}$ to local minima of 
  $W_1$ and $W_2$.}
Fix, without loss of generality, $\boldsymbol{x} \in
\overline{\mc{B}}_{N} \setminus\widehat{\mc{B}}_{N}$ such that
$\boldsymbol{x}\cdot \bs u_{1}\ge 0$. 

We first claim that there exists $a>0$ such that
\begin{equation}
\label{eq: est4}
e^{N[F_{N}(\boldsymbol{x})+H]}\big[ (\textup{div
}\Phi_{N})(\boldsymbol{x}) \big]^{2} \; \le \; 
\frac 1{Z_{N}^2}\, e^{-a\varepsilon_{N}^{2}N}
\end{equation}
for all $\boldsymbol{x}\in \overline{\mc{B}}_{N} \setminus
\widehat{\mc{B}}_{N}$.  We consider three cases,
$\boldsymbol{x}\in\mc{B}_{N}\setminus \widehat{\mc{B}}_{N}$,
$\boldsymbol{x} \in\partial^{0} \mc{B}_{N}$ and $\boldsymbol{x}
\in\partial^{1} \mc{B}_{N}$.  

Assume first that $\boldsymbol{x}\in
\mc{B}_{N} \setminus \widehat{\mc{B}}_{N} = \mc{B}_{N} \setminus
\widehat{\mc C}_N$ and recall from Lemma \ref{s17} that
$\mathbb{H}+2\alpha \bs{v}\bs{v}^{\dagger}$ is a positive-definite matrix. By
\eqref{31}, and by a Taylor expansion of $F_{N}(\boldsymbol{x})$,
\begin{equation*}
e^{N[F_{N}(\boldsymbol{x})+H]} \big[(\textup{div }\Phi_{N}) (\boldsymbol{x})\big]^{2} 
\;\le\; C_0 \frac{\varepsilon_{N}^{4} }{N\, Z_{N}^{2}} \,
e^{-\frac{N}{2}\boldsymbol{x}^{\dagger}(\mathbb{H}+2\alpha
\boldsymbol{v}\boldsymbol{v}^{\dagger})\boldsymbol{x}}
\;\le\; C_0\, \frac 1{Z_{N}^2} \,
e^{-\frac{N}{2}\tau|\boldsymbol{x}|^{2}} \;,
\end{equation*}
where $\tau>0$ is the smallest eigenvalue of $\mathbb{H}+2\alpha
vv^{\dagger}$. To complete the proof of (\ref{eq: est4}) for points
$\boldsymbol{x}$ in the set $\mc{B}_{N} \setminus
\widehat{\mc{B}}_{N}$, it remains to observe from (\ref{eq: hat CN})
that for $\bs x$ in $\mc{B}_{N} \setminus \widehat{\mc{B}}_{N}$,
\begin{equation*}
|\boldsymbol{x}|^{2} \;\ge\; \varepsilon_{N}^{2}
\min_{2\le k\le d}\Big\{ \frac{\lambda_{1}}{2(d-1)\lambda_{k}}\Big\}
\;. 
\end{equation*}

Fix $\bs x\in\partial^{0}\mc{B}_{N}$. By \eqref{37}, $|(\textup{div
}\Phi_{N}) (\boldsymbol{x})| \le C_0 (1/Z_N) \exp\{-N F_N(\bs x)\}$.
In particular, (\ref{eq: est4}) follows from the fact that
$F(\boldsymbol{x}) \ge H+ (1/4) \lambda_{1} \varepsilon_{N}^{2}$ for $\bs
x$ in $\partial^{0}\mc{B}_{N}$.

Finally, fix $\bs x \in \partial^{1} \mc{B}_{N}$. By \eqref{eq:div(Phi_N)},
\begin{equation*}
(\textup{div }\Phi_{N})(\boldsymbol{x}) \;=\; \frac 1{Z_N} \sum_{i}
e^{-N\overline{F}_{N}(\boldsymbol{x}-\boldsymbol{z}_{i}^{N})} \big\{ 
V_{N}(\boldsymbol{x})- V_{N}(\boldsymbol{x}+\boldsymbol{z}_{i+1}^{N} -
\boldsymbol{z}_{i}^{N})\big\}\;,
\end{equation*}
where the sum is performed over all indices $i$ for which
$\boldsymbol{x} - \boldsymbol{z}_{i}^{N}$ belongs to
$\mathring{\mc{B}}_{N}$. Expressing the difference
$V_{N}(\boldsymbol{x})- V_{N}(\boldsymbol{x}+\boldsymbol{z}_{i+1}^{N}
- \boldsymbol{z}_{i}^{N})$ as $[1-V_{N}(\boldsymbol{x})] +
[V_{N}(\boldsymbol{x}+\boldsymbol{z}_{i+1}^{N} -
\boldsymbol{z}_{i}^{N})-1]$, by Young's inequality,
\begin{equation*}
\big[ (\textup{div }\Phi_{N})(\boldsymbol{x}) \big]^2 \;\le\;
\frac {C_0} {Z^2_N} \sum_{i}
e^{-2N\overline{F}_{N}(\boldsymbol{x}-\boldsymbol{z}_{i}^{N})} \Big\{ 
[1-V_{N}(\boldsymbol{x})]^2 +[1-V_{N}(\boldsymbol{x}+\boldsymbol{z}_{i+1}^{N} -
\boldsymbol{z}_{i}^{N})]^2 \Big\}\;.
\end{equation*}
Since the sum is carried over indices $i$ for which $\boldsymbol{x} -
\boldsymbol{z}_{i}^{N}$ belongs to $\mathring{\mc{B}}_{N}$, and since
$\exp\{-N\overline{F}_{N} (\boldsymbol{x} - \boldsymbol{z}_{i}^{N})\}
\le C_0 \exp\{-NF_N (\boldsymbol{x})\}$, by Lemma \ref{s18}, 
\begin{equation*}
\big[ (\textup{div }\Phi_{N})(\boldsymbol{x}) \big]^2 \;\le\;
\frac {C_0} {Z^2_N} \, e^{-N[ F_{N}(\boldsymbol{x})+H]} \, e^{-a\varepsilon_{N}^{2}N}\;, 
\end{equation*}
for some $a>0$, which completes the proof of assertion \eqref{eq:
  est4}.
  
We next claim that there exists $a>0$ such that
\begin{equation}
\label{eq: est5}
e^{N[F_{N}(\boldsymbol{x})+H]} \big[ (\textup{div
}\chi_{N})(\boldsymbol{x})\big]^{2} \;\le\; C_0\, 
\frac 1{Z_{N}^2}\, e^{-a\varepsilon_{N}^{2}N}
\end{equation}
for all $\boldsymbol{x}\in \partial^1 \widehat{\mc{B}}_{N}
\cup \partial^2 \widehat{\mc{B}}_{N}$.

Fix a point $\boldsymbol{x} \in \partial^1 \widehat{\mc{B}}_{N}
\cup \partial^2 \widehat{\mc{B}}_{N}$. By Lemmata \ref{s19} and
\ref{s16},
\begin{equation}
\label{4331}
|(\textup{div }\chi_{N})(\boldsymbol{x})| \;\le\; 
\sum_{z\in\widehat{\mc{B}}_{N}}|(\textup{div
}\Phi_{N})(\boldsymbol{z})|
\;=\;\kappa_N o_{N}(1)\;.
\end{equation}
By (\ref{eq: hat CN}), each point $\boldsymbol{x}$ in $\partial^1
\widehat{\mc{B}}_{N} \cup \partial^2 \widehat{\mc{B}}_{N}$ can be
written as
\begin{equation*}
\boldsymbol{x}  \;=\; \Big( \pm\big[1+o_N(1)\big]\boldsymbol{u}_{1}
+ \sum_{k=2}^{d}x_{k}\boldsymbol{u}_{k}\Big)\, \varepsilon_{N}\;,
\end{equation*}
where $x_{k}^2 \le \lambda_{1}/[2(d-1)\lambda_{k}]$, $2\le k\le d$.
Therefore, by Taylor's expansion,
\begin{align*}
F_{N}(\boldsymbol{x}) -H \; &=\; \frac 12 \, \boldsymbol{x}^{\dagger}
\mathbb{H}\boldsymbol{x} \,+\, O(\varepsilon^3_N)
\;=\; \frac 12 \, \Big(-\lambda_{1} + 
\sum_{k=2}^{d}x_{k}^{2}\, \lambda_{k}\Big)
\varepsilon_{N}^{2}\,+\,O(\varepsilon^3_N) \\
&\le \; \frac 12 \, \Big( -\lambda_{1} + \sum_{k=2}^{d}
\frac{\lambda_{1}}{2(d-1)}\,\Big) \, \varepsilon_{N}^{2} \,+\, O(\varepsilon^3_N)
\; =\; -\,  \frac{\lambda_{1}}{4}\varepsilon_{N}^{2}
\,+\, O(\varepsilon_{N}^3)\;.  
\end{align*}
By the previous estimates, since $N\epsilon^3_N\downarrow 0$ and since
$\exp\{-a N\varepsilon_{N}^{2}\}$ vanishes faster than any polynomial,
\begin{equation*}
e^{N[F_{N}(\boldsymbol{x})+H]} \, \big[(\textup{div }\chi_{N})
(\boldsymbol{x})\big]^{2} \;\le\; C_0 \,\kappa_N^2\, 
e^{2NH}\, e^{-(\lambda_{1}/4) N\varepsilon_{N}^{2}} 
\;<\; C_0 \,  \frac 1{Z_{N}^2}\, e^{-(\lambda_{1}/8)
  N\varepsilon_{N}^{2}}\;, 
\end{equation*}
which completes the proof of assertion \eqref{eq: est5}.

By definition of the flow $\widehat{\Phi}_{N}$ and by 
(\ref{eq: est4}) and (\ref{eq: est5}), there exists $a>0$ such that
\begin{equation}
\label{e424}
e^{N[F_{N}(\boldsymbol{x})+H]} \big[ (\textup{div }\widehat{\Phi}_{N})
(\boldsymbol{x})\big]^{2} \;\le\; \frac 1{Z_{N}^2}\, 
e^{-a\varepsilon_{N}^{2}N}\;\;,\;
\forall \bs{x}\in\overline{\mathcal{B}}_N
\setminus\widehat{\mathcal{B}}_N\;.
\end{equation}

We are now in a position to move the divergence of $\widehat{\Phi}_N$
from $\overline{\mc{B}}_{N} \setminus \widehat{\mc{B}}_{N}$ to the 
minima of $W_1$ and $W_2$.

Fix $\bs x\in \overline{\mc{B}}_{N} \setminus \widehat{\mc{B}}_{N}$
and let $\bs x(t)$ be the solution of the ODE 
\begin{equation*}
\dot{\boldsymbol{x}}(t) \;=\; -\nabla F(\boldsymbol{x}(t))\;,\;\; 
\boldsymbol{x}(0)\;=\;\boldsymbol{x}\;.
\end{equation*}
Since we assumed that $\bs x \cdot \bs u_1 \ge 0$, this path connects
$\boldsymbol{x}$ to a local minima of $W_1^{\epsilon}$. Let
$T=\inf\{\,t>0:\bs{x}(t)\in \overline{W}_1^\epsilon\,\}<\infty$, and
let $\theta_\bs{x}^{(1)}=\{\bs{x}(t):0\le t\le T\}$. Since
$\overline{W}_1^\epsilon$ is connected, there is a continuous path
$\theta_\bs{x}^{(2)}\subset \overline{W}_1^\epsilon $ connecting
$\bs{x}(T)$ to $\bs{m}_1$. Note that $F(\bs y) \le H - \epsilon$ for
all $\bs y \in \theta_\bs{x}^{(2)}$. Appending $\theta_\bs{x}^{(2)}$
to $\theta_\bs{x}^{(1)}$ at $\bs{x}(T)$, we obtain a continuous path
$\theta_\bs{x}$ connecting $\bs{x}$ and $\bs{m}_1$. Let
$\Gamma_\bs{x}=(\bs{x}=\bs{x}_0,\,\bs{x}_1,\,\cdots,\,\bs{x}_m=\bs{m}_1^N)$
be the path connecting $\bs{x}$ to $\bs{m}_1^N$, obtained by
discretizing $\theta_\bs{x}$, as in step 1. It is obvious that
$F(\bs{y})\le F(\bs{x})$ for all $\bs{y}\in\theta_\bs{x}$, so that
$\Gamma_{\bs{x}}$ is a good path. 

Apply Lemma \ref{s16} with $\mc
A=\overline{\mc{B}}_{N}\setminus\widehat{\mc{B}}_{N}$, $\mc
B=\{\bs{m}_1^N, \bs{m}_2^N\}$, $\psi=\widehat{\Phi}_{N}$ and good
paths $\{\Gamma_\bs{x}:\bs{x}\in\overline{\mc B}_N \setminus
\widehat{\mc B}_N\}$.  As $M_{N}\le C_0 N^d$ and $L_{N}\le C_0 N^{d}$,
if we represent by $\widetilde{\chi}_{N}$ the flow denoted by
$\chi_{\mc A}$ in Lemma \ref{s16}, by the assertion of this lemma and
by \eqref{e424},
\begin{align*}
&\left\Vert \widetilde{\chi}_{N}\right\Vert ^{2} \;\le\; 
C_0 \, N^{2d}\, Z_{N} \sum_{\boldsymbol{x}\in \overline{\mc{B}}_{N}
\setminus\widehat{\mc{B}}_{N}} e^{NF_{N}(\boldsymbol{x})}
\big[(\textup{div }\widehat{\Phi}_{N})(\boldsymbol{x})\big]^{2} \\
& \quad \le\; C_0 \,e^{-NH} \frac {N^{2d}}{Z_{N}} \, |\overline{\mc{B}}_{N}|\,
e^{-a\varepsilon_{N}^{2}N}\;=\;C_0\, \kappa_N \, N^{(3d/2)+1} \, 
e^{-a\varepsilon_{N}^{2}N}
\;=\;\kappa_N\, o_{N}(1)\;.
\end{align*}
Therefore, $\widetilde{\chi}_{N}$ is negligible. 

Let $R_{N}=\chi_{N}+\widetilde{\chi}_{N}$. By construction,
the flow $\Phi_{N}+R_{N}$ is divergence-free on
$\{\bs{m}_1^N ,\bs{m}_2^N \}^{c}$, and $R_N$ is
negligible since both of $\chi_N$ and $\widetilde{\chi}_N$ are
negligible.

It suffices to check \eqref{eq: div of crr} to complete the proof. By
the first assertion of Lemma \ref{lf1}, and by Lemmata \ref{s19} and
\ref{s11}, the divergence of the flow $\Phi_N$ is negligible on $\Xi_N
\setminus (\partial^{1}\mathcal{B}_{N}
\cup \partial^{2}\mathcal{B}_{N})$.  Since the path
$\Gamma_{\boldsymbol{x}}$ connects $\partial^{i}\mathcal{B}_{N}$ to
$\bs{m}_i^N$, $i=1$, $2$, the divergences on
$\partial^{i}\mathcal{B}_{N}$ is transferred to
$\bs{m}_i^N$. Therefore, by Proposition \ref{s03}, we can conclude
\eqref{eq: div of crr}.

\end{proof}

\subsection{Flows for the adjoint dynamics.} 
\label{sub54}

We introduce in this subsection the flows which approximate the flow
$\Phi_{V^*_N}$ in $\mc B_N$. For $\bs z\in \widehat\Xi_N$ and a
function $f:\Xi_{N}\rightarrow\mathbb{R}$, define a flow
$\Phi_{f,\boldsymbol{z}}$, supported on $\gamma_{\boldsymbol{z}}^{N}$,
by
\begin{equation*}
\Phi_{f,\boldsymbol{z}} (\boldsymbol{x},\boldsymbol{y})
\;=\;f(\boldsymbol{x})
c_{\boldsymbol{z}}(\boldsymbol{x},\boldsymbol{y})
-f(\boldsymbol{y})c_{\boldsymbol{z}}(\boldsymbol{y},\boldsymbol{x})\;,
\quad \bs x\,,\; \bs y \in\Xi_N\;.
\end{equation*}
Let $\Phi^*_{N}$ be the flow defined by
\begin{equation}
\label{af2}
\Phi^*_{N}\;=\;\sum_{\boldsymbol{z}\in\mathring{\mathcal{B}}_{N}}
\Phi_{V^*_{N},\boldsymbol{z}}\;,
\end{equation}
where $V^*_N$ is the approximation of the equilibrium potential
introduced in Subsection \ref{ad}.

All results presented in Subsections \ref{sub51}, \ref{sub52} for the
flow $\Phi_N$ are in force for $\Phi^*_N$, replacing $\mc L_N$, $V_N$
by $\mc L^*_N$, $V^*_N$, respectively. In particular, there exists 
a flow $\widetilde{\Phi}^*_{N}$ which is divergence-free on
$\{\bs{m}_1^N, \bs{m}_2^N\}^{c}$, such that
\begin{equation}
\label{af1}
\big(\textup{div}\,
  \widetilde{\Phi}^*_{N}\big)
(\bs{m}_1^N) \;=\; -\big(\textup{div}\,
  \widetilde{\Phi}^*_{N}\big)
(\bs{m}_2^N)\;=\; \big[ 1+o_{N}(1)\big] \,\kappa_N \, \omega_{\bs{0}}\;,
\end{equation}
and such that $\widetilde{\Phi}^*_{N} - \Phi^*_{N}$ is negligible. 

\subsection{\bf Final corrections on $\widetilde{\Phi}_{N}$.}  
\label{sub53}

In this subsection, we remove the $o_{N}(1)$ terms in (\ref{eq: div of
  crr}).  The following elementary lemma is useful in the forthcoming
computations.

\begin{lemma}
\label{lem:neg_lemma}
Suppose that the sequence of flows
$\mathfrak{f}_{N}\in\mathcal{F}_{N}$ is negligible. Then, for any
sequence of flows $\mathfrak{h}_{N}\in\mathcal{F}_{N}$,
\begin{equation*}
\left\Vert \mathfrak{h}_{N}+\mathfrak{f}_{N}\right\Vert ^{2}
\;\le\; \big[1+o_N(1)\big]\left\Vert \mathfrak{h}_{N}\right\Vert ^{2}
\;+\;  \kappa_{N}\,o_{N}(1)\;.
\end{equation*}
\end{lemma}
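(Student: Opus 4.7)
The plan is to expand the squared norm by bilinearity of $\langle \cdot, \cdot \rangle_{\mathcal{F}_N}$ and control the cross term by Young's inequality with a carefully chosen parameter.

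First I would write
\begin{equation*}
\Vert \mathfrak{h}_N + \mathfrak{f}_N \Vert^2 \;=\;
\Vert \mathfrak{h}_N \Vert^2 \,+\, 2\langle \mathfrak{h}_N, \mathfrak{f}_N\rangle \,+\,
\Vert \mathfrak{f}_N \Vert^2\;,
\end{equation*}
and for any $\epsilon>0$ apply Young's inequality to get
$2|\langle \mathfrak{h}_N, \mathfrak{f}_N\rangle| \le \epsilon \Vert \mathfrak{h}_N\Vert^2 + \epsilon^{-1}\Vert \mathfrak{f}_N\Vert^2$, so that
\begin{equation*}
\Vert \mathfrak{h}_N + \mathfrak{f}_N \Vert^2 \;\le\;
(1+\epsilon)\,\Vert \mathfrak{h}_N \Vert^2 \,+\, (1+\epsilon^{-1})\,\Vert \mathfrak{f}_N\Vert^2\;.
\end{equation*}

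The key step is to choose $\epsilon=\epsilon_N \downarrow 0$ slowly enough that the second term remains of order $\kappa_N o_N(1)$. Writing $\Vert \mathfrak{f}_N\Vert^2 = \kappa_N \eta_N$ with $\eta_N = o_N(1)$ by negligibility, I would take $\epsilon_N := \sqrt{\eta_N}$ (and any positive sequence tending to $0$ in the trivial case $\eta_N=0$). Then $\epsilon_N = o_N(1)$, so $1+\epsilon_N = 1+o_N(1)$, while $(1+\epsilon_N^{-1})\Vert \mathfrak{f}_N\Vert^2 = (\eta_N + \sqrt{\eta_N})\,\kappa_N = \kappa_N o_N(1)$, which is exactly the conclusion.

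There is no real obstacle here: the lemma is a one-line application of Young's inequality once the parameter is tuned to the rate of $\Vert \mathfrak{f}_N\Vert^2/\kappa_N$. The only minor care is the degenerate case $\Vert \mathfrak{f}_N\Vert^2=0$, which is trivial.
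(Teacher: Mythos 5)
Your argument is correct: Cauchy--Schwarz plus Young's inequality gives $\Vert \mathfrak{h}_{N}+\mathfrak{f}_{N}\Vert^{2}\le(1+\epsilon)\Vert\mathfrak{h}_{N}\Vert^{2}+(1+\epsilon^{-1})\Vert\mathfrak{f}_{N}\Vert^{2}$, and tuning $\epsilon=\epsilon_{N}$ to the rate $\eta_{N}=\Vert\mathfrak{f}_{N}\Vert^{2}/\kappa_{N}$ yields exactly the stated bound. The paper omits the proof as elementary, and this is precisely the intended argument; nothing is missing.
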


Next result provides a lower bound for the capacity $\mbox{cap}_{N}
(\{\bs{m}_1^N\}, \{\bs{m}_2^N\})$. 

\begin{lemma}
\label{s09}
We have that 
\begin{equation*}
\textup{cap}_{N}(\{\bs{m}_1^N\}, \{\bs{m}_2^N\}) \;\ge\; 
\big[1+o_N(1)\big]\,\kappa_N\,\omega_{\bs{0}}\;.
\end{equation*}
\end{lemma}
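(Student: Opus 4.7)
The plan is to apply Thomson's principle (Theorem \ref{s10}) to the disjoint pair $(\mc A, \mc B) = (\{\bs m_1^N\}, \{\bs m_2^N\})$ with carefully coupled choices of a test function and a unit test flow built from the approximations already in place. Writing $c_N = [1+o_N(1)]\kappa_N \omega_{\bs 0}$ for the common divergence produced by Theorem \ref{s24} and \eqref{af1}, I take the unit flow
$$\psi_N \;:=\; \frac{\widetilde{\Phi}_N + \widetilde{\Phi}_N^*}{2\, c_N} \;\in\; \mathfrak{U}_1(\{\bs m_1^N\}, \{\bs m_2^N\})$$
together with the test function $g_N \in \mathfrak{C}_{0,0}(\{\bs m_1^N\}, \{\bs m_2^N\})$ equal to $(V_N^*-V_N)/(2c_N)$ on $\overline{\mc B}_N$ and extended by cutoff outside so that $g_N \equiv 0$ at the minima; this is possible because both $V_N$ and $V_N^*$ may be extended to approximate the equilibrium potentials $V_{\mc E^1_N, \mc E^2_N}$ and $V^*_{\mc E^1_N, \mc E^2_N}$, each of which takes the values $0$ and $1$ on $W_2$ and $W_1$, so that their difference vanishes on each well.

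The virtue of this coupling is an exact algebraic cancellation on every forward cycle edge $(\bs x, \bs y) = (\bs z + \bs z_i^N, \bs z + \bs z_{i+1}^N)$ with $\bs z\in \mathring{\mc B}_N$. Combining $\Phi_{V_N,\bs z}^*(\bs x,\bs y) = -V_N(\bs y) c_{\bs z}$ and $\Phi_{V_N^*,\bs z}(\bs x, \bs y) = V_N^*(\bs x) c_{\bs z}$ with the analogous contributions from any cycle $\bs z'$ traversing the same undirected edge in the reverse direction (and the estimate $c_{\bs z'} = [1+o_N(1)]\, c_{\bs z}$ coming from Lipschitz continuity of $\overline F_N$ and $|\bs z - \bs z'| = O(1/N)$), a short computation yields
$$(\Phi_{g_N} - \psi_N)(\bs x, \bs y) \;=\; [1+o_N(1)]\, \frac{c^s(\bs x, \bs y)\,[V_N(\bs y) - V_N(\bs x)]}{c_N}\;,$$
the unwanted $V_N^*$ contributions cancelling between $g_N$ and $\psi_N$ by design. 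Squaring, dividing by $c^s$, and summing over undirected edges of cycles in $\mathring{\mc B}_N$ reproduces exactly the Dirichlet form $\mc D_N(V_N; \mathring{\mc B}_N)/c_N^2$, which by Proposition \ref{s02} equals $[1+o_N(1)]/(\kappa_N \omega_{\bs 0})$. Thomson's principle then gives $\textup{cap}_N(\{\bs m_1^N\}, \{\bs m_2^N\}) \ge 1/\|\Phi_{g_N} - \psi_N\|^2 \ge [1+o_N(1)]\kappa_N \omega_{\bs 0}$.

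The main obstacle is not the interior computation above, but rather the control of the exterior contribution to $\|\Phi_{g_N} - \psi_N\|^2$. Outside $\overline{\mc B}_N$ the flow $\psi_N$ still carries the divergence-transfer corrections $\widetilde{\Phi}_N - \Phi_N$ and $\widetilde{\Phi}_N^* - \Phi_N^*$ used to push divergence to $\bs m_1^N$, $\bs m_2^N$, and $g_N$ is defined by an ad-hoc cutoff. Both of the correction flows are negligible in the sense of Definition \ref{def1} by Theorem \ref{s24} and \eqref{af1}, and the cutoff region lies either where $V_N$, $V_N^*$ are exponentially close to their boundary values (controlled by Lemma \ref{s18}) or where the measure is exponentially small by \eqref{vare2}. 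An adaptation of Lemma \ref{lem:neg_lemma} applied to the combined quantity $\Phi_{g_N} - \psi_N$ then ensures that these exterior contributions are of order $o_N(1)/(\kappa_N \omega_{\bs 0})$, negligible beside the leading term $[1+o_N(1)]/(\kappa_N \omega_{\bs 0})$, and the claimed lower bound follows.
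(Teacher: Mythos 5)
Your proposal is correct and follows essentially the same route as the paper: the Thomson principle applied to $g_N \propto (V_N^*-V_N)/2$ cut off outside $\overline{\mc B}_N$ and the normalized unit flow $\propto(\widetilde\Phi_N+\widetilde\Phi_N^*)/2$, the exact cancellation of the $V_N^*$ terms reducing the interior contribution to $\mc D_N(V_N;\mathring{\mc B}_N)$ via Proposition \ref{s02}, and negligibility of the correction flows and boundary terms via Lemma \ref{lem:neg_lemma} and Lemma \ref{s18}. One small remark: since the proof of the exact-divergence statement of Theorem \ref{s24} itself relies on this lemma, you should cite Proposition \ref{s15} and \eqref{af1} for the divergence $[1+o_N(1)]\kappa_N\omega_{\bs 0}$, which is all your normalization by $c_N$ actually requires.
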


\begin{proof}
By Theorem \ref{s10}, it suffices to find
$g_{N}\in\mathfrak{C}_{0,0}(\{\bs{m}_1^N\}, \{\bs{m}_2^N\})$ and $\psi_{N}\in \mathfrak{U}_{1}
(\{\bs{m}_1^N\}, \{\bs{m}_2^N\})$ satisfying
\begin{equation}
\label{eq:lower main-1}
\kappa_N \omega_{\bs{0}}
\, \left\Vert \Phi_{g_{N}}-\psi_{N}\right\Vert ^{2} 
\;=\; 1+o_{N}(1)\;.
\end{equation}
Let $\psi_{N}$ and $g_N$ be the flow  and the function given by
\begin{align*}
\psi_{N} \;=\; \frac{1+c_{N}}{\kappa_N \omega_{\bs{0}}}\, 
\frac{\widetilde{\Phi}_{N}+\widetilde{\Phi}_{N}^{*}}{2}\;, \quad 
g_{N} \;=\; \frac{1+c_{N}}{\kappa_N \omega_{\bs{0}}}\,W_N\;,
\end{align*}
where $c_{N}=o_{N}(1)$ is the normalizing sequence which guarantees
that $\psi_{N}$ is a unitary flow, and where $W_N$ is the function
defined by $(V_N^*-V_N)/2$ on $\overline{\mathcal{B}}_N$ and $0$
otherwise. By definition, the flow $\psi_{N}$ belongs to
$\mathfrak{U}_{1}(\{\bs{m}_1^N\}, \{\bs{m}_2^N\})$ and the function
$g_{N}$ belongs to $\mathfrak{C}_{0,0} (\{\bs{m}_1^N\},
\{\bs{m}_2^N\})$. It remains to show that \eqref{eq:lower main-1}
holds.

By Proposition \ref{s15}, by \eqref{af1} and by Lemma \ref{lem:neg_lemma},
\begin{equation}
\label{eq:rem01}
\begin{aligned}
& \kappa_N \omega_{\bs{0}}
\left\Vert \Phi_{g_{N}}-\psi_{N}\right\Vert ^{2} \; 
=\; \frac{1+o_{N}(1)}{\kappa_N \omega_{\bs{0}}}\,
\Big\Vert \Phi_{W_N} - 
\frac{\widetilde{\Phi}_{N}+\widetilde{\Phi}_{N}^{*}}{2}\Big\Vert ^{2} \\
&\quad =\;  \frac{1+o_{N}(1)}{\kappa_N \omega_{\bs{0}}}\,
\Big\Vert \Phi_{W_N} -
\frac{\Phi_{N}+\Phi_{N}^{*}}{2}\Big\Vert ^{2}+o_N(1)\;.
\end{aligned}
\end{equation}
Define the outer boundary of $\mc{B}_N$ as 
\begin{equation*}
\overline{\partial}\mathcal{B}_N \;=\; \{\bs{z}\in\widehat{\Xi}_N
\setminus\mathring{\mathcal{B}}_N:\gamma_\bs{z}^N
\cap\overline{\mathcal B}_N\neq\varnothing\}\;.
\end{equation*}
With this notation, 
\begin{equation*}
\Phi_{W_N}\;=\;\sum_{\bs{z}\in\mathring{\mathcal{B}}_N}
\Phi_{(V_N^* -V_N)/2,\bs{z}} \,+ \,
\sum_{\bs{z}\in\overline{\partial}\mathcal{B}_N} \Phi_{W_N,\bs{z}}\;.
\end{equation*}
Therefore, by the definitions \eqref{40}, \eqref{af2} of the flows $\Phi_{N}$,
$\Phi_{N}^{*}$, 
\begin{equation*}
\Phi_{W_N} \;-\; \frac 12 \big( \Phi_{N}+\Phi_{N}^{*} \big) \;=\;  
\, -\; \sum_{\bs z\in\mathring{\mc
    B}_N} \Psi_{V_N, \bs z} \;+\; 
\sum_{\bs{z}\in\overline{\partial}\mathcal{B}_N} \Phi_{W_N,\bs{z}}\;,
\end{equation*}
where $\Psi_{f,\bs{z}}=[\Phi_{f,\bs{z}}+\Phi_{f,\bs{z}}^*]/2$. 
Denote the first term on the right hand side by $\Psi_{N}^{(1)}$ and
the second one by $\Psi_{N}^{(2)}$. By the definition of
$\Psi_{f,\bs{z}}$ and by Proposition \ref{s02},
\begin{equation*}
\big\Vert \Psi_{N}^{(1)} \big\Vert ^{2} \;=\; 
\mathcal{D}_N (V_N;\mathring{\mathcal{B}}_N)\;=\; 
\big[1+o_{N}(1)\big]\kappa_N\,\omega_{\bs{0}}\;.
\end{equation*}
Therefore, in view of \eqref{eq:rem01}, to conclude the proof of the
lemma it remains to show that $\Psi_{N}^{(2)}$ is a negligible flow.

By the definition of $\Phi_{f,z}$, 
\begin{align*}
\Psi_{N}^{(2)} (\bs x, \bs y) \;&=\;
\sum_{i=0}^{L-1} c_{\bs x - \bs z^N_i}(\bs x, \bs y)\, W_N(\bs x)\, 
\mb 1\{ \bs x - \bs z^N_i \in \overline{\partial}\mathcal{B}_{N} \,,\,
\bs y = \bs x + \bs z^N_{i+1} - \bs z^N_i\} \\
\;&-\;
\sum_{i=0}^{L-1} c_{\bs x - \bs z^N_i}(\bs y, \bs x)\, W_N(\bs y)\, 
\mb 1\{ \bs x - \bs z^N_i \in \overline{\partial}\mathcal{B}_{N} \,,\,
\bs y = \bs x + \bs z^N_{i-1} - \bs z^N_i\}\;.
\end{align*}
Therefore, by Schwarz inequality and since $c_{\bs z}(\bs w, \bs w')
\le 2 \, c^s(\bs w, \bs w')$ for all $\bs z$, $\bs w$, $\bs w'$,
\begin{equation*}
\Vert \Psi_{N}^{(2)} \Vert^2 \;\le\;
C_0 \sum_{i=0}^{L-1} \sum_{\boldsymbol{x}\in \overline{\partial}\mathcal{B}_{N}}
c_{\bs x}(\bs x + \bs z^N_i, \bs x + \bs z^N_{i+1})\, W_N(\bs x + \bs z^N_i)^2\;.
\end{equation*}
By definition of $W_N$ and by the bound \eqref{eq:3} on the
conductances, this expression is less than or equal to
\begin{equation*}
\frac{C_0}{Z_N} \sum_{i=0}^{L-1} \sum_{\boldsymbol{x}\in \overline{\partial}\mathcal{B}_{N}}
e^{-N F(\bs x)} \, [V_N(\bs x + \bs z^N_i) - V^*_N(\bs x + \bs
z^N_i)] ^2 \, \mb 1\{ \bs x + \bs z^N_i \in \overline{\mc B}_N\}\;.
\end{equation*}
The last indicator appeared because $W_N$ vanishes outside
$\overline{\mc B}_N$.  Since $\bs x$ belongs to
$\overline{\partial}\mathcal{B}_{N}$, $\bs x + \bs z^N_i \not\in \mc
B_N$ for $0\le i<L$. We may therefore replace the indicator appearing
in the previous formula by the indicator of the set $\{\bs x : \bs x +
\bs z^N_i \in \partial \mc B_N\}$. Therefore, by performing the change
of variables $\bs y = \bs x + \bs z^N_i $, we obtain that 
\begin{equation*}
\Vert \Psi_{N}^{(2)} \Vert^2 \;\le\;
\frac{C_0}{Z_N} \sum_{\boldsymbol{y}\in \partial\mathcal{B}_{N}}
e^{-N F (\bs y)} \, [V_N(\bs y) - V^*_N(\bs y)] ^2\;.
\end{equation*}

To show that this expression is of order $o_N(1) \kappa_N$, we
consider separately each part of the boundary
$\partial\mathcal{B}_{N}$.  For $\boldsymbol{y}
\in \partial^{0}\mathcal{B}_{N}$, since $|V_{N}^*-V_{N}|\le1$,
\begin{equation*}
e^{-N F(\boldsymbol{y})}[(V_{N}^{*}-V_{N})(\boldsymbol{y})]^{2}
\;\le\;C\,e^{-NH}\,e^{-\frac{1}{4}\lambda_{1}N\varepsilon_{N}^{2}}\;.
\end{equation*}
For $\boldsymbol{y}\in\partial^{1}\mathcal{B}_{N}$,
by the Cauchy-Schwarz inequality and by Lemma \ref{s18}, 
\begin{align*}
e^{-N F(\boldsymbol{y})}[(V_{N}^{*}-V_{N})(\boldsymbol{y})]^{2}
\; & \le\;  2e^{-N F (\boldsymbol{y})}
\left([1-V_{N}(\boldsymbol{y})]^{2}+[1-V_{N}^{*}(\boldsymbol{y})]^{2}\right)
\\
& \le  \; C_0 \,e^{-NH}\,e^{-c_{0}N\varepsilon_{N}^{2}}
\end{align*}
for some $c_0>0$.  An analogously argument applies to $\boldsymbol{y}
\in \partial^{2}\mathcal{B}_{N}$.  It follows from the last three
estimates that there exists $a>0$ such that
\begin{equation*}
\Vert \Psi_{N}^{(2)} \Vert^2 \;\le\; C \kappa_N
N^{-\frac{d}{2}-1}e^{-aN\varepsilon_N^2}\;. 
\end{equation*}
This shows that $\Psi_{N}^{(2)}$ is a negligible flows and completes
the proof of the lemma.
\end{proof}

The lower bound presented in the previous lemma is sharp if there are
only two metastable sets $\mathcal{E}_{N}^{1}$, $\mathcal{E}_{N}^{2}$
and only one saddle point between them. It is not sharp otherwise, as
shall be seen in the next section.

Let $\textup{cap}_N^s (\cdot,\cdot)$ be the capacity with respect to
the process generated by $\mc L_N^s$. Then, by Lemma \ref{lem: sector
  condition} and by \cite[Lemma 2.6]{GL}, for any disjoint subsets
$\mc A$, $\mc B$ of $\Xi_N$,
\begin{equation}
\label{sc1}
\textup{cap}_N(\mc A,\mc B)\le 4\,L^2 \,\textup{cap}_N^s (\mc A,\mc B)\;.
\end{equation}

\begin{lemma}
\label{s20}
For any sequence $\delta_{N}\downarrow 0$, there exists a negligible flow
$\varrho_{N}\in\mathcal{F}_{N}$ which is divergence-free on
$\{\bs{m}_1^N , \bs{m}_2^N\}^{c}$ and such that
\begin{equation*}
\left(\mbox{\textup{div}\,}\varrho_{N}\right)(\bs{m}_1^N) 
\;=\;  -\left(\mbox{\textup{div}\,}\varrho_{N}\right)
(\bs{m}_2^N)\;=\;\kappa_N\delta_{N}\;.
\end{equation*}
\end{lemma}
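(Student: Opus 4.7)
The plan is to produce the required flow by scaling the harmonic flow associated with the symmetrized reversible chain, for which a classical reversible computation gives explicit control of the norm.

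First I would consider the reversible chain with generator $\mathcal{L}_N^s = (\mathcal{L}_N + \mathcal{L}_N^*)/2$, which has $\mu_N$ as its invariant measure and whose conductances coincide exactly with the symmetric conductances $c^s(\bs x, \bs y)$ used to define $\|\cdot\|_{\mathcal{F}_N}$. Let $V_N^s$ denote the equilibrium potential for this reversible chain with boundary values $1$ at $\bs m_1^N$ and $0$ at $\bs m_2^N$, and set
\begin{equation*}
\phi_N^s (\bs x, \bs y) \;=\; c^s(\bs x, \bs y)\, [V_N^s(\bs x) - V_N^s(\bs y)]\;.
\end{equation*}
A standard reversible computation then yields $(\textup{div}\,\phi_N^s)(\bs m_1^N) = \textup{cap}_N^s(\{\bs m_1^N\},\{\bs m_2^N\}) = -(\textup{div}\,\phi_N^s)(\bs m_2^N)$, with $\phi_N^s$ divergence-free on $\{\bs m_1^N, \bs m_2^N\}^c$, and the explicit identity $\|\phi_N^s\|^2 = \textup{cap}_N^s(\{\bs m_1^N\},\{\bs m_2^N\})$.

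Next I would simply set
\begin{equation*}
\varrho_N \;=\; \frac{\kappa_N\, \delta_N}{\textup{cap}_N^s(\{\bs m_1^N\},\{\bs m_2^N\})}\;\phi_N^s\;,
\end{equation*}
so that the divergence identity stated in the lemma holds exactly, and
\begin{equation*}
\|\varrho_N\|^2 \;=\; \frac{(\kappa_N\, \delta_N)^2}{\textup{cap}_N^s(\{\bs m_1^N\},\{\bs m_2^N\})}\;.
\end{equation*}

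To conclude that $\|\varrho_N\|^2 = \kappa_N\, o_N(1)$, I would invoke the sector-condition comparison \eqref{sc1}, which gives $\textup{cap}_N^s \ge \textup{cap}_N/(4L^2)$, together with the lower bound $\textup{cap}_N(\{\bs m_1^N\},\{\bs m_2^N\}) \ge [1+o_N(1)]\,\kappa_N\, \omega_{\bs 0}$ from Lemma \ref{s09}. These combine to give $(\textup{cap}_N^s)^{-1} \le C/\kappa_N$ for some finite constant $C$, so that $\|\varrho_N\|^2 \le C\, \delta_N^2\, \kappa_N = \kappa_N\, o_N(1)$ as required. I do not foresee a substantive obstacle here: all the delicate asymptotic analysis of the non-reversible equilibrium potential near the saddle point is already packaged into Lemma \ref{s09}, and the role of the sector condition is precisely to transfer the sharp reversible Thomson-type bound on unit-flow norms to the non-reversible capacity up to a multiplicative constant, which is all one needs for negligibility.
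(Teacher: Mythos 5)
Your proposal is correct and follows essentially the same route as the paper: both take the Thomson-optimal unit flow for the symmetrized reversible chain between $\bs m_1^N$ and $\bs m_2^N$ (whose squared norm is $1/\textup{cap}_N^s$), rescale it by $\kappa_N\delta_N$, and establish negligibility by combining the sector-condition comparison \eqref{sc1} with the capacity lower bound of Lemma \ref{s09}.
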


\begin{proof}
By the Thomson principle for the reversible process generated by
$\mathcal{L}_{N}^{s}$, there is a unit flow
$\widetilde{\varrho}_{N}\in\mathfrak{U}_{1}(\{\bs{m}_1^N\}, \{\bs{m}_2^N\})$
such that
\begin{equation*}
\left\Vert \widetilde{\varrho}_{N}\right\Vert ^{2} 
\;=\;
\frac{1}{\mbox{cap}_{N}^{s}(\{\bs{m}_1^N\}, \{\bs{m}_2^N\})}
\; \le\;
\frac{4\, L^{2}}{\mbox{cap}_{N}(\{\bs{m}_1^N\}, \{\bs{m}_2^N\})}
\;\le\; C\, \kappa_N^{-1}\;,
\end{equation*}
where we used \eqref{sc1} and Lemma \ref{s09} to
obtain the last two inequalities. Therefore, $\varrho_{N} \;=\;
\kappa_N \delta_{N} \widetilde{\varrho}_{N}$ fulfils all the
requirements of the lemma.
\end{proof}

By adding the flow $\varrho_N$ to the flow $\widetilde{\Phi}_{N}$
introduced in Proposition \ref{s15} we obtain a new flow whose
divergences at $\bs{m}_1^N$ and $\bs{m}_2^N$ are exactly equal to
$\pm\kappa_N\omega_{\bs{0}}$. This completes the proof of Theorem
\ref{s24}.

\section{Computation of Capacities}
\label{sec6}

We prove in this section a special case of Theorem \ref{s133}. More
precisely, we are concentrating on the case $A=\{i\}$,
$B=S\setminus\{i\}$, $i\in S$. In this case, there is no ambiguity in
the constructions of the approximations of equilibrium potential
$V_{\mc{E}_N (A), \mc{E}_N (B)}$ and the flow $\Phi_{V_{\mc{E}_N (A),
    \mc{E}_N (B)}}$ and it is very clear how to use the building
blocks that we obtained so far. In the next section, based on the
argument of the current section, we prove Theorem \ref{s133} for
general $A$ and $B$.
 
Define $\breve{\mathcal{E}}_{N}^{i}=\mc{E}_N(S\setminus\{i\})$. Since
it is obvious that $\textup{cap}_Y(\{i\},S\setminus\{i\})=\sum_{k=1}^M
\omega_{i,k}$, the following theorem is the main result of this
section.

\begin{theorem}
\label{s13}
For every $1\le i\le M$, 
\begin{equation}
\label{s13e}
\frac{Z_{N}}{(2\pi N)^{\frac{d}{2}-1}}\, e^{NH}
\textup{cap}_{N}(\mathcal{E}_{N}^{i},\breve{\mathcal{E}}_{N}^{i}) 
\;=\; \big[ 1 + o_{N}(1) \big]\,\sum_{k=1}^{M}\omega_{i,k} \;.
\end{equation}
\end{theorem}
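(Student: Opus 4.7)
The plan is to establish matching upper and lower bounds for $\textup{cap}_{N}(\mathcal{E}_{N}^{i},\breve{\mathcal{E}}_{N}^{i})$ by applying the Dirichlet principle (Theorem \ref{s07}) and the Thomson principle (Theorem \ref{s10}), using test functions and flows assembled saddle-by-saddle from the building blocks of Sections \ref{sec3}--\ref{sec:flow}. Write $\mathcal{S}_i = \bigcup_{k\neq i}\mathfrak{S}_{i,k}$ and, for each $\bs\sigma\in\mathcal{S}_i$, let $\overline{\mathcal{B}}_N^{\bs\sigma}$, $V_N^{\bs\sigma}$, $V_N^{*,\bs\sigma}$, $\widetilde{\Phi}_N^{\bs\sigma}$, $\widetilde{\Phi}_N^{*,\bs\sigma}$ denote the mesoscopic neighborhood of $\bs\sigma$ and the local approximations attached to $\bs\sigma$, oriented so that the potentials approach $1$ on the $\mathcal{E}_N^i$ side of $\bs\sigma$ and $0$ on the opposite well $\mathcal{E}_N^{k(\bs\sigma)}$. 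For $N$ large the boxes $\overline{\mathcal{B}}_N^{\bs\sigma}$ are pairwise disjoint, and any descending path from $\mathcal{E}_N^i$ to another well must cross one of them.

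For the upper bound, I take $f_N = (V_N^{\bs\sigma} + V_N^{*,\bs\sigma})/2$ inside each $\overline{\mathcal{B}}_N^{\bs\sigma}$, $f_N\equiv 1$ on every point of $\Xi_N$ lying on the $\mathcal{E}_N^i$ side of all the boxes, and $f_N\equiv 0$ elsewhere; and I pair it with $\phi_N = \tfrac12 \sum_{\bs\sigma\in\mathcal{S}_i}(\widetilde{\Phi}_N^{*,\bs\sigma} - \widetilde{\Phi}_N^{\bs\sigma})$. By Theorem \ref{s24} and its adjoint analogue \eqref{af1}, the divergences of $\widetilde{\Phi}_N^{*,\bs\sigma}$ and $\widetilde{\Phi}_N^{\bs\sigma}$ at $\bs m_i^N$ and at $\bs m_{k(\bs\sigma)}^N$ coincide and therefore cancel in the difference, so $\phi_N\in\mathfrak{U}_0(\mathcal{E}_N^i,\breve{\mathcal{E}}_N^i)$. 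Contributions to $\|\Phi_{f_N}-\phi_N\|^2$ from edges outside the boxes are exponentially small: on any single well $f_N$ is constant so $\Phi_{f_N}$ vanishes, while points at heights exceeding $H+c\varepsilon_N^2$ carry exponentially small conductances, and the boundary jumps of $f_N$ across $\partial^1\mathcal{B}_N^{\bs\sigma}\cup\partial^2\mathcal{B}_N^{\bs\sigma}$ are controlled by Lemma \ref{s18}. Inside a box, replacing $\widetilde{\Phi}_N^{\bs\sigma}$, $\widetilde{\Phi}_N^{*,\bs\sigma}$ by $\Phi_N^{\bs\sigma}$, $\Phi_N^{*,\bs\sigma}$ via Proposition \ref{s15}, \eqref{af1}, and Lemma \ref{lem:neg_lemma} costs only a negligible flow, and the algebraic identity
\begin{equation*}
\Phi_{(V+V^{*})/2}\;-\;\tfrac{1}{2}\bigl(\Phi_{V^{*}}-\Phi_{V}^{*}\bigr)\;=\;\tfrac{1}{2}\bigl(\Phi_{V}+\Phi_{V}^{*}\bigr)\;=\;\Psi_{V}
\end{equation*}
reduces the contribution on the edges of $\overline{\mathcal{B}}_N^{\bs\sigma}$ to $\mathcal{D}_N(V_N^{\bs\sigma};\mathring{\mathcal{B}}_N^{\bs\sigma})$. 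Summing over $\bs\sigma\in\mathcal{S}_i$ and invoking Proposition \ref{s02} gives $\|\Phi_{f_N}-\phi_N\|^2 \le [1+o_N(1)]\,\kappa_N\sum_{k\neq i}\omega_{i,k}$, which after multiplication by $\kappa_N^{-1}$ yields the upper bound in \eqref{s13e}.

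For the lower bound I mimic Lemma \ref{s09}: with $c_N = o_N(1)$ a renormalization constant, take $\psi_N = C_N \cdot \tfrac12 \sum_{\bs\sigma}(\widetilde{\Phi}_N^{\bs\sigma}+\widetilde{\Phi}_N^{*,\bs\sigma})$ and $g_N = C_N W_N$, where $C_N = (1+c_N)/\bigl(\kappa_N\sum_k\omega_{i,k}\bigr)$ is chosen so that $\psi_N$ has total divergence $+1$ on $\mathcal{E}_N^i$ (and hence $\psi_N\in\mathfrak{U}_1(\mathcal{E}_N^i,\breve{\mathcal{E}}_N^i)$), and $W_N$ equals $(V_N^{*,\bs\sigma}-V_N^{\bs\sigma})/2$ in each box and $0$ elsewhere. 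The same algebraic manipulation, now yielding $\tfrac12(\Phi_{V^*}-\Phi_V) - \tfrac12(\Phi_V^*+\Phi_{V^*}) = -\Psi_V$ in each box, combined with the boundary estimates used at the end of Lemma \ref{s09}, produces $\|\Phi_{g_N}-\psi_N\|^2 = [1+o_N(1)]/[\kappa_N\sum_k\omega_{i,k}]$ and therefore the matching lower bound via Theorem \ref{s10}. The main obstacle is controlling cross terms and boundary corrections when the many localized flows are superposed: one must verify that the divergence-cleanup flows produced independently at each saddle by Proposition \ref{s15} and Lemma \ref{s20} remain collectively negligible and do not interfere. This is handled by the disjointness of the boxes and by the freedom to route the good paths of Lemma \ref{s16} into each minimum along disjoint corridors, keeping the aggregate cleanup flow negligible in the norm $\|\cdot\|$ even when summed over all of $\mathcal{S}_i$.
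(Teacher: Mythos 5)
Your overall architecture (the function glued from the local approximations $V_N^{\bs\sigma}$, the flow assembled from the $\widetilde{\Phi}_N^{\bs\sigma}$, the algebraic reduction of $\Phi_{f_N}-\phi_N$ to $\Psi_{h_N}$, and the matching Thomson bound) is the paper's proof of Theorem \ref{s13}. However, there is a genuine gap in your upper bound: the claim that ``on any single well $f_N$ is constant so $\Phi_{f_N}$ vanishes'' is false for the non-reversible dynamics. By \eqref{21}, for a constant function $f\equiv 1$ one has $\Phi_{f}(\bs x,\bs y)=c(\bs x,\bs y)-c(\bs y,\bs x)$, and for a cycle of length $L>2$ the conductance $c(\bs y,\bs x)$ typically vanishes where $c(\bs x,\bs y)>0$ (no step $\bs z_{j+1}-\bs z_j$ of $\gamma$ need be the negative of another). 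Hence on the edges of the well containing $\mathcal{E}_N^i$ the flow $\Phi_{f_N}$ is a nonzero, divergence-free flow of pointwise size of order $Z_N^{-1}e^{-NF_N(\bs x)}$, and its squared flow norm is of order $Z_N^{-1}e^{-Nh_i}$, exponentially larger than $\kappa_N=Z_N^{-1}(2\pi N)^{\frac d2-1}e^{-NH}$ since $h_i<H$. Your $\phi_N=\tfrac12\sum_{\bs\sigma}(\widetilde{\Phi}_N^{*,\bs\sigma}-\widetilde{\Phi}_N^{\bs\sigma})$ vanishes on these edges and does not compensate this term, so the Dirichlet principle would only yield an exponentially wrong upper bound.

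The missing ingredient is precisely the cyclic part of the paper's test flow: there $\phi_N=\tfrac12(\Upsilon_N^*-\Upsilon_N)$ with $\Upsilon_N=\Upsilon_N^1+\sum_{\bs\sigma}\widetilde{\Phi}_N^{\bs\sigma}$ and $\Upsilon_N^1=\sum_{\bs z\in\mathring{\mathcal W}_N^1}\Phi^*_{1,\bs z}$, a sum of divergence-free cyclic flows. Since $\tfrac12\bigl(\Phi_{1,\bs z}-\Phi^*_{1,\bs z}\bigr)(\bs x,\bs y)=c_{\bs z}(\bs x,\bs y)-c_{\bs z}(\bs y,\bs x)$, these cyclic flows exactly cancel $\Phi_{f_N}$ on the edges where $f_N\equiv 1$, and being divergence-free they do not affect the membership $\phi_N\in\mathfrak{U}_0(\mathcal{E}_N^i,\breve{\mathcal{E}}_N^i)$. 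Your lower-bound construction is unaffected by this issue, since $g_N$ vanishes on the wells and the cyclic contributions cancel identically in $\Upsilon_N+\Upsilon_N^*$ (indeed $\Phi_{1,\bs z}+\Phi^*_{1,\bs z}=0$); with the cyclic correction added to the upper-bound flow, the remaining steps of your argument go through as in the paper.
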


The proof of this result is based on the construction of an
approximation, denoted by $h_N$, of the equilibrium potential $V_{\mc
  E^1_N, \breve{\mc E}^1_N}$, and of an approximation, denoted by
$\Upsilon_N$, of the flow $\Phi^*_{V_{\mc E^1_N, \breve{\mc
      E}^1_N}}$. Identical arguments, left to the reader, permit to
define approximations of the equilibrium potential $V^*_{\mc E^1_N,
  \breve{\mc E}^1_N}$ and of the flow $\Phi_{V^*_{\mc E^1_N,
    \breve{\mc E}^1_N}}$. We assume throughout this section, without
loss of generality, that $i=1$ in the statement of Theorem \ref{s13}.

Let $\boldsymbol{\sigma}$ be a saddle point in $\mathfrak{S}_{1,j}$,
$j\not = 1$. All sets, functions and flows introduced in the previous
sections are represented in this section with an extra upper index
$\bs \sigma$ to specify the saddle point. For example, we denote by
$\mathcal{B}_{N}^{\boldsymbol{\sigma}}$, $\mc{C}_N^{\bs{\sigma}}$ the
mesoscopic neighborhood of $\boldsymbol{\sigma}$ defined in \eqref{25}
and \eqref{C_N}.

For a saddle point $\boldsymbol{\sigma}\in \mathfrak{S}$, denote by
$-\lambda_{1}^ {\boldsymbol{\sigma}}$ the negative eigenvalue of
$(\mbox{Hess }F) (\boldsymbol{\sigma})$ and let $\lambda= 
\min_{\boldsymbol{\sigma} \in \mathfrak{S}} \lambda^{\bs
  \sigma}_{1}$,
\begin{equation*}
\Omega_{N} \;=\; \left\{ \boldsymbol{z}\in\Xi:\,
F(\boldsymbol{z})\,\le\,H + (1/4) \lambda\, \varepsilon_{N}^{2}\right\} \;.
\end{equation*}
In view of \eqref{26}, denote by $W_N^1,\,W_N^2,\,\dots,\,W_N^M$ the
connected components of $\Omega_N \setminus
\left(\bigcup_{\boldsymbol{\sigma} \in \mathfrak{S}}
  \mathcal{C}_{N}^{\boldsymbol{\sigma}}\right)$ such that
$W_i^\epsilon \subset W_N^i$, $1\le i\le M$, and let $\mc W_N^i =W_N^i
\cap \Xi_N$. Note that $\mc E_N^i \subset \mc W_N^i$ by
definition. Let $\mathcal{X}_{N}$ be the outer region:
\begin{equation*}
\mathcal{X}_{N} \;=\; \Xi_N \setminus \Big( \bigcup_{k=1}^{M}
\mathcal{W}_{N}^{k} \bigcup_{\boldsymbol{\sigma}\in\mathfrak{S}}
\mathcal{B}_{N}^{\sigma}\Big)\;.
\end{equation*}
We refer to Figure \ref{fig:global fig} for an illustration of these
sets.  

\begin{figure}
\centering
\includegraphics[scale=0.25]{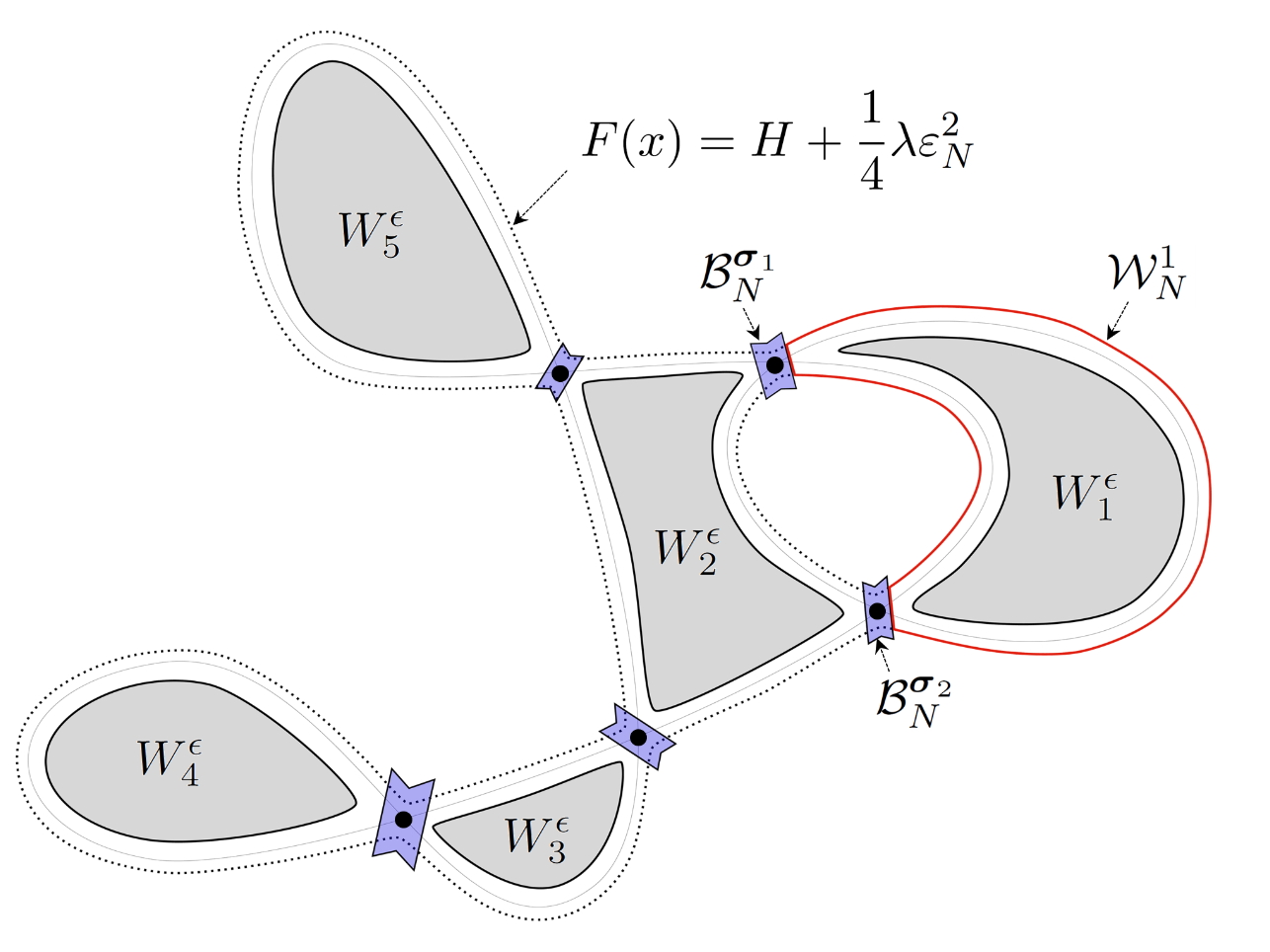}
\protect\caption{\label{fig:global fig} The sets
  $\mathcal{W}_{N}^{i}$, $\mathcal{B}_{N}^{\boldsymbol{\sigma}}$ and
  $\mathcal{X}_{N}$. Note that the sets $\mc B^{\bs \sigma}_N$ are
  thicker than the sets $\mc W^i_N$ because the formers are defined by
  the bound $F(\bs x) \le \lambda^{\bs \sigma}_1 \varepsilon^2_N/4$,
  while the latters are defined by the bound $F(\bs x) \le \lambda
  \varepsilon^2_N/4$.}
\end{figure}

Let $\mf S_1 = \bigcup_{2\le k\le M}\mathfrak{S}_{1,k}$, and denote by
$h_N$ the approximations of the equilibrium potential $V_{\mc E^1_N,
  \breve{\mc E}^1_N}$ given by
\begin{equation*}
h_{N}(\boldsymbol{x})\;=\;\begin{cases}
1 & \boldsymbol{x}\in\mathcal{W}_{N}^{1}\\
V^{\boldsymbol{\sigma}}_N(\boldsymbol{x}) & 
\boldsymbol{x}\in\overline{\mathcal{B}}_{N}^{\boldsymbol{\sigma}},
\,\boldsymbol{\sigma}\in\mathfrak{S}_{1}\\
0 & \mbox{otherwise,}
\end{cases}
\end{equation*}

\begin{lemma}
\label{s23}
Recall from \eqref{45} the definition of $\omega_{\bs \sigma}$. Then, 
\begin{equation*}
\mathcal{D}_{N}(h_{N}) \;=\; \big[ 1 + o_{N}(1) \big]\, \kappa_{N}\,
\sum_{\bs{\sigma}\in\mathfrak{S}_{1}}\omega_{\bs{\sigma}}\;.
\end{equation*}
\end{lemma}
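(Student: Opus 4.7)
My plan is to decompose
$$\mathcal{D}_N(h_N) \;=\; \sum_{\bs z\in\widehat \Xi_N} \mathcal{D}_{N,\bs z}(h_N)$$
via (\ref{eq: Decomposition of Diri Form -2}) and to isolate the leading contribution, which comes from cycles centered near the saddles $\bs \sigma \in \mathfrak S_1$. Since $\varepsilon_N\to 0$, for $N$ large the cores $\mathring{\mc B}_N^{\bs \sigma}$, $\bs \sigma \in \mathfrak S$, are pairwise disjoint and disjoint from every $\mc W_N^i$; moreover, any cycle centered at $\bs z \in \mathring{\mc B}_N^{\bs \sigma}$ lies entirely in $\overline{\mc B}_N^{\bs \sigma}$.

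I first dispose of cycles away from $\mathfrak S_1$. For $\bs z \in \mathring{\mc B}_N^{\bs \sigma}$ with $\bs \sigma \notin \mathfrak S_1$, the cycle sits in $\overline{\mc B}_N^{\bs \sigma}$, where $h_N\equiv 0$, so $\mathcal D_{N,\bs z}(h_N) = 0$. For $\bs z$ in the remainder $\mathcal R_N := \widehat \Xi_N \setminus \bigcup_{\bs \sigma} \mathring{\mc B}_N^{\bs \sigma}$, the cycle avoids every $\mc B_N^{\bs \sigma}$, so either $h_N$ is constant on $\gamma_{\bs z}^N$ (contribution zero) or the cycle straddles a boundary between the regions defining $h_N$. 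Any such straddling must occur either across some $\partial^0\mc B_N^{\bs \sigma}$ or across the outer boundary of an $\Omega_N$-component, forcing $F\ge H+(1/4)\lambda\varepsilon_N^2$ on some vertex, or across $\partial^1 \mc B_N^{\bs \sigma}$ or $\partial^2 \mc B_N^{\bs \sigma}$ for some $\bs \sigma \in \mathfrak S_1$, where Lemma \ref{s18} bounds $e^{-N\bar F_N(\bs z)}(1-V_N^{\bs \sigma})^2$ or $e^{-N\bar F_N(\bs z)}(V_N^{\bs \sigma})^2$ by $Ce^{-NH - c_0 N\varepsilon_N^2}$. Since the number of boundary cycles is only polynomially large, condition (\ref{vare2}) makes the cumulative contribution $\kappa_N o_N(1)$.

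Next, for each $\bs \sigma \in \mathfrak S_1$, I would translate coordinates so that $\bs \sigma$ is the origin and compare $\mathcal D_{N,\bs z}(h_N)$ with $\mathcal D_{N,\bs z}(V_N^{\bs \sigma})$ for $\bs z\in \mathring{\mc B}_N^{\bs \sigma}$. The two agree except when $\gamma_{\bs z}^N$ contains a vertex inside $\mc W_N^1$, in which case that vertex lies in an $O(1/N)$-neighborhood of $\partial^1 \mc B_N^{\bs \sigma}$. Young's inequality together with Lemma \ref{s18} applied with $r$ sufficiently large dominates each defective term by $Ce^{-NH - c_0 N\varepsilon_N^2}/Z_N$, so the total discrepancy is once again $\kappa_N o_N(1)$. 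Invoking Proposition \ref{s02} at each saddle then yields
$$\sum_{\bs z\in \mathring{\mc B}_N^{\bs \sigma}} \mathcal D_{N,\bs z}(h_N) \;=\; \mathcal D_N(V_N^{\bs \sigma}; \mathring{\mc B}_N^{\bs \sigma}) + \kappa_N o_N(1) \;=\; [1+o_N(1)]\, \kappa_N\, \omega_{\bs \sigma}\;,$$
and summing over $\bs \sigma \in \mathfrak S_1$ establishes the lemma.

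The hard part will be the careful combinatorial and analytic bookkeeping of the boundary cycles---those in $\mathcal R_N$ that touch some $\partial^i \mc B_N^{\bs \sigma}$, $i=1,2$, and those in $\mathring{\mc B}_N^{\bs \sigma}$ whose vertices spill into $\mc W_N^1$. In every case the controlling mechanism is the same: the pointwise estimates of Lemma \ref{s18} on $e^{-N\bar F_N}(1-V_N^{\bs \sigma})^2$ and $e^{-N\bar F_N}(V_N^{\bs \sigma})^2$ near these boundary strips, combined with condition (\ref{vare2}) overwhelming the polynomial counting of affected cycles.
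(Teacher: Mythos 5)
Your proposal is correct and follows essentially the same route as the paper: decompose the Dirichlet form into the cores $\mathring{\mc B}_N^{\bs\sigma}$, $\bs\sigma\in\mf S_1$, where Proposition \ref{s02} gives the leading term $[1+o_N(1)]\kappa_N\omega_{\bs\sigma}$, and a remainder whose only non-trivial cycles straddle region boundaries and are killed either by $F\ge H+(1/4)\lambda\varepsilon_N^2$ together with \eqref{vare2}, or by Lemma \ref{s18} on $\partial^1\mc B_N^{\bs\sigma}\cup\partial^2\mc B_N^{\bs\sigma}$. The only cosmetic difference is that you also control cycles in $\mathring{\mc B}_N^{\bs\sigma}$ spilling into $\mc W_N^1$ via Lemma \ref{s18}, whereas the paper simply uses that $h_N=V_N^{\bs\sigma}$ on all of $\overline{\mc B}_N^{\bs\sigma}$; both resolutions are fine.
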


\begin{proof}
Decompose $\mathcal{D}_{N}(h_{N})$ into
\begin{equation*}
\mathcal{D}_{N}(h_{N}) \;=\;
\sum_{\boldsymbol{\sigma}\in\mathfrak{S}_{1}}
\mathcal{D}_{N}(h_{N};\,
\mathring{\mathcal{B}}_{N}^{\boldsymbol{\sigma}})
\;+\; \mathcal{D}_{N}(h_{N};\,\mathcal{R}_{N})\;,
\end{equation*}
where $\mathcal{R}_{N}= \widehat\Xi_{N}\setminus
(\bigcup_{\boldsymbol{\sigma}\in\mathfrak{S}_{1}}
\mathring{\mathcal{B}}_{N}^{\boldsymbol{\sigma}})$. Since $h_N$
coincides with $V^{\bs \sigma}_N$ on
$\overline{\mathcal{B}}_N^{\bs{\sigma}}$, by Proposition \ref{s02},
for each $\boldsymbol{\sigma}$ in $\mathfrak{S}_{1}$,
\begin{equation*}
\mathcal{D}_{N}(h_{N};\,\mathring{\mathcal{B}}_{N}^{\boldsymbol{\sigma}})
\;=\; \big[1+o_{N}(1)\big]\, \kappa_{N}\, \omega_{\boldsymbol{\sigma}}\;.
\end{equation*}
To complete the proof of the lemma, it remains to show that
\begin{equation}
\label{eq:negs}
\mathcal{D}_{N}(h_{N};\mathcal{R}_{N}) \;=\; 
\kappa_{N}\,o_{N}(1) \;.
\end{equation}
The argument presented below to prove this assertion will be used
several times in the remaining part of this article. For this reason,
we present very carefully each step.

By (\ref{eq: Decomposition of Diri Form -2}) and \eqref{13},
$\mathcal{D}_{N}(h_{N};\mathcal{R}_{N})$ is equal to
\begin{equation}
\label{30}
\begin{aligned}
& \frac{1}{2 Z_N}\sum_{\boldsymbol{x}\in\mathcal{R}_{N}}
e^{-N\overline{F}_{N}(\boldsymbol{x})}
\sum_{i=0}^{L-1} \big[h_{N}(\boldsymbol{x}+\boldsymbol{z}_{i+1}^{N})
-h_{N}(\boldsymbol{x}+\boldsymbol{z}_{i}^{N})\big]^{2} \\
&\quad =\; \frac{\kappa_{N}}{2(2\pi N)^{\frac{d}{2}-1}}
\sum_{\boldsymbol{x}\in\mathcal{R}_{N}}
e^{N(H-\overline{F}_{N}(\boldsymbol{x}))}
\sum_{i=0}^{L-1}\left[h_{N}(\boldsymbol{x}+\boldsymbol{z}_{i+1}^{N})
-h_{N}(\boldsymbol{x}+\boldsymbol{z}_{i}^{N})\right]^{2} \;.
\end{aligned}
\end{equation}

In the previous sum, we may restrict our attention to the points $\bs
x\in \mc R_N$ such that $F_{N}(\boldsymbol{x}) \le H + (1/8) \lambda
\varepsilon^2_N$ because, as $h_N$ is bounded by $1$, the contribution
of the terms $\bs x\in \mc R_N$ such that $F_{N}(\boldsymbol{x}) \ge H
+ (1/8) \lambda \varepsilon^2_N$ is of order $ \kappa_N o_N(1)$ by
definition of $\varepsilon_N$.

To estimate the remaining sum, fix $\bs x\in\mc R_N$ and $0\le i<L$.
Assume first that $\bs x+ \bs z^N_i$ belongs to $\overline{\mc B}^{\bs
  \sigma}_N$ for some $\bs \sigma\in \mf S_1$. To fix ideas suppose
that $\bs \sigma \in \mf S_{1,2}$.  As $\bs x$ is an element of $\mc
R_N$, it does not belong to $\mathring{\mc B}^{\bs \sigma}_N$. Thus
$\bs x+\bs z^N_j\not\in \mc B ^{\bs \sigma}_N$ for all $0\le j<L$. In
particular, $\bs x+\bs z^N_i\not\in \mc B ^{\bs \sigma}_N$ so that
$\bs x+\bs z^N_i\in \partial \mc B ^{\bs \sigma}_N$. The point $\bs x+
\bs z^N_i$ can not belong to $\partial^0\mc B^{\bs \sigma}_N$ because,
by \eqref{27}, points $\bs y$ in this set are such that
$F_{N}(\boldsymbol{y}) \ge H+ (1/4) \lambda^{\bs \sigma}_1
\varepsilon^2_N - C_0/N$ and we already assumed that
$F_{N}(\boldsymbol{x}) \le H + (1/8) \lambda \varepsilon^2_N$.
Thus, $\bs x+ \bs z^N_i$ belongs to $\partial^1\mc B^{\bs \sigma}_N
\cup \partial^2\mc B^{\bs \sigma}_N$. 

Suppose, to fix ideas, that $\bs x+ \bs z^N_i$ belongs to
$\partial^1\mc B^{\bs \sigma}_N$.  The argument in the case $\bs x+
\bs z^N_i \in \partial^2\mc B^{\bs \sigma}_N$ is analogous. As
$F_{N}(\boldsymbol{x}) \le H + (1/8) \lambda \varepsilon^2_N$, $\bs
x + \bs {z}_{i+1}^{N}$ belongs to $\overline{\mc B}^{\bs \sigma}_N$ or
to $\mc W^1_N$.

Consider first the case where $\bs x + \bs {z}_{i+1}^{N}$ belongs to
$\overline{\mc B}^{\bs \sigma}_N$. Recall the arguments presented in
the penultimate paragraph which led to the conclusion that $\bs x+ \bs
z^N_i$ belongs to $\partial^1\mc B^{\bs \sigma}_N \cup \partial^2\mc
B^{\bs \sigma}_N$. Applied to $\bs x + \bs {z}_{i+1}^{N}$ this
reasoning permits to conclude that this point belongs to
$\partial^1\mc B^{\bs \sigma}_N \cup \partial^2\mc B^{\bs
  \sigma}_N$. Since, by assumption, $\bs x+ \bs z^N_i
\in \partial^1\mc B^{\bs \sigma}_N$, we also have that $\bs x+ \bs
z^N_{i+1} \in \partial^1\mc B^{\bs \sigma}_N$.

As $h_N = V^{\bs \sigma}_N$ on $\overline{\mc B}^{\bs \sigma}_N$,
$h_N(\bs x+ \bs z^N_i) = V^{\bs \sigma}_N (\bs x+ \bs z^N_i)$ and
$h_N(\bs x+ \bs z^N_{i+1}) = V^{\bs \sigma}_N (\bs x+ \bs
z^N_{i+1})$. Since both points belong to $\partial^1\mc B^{\bs
  \sigma}_N$, we may apply Lemma \ref{s18} to conclude that the
sum on the right hand side of \eqref{30} restricted to points $\bs x
\in \mc R_N$ and indices $0\le i<L$ satisfying the new set of
conditions is of order $\kappa_N o_N(1)$.

Consider now the case where $\bs x + \bs {z}_{i+1}^{N}$ belongs to
$\mc W^1_N$. In consequence, $h_N(\bs x + \bs {z}_{i+1}^{N}) =1$.
Since $\bs x+ \bs z^N_i\in \partial^1\mc B^{\bs \sigma}_N$ and
$h_N(\bs x+ \bs z^N_i) = V^{\bs \sigma}_N (\bs x+ \bs z^N_i)$, we may
also apply Lemma \ref{s18} to conclude that the
sum on the right hand side of \eqref{30} restricted to points $\bs x
\in \mc R_N$ and indices $0\le i<L$ satisfying all the above
conditions is of order $\kappa_N o_N(1)$.

By symmetry, the previous argument applies also to the case where $\bs
x+ \bs z^N_{i+1}$ belongs to $\overline{\mc B}^{\bs \sigma}_N$ for
some $\bs \sigma\in \mf S_1$. It remains therefore to consider the
case in which $\bs x+ \bs z^N_{i}$ and $\bs x+ \bs z^N_{i+1}$ do not
belong to $\cup_{\bs \sigma\in\mf S_1} \overline{\mc B}^{\bs
  \sigma}_N$, and $F_{N}(\boldsymbol{x}) \le H + (1/8) \lambda
\varepsilon^2_N$. For such points $h_N$ is equal to $0$ or $1$, and
the only possible contribution occurs if $h_N(\bs x+ \bs z^N_{i})=1$
and $h_N(\bs x+ \bs z^N_{i+1})=0$, or the contrary. These identities
imply that the point $\bs x$ is close to the boundary of $\mc W^1_N$,
but the only part of the boundary of $\mc W^1_N$ in which
$F_{N}(\boldsymbol{x}) \le H + (1/8) \lambda \varepsilon^2_N$ is the
one with $\cup_{\bs \sigma\in\mf S_1} \overline{\mc B}^{\bs \sigma}_N$
which has already been examined. This completes the proof of
\eqref{eq:negs} and the one of the lemma.
\end{proof}

The next step consists in defining a flow, denoted below by
$\Upsilon_N$, which approximates $\Phi^*_{h_N}$.  Denote by
$\mathring{\mathcal{W}}_{N}^{i}$ the core of $\mathcal{W}_{N}^{i}$.
Define the flows $\Upsilon_{N}\in\mathcal{F}_{N}$ as
\begin{equation*}
\Upsilon_{N} \;=\; \Upsilon_{N}^{1} \;+\;
\sum_{\boldsymbol{\sigma}\in\mathfrak{S}_{1}}
\widetilde{\Phi}^{\boldsymbol{\sigma}}_N\;,
\end{equation*}
where the flows $\widetilde{\Phi}^{\boldsymbol{\sigma}}_N$ have been
introduced in Theorem \ref{s24}, and where
\begin{equation*}
\Upsilon_{N}^{1}=\sum_{\boldsymbol{z}\in\mathring{\mathcal{W}}_{N}^{1}}
\Phi_{1,\boldsymbol{z}}^{*} \;.
\end{equation*}
The flow $\Phi_{1,\boldsymbol{z}}^{*}$ is obtained by taking $f$ as a
constant function $f\equiv 1$ at $\Phi_{f,\bs{z}}^*$. The flow
$\Upsilon_{N}^{1}$ has been added to take into account the fact that
$h_N=1$ on the set $\mc W^1_N$.

\begin{lemma}
\label{s22}
The flow $\Upsilon_{N}$ is divergence-free on
$(\mathcal{E}_{N})^{c}$ and satisfy
\begin{equation*}
(\textup{div}\,\Upsilon_{N})(\mathcal{E}_{N}^{1}) \;=\;
-\, (\textup{div}\,\Upsilon_{N})(\mathcal{\breve{\mathcal{E}}}_{N}^{1})
\;=\; \kappa_{N}\sum_{\boldsymbol{\sigma}\in\mathfrak{S}_{1}}
\omega_{\boldsymbol{\sigma}}\;.
\end{equation*}
Furthermore, $\Upsilon_{N}-\Phi_{h_{N}}^{*}$ is a negligible flow.
\end{lemma}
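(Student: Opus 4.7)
The plan is to handle the divergence identities and the negligibility claim in turn, treating Theorem \ref{s24} as a black box for each $\widetilde{\Phi}_N^{\bs\sigma}$ and reducing the second part to a boundary estimate via Lemma \ref{s18}.

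For the divergence, I would first establish that each $\Phi_{1,\bs z}^*$ is divergence-free on all of $\Xi_N$. By \eqref{eq:3} the conductance $c_{\bs z}$ takes the constant value $Z_N^{-1} e^{-N\overline{F}_N(\bs z)}$ on the forward edges of $\gamma_{\bs z}^N$ and vanishes elsewhere, so $\Phi_{1,\bs z}^*$ is a constant circulation around the cycle, and the divergence at each vertex telescopes to zero. Summing gives $\Upsilon_N^1$ divergence-free. Applying Theorem \ref{s24} to each $\widetilde{\Phi}_N^{\bs\sigma}$ with $\bs\sigma \in \mf S_{1,j}$ and invoking the partition $\mf S_1 = \bigsqcup_{j\neq 1}\mf S_{1,j}$, the divergence of $\Upsilon_N$ is then concentrated at $\{\bs m_1^N, \ldots, \bs m_M^N\} \subset \mc E_N$, with value $\kappa_N\omega_{\bs\sigma}$ at $\bs m_1^N$ and $-\kappa_N\omega_{\bs\sigma}$ at $\bs m_j^N$ for each $\bs\sigma \in \mf S_{1,j}$, which sum to the claimed totals on $\mc E_N^1$ and $\breve{\mc E}_N^1$.

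For the negligibility, Theorem \ref{s24} guarantees that $\widetilde{\Phi}_N^{\bs\sigma} - \Phi_N^{\bs\sigma}$ is negligible, so by Lemma \ref{lem:neg_lemma} it suffices to show that
\[
\Delta_N \;:=\; \Upsilon_N^1 \;+\; \sum_{\bs\sigma \in \mf S_1} \Phi_N^{\bs\sigma} \;-\; \Phi_{h_N}^*
\]
is negligible. Writing $\Phi_{h_N}^* = \sum_{\bs z \in \widehat{\Xi}_N} \Phi_{h_N, \bs z}^*$ and grouping the contributions by $\bs z$, I would distinguish three classes of cycles: those with $\gamma_{\bs z}^N \subset \mc W_N^1$, those with $\gamma_{\bs z}^N \subset \overline{\mc B}_N^{\bs\sigma}$ for some $\bs\sigma \in \mf S_1$, and straddling cycles. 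On cycles of the first type $h_N \equiv 1$, so $\Phi_{h_N, \bs z}^* = \Phi_{1, \bs z}^*$ cancels the contribution from $\Upsilon_N^1$; on cycles of the second type $h_N \equiv V_N^{\bs\sigma}$, so $\Phi_{h_N, \bs z}^*$ cancels the contribution from $\Phi_N^{\bs\sigma}$. Only straddling cycles, together with cycles in the exterior whose vertices happen to visit $\overline{\mc B}_N^{\bs\sigma}$, contribute to $\Delta_N$.

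Any such straddling cycle necessarily has a vertex in $\partial^0 \mc B_N^{\bs\sigma} \cup \partial^1 \mc B_N^{\bs\sigma} \cup \partial^2 \mc B_N^{\bs\sigma}$, where Lemma \ref{s18} (together with \eqref{27} on $\partial^0 \mc B_N^{\bs\sigma}$) yields the key exponential bounds $(1-V_N^{\bs\sigma})^2 e^{-N[\overline{F}_N - H]} \le C_0 e^{-c_0 N \varepsilon_N^2}$ on $\partial^1 \mc B_N^{\bs\sigma}$ and the symmetric bound for $(V_N^{\bs\sigma})^2$ on $\partial^2 \mc B_N^{\bs\sigma}$. Dividing by $c^s(\bs x, \bs y) \sim Z_N^{-1} e^{-NF_N(\bs x)}$ and summing the $O(N^{d-1})$ boundary edges around each saddle point gives $\Vert \Delta_N \Vert^2 \le C_0 \kappa_N e^{-c_0 N \varepsilon_N^2}$, which is $\kappa_N o_N(1)$ by \eqref{vare2}. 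The main obstacle is the combinatorial bookkeeping required to classify straddling cycles and to handle potential overlap between $\mathring{\mc W}_N^1$ and $\mathring{\mc B}_N^{\bs\sigma}$ as well as cycles $\bs z$ outside $\mathring{\mc W}_N^1 \cup \bigcup_{\bs\sigma} \mathring{\mc B}_N^{\bs\sigma}$ whose vertices nonetheless visit one of these regions; the analysis proceeds structurally parallel to the estimate \eqref{eq:negs} in the proof of Lemma \ref{s23}, where the same exponential decay controlled all cross-region contributions.
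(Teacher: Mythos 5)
Your proposal is correct and follows essentially the same route as the paper: the divergence identities come from the constant circulation of $\Phi^*_{1,\bs z}$ on each cycle plus additivity and Theorem \ref{s24}, and the negligibility is reduced via Lemma \ref{lem:neg_lemma} to the flow $\Upsilon^1_N+\sum_{\bs\sigma}\Phi_N^{\bs\sigma}-\Phi^*_{h_N}$, whose surviving (straddling) edges lie at the boundaries of $\mc W^1_N$ and the $\overline{\mc B}^{\bs\sigma}_N$ and are controlled exactly as in the proof of Lemma \ref{s23} using Lemma \ref{s18} and the lower bound $F_N\ge H+(1/4)\lambda\varepsilon_N^2-C/N$. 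Your write-up is merely more explicit about the cycle classification than the paper's terse version.
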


\begin{proof}
Since the conductance is constant over each cycle, the flow
$\Upsilon_{N}^{1}$ is divergence-free.  Furthermore, since the
divergence functional is additive, the first assertion of the
lemma follows from Theorem \ref{s24}.

By Theorem \ref{s24}, to prove that $\Upsilon_{N}-\Phi_{h_{N}}^{*}$ is
a negligible flow, it is enough to show that $\Upsilon^1_{N} +
\sum_{\boldsymbol{\sigma} \in\mathfrak{S}_{1}}
\Phi^{\boldsymbol{\sigma}}_N -\Phi_{h_{N}}^{*}$ is negligible. By
\eqref{40}, this difference is equal to
\begin{equation*}
\sum_{\bs z\in \mathring{\mc W}^1_N} \Phi^*_{1, \bs z} 
\;+\; \sum_{\boldsymbol{\sigma}  \in\mathfrak{S}_{1}} 
\sum_{\bs z\in \mathring{\mc B}^{\bs \sigma}_N} \Phi^*_{\bs z , V^{\bs \sigma}_N} 
\;-\; \sum_{\bs z\in \widehat{\Xi}_N} \Phi^*_{\bs z , h_N}\;.
\end{equation*}
Since $h_N = V^{\bs \sigma}_N$ on $\mc B^{\bs \sigma}_N$, $h_N=1$ on
$\mc W^1_N$, and $h_N=0$ on the complement of these sets, the unique
edges which survive in this difference belong to the boundary of these
sets. At the boundary of these sets, the function $F_N$ is bounded
below by $H+(1/4)\lambda\varepsilon^2_N - C/N$, and we may repeat the
arguments presented in the proof of Lemma \ref{s23} to show that
the flow appearing in the previous displayed formula is negligible. 
\end{proof}

\smallskip\noindent{\bf Proof of Theorem \ref{s13}.} We start by
proving the upper bound of the capacity. Define 
\begin{equation*}
f_{N}\;=\;\frac{1}{2}(h_{N}^{*}+h_{N}) 
\;,\quad  \phi_{N}\;=\;\frac{1}{2}(\Upsilon_{N}^{*}-\Upsilon_{N}) \;.
\end{equation*}
By definition of $h_N$, $h^*_N$, the function $f_N$ belongs to
$\mathfrak{C}_{1,0} (\mathcal{E}_{N}^{1},
\breve{\mathcal{E}}_{N}^{1})$.  By Lemma \ref{s22}, the flow
$\phi_{N}$ belongs to $\mathfrak{U}_{0} (\mathcal{E}_{N}^{1},
\breve{\mathcal{E}}_{N}^{1})$, and the flow
$\phi_{N}-\frac{1}{2}(\Phi_{h_{N}^{*}}-\Phi_{h_{N}}^{*})$ is
negligible. Therefore, by Lemma \ref{lem:neg_lemma},
\begin{equation*}
\left\Vert \Phi_{f_{N}} - \phi_{N}\right\Vert ^{2} \;\le\; 
\big[1+o_{N}(1)\big]\, \left\Vert \Phi_{\frac{1}{2} (h_{N}^{*}+h_{N})} -
\frac{1}{2}(\Phi_{h_{N}^{*}}-\Phi_{h_{N}}^{*}) \right\Vert ^{2}
\;+\;  \kappa_{N}\,o_{N}(1)\;.
\end{equation*}
The flow appearing in this formula is equal to $(1/2) \{\Phi_{h_{N}} +
\Phi_{h_{N}}^{*} \}$. Hence, by the explicit expression of these flows
and of the flow norm, the previous expression is equal to
\begin{equation}
\label{46}
\big[1+o_{N}(1)\big]\, \mathcal{D}_{N}(h_{N}) \;+\; \kappa_{N}\,o_{N}(1)
\;=\; \big[1+o_{N}(1)\big]\,  \kappa_{N} \sum_{\boldsymbol{\sigma}\in\mathfrak{S}_{1}}
\omega_{\boldsymbol{\sigma}} \;,
\end{equation}
where we applied Lemma \ref{s23} to derive the last identity.
To complete the proof of the upper bound, it remains to apply Theorem
\ref{s07} to obtain that
\begin{equation*}
\mbox{cap}_{N}(\mathcal{E}_{N}^{1},\breve{\mathcal{E}}_{N}^{1})
\;\le\; \big[1+o_{N}(1)\big]\, \kappa_{N} \sum_{\boldsymbol{\sigma}\in\mathfrak{S}_{1}}
\omega_{\boldsymbol{\sigma}}\;.
\end{equation*}

To prove the converse inequality, let $g_{N}$ and $\psi_{N}$ be given by
\begin{equation*}
g_{N}\;=\; \frac 1{\kappa_{N} \sum_{\boldsymbol{\sigma}\in\mathfrak{S}_{1}}
\omega_{\boldsymbol{\sigma}}} \, \frac{h_{N}^{*}-h_{N}}{2} \;, \quad 
\psi_{N}\;=\; \frac 1{\kappa_{N} \sum_{\boldsymbol{\sigma}\in\mathfrak{S}_{1}}
\omega_{\boldsymbol{\sigma}}} \, \frac{\Upsilon_{N}^{*}+\Upsilon_{N}}{2} \;,
\end{equation*}
As above, by definition of $h_N$, $h^*_N$, the function $g_N$ belongs to
$\mathfrak{C}_{0,0} (\mathcal{E}_{N}^{1},
\breve{\mathcal{E}}_{N}^{1})$, and by Lemma \ref{s22}, the flow
$\phi_{N}$ belongs to $\mathfrak{U}_{1} (\mathcal{E}_{N}^{1},
\breve{\mathcal{E}}_{N}^{1})$.

By Lemmata \ref{s23} and \ref{lem:neg_lemma},
\begin{equation*}
\Big(\kappa_{N} \sum_{\boldsymbol{\sigma}\in\mathfrak{S}_{1}}
\omega_{\boldsymbol{\sigma}} 
\Big)^2 \left\Vert \Phi_{g_{N}}-\psi_{N}\right\Vert ^{2}
\;\le\;\big[1+o_{N}(1)\big]\, \left\Vert \Phi_{\frac{h_{N}^{*}-h_{N}}{2}}
-\frac{\Phi_{h_{N}^{*}}+\Phi_{h_{N}}^{*}}{2}\right\Vert ^{2} \;+\;
\kappa_{N}\,o_{N}(1)  \;.
\end{equation*}
The flow appearing on the right hand side is equal to $-(1/2)
(\Phi_{h_{N}} + \Phi^*_{h_{N}})$. Hence, by \eqref{46}, the previous
expression is equal to $[1+o_{N}(1)] \kappa_{N}
\sum_{\boldsymbol{\sigma}\in\mathfrak{S}_{1}}
\omega_{\boldsymbol{\sigma}}$ so that
\begin{equation*}
\Big(\kappa_{N} \sum_{\boldsymbol{\sigma}\in\mathfrak{S}_{1}}
\omega_{\boldsymbol{\sigma}} 
\Big) \left\Vert \Phi_{g_{N}}-\psi_{N}\right\Vert ^{2}
\;\le\; 1\,+\, o_{N}(1) \;.
\end{equation*}
Finally, by Theorem \ref{s10}, 
\begin{equation*}
\mbox{cap}_{N}(\mathcal{E}_{N}^{1},\breve{\mathcal{E}}_{N}^{1})
\;\ge\; \big[1+o_{N}(1)\big]\, \kappa_{N} \sum_{\boldsymbol{\sigma}\in\mathfrak{S}_{1}}
\omega_{\boldsymbol{\sigma}}\;,
\end{equation*}
which completes the proof of the theorem.

\section{Proof of Theorem \ref{s133}}
\label{sec7}

Recall the definition of the Markov chain $Y(t)$ introduced just above
the statement of Theorem \ref{s133}.  The generator of $Y(t)$, denoted
by $\mc L_Y$, is given by
\begin{equation*}
(\mathcal{L}_{Y}f) (i)\;=\; \sum_{j\in S}
\frac{\omega_{i,j}}{\overline{\omega}_{i}} [f(j)-f(i)]
\end{equation*}
for each function $f:S\rightarrow\mathbb{R}$. The associated Dirichlet
form with respect to the equilibrium measure $\mu$ is given by
\begin{equation*}
\mathcal{D}_{Y}(f)\;=\;\left\langle f,\,-\mathcal{L}_{Y}f\right\rangle
_{\mu} \;=\;\frac{1}{2}\sum_{i,j\in S}\omega_{i,j}\, [f(j)-f(i)]^{2}\;.
\end{equation*}

For disjoint subsets $A,\,B$ of $S$, denote by $q_{A,B}$ the equilibrium
potential between $A$ and $B$: 
\begin{equation*}
q_{A,B}(k)\;=\;\mathbf{P}_{k}^{Y}[H_{A}<H_{B}]\;.
\end{equation*}
We review some well-known properties of the equilibrium potential
needed below. The first property is that the capacity between $A$ and
$B$ is given by the Dirichlet form of $q_{A,B}$: 
\begin{equation}
\textup{cap}_{Y}(A,B)\;=\;\mathcal{D}_{Y}(q_{A,B})\;.\label{qq1}
\end{equation}
Recall that the equilibrium potential can be characterized as the
solution of discrete elliptic equation 
\begin{equation}
\label{qq2} 
\begin{cases}
(\mathcal{L}_{Y}q_{A,B}) (k) \; = \;  0
& k\in S\setminus(A\cup B)\;,\\
q_{A,B}(k) \;=\; 1 & k\in A \;, \\
q_{A,B}(k) \;=\; 0 &  k\in B\;,
\end{cases}
\end{equation}
which reduces to a linear equation of dimension $M-|A\cup B|$. By
using \eqref{qq2}, we can rewrite \eqref{qq1} as
\begin{equation}
\label{qq3}
\textup{cap}_{Y}(A,B)\;=\;-\sum_{a\in A}\mu(a)\, 
(\mathcal{L}_{Y}q_{A,B})(a)\;=\;
\sum_{b\in B}\mu(b)\, (\mathcal{L}_{Y}q_{A,B}) (b)\;.
\end{equation}

\subsection{Approximation of the equilibrium potential $V_{\mc
    E_N(A), \mc E_N(B)}$}
\label{sub_pc}

Fix two disjoint subsets $A$, $B$ of $S$. The main difficulty of the
proof of Theorem \ref{s133} consists in constructing a good
approximation of the equilibrium potential between
$\mathcal{E}_{N}(A)$ and $\mathcal{E}_{N}(B)$. If $A\cup B\neq S$,
this construction requires a more refined analysis than the one
presented in the previous section, as we have to find the correct
value of the approximation of equilibrium potential at the sets
$\mathcal{E}_N^i$, $i \not\in A\cup B$. This value is related to the
equilibrium potential of the process $Y$ between $A$ and $B$, as
evidenced below. We present the arguments for $V_{\mc E_N(A), \mc
  E_N(B)}$, as the ones for $V_{\mc E_N(A), \mc E_N(B)}^*$ are
analogous.

For $\boldsymbol{\sigma}\in\mathfrak{S}_{i,j}$, $i<j$, denote by
$V_{N}^{\boldsymbol{\sigma}}$, the approximation, introduced in
Section \ref{sec3}, of $V_{\mc E_N^i, \mc E_N^j}$, the equilibrium
potential between $\mathcal{E}_{N}^{i}$ and $\mathcal{E}_{N}^{j}$, in
the mesoscopic neighborhood $\mc B^{\bs \sigma}_N$.
For a function $q: S\to \bb R$,
define the function $h_{N}^{q}:\Xi_{N}\rightarrow\mathbb{R}$ by
\begin{equation*}
h_{N}^{q}(\boldsymbol{x})\;=\;\begin{cases}
q(i) & \boldsymbol{x}\in\mathcal{W}_{N}^{i}\;,\\{}
[q(i)-q(j)]V_{N}^{\boldsymbol{\sigma}}(\boldsymbol{x})+q(j) & 
\boldsymbol{x}\in\overline{\mathcal{B}}_{N}^{\boldsymbol{\sigma}},
\,\boldsymbol{\sigma}\in\mathfrak{S}_{i,j},\,i<j\;,\\
0 & \mbox{otherwise\;.}
\end{cases}
\end{equation*}

\begin{lemma}
\label{con71}
For any function $q:S\rightarrow\mathbb{R}$, we have that
\begin{equation*}
\mathcal{D}_{N}(h_{N}^{q})\;=\;\big[1+o_{N}(1)\big]\, \kappa_{N}
\,\mathcal{D}_{Y}(q)\;.
\end{equation*}
\end{lemma}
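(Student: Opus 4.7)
The plan is to decompose the Dirichlet form as in the proof of Lemma \ref{s23}, writing
\[
\mathcal{D}_N(h_N^q) \;=\; \sum_{\bs\sigma\in\mf S} \mathcal{D}_N(h_N^q;\mathring{\mc B}_N^{\bs\sigma})
\;+\; \mathcal{D}_N(h_N^q;\mc R_N)\;,
\]
where $\mc R_N = \widehat\Xi_N \setminus \bigcup_{\bs\sigma\in\mf S} \mathring{\mc B}_N^{\bs\sigma}$, and to treat the two contributions separately.

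On each mesoscopic neighborhood $\mathring{\mc B}_N^{\bs\sigma}$ with $\bs\sigma\in\mf S_{i,j}$, $i<j$, the definition of $h_N^q$ gives $h_N^q = [q(i)-q(j)]\,V_N^{\bs\sigma} + q(j)$. Since the Dirichlet form involves only differences $h_N^q(\bs x + \bs z^N_{i+1}) - h_N^q(\bs x + \bs z^N_i)$, the additive constant $q(j)$ drops out and one gets the exact identity
\[
\mathcal{D}_N(h_N^q;\mathring{\mc B}_N^{\bs\sigma}) \;=\; [q(i)-q(j)]^2 \, \mathcal{D}_N(V_N^{\bs\sigma};\mathring{\mc B}_N^{\bs\sigma}) \;=\; \big[1+o_N(1)\big]\, \kappa_N\, \omega_{\bs\sigma}\, [q(i)-q(j)]^2\;,
\]
where the last equality is Proposition \ref{s02}. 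Summing over $\bs\sigma\in\mf S$ and regrouping by ordered pairs $(i,j)$ with $\omega_{i,j} = \sum_{\bs\sigma\in\mf S_{i,j}}\omega_{\bs\sigma}$, the main term becomes
\[
\big[1+o_N(1)\big]\, \kappa_N \sum_{i<j} \omega_{i,j}\,[q(i)-q(j)]^2 \;=\; \big[1+o_N(1)\big]\, \kappa_N\, \mathcal{D}_Y(q)\;,
\]
which is the desired right-hand side.

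It remains to verify $\mathcal{D}_N(h_N^q;\mc R_N) = \kappa_N\, o_N(1)$, and this is the main technical point. The argument follows the blueprint established in the proof of Lemma \ref{s23} (equation \eqref{eq:negs}): one first restricts the sum \eqref{30} to points $\bs x\in \mc R_N$ with $F_N(\bs x) \le H + (1/8)\lambda\varepsilon_N^2$, since on the complement the prefactor $e^{N(H-\overline{F}_N(\bs x))}$ times the uniformly bounded increments of $h_N^q$ decays faster than any polynomial by \eqref{vare2} (the bound $|h_N^q|\le \max_{i\in S}|q(i)|$ replaces the bound $|h_N|\le 1$). For the remaining points one proceeds by cases: if both $\bs x + \bs z^N_i$ and $\bs x + \bs z^N_{i+1}$ lie in $\overline{\mc B}_N^{\bs\sigma}$ for the same saddle, both endpoints are in $\partial^1\mc B_N^{\bs\sigma}\cup\partial^2\mc B_N^{\bs\sigma}$, and on such boundaries $V_N^{\bs\sigma}$ is exponentially close (in $N\varepsilon_N^2$) to $0$ or $1$ by Lemma \ref{s18}; for edges crossing from a well $\mc W_N^i$ to a boundary piece of some $\mc B_N^{\bs\sigma}$ with $\bs\sigma\in\mf S_{i,j}$, the value of $h_N^q$ at the boundary piece close to $\mc W_N^i$ equals $q(i)+O(e^{-cN\varepsilon_N^2})$ by Lemma \ref{s18}, matching the well value $q(i)$; and there are no surviving edges entirely inside a single well, since $h_N^q$ is constant there. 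Each surviving contribution is thus of order $\kappa_N\,e^{-cN\varepsilon_N^2} \cdot (\max_{i}|q(i)|)^2$, summable to $\kappa_N\,o_N(1)$.

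The hard part is this last step: bookkeeping the exhaustive list of cases for edges with endpoints in different regions (two boundary pieces of the same $\mc B_N^{\bs\sigma}$, a well and a boundary piece, two distinct $\mc B_N^{\bs\sigma}$'s, or the outermost region), and checking that Lemma \ref{s18} (applied now with the appropriate well value $q(i)$ or $q(j)$ rather than $0$ or $1$) yields the exponential decay in every case. The additional factor $[\max_{i\in S}|q(i)|]^2$ coming from the values of $q$ is harmless because it is $N$-independent. This yields \eqref{eq:negs} with $h_N$ replaced by $h_N^q$ and completes the proof.
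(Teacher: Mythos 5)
Your proposal is correct and follows essentially the same route as the paper: the paper decomposes $\mathcal{D}_N(h_N^q)$ into the contributions of the $\mathring{\mathcal{B}}_N^{\bs\sigma}$ (evaluated via Proposition \ref{s02}, using exactly the observation that $h_N^q$ is affine in $V_N^{\bs\sigma}$ there) plus a remainder that it bounds by $\kappa_N\,o_N(1)$ by invoking the proof of Lemma \ref{s23}. The only difference is that you spell out the case analysis for the remainder, which the paper leaves implicit.
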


\begin{proof}
Write $\mathcal{D}_{N}(h_{N}^{q})$ as 
\begin{equation*}
\mathcal{D}_{N}(h_{N}^{q})\;=\;
\sum_{\boldsymbol{\sigma}\in\mathfrak{S}} \mathcal{D}_{N}(h_{N}^{q};
\mathring{\mathcal{B}}_{N}^{\boldsymbol{\sigma}})
\;+\; \mathcal{D}_{N}(h_{N}^{q};\widehat{\Xi}_{N}\setminus
\cup_{\boldsymbol{\sigma}\in\mathfrak{S}}
\mathring{\mathcal{B}}_{N}^{\boldsymbol{\sigma}})\;.
\end{equation*}
By the proof of Lemma \ref{s23}, the second term is bounded by
$\kappa_{N}\, o_{N}(1)$. On the other hand, for each $i<j$ and
$\bs\sigma \in\mf S_{i,j}$, by Proposition \ref{s02},
\begin{equation*}
\mathcal{D}_{N}(h_{N}^{q};
\mathring{\mathcal{B}}_{N}^{\boldsymbol{\sigma}})
\;=\;\big[1+o_{N}(1)\big]\, \kappa_{N}\,
\omega_{\boldsymbol{\sigma}}\, [q(i)-q(j)]^{2}\;.
\end{equation*}
Hence, 
\begin{equation*}
\mathcal{D}_{N}(h_{N}^{q}) \;=\; 
\big[1+o_{N}(1)\big]\, \kappa_{N}\, \sum_{1\le i<j\le M}
\sum_{\boldsymbol{\sigma}\in\mathfrak{S}_{i,j}}
\omega_{\boldsymbol{\sigma}}\, [q(i)-q(j)]^{2} \;+\; 
 \kappa_{N}\,o_{N}(1)\;,
\end{equation*}
which completes the proof of the lemma. 
\end{proof}

In view of the previous result, to minimize
$\mathcal{D}_{N}(h_{N}^{q})$ among all functions which vanish at $\mc
E_N(B)$ and which are equal to $1$ at $\mc E_N(A)$, we have to choose
$q$ as the equilibrium potential between $A$ and $B$ for the random
walk $Y(t)$.

Recall from Theorem \ref{s24} the definition of the flows
$\widetilde{\Phi}_{N}^{\boldsymbol{\sigma}}$, $\bs\sigma\in\mf S$. For
each function $q:S\to \bb R$, define the flow $\Upsilon_{N}^{q}$ which
approximates $\Phi_{h_{N}^{q}}^{*}$ by
\begin{equation*}
\Upsilon_{N}^{q}  \;=\;  \sum_{i\in S}\,
\sum_{\boldsymbol{z}\in\mathring{\mathcal{W}}_{N}^{1}}
\Phi_{q(i),\boldsymbol{z}}^{*}
\;+\; \sum_{1\le i<j\le M}\,
\sum_{\boldsymbol{\sigma}\in\mathfrak{S}_{i,j}}
\Big\{ (q(i)-q(j))\, \widetilde{\Phi}_{N}^{\boldsymbol{\sigma}}
\,+\,
\sum_{\boldsymbol{z}\in\mathring{\mathcal{B}}_{N}^{\boldsymbol{\sigma}}}
\Phi_{q(j),\boldsymbol{z}}^{*}\Big\}\;.
\end{equation*}
Recall that the flow $\Phi_{c,\boldsymbol{z}}^{*}$ is obtained by
inserting the constant function $f\equiv c$ into
$\Phi_{f,\boldsymbol{z}}^{*}$, the flow defined at the beginning of
Section \ref{sec:flow}.

\begin{lemma}
\label{con73}
The flow $\Upsilon_{N}^{q}$ is divergence-free on
$\{\boldsymbol{m}_{N}^{i}:i\in S\}^c$ and for each $i\in S$, 
\begin{equation*}
(\textup{div }\Upsilon_{N}^{q})(\boldsymbol{m}_{N}^{i})
\;=\;-\kappa_{N}\, \mu(i)\, (\mathcal{L}_{Y}q) (i)\;.
\end{equation*}
Furthermore, $\Upsilon_{N}^{q}-\Phi_{h_{N}^{q}}^{*}$ is a
negligible flow.
\end{lemma}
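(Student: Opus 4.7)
The plan is to argue in the spirit of Lemma \ref{s22}, viewing $\Upsilon_N^q$ as a weighted superposition whose divergence can be read off term by term. The key observation is that for any constant $c\in\bb R$, the flow $\Phi^*_{c,\bs{z}}$ is a uniform constant flow around the cycle $\gamma_{\bs{z}}^N$ (since the conductance \eqref{eq:3} is constant along the cycle, $\Phi^*_{c,\bs{z}}$ takes the value $-c\, Z_N^{-1}e^{-N\overline F_N(\bs{z})}$ on every oriented edge of $\gamma_{\bs{z}}^N$ and vanishes elsewhere), and is therefore divergence-free. Consequently the sums $\sum_{\bs{z}\in\mathring{\mc W}_N^i}\Phi^*_{q(i),\bs{z}}$ and $\sum_{\bs{z}\in\mathring{\mc B}_N^{\bs{\sigma}}}\Phi^*_{q(j),\bs{z}}$ contribute nothing to $\textup{div}\,\Upsilon_N^q$, and all of the divergence comes from the weighted correctors $(q(i)-q(j))\widetilde{\Phi}_N^{\bs{\sigma}}$, $\bs{\sigma}\in\mf S_{i,j}$.

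For the value of the divergence at each $\bs{m}_i^N$, I would apply Theorem \ref{s24} to each $\widetilde{\Phi}_N^{\bs{\sigma}}$, picking up contributions $\pm\kappa_N(q(i)-q(j))\omega_{\bs{\sigma}}$ at $\bs{m}_i^N$ and $\bs{m}_j^N$. Summing over all saddles incident to well $i$, and using the identities $\omega_{i,k}=\sum_{\bs{\sigma}\in\mf S_{i,k}}\omega_{\bs{\sigma}}$, $\mu(i)=\overline{\omega}_i$, together with the jump rates $\omega_{i,k}/\overline{\omega}_i$ of $Y$, one obtains
\begin{equation*}
(\textup{div }\Upsilon_N^q)(\bs{m}_i^N) \;=\; \kappa_N\sum_{k\in S}\omega_{i,k}\,[q(i)-q(k)] \;=\; -\,\kappa_N\,\mu(i)\,(\mc L_Y q)(i)\;,
\end{equation*}
which is the claimed formula.

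For the negligibility of $\Upsilon_N^q-\Phi^*_{h_N^q}$, I would expand $\Phi^*_{h_N^q}=\sum_{\bs{z}\in\widehat\Xi_N}\Phi^*_{h_N^q,\bs{z}}$, split the sum according to whether the base point $\bs{z}$ lies in some $\mathring{\mc W}_N^i$, in some $\mathring{\mc B}_N^{\bs{\sigma}}$, or in the residual set, and use the piecewise description of $h_N^q$ (equal to $q(i)$ on $\mc W_N^i$ and to $(q(i)-q(j))V_N^{\bs{\sigma}}+q(j)$ on $\overline{\mc B}_N^{\bs{\sigma}}$) to evaluate each summand. On cycles whose base point lies entirely within one region, the summands of $\Phi^*_{h_N^q}$ match those of $\Upsilon_N^q$ term by term, modulo the differences $\widetilde{\Phi}_N^{\bs{\sigma}}-\Phi_N^{\bs{\sigma}}$, which are negligible by Theorem \ref{s24} and enter multiplied by the bounded factors $q(i)-q(j)$. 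What remains is an error flow supported on cycles $\gamma_{\bs{z}}^N$ straddling the boundary between two of the regions $\mc W_N^i$, $\mc B_N^{\bs{\sigma}}$, or reaching into the outer region; any such cycle must visit a point at which $F_N\ge H+c_0\lambda\varepsilon_N^2-C/N$, so by \eqref{eq:3} the symmetric conductance on each contributing edge is bounded by $C\,Z_N^{-1}\exp\{-NH-c\,N\varepsilon_N^2\}$, and boundedness of $q$ combined with \eqref{vare2} yields a squared norm of order $\kappa_N\, o_N(1)$. The main technical obstacle is precisely this boundary accounting, which is a direct adaptation of the argument already used to establish \eqref{eq:negs} in Lemma \ref{s23}.
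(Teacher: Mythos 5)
Your proof follows the paper's argument almost verbatim: the constant flows $\Phi^*_{c,\bs z}$ are cyclic hence divergence-free, so all the divergence of $\Upsilon_N^q$ sits in the correctors $(q(i)-q(j))\widetilde\Phi_N^{\bs\sigma}$ and is computed from Theorem \ref{s24}; and the difference $\Upsilon_N^q-\Phi^*_{h_N^q}$ splits into the negligible flows $\widetilde\Phi_N^{\bs\sigma}-\Phi_N^{\bs\sigma}$ plus a remainder supported on cycles straddling the boundaries of the regions $\mc W_N^i$, $\overline{\mc B}_N^{\bs\sigma}$, which is handled as in Lemmata \ref{s22} and \ref{s23}.

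One correction to the boundary accounting: it is not true that every straddling cycle visits a point with $F_N\ge H+c_0\lambda\varepsilon_N^2-C/N$. Cycles crossing $\partial^1\mc B_N^{\bs\sigma}$ or $\partial^2\mc B_N^{\bs\sigma}$ into an adjacent $\mc W_N^i$ sit where $F_N$ can be \emph{below} $H$ (near the center of $\partial^1$ one has $F\approx H-\tfrac12\lambda_1\varepsilon_N^2$), so the conductance there is large and your ``small conductance on each contributing edge'' mechanism fails on that portion of the boundary. What saves the estimate there is instead that the two candidate values of $h_N^q$ across the interface differ by $(q(i)-q(j))(1-V_N^{\bs\sigma})$ (resp.\ $(q(i)-q(j))V_N^{\bs\sigma}$ on $\partial^2$), and Lemma \ref{s18} bounds $e^{-N(\overline F_N-H)}(1-V_N^{\bs\sigma})^2$ by $Ce^{-c_0N\varepsilon_N^2}$; the small-conductance argument applies only on $\partial^0$ and on the outer boundary of $\Omega_N$. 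Since you explicitly defer to the argument of \eqref{eq:negs} in Lemma \ref{s23}, which does invoke Lemma \ref{s18} for exactly these interfaces, the proof goes through once this is made explicit.
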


\begin{proof}
For $c\in\mathbb{R}$ and $\boldsymbol{z}\in\widehat{\Xi}_{N}$, the
flow $\Phi_{c,\boldsymbol{z}}^{*}$ is cyclic and thus
divergence-free. Hence, the divergence of $\Upsilon_{N}^{q}$ is equal
to the divergence of the flow
\begin{equation*}
\sum_{1\le i<j\le M}\, \sum_{\boldsymbol{\sigma}\in\mathfrak{S}_{i,j}}\,
\sum_{\boldsymbol{z}\in\mathring{\mathcal{B}}_{N}^{\boldsymbol{\sigma}}}
\big[q(i)-q(j)\big]\, \widetilde{\Phi}_{N}^{\boldsymbol{\sigma}}\;.
\end{equation*}
Hence, the first assertion of the lemma follows from Theorem \ref{s24}
and the definitions of $\mathcal{L}_Y$ and $\mu$.

It remains to show that the flow $\Upsilon_{N}^{q} -
\Phi_{h_{N}^{q}}^{*}$ is negligible.  As in the proof of Lemma
\ref{s22}, rewrite this flow as
\begin{equation}
\label{fl1}
\sum_{1\le i<j\le M}\sum_{\boldsymbol{\sigma}\in\mathfrak{S}_{i,j}}
\sum_{\boldsymbol{z}\in\mathring{\mathcal{B}}_{N}^{\boldsymbol{\sigma}}}
(q(i)-q(j))\left(\widetilde{\Phi}_{N}^{\boldsymbol{\sigma}}
-\Phi_{N}^{\boldsymbol{\sigma}}\right) \;+\; \varDelta_{N}\;,
\end{equation}
where $\varDelta_{N} = \widehat{\Upsilon}_{N}^{q} -
\Phi_{h_{N}^{q}}^{*}$, with $\widehat{\Upsilon}_{N}^{q}$ being the
flow obtained from $\Upsilon_{N}^{q}$ by replacing
$\widetilde{\Phi}_{N}^{\boldsymbol{\sigma}}$ by
$\Phi_{N}^{\boldsymbol{\sigma}}$. By Theorem \ref{s24}, the first flow
in \eqref{fl1} is negligible. As in the proof of Lemma
\ref{s22}, the flow $\varDelta_{N}$ is negligible because the
discrepancies between $\Upsilon_{N}^{q}$ and $\Phi_{h_{N}^{q}}^{*}$
appear only at the boundaries of $\mathcal{W}_{N}^{i}$, $i\in S$, and
of $\mathcal{B}_{N}^{\boldsymbol{\sigma}}$,
$\boldsymbol{\sigma\in\mathfrak{S}}$.
\end{proof}

\subsection{Proof of Theorem \ref{s133}} 
\label{sub_pt}

In view of the remark below Lemma \ref{con71}, choose $q$ as the
equilibrium potential between $A$ and $B$, denoted by $q_{A,B}$. From
now on, we write $h_{N}^{q_{A,B}}$, $\Upsilon_{N}^{q_{A,B}}$ simply as
$h_{N}^{A,B}$, $\Upsilon_{N}^{A,B}$, respectively.

\begin{lemma}
The function $h_{N}^{A,B}$ satisfies
\begin{equation*}
\mathcal{D}_{N}(h_{N}^{A,B})\;=\;\big[1+o_{N}(1)\big]\, \kappa_{N}
\, \textup{cap}_{Y}(A,B)\;. 
\end{equation*}
Moreover, the flow $\Upsilon_{N}^{A,B}$ is divergence-free on
$(\mathcal{E}_{N}(A)\cup\mathcal{E}_{N}(B))^{c}$ and
\begin{equation*}
(\textup{div }\Upsilon_{N}^{A,B})(\mathcal{E}_{N}(A)) 
\;=\;-\, (\textup{div }\Upsilon_{N}^{A,B})(\mathcal{E}_{N}(B))
\;=\; \kappa_{N}\, \mbox{\textup{cap}}_{Y}(A,B)\;. 
\end{equation*}
\end{lemma}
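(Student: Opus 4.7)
The proof plan is to recognize that this lemma is a direct consequence of Lemmas \ref{con71} and \ref{con73} specialized to $q = q_{A,B}$, combined with the characterizations \eqref{qq1}--\eqref{qq3} of the equilibrium potential of the chain $Y$.

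For the first assertion, I would simply apply Lemma \ref{con71} with $q = q_{A,B}$ to obtain
\begin{equation*}
\mathcal{D}_{N}(h_{N}^{A,B}) \;=\; \big[1+o_{N}(1)\big]\, \kappa_{N}\, \mathcal{D}_{Y}(q_{A,B})\;,
\end{equation*}
and then invoke the identity $\mathcal{D}_{Y}(q_{A,B}) = \textup{cap}_{Y}(A,B)$ recorded in \eqref{qq1}.

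For the flow assertions, the starting point is Lemma \ref{con73}, which tells us that $\Upsilon_N^{A,B}$ is divergence-free on $\{\boldsymbol{m}_N^i : i \in S\}^c$ and that
\begin{equation*}
(\textup{div}\,\Upsilon_{N}^{A,B})(\boldsymbol{m}_{N}^{i}) \;=\; -\, \kappa_{N}\, \mu(i)\, (\mathcal{L}_{Y} q_{A,B})(i)\;, \quad i\in S\;.
\end{equation*}
Since $q_{A,B}$ solves the discrete elliptic problem \eqref{qq2}, the right-hand side vanishes for every $i \in S \setminus (A \cup B)$. Because $\boldsymbol{m}_N^i \in \mathcal{E}_N^i$ and because $\mathcal{E}_N^i \subset \mathcal{E}_N(A)\cup\mathcal{E}_N(B)$ exactly when $i \in A\cup B$, this is precisely the statement that $\Upsilon_N^{A,B}$ is divergence-free on $(\mathcal{E}_N(A)\cup\mathcal{E}_N(B))^c$.

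Finally, summing the divergence over $\mathcal{E}_N(A)$ collapses, by what has just been shown, to a sum over the points $\boldsymbol{m}_N^a$, $a\in A$, and hence
\begin{equation*}
(\textup{div}\,\Upsilon_{N}^{A,B})(\mathcal{E}_{N}(A)) \;=\; -\, \kappa_{N}\, \sum_{a\in A} \mu(a)\, (\mathcal{L}_{Y}q_{A,B})(a) \;=\; \kappa_{N}\, \textup{cap}_{Y}(A,B)
\end{equation*}
by the first identity in \eqref{qq3}. The analogous computation using the second identity in \eqref{qq3} gives $(\textup{div}\,\Upsilon_{N}^{A,B})(\mathcal{E}_{N}(B)) = -\kappa_{N}\,\textup{cap}_{Y}(A,B)$, completing the proof. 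No serious obstacle is expected here: all the work was already absorbed into Lemmas \ref{con71}--\ref{con73} (and ultimately into the mesoscopic analysis of Sections \ref{sec3}--\ref{sec:flow}); the only task is to choose $q = q_{A,B}$ and translate the harmonicity property \eqref{qq2} of $q_{A,B}$ through those lemmas.
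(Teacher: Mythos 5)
Your proposal is correct and follows exactly the paper's own argument: the paper proves the first assertion from Lemma \ref{con71} together with \eqref{qq1}, and the second from Lemma \ref{con73} combined with the harmonicity \eqref{qq2} and the identities \eqref{qq3}, which is precisely what you do (with the signs handled correctly). You have merely written out the details that the paper leaves implicit.
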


\begin{proof}
The first part follows from \eqref{qq1} and Lemma \ref{con71},
and the second part is a consequence of \eqref{qq2}, \eqref{qq3}
and Lemma \ref{con73}.
\end{proof}

\begin{proof}[Proof of Theorem \ref{s133}]
The upper bound is achieved by using 
\begin{align*}
&f_{N}^{A,B}  \;=\;  \frac{1}{2}(h_{N}^{*,A,B}+h_{N}^{A,B})
\;\in\;\mathfrak{C}_{1,0}(\mathcal{E}_{N}(A),\mathcal{E}_{N}(B))\;,\\
&\quad\phi_{N}^{A,B} \;=\;  \frac{1}{2}(\Upsilon_{N}^{*,A,B}
-\Upsilon_{N}^{A,B})\;\in\;
\mathfrak{U}_{0}(\mathcal{E}_{N}(A),\mathcal{E}_{N}(B))\;,
\end{align*}
as the test function and the test flow in Theorem \ref{s07}. On
the other hand, the lower bound can be proven by using
\begin{align*}
&g_{N}^{A,B}\;=\; \frac{1}{\kappa_{N}\mbox{\textup{cap}}_{Y}(A,B)}
\,\frac{h_{N}^{*,A,B}-h_{N}^{A,B}}{2}\;\in\;
\mathfrak{C}_{0,0}(\mathcal{E}_{N}(A),\mathcal{E}_{N}(B))\;,\\
&\quad\psi_{N}^{A,B}  \;=\; \frac{1}{\kappa_{N}
\mbox{\textup{cap}}_{Y}(A,B)}\,
\frac{\Upsilon_{N}^{*,A,B}+\Upsilon_{N}^{A,B}}{2}\;
\in\;\mathfrak{U}_{1}(\mathcal{E}_{N}(A),\mathcal{E}_{N}(B))\;,
\end{align*}
as the test function and flow in Theorem \ref{s10}. The details
of the proof are identical to the ones of Theorem \ref{s13}. 
\end{proof}

\section{Mean Jump Rate}
\label{sec8}

In this section we compute the mean jump rates of the process. Fix
$1\le m\le l$, and write $\mathcal{E}_{N}^{(m)} =
\mathcal{E}_{N}(S_{m}) = \cup_{i\in S_m} \mc E^i_N$. Denote by $\bb
T_m(t)$, $t\ge 0$, the amount of time the process $X_N(s)$ remains in
the set $\mc E _{N}^{(m)}$ in the interval $[0,t]$:
\begin{equation*}
\bb T_m(t) \;:=\;\int_{0}^t \mb {1} \big \{X_N(s) \in
\mc E^{(m)}_N \big\} \, ds\,,\quad t\ge 0\,.
\end{equation*}
Let $\bb S_m(t)$ be the generalized inverse of $\bb T_m(t)$:
\begin{equation*}
\bb S_m(t) \;:=\;\sup\{s\ge 0 :  \bb T_m(s) \le t\}\; .
\end{equation*}
The \emph{trace} of the process $X_N(t)$ on the set $\mc E^{(m)}_N$,
denoted by $X^{(m)}_N(t)$, is defined as
\begin{equation*}
X^{(m)}_N(t) \;=\; X_N(\bb S_m(t))\;.
\end{equation*}

The process $X^{(m)}_N(t)$ is an $\mc E^{(m)}_N$-valued Markov
process. Denote by $R^{(m)}_N(\bs x, \bs y)$, $\bs x\not = \bs y\in
\mc E^{(m)}_N$, the jump rates of this process which can be expressed
in terms of hitting times. Let $\bs r^{(m)}_N(i,j)$, $i\not = j\in
S_m$, be the mean jump rates from the well $\mathcal{E}_{N}^{i}$ to
the valley $\mathcal{E}_{N}^{j}$:
\begin{equation}
\label{48}
r_{N}^{(m)}(i,j)\;=\;\frac{1}{\mu_{N}(\mathcal{E}_{N}^{i})}
\,\sum_{\boldsymbol{x}\in\mathcal{E}_{N}^{i}}\,
\sum_{\boldsymbol{y}\in\mathcal{E}_{N}^{j}}
\mu_{N}(\boldsymbol{x}) \, R_{N}^{(m)}(\boldsymbol{x},\boldsymbol{y})\;.
\end{equation}
We define $r_{N}^{(m)}(i,i)=0$ just for convenience. 

Denote by $\lambda_{N}^{(m)}(i)$, $i\in S_{m}$, the mean holding
time at $\mathcal{E}_{N}^{i}$ for the trace process $X_{N}^{(m)}:$
\begin{equation*}
\lambda_{N}^{(m)}(i)\;=\;\sum_{j\in S_{m}}r^{(m)}_{N}(i,j)\;.
\end{equation*}
By \cite[display (A.8)]{BL2}, 
\begin{equation*}
\mu_{N}(\mathcal{E}_{N}^{i}) \, \lambda_{N}^{(m)}(i)
\;=\;\mbox{\textup{cap}}_{N}(\mathcal{E}_{N}^{i},
\mathcal{E}_{N}^{(m)}\setminus\mathcal{E}_{N}^{i})\;,
\end{equation*}
so that, by \eqref{thm: estimate of meta mass} and Theorem \ref{s133},
\begin{equation}
\label{10}
\lambda_{N}^{(m)}(i)\;=\;\big[1+o_{N}(1)\big]\,
\frac{e^{-N(H-h_{i})}}{2\pi N}\,\frac{1}{\nu_{i}}\,
\textup{cap}_{Y}(\{i\},S_{m}\setminus\{i\})\;.
\end{equation}

The main result of this section provides a sharp estimate for the
mean jump rates of the trace process $X_{N}^{(m)}(t)$. Recall the
definition of $c_{m}(\cdot,\cdot)$ from \eqref{c_m}.

\begin{theorem}
\label{s04}For $1\le m\le l$ and $i\neq j\in S_{m}$, 
\begin{equation*}
r_{N}^{(m)}(i,j)\;=\;\big[1+o_{N}(1)\big]\,
\frac{e^{-N(H-h_{i})}}{2\pi N}\,\frac{c_{m}(i,j)}{\nu_{i}}\;.
\end{equation*}
\end{theorem}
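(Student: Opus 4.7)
Starting from the definition \eqref{48} and writing the trace jump rate $R_N^{(m)}(\bs x,\bs y)$ as a sum over first-hitting probabilities of $\mc E_N^{(m)}$, I obtain the potential-theoretic identity
\begin{equation*}
\mu_N(\mc E_N^i)\,r_N^{(m)}(i,j)
\;=\; \big\langle \mathbf{1}_{\mc E_N^i},\,\mc L_N V^{(j)}\big\rangle_{\mu_N}\;,
\qquad V^{(j)}\;:=\;V_{\mc E_N^j,\,\mc E_N^{(m)}\setminus\mc E_N^j}\;.
\end{equation*}
Since $V^{(j)}$ is $\mc L_N$-harmonic outside $\mc E_N^{(m)}$, the function $\mc L_N V^{(j)}$ is supported on $\mc E_N^{(m)}$, and on that set $\mathbf{1}_{\mc E_N^i}$ coincides with the adjoint equilibrium potential $V^{*,(i)} := V^*_{\mc E_N^i,\,\mc E_N^{(m)}\setminus\mc E_N^i}$. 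Hence the quantity to estimate equals $\langle V^{*,(i)},\,\mc L_N V^{(j)}\rangle_{\mu_N}$.

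The plan is to replace both equilibrium potentials by the building blocks of Section~\ref{sec7}: $V^{*,(i)}\rightsquigarrow h_N^{*,\mathbf{1}_i}$ (the adjoint analog of $h_N^p$, built from the profiles $V_N^{*,\bs\sigma}$ of Subsection~\ref{ad}) and $V^{(j)}\rightsquigarrow h_N^{q_j}$, where $q_j := q_{\{j\},\,S_m\setminus\{j\}}$ is the reversible $Y$-equilibrium potential. By the sector condition of Lemma~\ref{lem: sector condition}, each replacement error is bounded by a constant multiple of $\sqrt{\mc D_N(V^{*,(i)})\,\mc D_N(V^{(j)}-h_N^{q_j})}$, or an adjoint analog. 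The factor $\mc D_N(V^{*,(i)})$ equals the capacity $\Cap_N(\mc E_N^i,\,\mc E_N^{(m)}\setminus\mc E_N^i) = O(\kappa_N)$ by Theorem~\ref{s133}, so the whole argument reduces to the Dirichlet-form estimate
\begin{equation*}
\mc D_N\big(V^{(j)} - h_N^{q_j}\big) \;=\; \kappa_N\, o_N(1)\;,
\end{equation*}
together with its adjoint counterpart. This is the principal obstacle: unlike a capacity, the Dirichlet form of the difference between an equilibrium potential and a test function admits no direct variational characterization, and the estimate has no counterpart in the reversible analysis of \cite{LMT}. I expect to establish it by coupling the Dirichlet and Thomson principles (Theorems~\ref{s07}--\ref{s10}): the test pair $(h_N^{q_j},\,\Upsilon_N^{q_j})$ from Lemma~\ref{con73} already saturates both variational principles to leading order, so a Pythagorean expansion about the unique optimizer pins down the deficit $V^{(j)}-h_N^{q_j}$ in the required norm, with the block structure $\mc X_N,\,\mc W_N^k,\,\mc B_N^{\bs\sigma}$ of Section~\ref{sec6} providing the localization.

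Once the replacement is legitimate, the computation becomes transparent in the flow formalism. Using the identity $\textup{div}\,\Phi_g^* = -\mu_N\,\mc L_N g$ together with the summation-by-parts relation $\sum_{\bs x} f(\bs x)\,\textup{div}\,\phi(\bs x) = \langle\phi,\Psi_f\rangle$, one obtains
\begin{equation*}
\big\langle h_N^{*,\mathbf{1}_i},\,\mc L_N h_N^{q_j}\big\rangle_{\mu_N}
\;=\; -\,\big\langle \Phi_{h_N^{q_j}}^*,\,\Psi_{h_N^{*,\mathbf{1}_i}}\big\rangle\;.
\end{equation*}
By Lemma~\ref{con73} the flow $\Phi_{h_N^{q_j}}^*$ differs from $\Upsilon_N^{q_j}$ by a negligible flow, and since $\|\Psi_{h_N^{*,\mathbf{1}_i}}\|^2 = \mc D_N(h_N^{*,\mathbf{1}_i}) = O(\kappa_N)$ by the adjoint analog of Lemma~\ref{con71}, the Cauchy--Schwarz replacement error is $\kappa_N o_N(1)$. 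Applying summation by parts again, the expression reduces to $-\sum_{\bs x} h_N^{*,\mathbf{1}_i}(\bs x)\,(\textup{div}\,\Upsilon_N^{q_j})(\bs x)$. By Lemma~\ref{con73} the divergence is concentrated at the minima $\bs m_N^k$ with $(\textup{div}\,\Upsilon_N^{q_j})(\bs m_N^k) = -\kappa_N\,\mu(k)\,(\mc L_Y q_j)(k)$, while $h_N^{*,\mathbf{1}_i}(\bs m_N^k) = \mathbf{1}\{k=i\}$, so the sum collapses to $\kappa_N\,\mu(i)\,(\mc L_Y q_j)(i) = \kappa_N\,c_m(i,j)$, the last equality being the standard trace-process identity for the reversible chain $Y$. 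Combining with \eqref{thm: estimate of meta mass} for $\mu_N(\mc E_N^i)$ yields the announced Eyring--Kramers asymptotic.
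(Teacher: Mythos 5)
Your route is genuinely different from the paper's. The paper starts from the identity $r_N^{(m)}(1,2)/\lambda_N^{(m)}(1)=\overline{\mathbb P}^N_{\mathfrak o}[H_{\mc E_N^2}<H_{\mc E_N(S_m\setminus\{1,2\})}]$ for the process with $\mc E^1_N$ collapsed to a point $\mf o$, and must therefore estimate the \emph{pointwise} value of an equilibrium potential at $\mf o$; this forces the whole collapsed-process apparatus of Section \ref{sec8} plus a capacity lower bound $\overline{\Cap}^s_N(\mf o,\cdot)\gtrsim\kappa_N$ (via the sector condition) to convert an $o(\kappa_N)$ Dirichlet-norm bound into an $o(1)$ pointwise bound. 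You instead test $\mc L_N V^{(j)}$ against $V^{*,(i)}$, an $H^{-1}$-type pairing for which the Dirichlet-norm estimate suffices directly, so you avoid collapsing altogether. Your opening identity $\mu_N(\mc E^i_N)r_N^{(m)}(i,j)=\langle\mathbf 1_{\mc E^i_N},\mc L_N V^{(j)}\rangle_{\mu_N}$ is correct, and your ``principal obstacle'' $\mc D_N(V^{(j)}-h_N^{q_j})=\kappa_N o_N(1)$ is exactly the estimate the paper proves (in collapsed form) in Proposition \ref{s05}: expand $\Vert\Psi_{V^{(j)}-h_N^{q_j}}\Vert^2$ as $\Vert\Psi_{V^{(j)}}\Vert^2+\Vert\Psi_{h_N^{q_j}}\Vert^2-2\langle\Psi_{h_N^{q_j}},\Psi_{V^{(j)}}\rangle$, identify the first term with $\Cap_N$ via Theorem \ref{s133}, the second via Lemma \ref{con71}, and the cross term via $\Psi_{h_N^{q_j}}=\Phi_{f_N}-\phi_N-R_N$, $\langle\phi_N,\Psi_{V^{(j)}}\rangle=0$ and $\langle\Phi_{f_N},\Psi_{V^{(j)}}\rangle=\langle(-\mc L_N)V^{(j)},f_N\rangle_{\mu_N}=\Cap_N$; all three equal $[1+o_N(1)]\kappa_N\Cap_Y(\{j\},S_m\setminus\{j\})$. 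So the Pythagorean argument you anticipate does close, and the remainder of your flow computation (divergence of $\Upsilon_N^{q_j}$ concentrated at the $\bs m^k_N$, the identity $\mu(i)(\mc L_Y q_j)(i)=c_m(i,j)$, and division by \eqref{thm: estimate of meta mass}) is correct.

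The one genuine error is the adjoint replacement $V^{*,(i)}\rightsquigarrow h_N^{*,\mathbf 1_i}$. For $m>1$ the correct approximation of $V^*_{\mc E^i_N,\mc E_N(S_m\setminus\{i\})}$ uses the profile $q_i=q_{\{i\},S_m\setminus\{i\}}$, which is harmonic on $S\setminus S_m$ and in general not equal to $\mathbf 1_i$ there. With $\mathbf 1_i$ the same Pythagorean computation gives $\mc D_N(V^{*,(i)}-h_N^{*,\mathbf 1_i})=[1+o_N(1)]\kappa_N[\Cap_Y(\{i\},S\setminus\{i\})-\Cap_Y(\{i\},S_m\setminus\{i\})]$, which is of order $\kappa_N$, not $o(\kappa_N)$, so your sector-condition bound on that replacement error is of the same order as the answer. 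Fortunately the step is superfluous: after the first replacement, write $\langle V^{*,(i)},\mc L_N h_N^{q_j}\rangle_{\mu_N}=-\langle\Phi^*_{h_N^{q_j}},\Psi_{V^{*,(i)}}\rangle=-\langle\Upsilon_N^{q_j},\Psi_{V^{*,(i)}}\rangle+\kappa_N o_N(1)$ and sum by parts against the true $V^{*,(i)}$; since $\textup{div}\,\Upsilon_N^{q_j}$ is supported on $\{\bs m^k_N: k\in S_m\}$ (because $(\mc L_Y q_j)(k)=0$ for $k\notin S_m$) and $V^{*,(i)}(\bs m^k_N)=\mathbf 1\{k=i\}$ exactly for $k\in S_m$, the sum collapses to $\kappa_N c_m(i,j)$ with no second replacement needed. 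Either drop that step or replace $\mathbf 1_i$ by $q_i$ throughout.
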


Without loss of generality, we assume that $1,\,2\in S_{m}$ and then
prove Theorem \ref{s04} for $(i,j)=(1,2)$. We also assume that
$\textup{cap}_{Y}(\{1\},S_{m}\setminus\{1\})\neq0$ because if this is
not the case, Theorem \ref{s04} is a direct consequence of \eqref{10}.

\subsection{Collapsed Process}
\label{81}

We present in this subsection some general results on collapsed
processes needed to prove Theorem \ref{s04}.  To avoid
introducing new notation, we present all results in the context of the
$\Xi_N$-valued Markov chain $X_N(t)$, but all assertions of this
subsection hold for general continuous-time Markov chains.

Fix a point $\mf o\not \in \Xi_N$ and let $\overline{\Xi}_{N}$ be the
set in which the valley $\mathcal{E}_{N}^{1}$ is collapsed to a point
$\mf{o}$: $\overline{\Xi}_N = (\Xi_{N}\setminus \mathcal{E}_{N}^{1})
\cup \{\mathfrak{o}\}$.  Recall from \eqref{11} that we denote by
$R_{N}$ the jump rates of the chain $X_N(t)$. Let $\overline{X}_{N}
(t)$ be the $\overline{\Xi}_{N}$-valued Markov chain whose jump rates
are given by
\begin{equation}
\label{eq:coll rate}
\begin{cases}
\;\overline{R}_{N}(\boldsymbol{x},\boldsymbol{y})\;=\;
R_{N}(\boldsymbol{x},\boldsymbol{y})\;, & 
\boldsymbol{x},\,\boldsymbol{y}\in\Xi_{N}\setminus\mathcal{E}_{N}^{1}\;,
\\
\;\overline{R}_{N}(\boldsymbol{x},\mathfrak{o})\;=\;
\sum_{\boldsymbol{z}\in\mathcal{E}_{N}^{1}}R_{N}(\boldsymbol{x},\boldsymbol{z}) 
\;,& \boldsymbol{x}\in\Xi_{N}\setminus\mathcal{E}_{N}^{1}\;, \\
\;\overline{R}_{N}(\mathfrak{o},\boldsymbol{y})\;=\;
\big[\mu_{N}(\mathcal{E}_{N}^{1})\big]^{-1}\sum_{\boldsymbol{z}\in\mathcal{E}_{N}^{1}}
\mu_{N}(\boldsymbol{z})\, R_{N}(\boldsymbol{z},\boldsymbol{y}) \;, &
\boldsymbol{y}\in\Xi_{N}\setminus\mathcal{E}_{N}^{1}\;.
\end{cases}
\end{equation}
Denote by $\overline{\mathbb{P}}_{\boldsymbol{x}}^{N}$ the law of
$\overline{X}_{N}$ starting from $\boldsymbol{x}$, and by
$\mathcal{\overline{L}}_{N}$, $\overline{\mathcal{D}}_{N}(\cdot)$,
$\overline{\textup{cap}}_N(\cdot,\cdot)$ the generator, the Dirichlet
form and the capacity, respectively, corresponding to the collapsed
process. One can easily verify that the invariant measure
$\overline{\mu}_{N}(\cdot)$ for $\overline{X}_{N}(t)$ is given by
\begin{equation}
\label{eq:coll_inv}
\overline{\mu}_{N}(\boldsymbol{x})\;=\;\mu_{N}(\boldsymbol{x}) 
\;,\;\; \boldsymbol{x}\in\Xi_{N}\setminus\mathcal{E}_{N}^{1}\;, \quad
\overline{\mu}_{N}(\mathfrak{o})\;=\;\mu_{N}(\mathcal{E}_{N}^{1})\;.
\end{equation}

Denote by $\overline{c}(\boldsymbol{x},\boldsymbol{y})$ the
conductances of the chain $\overline{X}_{N}$:
$$
\overline{c}(\boldsymbol{x},\boldsymbol{y})
=\overline{\mu}_{N} (\boldsymbol{x}) \overline{R}_{N}
(\boldsymbol{x},\boldsymbol{y})\;.
$$
In view of the previous relations,
\begin{equation}
\label{eq:coll conduct}
\begin{cases}
\;\overline{c}(\boldsymbol{x},\boldsymbol{y})\;=\;
c(\boldsymbol{x},\boldsymbol{y}) \;, & \boldsymbol{x},\,
\boldsymbol{y}\in\Xi_{N}\setminus\mathcal{E}_{N}^{1}\;, \\
\;\overline{c}(\boldsymbol{x},\mathfrak{o})\;=\;
\sum_{\boldsymbol{z}\in\mathcal{E}_{N}^{1}}
c(\boldsymbol{x},\boldsymbol{z}) \;, & 
\boldsymbol{x}\in\Xi_{N}\setminus\mathcal{E}_{N}^{1}\;,\\
\;\overline{c}(\mathfrak{o},\boldsymbol{y})\;=\;
\sum_{\boldsymbol{z}\in\mathcal{E}_{N}^{1}}
c(\boldsymbol{z},\boldsymbol{y}) \;, & \boldsymbol{y}\in
\Xi_{N}\setminus\mathcal{E}_{N}^{1}\;.
\end{cases}
\end{equation}
The symmetrized conductance is defined by
$\overline{c}^{s}(\boldsymbol{x}, \boldsymbol{y})=
[\overline{c}(\boldsymbol{x}, \boldsymbol{y}) + \overline{c}
(\boldsymbol{y}, \boldsymbol{x})]/2$ for
$\boldsymbol{x},\,\boldsymbol{y}\in\overline{\Xi}_{N}$. Let
$\overline{E}_{N} = \{(\boldsymbol{x},\boldsymbol{y}):
\overline{c}^{s} (\boldsymbol{x},\boldsymbol{y})>0\}$ be the set of
edges and let $\overline{\mathcal{F}}_{N}$ be the set of flows on
$\overline{E}_{N}$ endowed with a scalar product analogous to the one
introduced in Section \ref{sec:Preliminaries}.  Denote the scalar
product and the norm by $\left\langle \cdot,\cdot\right\rangle
_{\mathcal{C}}$ and $||\cdot||_{\mathcal{C}}$, respectively.

For each flow $\phi\in\mathcal{F}_{N}$, define the collapsed flow
$\overline{\phi}\in\overline{\mathcal{F}}_{N}$ by
\begin{equation}
\label{eq:coll_flow}
\begin{cases}
\;\overline{\phi}(\boldsymbol{x},\boldsymbol{y})\;=\;
\phi(\boldsymbol{x},\boldsymbol{y}) \;, & 
\boldsymbol{x},\,\boldsymbol{y}\in\Xi_{N}\setminus\mathcal{E}^1_{N}\;, \\
\;\overline{\phi}(\boldsymbol{x},\mathfrak{o})\;=\;
\sum_{\boldsymbol{z}\in\mathcal{E}_{N}^{1}}
\phi(\boldsymbol{x},\boldsymbol{z}) \;, & 
\boldsymbol{x}\in\Xi_{N}\setminus\mathcal{E}^1_{N}\;, \\
\;\overline{\phi}(\mathfrak{o},\boldsymbol{y})\;=\;
\sum_{\boldsymbol{z}\in\mathcal{E}_{N}^{1}}
\phi(\boldsymbol{z},\boldsymbol{y}) \;, & 
\boldsymbol{y}\in\Xi_{N}\setminus\mathcal{E}^1_{N}\;.
\end{cases}
\end{equation}
Clearly, $\overline{\phi}$ inherits from $\phi$ the anti-symmetry.
Moreover,
\begin{equation}
\label{18}
(\mbox{div }\overline{\phi})(\boldsymbol{x})\;=\;
(\mbox{div }\phi)(\boldsymbol{x})\;,\;\; 
\boldsymbol{x}\in\Xi_{N}\setminus\mathcal{E}_{N}^{1}\;,\quad
(\mbox{div }\overline{\phi})(\mathfrak{o})\;=\;
(\mbox{div }\phi)(\mathcal{E}_{N}^{1})\;.
\end{equation}

\begin{lemma}
\label{lem82}
For every flow $\phi\in\mathcal{F}_{N}$, $\left\Vert
  \overline{\phi}\right\Vert _{\mathcal{C}}\le\left\Vert
  \phi\right\Vert $.
\end{lemma}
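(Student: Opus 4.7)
The plan is to split the norm $\Vert\overline{\phi}\Vert_{\mc C}^2$ into two pieces: the contribution from edges $(\bs x, \bs y)\in \overline E_N$ with both $\bs x,\bs y\in \Xi_N\setminus \mc E_N^1$, and the contribution from edges incident to the collapsed vertex $\mf o$. I will then compare each piece to the corresponding portion of $\Vert\phi\Vert^2$; the first piece is an equality while the second needs a single Cauchy--Schwarz bound. The contribution from edges internal to $\mc E_N^1$, which is nonnegative and present in $\Vert\phi\Vert^2$ but absent in $\Vert\overline\phi\Vert_{\mc C}^2$, provides the remaining slack.

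For the first piece, \eqref{eq:coll_flow} and \eqref{eq:coll conduct} give $\overline\phi(\bs x,\bs y)=\phi(\bs x,\bs y)$ and $\overline c^s(\bs x,\bs y)=c^s(\bs x,\bs y)$ for $\bs x,\bs y\in \Xi_N\setminus\mc E_N^1$, so the corresponding partial sums of $\Vert\overline\phi\Vert_{\mc C}^2$ and $\Vert\phi\Vert^2$ coincide term by term. Nothing to do here.

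The main step is the treatment of the edges $(\bs x,\mf o)$, $\bs x\in \Xi_N\setminus\mc E_N^1$. By definition,
\begin{equation*}
\overline\phi(\bs x,\mf o) \;=\; \sum_{\bs z\in\mc E_N^1}\phi(\bs x,\bs z)\;,\qquad
\overline c^s(\bs x,\mf o) \;=\; \sum_{\bs z\in\mc E_N^1}c^s(\bs x,\bs z)\;,
\end{equation*}
where only $\bs z$ with $c^s(\bs x,\bs z)>0$ contribute (otherwise $\phi(\bs x,\bs z)=0$ since $\phi$ is supported on $E_N$). Writing $\phi(\bs x,\bs z)=[\phi(\bs x,\bs z)/\sqrt{c^s(\bs x,\bs z)}]\cdot\sqrt{c^s(\bs x,\bs z)}$ and applying the Cauchy--Schwarz inequality to the sum over $\bs z\in\mc E_N^1$ yields
\begin{equation*}
\frac{\overline\phi(\bs x,\mf o)^2}{\overline c^s(\bs x,\mf o)} \;\le\; \sum_{\bs z\in\mc E_N^1}\frac{\phi(\bs x,\bs z)^2}{c^s(\bs x,\bs z)}\;.
\end{equation*}
The right-hand side is precisely the contribution to $\Vert\phi\Vert^2$ of all edges between $\bs x$ and $\mc E_N^1$ (in one orientation).

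Summing the equality for the first piece and the inequality above over the appropriate sets of edges, and noting that the contribution of edges internal to $\mc E_N^1$ to $\Vert\phi\Vert^2$ is nonnegative while such edges do not appear in $\overline E_N$, gives $\Vert\overline\phi\Vert_{\mc C}^2\le \Vert\phi\Vert^2$. The only bookkeeping to track is the factor $1/2$ in the inner product definition, which acts symmetrically on both sides since each unordered edge contributes twice with identical value of $\phi^2/c^s$; there is no obstacle, and the proof is essentially a one-line Cauchy--Schwarz after the right decomposition.
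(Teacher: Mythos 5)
Your proposal is correct and follows essentially the same route as the paper: the identical decomposition of the two flow norms into the off-$\mathcal{E}_N^1$ part (an exact term-by-term equality), the edges incident to the collapsed point (handled by the same Cauchy--Schwarz inequality applied to the sum over $\bs z\in\mathcal{E}_N^1$), and the nonnegative contribution of the edges internal to $\mathcal{E}_N^1$ as slack. Nothing is missing.
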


\begin{proof}
Decompose the flow norm of $\phi$ as $\left\Vert \phi\right\Vert
^{2}= (1/2) (A_{1}+A_{2}+A_{3})$ where 
\begin{align*}
&A_{1}\;=\;
\sum_{\boldsymbol{x},\,\boldsymbol{y}\in\Xi_{N}\setminus\mathcal{E}_{N}^{1}}
\frac{\phi(\boldsymbol{x},\boldsymbol{y})^{2}}
{c^{s}(\boldsymbol{x},\boldsymbol{y})}\;, \quad 
A_{2} \;=\; 2\, \sum_{\boldsymbol{x}\notin\mathcal{E}^1_{N}}
\sum_{\boldsymbol{y}\in\mathcal{E}^1_{N}} \frac{\phi(\boldsymbol{x},\boldsymbol{y})^{2}}
{c^{s}(\boldsymbol{x},\boldsymbol{y})} \;,\\
&\qquad A_{3} \;=\;
\sum_{\boldsymbol{x},\,\boldsymbol{y}\in\mathcal{E}_{N}^{1}}
\frac{\phi(\boldsymbol{x},\boldsymbol{y})^{2}}
{c^{s}(\boldsymbol{x},\boldsymbol{y})}\;,
\end{align*}
and the flow norm of $\overline{\phi}$ as $\left\Vert
  \overline{\phi}\right\Vert
_{\mathcal{C}}^{2}= (1/2) (\overline{A}_{1}+\overline{A}_{2})$
where
\begin{equation*}
\overline{A}_{1} \;=\;
\sum_{\boldsymbol{x},\,\boldsymbol{y}\notin\mathcal{E}^1_{N}}
\frac{\overline{\phi}(\boldsymbol{x},\boldsymbol{y})^{2}}
{\overline{c}^{s}(\boldsymbol{x},\boldsymbol{y})}\;, \quad 
\overline{A}_{2} \;=\; 2\, \sum_{\boldsymbol{x}\notin\mathcal{E}^1_{N}}
\frac{\overline{\phi}(\boldsymbol{x},\mathfrak{o})^{2}}
{\overline{c}^{s}(\boldsymbol{x},\mathfrak{o})}\;\cdot
\end{equation*}
The previous sums are all carried over bonds $(\bs x, \bs y)$ for
which $c^s(\bs x, \bs y)$ and $\overline{c}^{s}(\bs x, \bs y)$ are
strictly positive.  By (\ref{eq:coll conduct}) and
(\ref{eq:coll_flow}), it is clear that
$A_{1}=\overline{A}_{1}$. Thereby, to complete the proof, it suffices
to prove $A_{2}\ge\overline{A_{2}}$.  For each $\boldsymbol{x}$
adjacent to at least one point of $\mathcal{E}_{N}^{1}$, by the
Cauchy-Schwarz inequality,
\begin{equation*}
\sum_{\boldsymbol{y}\in\mathcal{E}^1_{N}}
\frac{\phi(\boldsymbol{x},\boldsymbol{y})^{2}}
{c^{s}(\boldsymbol{x},\boldsymbol{y})} \;\ge\; 
\frac{\Big(\sum_{\boldsymbol{y}\in\mathcal{E}^1_{N}} 
\phi(\boldsymbol{x},\boldsymbol{y})\Big)^{2}}
{\sum_{\boldsymbol{y}\in\mathcal{E}^1_{N}}
c^{s}(\boldsymbol{x},\boldsymbol{y})}\;=\;
\frac{\overline{\phi}(\boldsymbol{x},\mathfrak{o})^{2}}
{\overline{c}^{s}(\boldsymbol{x},\mathfrak{o})}\;.
\end{equation*}
By adding these inequality over
$\boldsymbol{x}\notin\mathcal{E}_{N}^{1}$ such that $(\bs x,
\mathfrak{o})\in \overline{E}_N$, we obtain that
$A_{2}\ge\overline{A}_{2}$.
\end{proof}

If a function $f:\Xi_{N}\rightarrow\mathbb{R}$ is constant over the
set $\mathcal{E}_{N}^{1}$, it is possible to collapse it to a function
$\overline{f}:\overline{\Xi}_{N}\rightarrow\mathbb{R}$ by setting
$\overline{f}(\mathfrak{o}) = f(\boldsymbol{z})$ for some $\bs z$ in
$\mathcal{E}_{N}^{1}$:
\begin{equation}
\label{eq: col f}
\overline{f}(\boldsymbol{x})\;=\;f(\boldsymbol{x})\;, \;
\boldsymbol{x}\in\Xi_{N}\setminus\mathcal{E}_{N}^{1}\;, \quad
\overline{f}(\mathfrak{o})\;= f(\boldsymbol{z}) \text{ for any } \bs
z \in \mathcal{E}_{N}^{1}\;.
\end{equation}
For a function $g:\overline{\Xi}_{N}\rightarrow\mathbb{R}$, denote by
$\overline{\Phi}_{g},\,\overline{\Phi}_{g}^{*}$ and
$\overline{\Psi}_{g}$, the flows in $\overline{\mathcal{F}}_{N}$
defined by
\begin{align*}
&\overline{\Phi}_{g}(\boldsymbol{x},\boldsymbol{y}) \;=\;
g(\boldsymbol{x})\,\overline{c}(\boldsymbol{x},\boldsymbol{y})
-g(\boldsymbol{y})\,\overline{c}(\boldsymbol{y},\boldsymbol{x})\;,\\
&\quad\overline{\Phi}_{g}^{*}(\boldsymbol{x},\boldsymbol{y}) \;=\;
g(\boldsymbol{x})\,\overline{c}(\boldsymbol{y},\boldsymbol{x})
-g(\boldsymbol{y})\,\overline{c}(\boldsymbol{x},\boldsymbol{y})\;,
\\
&\quad\quad \overline{\Psi}_{g}(\boldsymbol{x},\boldsymbol{y}) \;=\; 
\overline{c}^{s}(\boldsymbol{x},\boldsymbol{y})(g(\boldsymbol{x})-
g(\boldsymbol{y}))\;.
\end{align*}

\begin{lemma}
\label{s08}
Suppose that the function $f:\Xi_{N}\rightarrow\mathbb{R}$ is constant
over the set $\mathcal{E}_{N}^{1}$. Then, the flow obtained by
collapsing the flow $\Phi_{f}$, denoted by $\overline{\Phi_{f}}$,
coincides with the flow $\overline{\Phi}_{\,\overline{f}}$. The same
result holds for the flows $\Phi^{*}$ and $\Psi$.
\end{lemma}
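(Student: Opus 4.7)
The plan is to prove the lemma by a direct case-by-case verification from the definitions. The three cases correspond to which pair of endpoints $(\bs x, \bs y)$ we consider: both in $\Xi_N \setminus \mathcal{E}_N^1$; one in $\Xi_N\setminus\mathcal{E}_N^1$ and the other equal to $\mathfrak{o}$; and (by anti-symmetry) the symmetric case. For each case we write out $\overline{\Phi_f}(\bs x, \bs y)$ using \eqref{eq:coll_flow} and $\overline{\Phi}_{\overline{f}}(\bs x, \bs y)$ using the definition immediately preceding the lemma, and check that they agree. The hypothesis that $f$ is constant on $\mathcal{E}_N^1$ is what allows this identification and will enter in exactly one spot.

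For the first case, both notions reduce to $\Phi_f(\bs x, \bs y) = f(\bs x) c(\bs x, \bs y) - f(\bs y) c(\bs y, \bs x)$ on the nose, since $\overline{c}$ and $\overline{f}$ agree with $c$ and $f$ away from $\mathcal{E}_N^1$ by \eqref{eq:coll conduct} and \eqref{eq: col f}. The nontrivial case is $\bs y = \mathfrak{o}$ with $\bs x\in\Xi_N\setminus\mathcal{E}_N^1$. Here
\begin{equation*}
\overline{\Phi_f}(\bs x, \mathfrak{o}) \;=\; \sum_{\bs z\in\mathcal{E}_N^1} \Phi_f(\bs x, \bs z)
\;=\; f(\bs x) \sum_{\bs z\in\mathcal{E}_N^1} c(\bs x, \bs z) \;-\; \sum_{\bs z\in\mathcal{E}_N^1} f(\bs z)\, c(\bs z, \bs x)\;,
\end{equation*}
and because $f$ is constant on $\mathcal{E}_N^1$ with common value $\overline{f}(\mathfrak{o})$, the second term factors as $\overline{f}(\mathfrak{o}) \sum_{\bs z} c(\bs z, \bs x) = \overline{f}(\mathfrak{o})\,\overline{c}(\mathfrak{o}, \bs x)$; the first term is $\overline{f}(\bs x)\,\overline{c}(\bs x, \mathfrak{o})$. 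This is exactly $\overline{\Phi}_{\overline{f}}(\bs x, \mathfrak{o})$. The case $\bs x = \mathfrak{o}$ follows by anti-symmetry of both flows.

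The arguments for $\Phi^*$ and $\Psi$ are identical with the obvious substitutions: for $\Phi^*_f$ the roles of $c(\bs x, \bs y)$ and $c(\bs y, \bs x)$ are swapped (and correspondingly so are the roles of the two types of conductance sums in \eqref{eq:coll conduct} and \eqref{eq:coll_flow}), while for $\Psi_f$ one replaces $c$ by $c^s$ throughout and uses that $\overline{c^s} = \overline{c}^s$, which follows from the definition of $\overline{c}$ by averaging the two identities in \eqref{eq:coll conduct}. There is no real obstacle; the only thing to watch is that the hypothesis of constancy of $f$ on $\mathcal{E}_N^1$ is precisely what makes the factoring step legitimate, so that $\overline{f}(\mathfrak{o})$ is well-defined by \eqref{eq: col f} and compatible with the sums arising from collapsing the flow.
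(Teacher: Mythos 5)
Your proposal is correct and follows essentially the same route as the paper: reduce to edges of the form $(\boldsymbol{x},\mathfrak{o})$, expand $\sum_{\boldsymbol{z}\in\mathcal{E}_N^1}\Phi_f(\boldsymbol{x},\boldsymbol{z})$, and use the constancy of $f$ on $\mathcal{E}_N^1$ to factor out $\overline{f}(\mathfrak{o})$. The extra remarks on the trivial case and on $\overline{c^s}=\overline{c}^{\,s}$ for the $\Psi$ case are fine but add nothing beyond the paper's argument.
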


\begin{proof}
It suffices to check that these flows coincide on the edges of the
form $(\boldsymbol{x},\mathfrak{o})$,
$\boldsymbol{x}\notin\mathcal{E}_{N}^{1}$. Indeed,
\begin{align*}
& \overline{\Phi_{f}}(\boldsymbol{x},\mathfrak{o})\;=\;
\sum_{\boldsymbol{z}\in\mathcal{E}_{N}^{1}}\Phi_{f}(\boldsymbol{x},\boldsymbol{z})
\;=\; \sum_{z\in\mathcal{E}_{N}^{1}} \big[f(\boldsymbol{x})
c(\boldsymbol{x},\boldsymbol{z})-f(\boldsymbol{z})
c(\boldsymbol{z},\boldsymbol{x})\, \big]\\
&\qquad =\; \overline{f}(\boldsymbol{x})\,
\overline{c}(\boldsymbol{x},\mathfrak{o}) -
\overline{f}(\mathfrak{0})\, \overline{c}(\mathfrak{o},\boldsymbol{x})
\;=\; \overline{\Phi}_{\overline{f}}(\boldsymbol{x},\mathfrak{o})\;.
\end{align*}
The proofs for $\Phi^{*}$ and $\Psi$ are analogous. 
\end{proof}

\subsection{Mean Jump Rates}
\label{sub:MJR_comp}

Recall from \cite[Proposition 4.2]{BL2} that 
\begin{equation}
\frac{r_{N}^{(m)}(1,2)}{\lambda_{N}^{(m)}(1)}\;=\;\overline{\mathbb{P}}_{\mathfrak{o}}^{N}\left[H_{\mathcal{E}_{N}^{2}}<H_{\mathcal{E}_{N}(S_{m}\setminus\{1,2\})}\right]\;.\label{12}
\end{equation}
In particular, in view of \eqref{10}, the asymptotic analysis of
the mean jump rate $r_{N}^{(m)}(1,2)$ is reduced to the one of the
right hand side of this equation. The following proposition provides
this sort of analysis.

\begin{proposition}
\label{s05}
For disjoint subsets $A,\,B$ of $S\setminus\{1\}$ satisfying
$\textup{cap}_{Y}(\{1\},A\cup B)\neq 0$, we have that
\begin{equation*}
\lim_{N\rightarrow\infty}\overline{\mathbb{P}}_{\mathfrak{o}}^{N}\left[H_{\mathcal{E}_{N}(A)}<H_{\mathcal{E}_{N}(B)}\right]\;=\;q_{A,B}(1)\;.
\end{equation*}
\end{proposition}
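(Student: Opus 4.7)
The plan is to reduce the collapsed hitting probability $\bar V_N(\mathfrak o):=\overline{\mathbb P}^N_\mathfrak{o}[H_{\mc E_N(A)}<H_{\mc E_N(B)}]$ to a weighted mean of the \emph{original}-chain equilibrium potential $V_N:=V_{\mc E_N(A),\mc E_N(B)}$ on the inner shell of $\mc E_N^1$, and then to identify this mean with $q_{A,B}(1)$ using the capacity estimates of Theorem \ref{s133} together with the approximate equilibrium potential $h_N^{q_{A,B}}$ constructed in Section \ref{sec7}. The key intuition is that, under the metastable scaling, $V_N$ is asymptotically constant on each well $\mc E_N^i$ and takes the value $q_{A,B}(i)$ of the reduced chain $Y$.

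\textbf{Step 1 (Collapsing identity).} Writing down the $\overline{\mc L}_N$-harmonicity of the collapsed equilibrium potential $\bar V_N$ at $\mathfrak o$, using the explicit collapsed rates \eqref{eq:coll rate}, and applying the stationarity relation $\sum_{\bs z}\mu_N(\bs z) R_N(\bs z,\bs y)=\mu_N(\bs y)\lambda_N(\bs y)$ to rewrite the in-flux into $\mc E_N^1$ as an out-flux, one obtains
\[
\bar V_N(\mathfrak o) \;=\; \frac{\sum_{\bs z\notin\mc E_N^1,\,\bs y\in\mc E_N^1}\mu_N(\bs z) R_N(\bs z,\bs y)\, V_N(\bs y)}{\sum_{\bs z\notin\mc E_N^1,\,\bs y\in\mc E_N^1}\mu_N(\bs z) R_N(\bs z,\bs y)} \;+\; o_N(1),
\]
the $o_N(1)$ correction accounting for the discrepancy between $V_N$ and $\bar V_N$ on $\partial\mc E_N^1$ (which will itself be controlled by Step~2). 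Thus $\bar V_N(\mathfrak o)$ is asymptotically a convex combination of the values of $V_N$ on the inner shell of $\mc E_N^1$, and the problem reduces to proving $V_N(\bs y)=q_{A,B}(1)+o_N(1)$ uniformly on that shell.

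\textbf{Step 2 (Pointwise asymptotic for $V_N$).} Since $V_N$ is $\mc L_N$-harmonic on $(\mc E_N(A)\cup\mc E_N(B))^c$, by the maximum principle it suffices to control $V_N$ on the outer boundary of $\mc E_N^1$, which lies in the saddle and transition regions. There I plan to compare $V_N$ with the approximation $h_N^{q_{A,B}}$ from Section \ref{sec7} by applying the Dirichlet and Thomson principles (Theorems \ref{s07} and \ref{s10}) jointly to the direct and adjoint dynamics with the test pairs $(h_N^{q_{A,B}},\Upsilon_N^{q_{A,B}})$ and $(h_N^{*,q_{A,B}},\Upsilon_N^{*,q_{A,B}})$. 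The sharpness of these variational problems, quantified by Theorem \ref{s133}, forces $V_N$ to coincide with $h_N^{q_{A,B}}$ up to $o_N(1)$ error on each saddle box $\mc B_N^{\bs\sigma}$ and on each well $\mc W_N^i$; since $h_N^{q_{A,B}}\equiv q_{A,B}(1)$ on $\mc W_N^1\supset\mc E_N^1$, the conclusion follows.

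\textbf{Main obstacle.} The crux is converting the sharp \emph{energy}-level estimates of Theorem \ref{s133} into \emph{pointwise} control on $V_N$ in the non-reversible setting. For a reversible chain one has an explicit formula expressing $V_N(\bs y)$ as a ratio of capacities, so Theorem \ref{s133} alone would suffice; no such identity is available for non-reversible dynamics. One is therefore obliged to track both $V_N$ and its adjoint $V_N^*$ simultaneously, and to match each of them against the explicit mesoscopic saddle profiles $V_N^{\bs\sigma}, V_N^{*,\bs\sigma}$ constructed in Section \ref{sec3}. Organizing this matching so that it is robust enough to cover every modified capacity pair needed in Step~2 is the technical content of the remainder of Section \ref{sec8}, and constitutes the ``robust framework'' announced in Remark \ref{rm02}.
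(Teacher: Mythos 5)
Your Step 2 is where the argument breaks down, and the missing idea is precisely the one the paper's proof is built around. You correctly identify that the crux is converting an energy estimate into pointwise control, but you then propose to obtain pointwise control of the \emph{uncollapsed} equilibrium potential $V_{\mc E_N(A),\mc E_N(B)}$ at individual microscopic points on the shell of $\mc E_N^1$. Any pointwise bound extracted from an energy bound of size $\kappa_N\, o_N(1)$ must be divided by a capacity anchored at a single point, and the symmetric capacity from a microscopic point to $\mc E_N(A\cup B)$ carries polynomial-in-$N$ losses relative to $\kappa_N$ (compare the path bound \eqref{e1034}); the resulting estimate is of order $o_N(1)\cdot N^{d/2}$ and is useless. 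The paper avoids this by never leaving the collapsed chain: the quantity to be estimated \emph{is} $U_N^{A,B}(\mathfrak o)$, the collapsed equilibrium potential at $\mathfrak o$. One shows $\Vert\overline{\Psi}_{\Delta_N}\Vert_{\mathcal C}^2=\kappa_N\, o_N(1)$ for $\Delta_N=U_N^{A,B}-\overline{h}_N^{A,B}$ (via Lemma \ref{p002} and the orthogonality $\langle\overline{\phi}_N^{A,B},\overline{\Psi}_{U_N^{A,B}}\rangle_{\mathcal C}=0$), and then bounds $[\Delta_N(\mathfrak o)]^2\,\overline{\textup{cap}}_N^{s}(\mathfrak o,\mc E_N(A\cup B))\le\Vert\overline{\Psi}_{\Delta_N}\Vert_{\mathcal C}^2$ by the Dirichlet principle for the \emph{symmetrized collapsed} chain. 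Because $\mathfrak o$ carries the entire mass of the well, $\overline{\textup{cap}}_N^{s}(\mathfrak o,\mc E_N(A\cup B))\ge (2L)^{-2}\,\textup{cap}_N(\mc E_N^1,\mc E_N(A\cup B))=[1+o_N(1)](2L)^{-2}\kappa_N\,\textup{cap}_Y(\{1\},A\cup B)$ with no polynomial loss; this is where the sector condition and the hypothesis $\textup{cap}_Y(\{1\},A\cup B)\neq0$ enter, neither of which appears in your proposal.

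A secondary problem: in Step 1 you identify the collapsed potential $\bar V_N$ with the original $V_{\mc E_N(A),\mc E_N(B)}$ outside $\mc E_N^1$ up to $o_N(1)$. These are equilibrium potentials of two different Markov chains (after the collapsed chain visits $\mathfrak o$ it exits according to the aggregated rates \eqref{eq:coll rate}, not from its entry point), so this identification is itself a statement of the same difficulty as the proposition. The detour through harmonicity at $\mathfrak o$ is also unnecessary: the paper's proof uses only the variational structure and never this averaging identity.
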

We divide the proof of Proposition \ref{s05} into several steps.  The
first step is to compute the capacity between $\mathcal{E}_{N}(A)$ and
$\mathcal{E}_{N}(B)$ with respect to the collapsed chain. Recall from
Section \ref{sub_pt} the notations $h_{N}^{A,B}$, $f_{N}^{A,B}$,
$\phi_{N}^{A,B}$, $g_{N}^{A,B}$ and $\psi{}_{N}^{A,B}$. Note that
$h_{N}^{A,B}$, $f_{N}^{A,B}$ and $g_{N}^{A,B}$ are constant on
$\mathcal{E}_{N}^{1}$ and therefore we can collapse them as in the
previous subsection. These collapsed functions are denoted
respectively by $\overline{h}_{N}^{A,B}$, $\overline{f}_{N}^{A,B}$ and
$\overline{g}_{N}^{A,B}$. Note that by the definitions, we have that
\begin{equation}
\overline{h}_{N}^{A,B}(\mathfrak{o})\;=\;
\overline{f}_{N}^{A,B}(\mathfrak{o})\;=\;q_{A,B}(1)\;.\label{int1}
\end{equation}
Recall that $\overline{\textup{cap}}_N(\cdot,\cdot)$ represents the
capacity with respect to the collapsed process $\overline{X}_N(t)$.

\begin{lemma}
\label{p002}
We have that 
\begin{equation}
\overline{\textup{cap}}_{N}(\mathcal{E}_{N}(A),\mathcal{E}_{N}(B))
\;=\;\big[1+o_{N}(1)\big]\kappa_{N}\textup{cap}_{Y}(A,B)\label{re1}
\end{equation}
and
\begin{equation}
\left\Vert
  \overline{\Psi}_{\overline{h}_{N}^{A,B}}\right\Vert_{\mathcal{C}}^{2}
\;=\;\big[1+o_{N}(1)\big]\kappa_{N}\textup{cap}_{Y}(A,B)\;.\label{re2}
\end{equation}
\end{lemma}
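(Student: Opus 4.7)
My plan is to prove the two identities by different routes: \eqref{re1} via the variational principles of Theorems \ref{s07} and \ref{s10} applied to the collapsed chain, and \eqref{re2} by a direct algebraic identification with $\mathcal{D}_N(h_N^{A,B})$.

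For \eqref{re1}, I will prove matching upper and lower bounds for $\overline{\textup{cap}}_N(\mathcal{E}_N(A),\mathcal{E}_N(B))$. For the upper bound, I feed the collapsed versions $\overline{f}_N^{A,B}$ and $\overline{\phi}_N^{A,B}$ of the test pair from the proof of Theorem \ref{s133} into the Dirichlet principle for the collapsed chain. To check admissibility: since $h_N^{A,B}$ and $h_N^{*,A,B}$ both take the value $q_{A,B}(1)$ on $\mathcal{W}_N^1\supset \mathcal{E}_N^1$, the function $f_N^{A,B}$ is constant on $\mathcal{E}_N^1$, so $\overline{f}_N^{A,B}$ is well-defined and lies in $\mathfrak{C}_{1,0}(\mathcal{E}_N(A),\mathcal{E}_N(B))$; and since $1\notin A\cup B$, the flow $\phi_N^{A,B}$ is pointwise divergence-free on $\mathcal{E}_N^1$, hence by \eqref{18} its collapse $\overline{\phi}_N^{A,B}$ has zero divergence at $\mathfrak{o}$ and belongs to $\overline{\mathfrak{U}}_0(\mathcal{E}_N(A),\mathcal{E}_N(B))$. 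Lemma \ref{s08} then gives $\overline{\Phi}_{\overline{f}_N^{A,B}}-\overline{\phi}_N^{A,B}=\overline{\Phi_{f_N^{A,B}}-\phi_N^{A,B}}$, and Lemma \ref{lem82} together with the estimate $\|\Phi_{f_N^{A,B}}-\phi_N^{A,B}\|^2=[1+o_N(1)]\kappa_N\textup{cap}_Y(A,B)$ from the proof of Theorem \ref{s133} yields the upper bound. The lower bound is proved symmetrically via Thomson's principle, with $\overline{g}_N^{A,B}$ (well-defined since $g_N^{A,B}$ vanishes on $\mathcal{E}_N^1$, both $h_N^{A,B}$ and $h_N^{*,A,B}$ being equal to $q_{A,B}(1)$ there) and $\overline{\psi}_N^{A,B}\in \overline{\mathfrak{U}}_1$ (using again pointwise divergence-freeness of $\psi_N^{A,B}$ on $\mathcal{E}_N^1$).

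For \eqref{re2}, I claim the stronger identity $\|\overline{\Psi}_{\overline{h}_N^{A,B}}\|_\mathcal{C}^2=\mathcal{D}_N(h_N^{A,B})$. Indeed, writing $\|\overline{\Psi}_{\overline{h}_N^{A,B}}\|_\mathcal{C}^2=\tfrac12\sum_{(x,y)\in\overline{E}_N}\overline{c}^s(x,y)(\overline{h}_N^{A,B}(x)-\overline{h}_N^{A,B}(y))^2$ and splitting the sum into edges in $\Xi_N\setminus\mathcal{E}_N^1$ and edges incident to $\mathfrak{o}$, I use $\overline{c}^s(x,y)=c^s(x,y)$ for $x,y\notin\mathcal{E}_N^1$ and $\overline{c}^s(x,\mathfrak{o})=\sum_{z\in\mathcal{E}_N^1}c^s(x,z)$. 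Since $h_N^{A,B}\equiv q_{A,B}(1)=\overline{h}_N^{A,B}(\mathfrak{o})$ on $\mathcal{E}_N^1$, each term $\overline{c}^s(x,\mathfrak{o})(h_N^{A,B}(x)-q_{A,B}(1))^2$ reorganizes exactly into $\sum_{z\in\mathcal{E}_N^1}c^s(x,z)(h_N^{A,B}(x)-h_N^{A,B}(z))^2$, while the internal edges in $\mathcal{E}_N^1$, which are absent in the collapsed chain, contribute zero to $\mathcal{D}_N(h_N^{A,B})$ because $h_N^{A,B}$ is constant there. Hence the two Dirichlet forms agree exactly. Then Lemma \ref{con71} with $q=q_{A,B}$ and \eqref{qq1} give $\mathcal{D}_N(h_N^{A,B})=[1+o_N(1)]\kappa_N\mathcal{D}_Y(q_{A,B})=[1+o_N(1)]\kappa_N\textup{cap}_Y(A,B)$.

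The only delicate point is the verification of admissibility for \eqref{re1}: one must exploit the condition $1\notin A\cup B$ to ensure that the test functions are constant on $\mathcal{E}_N^1$ and the test flows are pointwise divergence-free there, so that collapsing produces valid test objects. Once this is in place, Lemmas \ref{lem82} and \ref{s08} together with the estimates already established in Section \ref{sec7} yield \eqref{re1}, while \eqref{re2} reduces to an exact combinatorial identity.
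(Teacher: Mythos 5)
Your argument for \eqref{re1} is essentially the paper's: collapse the test pair $(f_N^{A,B},\phi_N^{A,B})$ and $(g_N^{A,B},\psi_N^{A,B})$ from the proof of Theorem \ref{s133}, check admissibility via the fact that $1\notin A\cup B$ (so the test functions are constant on $\mathcal{E}_N^1$ and, by \eqref{qq2} and Lemma \ref{con73}, the test flows are divergence-free there, hence \eqref{18} gives zero divergence at $\mathfrak{o}$), and then combine Lemmata \ref{lem82} and \ref{s08} with the Dirichlet and Thomson principles. Your explicit admissibility check is exactly the point the paper leaves to the reader, and it is correct. For \eqref{re2} you take a genuinely different and shorter route. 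The paper writes $\Phi_{f_N^{A,B}}-\phi_N^{A,B}=\Psi_{h_N^{A,B}}+R_N$ with $R_N$ negligible (via Lemma \ref{con73}), collapses this identity, shows $\Vert\overline{\Phi}_{\overline{f}_N^{A,B}}-\overline{\phi}_N^{A,B}\Vert_{\mathcal C}^2$ is pinned between the collapsed capacity and the uncollapsed norm, and then strips off $\overline{R}_N$ with Lemma \ref{lem:neg_lemma}. You instead observe the exact identity $\Vert\overline{\Psi}_{\overline{h}_N^{A,B}}\Vert_{\mathcal C}^2=\overline{\mathcal D}_N(\overline{h}_N^{A,B})=\mathcal D_N(h_N^{A,B})$ — valid because $\overline{c}^s(\bs x,\mathfrak{o})=\sum_{\bs z\in\mathcal E_N^1}c^s(\bs x,\bs z)$, $h_N^{A,B}\equiv q_{A,B}(1)$ on $\mathcal E_N^1$, and the internal edges of $\mathcal E_N^1$ contribute nothing — and conclude by Lemma \ref{con71} and \eqref{qq1}. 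This identity is the same collapsing principle the paper itself invokes elsewhere (it is \cite[display (3.8)]{GL}, used in Lemma \ref{lem:col_sector} and again for $\Vert\overline{\Psi}_{\Delta_N}\Vert_{\mathcal C}^2=\overline{\mathcal D}_N(\Delta_N)$ in the proof of Proposition \ref{s05}), so your argument buys a cleaner proof of \eqref{re2} that bypasses the flow decomposition and the negligibility bookkeeping entirely; the paper's route has the mild advantage of producing along the way the relation \eqref{23}, which it reuses immediately afterwards in the proof of Proposition \ref{s05}, so if you adopt your shortcut you should note that \eqref{23} still needs to be established separately for that later argument.
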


\begin{proof}
As a by-product of the proof of Theorem \ref{s133}, we obtain that
\begin{equation}
\left\Vert \Phi_{f_{N}^{A,B}}-\phi_{N}^{A,B}\right\Vert ^{2}\;=\;\big[1+o_{N}(1)\big]\,\kappa_{N}\,\textup{cap}_{Y}(A,B)\;.\label{col1}
\end{equation}
For $a,\,b\in\mathbb{R}$, denote by
$\overline{\mathfrak{C}}_{a,b}(\cdot,\cdot)$ and
$\overline{\mathfrak{U}}_{a}(\cdot,\cdot)$ the collapsed versions of
the sets $\mathfrak{C}_{a,b}(\cdot,\cdot)$ and
$\mathfrak{U}_{a}(\cdot,\cdot)$ introduced in \eqref{eq:C_ab},
\eqref{eq:U_ab}.  Then, it is easy to check that
$\overline{f}_{N}^{A,B}$ belongs to
$\overline{\mathfrak{C}}_{1,0}(\mathcal{E}_{N}(A),\mathcal{E}_{N}(B))$.
By \eqref{18}, we can also verify that $\overline{\phi}_{N}^{A,B}$
belongs to
$\overline{\mathfrak{U}}_{0}(\mathcal{E}_{N}(A),\mathcal{E}_{N}(B))$.
Hence, by Theorem \ref{s07}, Lemmata \ref{lem82}, \ref{s08} and
\eqref{col1}, we obtain that
\begin{equation}
\label{col_u}
\overline{\textup{cap}}_{N}(\mathcal{E}_{N}(A),
\mathcal{E}_{N}(B))\;\le\;\left\Vert 
\overline{\Phi}_{\overline{f}_{N}^{A,B}}-
\overline{\phi}_{N}^{A,B}\right\Vert_{\mathcal{C}}^{2}
\;\le\;\big[1+o_{N}(1)\big]\,\kappa_{N}\,\textup{cap}_{Y}(A,B)\;.
\end{equation}

On the other hand, again by the proof of Theorem \ref{s133}, 
\begin{equation*}
\frac{1}{\left\Vert \Phi_{g_{N}^{A,B}}-\psi_{N}^{A,B}\right\Vert ^{2}}\;=\;\big[1+o_{N}(1)\big]\,\kappa_{N}\,\textup{cap}_{Y}(A,B)\;.
\end{equation*}
By the similar argument as above, we obtain that 
\begin{equation}
\overline{\textup{cap}}_{N}(\mathcal{E}_{N}(A),\mathcal{E}_{N}(B))\;\ge\;\frac{1}{\left\Vert \overline{\Phi}_{\overline{g}_{N}^{A,B}}-\overline{\psi}_{N}^{A,B}\right\Vert_{\mathcal{C}}^{2}}\;\ge\;(1+o_{N}(1))\kappa_{N}\textup{cap}_{Y}(A,B)\;.\label{col_l}
\end{equation}
Now, \eqref{re1} is a direct consequence of \eqref{col_u} and \eqref{col_l}.

Now we prove \eqref{re2}. By the definitions of $f_{N}^{A,B}$, $\phi_{N}^{A,B}$
and Lemma \ref{con73}, we can write $\Phi_{f_{N}^{A,B}}-\phi_{N}^{A,B}=\Psi_{h_{N}^{A,B}}+R_{N}$
where $R_{N}$ is a negligible flow. By Lemma \ref{s08}, collapsing
this relation we obtain that 
\begin{equation}
\overline{\Phi}_{\overline{f}_{N}^{A,B}}-\overline{\phi}_{N}^{A,B}\;=\;\overline{\Psi}_{\overline{h}_{N}^{A,B}}+\overline{R}_{N}\label{23}
\end{equation}
By Lemma \ref{lem82}, the flow $\overline{R}_{N}$ inherits
the negligibility from $R_{N}$. On the other hand, we obtain from
\eqref{re1} and \eqref{col_u} that 
\begin{equation*}
\left\Vert \Phi_{\overline{f}_{N}^{A,B}}-\overline{\phi}_{N}^{A,B}\right\Vert_{\mathcal{C}}^{2}\;=\;\big[1+o_{N}(1)\big]\,\kappa_{N}\,\textup{cap}_{Y}(A,B)\;.
\end{equation*}
The second assertion of the lemma follows from the two previous displayed
formulas, from the fact that $\overline{R}_{N}$ is negligible, and
from Lemma \ref{lem:neg_lemma}.
\end{proof}

The next two results will be used in the proof of Proposition
\ref{s05}.  First, by \cite[display (3.10)]{GL}, for any
$A\subset\Xi_{N}\setminus\mathcal{E}_{N}^{1}$,
\begin{equation}
\label{20}
\overline{\mbox{\textup{cap}}}_{N} (\mathfrak{o},A) \;=\;
\textup{cap}_{N}(\mathcal{E}_{N}^{1},A)\;.
\end{equation}

\begin{lemma}
\label{lem:col_sector}
For two disjoint subsets $A$, $B$ of $\overline{\Xi}_{N}$,
\begin{equation*}
\overline{\textup{cap}}_{N}(A,B) \;\le\;
4\, L^{2}\, \overline{\textup{cap}}^{s}_{N}(A,B) \;,
\end{equation*}
where $\overline{\textup{cap}}_{N}^{s}(\cdot,\cdot)$ stands for the
capacity with respect to $\overline{\mathcal{L}}_{N}^{s} = (1/2)
(\overline{\mathcal{L}}_{N} + \overline{\mathcal{L}}_{N}^{*})$.
\end{lemma}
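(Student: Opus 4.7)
The plan is to show that the collapsed generator $\overline{\mc L}_N$ inherits the sector condition established in Lemma \ref{lem: sector condition} for $\mc L_N$, and then apply the general argument from \cite[Lemma 2.6]{GL} already used to derive \eqref{sc1}.

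For any function $f:\overline{\Xi}_N\to\bb R$, define its lift $\tilde f:\Xi_N\to\bb R$ by $\tilde f(\bs x)=f(\bs x)$ for $\bs x\in\Xi_N\setminus\mathcal{E}_N^1$ and $\tilde f(\bs z)=f(\mathfrak{o})$ for $\bs z\in\mathcal{E}_N^1$; in particular $\tilde f$ is constant on $\mathcal{E}_N^1$. The first step is to verify, using the explicit expressions \eqref{eq:coll conduct} for the conductances and \eqref{eq:coll_inv} for the invariant measure of the collapsed chain, the two identities
\begin{equation*}
\overline{\mc D}_N(f) \;=\; \mc D_N(\tilde f)\,, \qquad
\langle f,\,-\overline{\mc L}_N g\rangle_{\overline{\mu}_N} \;=\;
\langle \tilde f,\,-\mc L_N \tilde g\rangle_{\mu_N}\;.
\end{equation*}
Both identities are obtained by splitting the defining sums according to whether the indices belong to $\mathcal{E}_N^1$ or not: pairs of indices both lying in $\mathcal{E}_N^1$ contribute zero because $\tilde f$ and $\tilde g$ are constant there, while mixed pairs get collapsed into terms involving $\mathfrak{o}$, with precisely the right conductance $\overline{c}(\bs x,\mathfrak{o})=\sum_{\bs z\in\mathcal{E}_N^1}c(\bs x,\bs z)$ and mass $\overline{\mu}_N(\mathfrak{o})=\mu_N(\mathcal{E}_N^1)$ to reproduce the collapsed expressions.

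With these two identities, the sector condition transfers immediately: for every $f,g:\overline{\Xi}_N\to\bb R$, Lemma \ref{lem: sector condition} applied to $\tilde f$ and $\tilde g$ yields
\begin{equation*}
\langle f,\,-\overline{\mc L}_N g\rangle_{\overline{\mu}_N}^2 \;=\;
\langle \tilde f,\,-\mc L_N \tilde g\rangle_{\mu_N}^2 \;\le\;
4L^2\, \mc D_N(\tilde f)\,\mc D_N(\tilde g) \;=\;
4L^2\, \overline{\mc D}_N(f)\,\overline{\mc D}_N(g)\;.
\end{equation*}
Therefore $\overline{\mc L}_N$ satisfies the sector condition with the same constant $4L^2$. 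Applying \cite[Lemma 2.6]{GL} to the collapsed process $\overline{X}_N(t)$, exactly as in the derivation of \eqref{sc1}, yields the desired bound $\overline{\textup{cap}}_N(A,B)\le 4L^2\,\overline{\textup{cap}}_N^s(A,B)$.

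There is no real obstacle in this argument; the only nontrivial step is the careful bookkeeping in verifying the two identities above, which is routine once one writes out the four cases (both indices inside/outside $\mathcal{E}_N^1$).
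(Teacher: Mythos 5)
Your proposal is correct and follows essentially the same route as the paper: lift functions on $\overline{\Xi}_{N}$ to functions on $\Xi_{N}$ constant on $\mathcal{E}_{N}^{1}$, transfer the sector condition of Lemma \ref{lem: sector condition} through the identity $\langle f,-\overline{\mathcal{L}}_{N}g\rangle_{\overline{\mu}_{N}}=\langle \tilde f,-\mathcal{L}_{N}\tilde g\rangle_{\mu_{N}}$, and conclude with \cite[Lemma 2.6]{GL}. The only cosmetic difference is that the paper cites \cite[display (3.8)]{GL} for that identity rather than verifying it by the case-by-case bookkeeping you outline.
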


\begin{proof}
By \cite[Lemma 2.6]{GL}, it suffices to prove that the generator
$\overline{\mathcal{L}}_{N}$ inherits the sector condition from $\mathcal{L}_{N}$.
Fix two functions $f,\,g:\overline{\Xi}_{N}\rightarrow\mathbb{R}$, and
define their extensions $F,\,G:\Xi_{N}\rightarrow\mathbb{R}$ by
\begin{equation*}
F(\boldsymbol{x}) \; = \; \begin{cases}
f(\boldsymbol{x}) & \mbox{if }\boldsymbol{x}\notin\mathcal{E}_{N}^{1}\\
f(\mathfrak{o}) & \mbox{if }\boldsymbol{x}\in\mathcal{E}_{N}^{1}
\end{cases}\;\;\;\mbox{and\;\;\;}G(\boldsymbol{x})\;=\;\begin{cases}
g(\boldsymbol{x}) & \mbox{if }\boldsymbol{x}\notin\mathcal{E}_{N}^{1}\\
g(\mathfrak{o}) & \mbox{if }\boldsymbol{x}\in\mathcal{E}_{N}^{1}\;.
\end{cases}
\end{equation*}
By \cite[display (3.8)]{GL} and by Lemma \ref{lem: sector condition},
\begin{align*}
\langle f,(-\mathcal{\overline{L}}_{N})\, g \rangle^2 _{\overline{\mu}_{N}}
\; &=\; \langle F,(-\mathcal{L}_{N})\, G\rangle^2 _{\mu_{N}} \;\le\; 
4\,L^{2}\, \langle F,(-\mathcal{L}_{N})\, F \rangle _{\mu_{N}} \,
\langle G, (-\mathcal{L}_{N}) \, G \rangle _{\mu_{N}}\\
 & =\;4\, L^{2} \langle f, (-\mathcal{\overline{L}}_{N})\, f\rangle
 _{\overline{\mu}_{N}} \, 
\langle g, (-\mathcal{\overline{L}}_{N})\, g\rangle _{\overline{\mu}_{N}}\;.
\end{align*}
\end{proof}

\begin{proof}[Proof of Proposition \ref{s05}] Denote by $U_{N}^{A,B}$
the equilibrium potential between $\mathcal{E}_{N}(A)$ and $\mathcal{E}_{N}(B)$
for the collapsed process. Hence, by Lemma \ref{p002}, 
\begin{equation}
\left\Vert \overline{\Psi}_{U_{N}^{A,B}}\right\Vert _{\mathcal{C}}^{2}\;=\;\overline{\textup{cap}}_{N}(\mathcal{E}_{N}(A),\mathcal{E}_{N}(B))\;=\;(1+o_{N}(1))\kappa_{N}\textup{cap}_{Y}(A,B)\;.\label{24}
\end{equation}

On the other hand, since $\overline{\phi}_{N}^{A,B}$ belongs to $\overline{\mathfrak{U}}_{0}(\mathcal{E}_{N}(A),\mathcal{E}_{N}(B))$,
\begin{equation*}
\left\langle \overline{\phi}_{N}^{A,B},\overline{\Psi}_{U_{N}^{A,B}}\right\rangle_{\mathcal{C}} \;=\;0\;.
\end{equation*}
Therefore, by \eqref{22}, by the fact that $\overline{\mathcal{L}}_{N}U_{N}=0$
on $(\mathcal{E}_{N}(A)\cup\mathcal{E}_{N}(B))^{c}$ and that $\overline{f}_{N}$
and $U_{N}$ coincide on $\mathcal{E}_{N}(A)\cup\mathcal{E}_{N}(B)$,
\begin{align*}
 & \left\langle \overline{\Phi}_{\overline{f}_{N}^{A,B}}-\overline{\phi}_{N}^{A,B},\,\overline{\Psi}_{U_{N}^{A,B}}\right\rangle _{\mathcal{C}}\;=\;\left\langle \overline{f}_{N}^{A,B},\,(-\overline{\mathcal{L}}_{N})U_{N}^{A,B}\right\rangle _{\overline{\mu}_{N}}\;=\;\left\langle U_{N}^{A,B},\,(-\overline{\mathcal{L}}_{N})U_{N}^{A,B}\right\rangle _{\overline{\mu}_{N}}\\
 & \quad=\;\overline{\textup{cap}}_{N}(\mathcal{E}_{N}(A),\mathcal{E}_{N}(B))\;.
\end{align*}
In particular, by \eqref{23} and \eqref{24}, 
\begin{equation*}
\left\langle \overline{\Psi}_{\overline{h}_{N}^{A,B}},\overline{\Psi}_{U_{N}^{A,B}}\right\rangle _{\mathcal{C}}\;=\;(1+o_{N}(1))\kappa_{N}\,\textup{cap}_{Y}(A,B)\;.
\end{equation*}

Let $\Delta_{N}=U_{N}^{A,B}-\overline{h}_{N}^{A,B}$. By Lemma \ref{p002},
by \eqref{24} and by the previous identity, 
\begin{equation*}
\left\Vert \overline{\Psi}_{\Delta_{N}}\right\Vert _{\mathcal{C}}^{2}\;=\;\left\Vert \overline{\Psi}_{U_{N}^{A,B}}\right\Vert _{\mathcal{C}}^{2}+\left\Vert \overline{\Psi}_{\overline{h}_{N}^{A,B}}\right\Vert _{\mathcal{C}}^{2}-2\left\langle \overline{\Psi}_{\overline{h}_{N}^{A,B}},\overline{\Psi}_{U_{N}^{A,B}}\right\rangle _{\mathcal{C}}\;=\;\kappa_{N}\,o_{N}(1)\;.
\end{equation*}
Since $\Delta_{N}=0$ on $\mathcal{E}_{N}(A\cup B)$, we may write
$\Delta_{N}$ as $\Delta_{N}=\Delta_{N}(\mathfrak{o})\cdot\widetilde{\Delta}_{N}$
for some $\widetilde{\Delta}_{N}$ in $\overline{\mathfrak{C}}_{1,0}(\{\mathfrak{o}\},\mathcal{E}_{N}(A\cup B))$.

By the Dirichlet principle for the reversible process $\overline{\mathcal{L}}_{N}^{s}$,
\begin{equation*}
\left\Vert \overline{\Psi}_{\Delta_{N}}\right\Vert _{\mathcal{C}}^{2}\;=\;\overline{\mathcal{D}}_{N}(\Delta_{N})\;=\;[\Delta_{N}(\mathfrak{o})]^{2}\,\overline{\mathcal{D}}_{N}(\widetilde{\Delta}_{N})\;\ge\;[\Delta_{N}(\mathfrak{o})]^{2}\,\overline{\textup{cap}}_{N}^{s}(\mathfrak{o},\mathcal{E}_{N}(A\cup B))\;.
\end{equation*}
By Lemma \ref{lem:col_sector}, by \eqref{20}, and by Theorem \ref{s133},
\begin{align*}
 & [\Delta_{N}(\mathfrak{o})]^{2}\,\overline{\textup{cap}}_{N}^{s}(\mathfrak{o},\mathcal{E}_{N}(A\cup B))\;\ge\;(2L)^{-2}[\Delta_{N}(\mathfrak{o})]^{2}\,\overline{\textup{cap}}_{N}(\mathfrak{o},\mathcal{E}_{N}(A\cup B))\\
 & \quad=\;(2L)^{-2}[\Delta_{N}(\mathfrak{o})]^{2}\,\textup{cap}_{N}(\mathfrak{\mathcal{E}}_{N}^{1},\mathcal{E}_{N}(A\cup B))\\
 & \quad\quad=\;\big[1+o_{N}(1)\big]\,(2L)^{-2}\,[\Delta_{N}(\mathfrak{o})]^{2}\,\kappa_{N}\,\textup{cap}_{Y}(\{1\},A\cup B)\;.
\end{align*}
By the last three displayed equations and the condition of the proposition,
we obtain that $[\Delta_{N}(\mathfrak{o})]^{2}=o_{N}(1)$. This completes
the proof since 
\begin{equation*}
\Delta_{N}(\mathfrak{o})\;=\;\overline{\mathbb{P}}_{\mathfrak{o}}^{N}\left[H_{\mathcal{E}_{N}(A)}<H_{\mathcal{E}_{N}(B)}\right]-q_{A,B}(1)
\end{equation*}
by the definition of the equilibrium potential and \eqref{int1}.
\end{proof}

\smallskip\noindent{\bf Proof of Theorem \ref{s04}.} By \eqref{10},
\eqref{12}, and by Proposition \ref{s05}, we obtain that
\begin{equation*}
r_{N}^{(m)}(1,2)\;=\;\big[1+o_{N}(1)\big]\frac{e^{-N(H-h_{1})}}{2\pi N}\,\frac{1}{\nu_{1}}\,\textup{cap}_{Y}(\{1\},S_{m}\setminus\{1\})\,q_{\{2\},S_{m}\setminus\{1,2\}}(1)\;.
\end{equation*}
Hence, it suffices to prove
\begin{equation}
c_{m}(1,2)\;=\;\textup{cap}_{Y}(\{1\},S_{m}\setminus\{1\})\,q_{\{2\},S_{m}\setminus\{1,2\}}(1)\label{t01}
\end{equation}
in order to complete the proof.

Recall the reversible Markov chain $Y(t)$ on $S$. Denote
by $Y^{(m)}(t)$ the trace of $Y(t)$ on the set $S_{m}$ and by $r^{(m)}(\cdot,\cdot$)
the jump rate of $Y^{(m)}(t).$ Then, by \cite[Proposition 6.2]{BL1},
the left hand side of \eqref{t01} can be written as $\mu(1)r^{(m)}(1,2)$. 

On the other hand, by \cite[display (A.8)]{BL2}, 
\begin{equation*}
\textup{cap}_{Y}(\{1\},S_{m}\setminus\{1\})\;=\;\mu(1)\sum_{j\in S_{m}\setminus\{1\}}r^{(m)}(1,j)\;,
\end{equation*}
and by \cite[Proposition 4.2]{BL2}, 
\begin{equation*}
q_{\{2\},S_{m}\setminus\{1,2\}}(1)\;=\;\frac{r^{(m)}(1,2)}{\sum_{j\in S_{m}\setminus\{1\}}r^{(m)}(1,j)}\;.
\end{equation*}
By the last two displayed equations, we can verify that the right hand
side of \eqref{t01} is $\mu(1)r^{(m)}(1,2)$ as well.

\section{Metastability}
\label{sec9}

We present in this section the proof of Theorem \ref{mr1}, which is
very similar to the one of the reversible model \cite{LMT}. It relies
on the precise asymptotic estimates of the mean jump rate between
metastable sets obtained in the previous section.  We first recall
\cite[Theorem 2.1]{BL2} in the present context. All proofs which are
omitted below can be found in \cite{BL1, BL2, GL, LMT}.

Recall the notation introduced at the beginning of the previous
section.  Denote by $\widehat{\Psi}_{N}^{(m)}: \mc E^{(m)}_N = \cup_{i\in
  S_m} \ms E^i_N \rightarrow S_{m}$ the projection given by
\begin{equation*}
\widehat{\Psi}_{N}^{(m)}(\boldsymbol{x})\;=\;
\sum_{i\in S_{m}}i\,
\mathbf{1}\{\boldsymbol{x}\in\mathcal{E}_{N}^{i}\}\;,
\end{equation*}
and by $Y_{N}^{m,T}(t)$ the $S_{m}$-valued, hidden Markov
chain obtained by projecting the trace process $X^{(m)}_N (t)$ with
$\widehat{\Psi}_{N}^{(m)}$:
\begin{equation*}
Y_{N}^{m,T}(t)\;=\;\widehat{\Psi}_{N}^{(m)}(X^{(m)}_N (t))\;.
\end{equation*}

Recall that $\bs m^N_i$, $1\le i\le M$, stands for the bottom of the
well $\mc E^i_N$ with respect to the potential $F$.

\begin{theorem}[{\cite[Theorem 2.1]{BL2}}]
\label{bl2-th1}
Suppose that there exists a sequence $\bs \theta=(\theta_N : N\ge 1)$
of positive numbers such that, for every pair $i \not = j \in S_m$,
the following limit exists
\begin{equation}
\label{bl2a}
r^{(m)} (i,j) \;:= \; \lim_{N\to\infty} \theta_N\, r^{(m)}_N(i,j) \,.
\end{equation}
Suppose, furthermore, that for each $i\in S_m$, 
\begin{equation}
\label{bl2b}
\lim_{N\to \infty} \sup_{\bs y \in \mc E^i_N }
\frac{\Cap_N \big(\mc E^i_N, \mc E_N(S_m \setminus \{i\})\big )}
{\Cap_N(\bs m^N_i, \bs y)}\;=\;0\; .
\end{equation}
Then, for any sequence $\{\bs x_N : N \ge 1\}$, $\bs x_N \in \ms
E^i_N$, under the measure $\bb P^N_{\bs x_N}$, the rescaled process
$Y_{N}^{m,T} (t\theta_N)$ converges in the Skorohod topology to a
$S_m$-valued Markov chain with jump rates $r^{(m)} (j,k)$ and which
starts from $i$.
\end{theorem}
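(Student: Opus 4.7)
The plan is a martingale problem approach. Let $L$ be the generator on $S_m$ defined by $(Lf)(k) = \sum_{j \in S_m \setminus \{k\}} r^{(m)}(k,j)[f(j)-f(k)]$. Since $S_m$ is finite, the martingale problem for $(L,\delta_i)$ has a unique solution, namely the claimed $S_m$-valued Markov chain with rates $r^{(m)}$ starting from $i$. It therefore suffices to prove (i) tightness of the laws of $Y_N^{m,T}(\theta_N\,\cdot)$ under $\bb P_{\bs x_N}^N$ in $D([0,\infty),S_m)$, and (ii) that every subsequential weak limit solves this martingale problem with initial state $i$. Tightness follows at once from Aldous' criterion: the total jump rate of the rescaled trace process is $\theta_N \lambda_N^{(m)}(k) = \theta_N \sum_j r_N^{(m)}(k,j)$, which is uniformly bounded by hypothesis \eqref{bl2a}, and compact containment is trivial since $S_m$ is finite.

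To identify the limit, fix a bounded $f:S_m\to\bb R$ and set $F = f\circ\widehat\Psi_N^{(m)}$. Dynkin's formula applied to the generator $\mc L_N^{(m)}$ of the trace process yields that
\begin{equation*}
f(Y_N^{m,T}(\theta_N t)) \,-\, \int_0^t \theta_N\, (\mc L_N^{(m)} F)(X_N^{(m)}(\theta_N s))\, ds
\end{equation*}
is a $\bb P_{\bs x_N}^N$-martingale. The task reduces to replacing, in this integrand, the instantaneous generator $\theta_N (\mc L_N^{(m)} F)(\bs y)$ by $(Lf)(k)$ whenever $\bs y \in \mc E_N^k$, with an $L^1$-error tending to $0$ uniformly on compact time intervals. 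By definition of the mean jump rate \eqref{48} and hypothesis \eqref{bl2a}, this identity is true in mean when $\bs y$ is averaged against $\mu_N(\,\cdot\,)/\mu_N(\mc E_N^k)$ on $\mc E_N^k$; thus the entire question is whether, within each well, the trace process equilibrates on a time scale much shorter than $\theta_N^{-1}$.

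This is the main obstacle and the role of hypothesis \eqref{bl2b}. I would use the classical hitting-time bound $\bb E_{\bs y}^N[H_{\{\bs m_k^N\}}] \le \mu_N(\mc E_N^k)/\Cap_N^s(\bs m_k^N,\bs y)$ for the reversibilized chain, combined with the sector-condition comparison $\Cap_N \le 4L^2 \Cap_N^s$ (Lemma \ref{lem: sector condition} and \cite[Lemma 2.6]{GL}), to obtain
\begin{equation*}
\bb E_{\bs y}^N[H_{\{\bs m_k^N\}}] \;\le\; \frac{C\,\mu_N(\mc E_N^k)}{\Cap_N(\bs m_k^N, \bs y)}\;,\quad \bs y \in \mc E_N^k\,.
\end{equation*}
Hypothesis \eqref{bl2a} and the identity $\mu_N(\mc E_N^k)\lambda_N^{(m)}(k) = \Cap_N(\mc E_N^k, \mc E_N(S_m\setminus\{k\}))$ give $\theta_N \Cap_N(\mc E_N^k, \mc E_N(S_m\setminus\{k\})) = O(\mu_N(\mc E_N^k))$, so dividing and invoking \eqref{bl2b} yields $\theta_N \sup_{\bs y \in \mc E_N^k} \bb E_{\bs y}^N[H_{\{\bs m_k^N\}}] \to 0$. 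Applied at time $0$, this shows that the limit process starts from $i$. Applied via the strong Markov property at every excursion of $X_N^{(m)}$ inside a well between successive jumps of the projection, it allows one to replace the conditional law of the integrand by the stationary one up to negligible error, thus turning the Dynkin martingale into $f(Y_N^{m,T}(\theta_N t)) - \int_0^t (Lf)(Y_N^{m,T}(\theta_N s))\,ds + o_N(1)$. Passing to any weak limit along a convergent subsequence closes the martingale problem, and uniqueness identifies the limit with the claimed Markov chain.
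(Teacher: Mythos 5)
This statement is not proved in the paper: it is quoted verbatim from \cite[Theorem 2.1]{BL2} and used as a black box, the paper's contribution being only the verification of hypotheses \eqref{bl2a} and \eqref{bl2b} (conditions {\bf (H0)}, {\bf (H1)}). Your overall architecture -- martingale problem, tightness plus identification of limit points, with \eqref{bl2b} supplying local equilibration inside each well -- is indeed the Beltr\'an--Landim strategy, so the skeleton is right. But the two load-bearing steps are asserted rather than proved, and the specific estimates you offer do not close them.

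First, tightness does not ``follow at once'': hypothesis \eqref{bl2a} controls only the $\mu_N$-\emph{averaged} escape rate $\lambda^{(m)}_N(k)=\sum_j r^{(m)}_N(k,j)$, whereas the rate at which the trace process leaves $\mc E^k_N$ from a given configuration $\bs y$ (which is what governs the jumps of $Y^{m,T}_N$) can be much larger at particular points. Aldous' criterion applied at a stopping time such as the entry time into a well requires controlling escape from an arbitrary entry point, which is precisely what \eqref{bl2b} is for; tightness and the replacement step are therefore intertwined, not separable as you claim. Second, the hitting-time bound you display is not the classical one -- the classical bound has the total mass of the state space (or of $\mc E^{(m)}_N$) in the numerator, not $\mu_N(\mc E^k_N)$; concentrating the equilibrium potential on the well is an extra, nontrivial step -- and the conclusion you draw from it has the wrong power of $\theta_N$: $\theta_N\sup_{\bs y}\bb E^N_{\bs y}[H_{\{\bs m^N_k\}}]\to 0$ is false and useless, while the relevant statement $\theta_N^{-1}\sup_{\bs y}\bb E^N_{\bs y}[H_{\{\bs m^N_k\}}]\to 0$ only follows from your chain of inequalities when $\theta_N\lambda^{(m)}_N(k)$ is bounded away from zero, i.e.\ not for absorbing wells. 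The clean use of \eqref{bl2b} is instead through \eqref{e1032} and monotonicity of capacities, $\bb P^N_{\bs y}[H_{\breve{\mc E}^k_N}<H_{\bs m^N_k}]\le \Cap_N(\mc E^k_N,\breve{\mc E}^k_N)/\Cap_N(\bs m^N_k,\bs y)\to 0$, showing the chain reaches the bottom of the well before escaping. Finally, the decisive step -- replacing $\int_0^t\theta_N(\mc L^{(m)}_NF)(X^{(m)}_N(\theta_Ns))\,ds$ by $\int_0^t(Lf)(Y^{m,T}_N(\theta_Ns))\,ds$ -- is exactly the content of the cited theorem and is left unproved: knowing that the chain quickly hits $\bs m^N_k$ does not imply that the time average over an excursion of an integrand that is bounded only in $\mu_N$-average coincides with its stationary average. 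If you want a self-contained proof you must either carry out this local-ergodicity estimate or follow \cite{BL1,BL2}, which compute the law of the sequence of wells visited and the jump probabilities via the collapsed-chain identity \eqref{12} rather than an $L^1$ generator replacement.
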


We claim that conditions \eqref{bl2a}, \eqref{bl2b} are fulfilled for
$\theta_N := \beta_N^{(m)} = 2\pi N \exp\{\theta_m N\}$ and $r^{(m)}
(j,k) := r_m (j,k)$, where $\beta_N^{(m)}$ has been introduced in
\eqref{eq:1} and $r_m (j,k)$ in \eqref{r_m}.

Indeed, on the one hand, by Theorem \ref{s04}, 
\begin{equation}
\tag*{\bf (H0)}
\lim_{N\rightarrow\infty} \beta_{N}^{(m)} \, r_N^{(m)}(i,j)\;=\;
\mb 1\{i\in T_m\} \,\frac{c_m (i,j)}{\nu_i} \;\;;\;i\neq j\in S_m\;.
\end{equation}
Note that the right hand side is precisely the rate $r_m (i,j)$.

On the other hand, we claim that for each $1\le i\le M$,
\begin{equation}
\tag*{\bf (H1)}
\lim_{N\rightarrow\infty} \sup_{\boldsymbol{x}\in\mathcal{E}_{N}^{i}}
\frac{\textup{cap}_{N} (\mathcal{E}_{N}^{i},
  \mc E_N(S_m \setminus \{i\}))}
{\textup{cap}_{N}(\{\bs m^N_i\},\{\boldsymbol{x}\})} 
\;=\; 0\;.
\end{equation}
In view of the sector condition presented in Lemma \ref{lem: sector
  condition} and of \cite[Lemmata 2.5 and 2.6]{GL}, it is enough to
prove the previous estimate for the symmetric capacities. This is
precisely the content of \cite[Lemma 6.2]{LMT}.

It follows from Theorem \ref{bl2-th1} and from {\bf (H0)}, {\bf (H1)}
that the the rescaled process $Y_{N}^{m,T} (t \beta_N^{(m)})$
converges in the Skorohod topology to the $S_m$-valued Markov chain
with jump rates $r_m (j,k)$.

The previous convergence does not provide much information on the
original process $X_N(t)$ if it spends a non-negligible amount of time
outside the set $\mathcal{E}^{(m)}_{N}$. But this is not the case. We
claim that for every $1\le i\le M$ and for every $T>0$,
\begin{equation}
\tag*{\bf (M3)}
\lim_{N\rightarrow\infty} \sup_{\boldsymbol{x}\in\mathcal{E}_{N}^{i}}
\mathbb{E}_{\boldsymbol{x}}^{N} \Big[\int_{0}^{T} \mathbf{1}\left\{ 
X_{N}(\beta^{(m)}_{N}t)\in\Xi_{N}\setminus \mc E^{(m)}_N \right\} \,
dt\, \Big] \;=\; 0\;.
\end{equation}
The proof of this assertion is similar to the one of \cite[Proposition
6.1]{LMT} and is therefore omitted.

One may prefer to state results on the original process
$X_{N}(\beta^{(m)}_{N}t)$ than on its trace
$X^{(m)}_{N}(\beta^{(m)}_{N}t)$. This issue is extensively discussed
in the introduction of \cite{Lan1}. To do that, we may either
introduce a weaker topology and prove the convergence of the hidden
Markov chain $Y^{(m)}_N(\beta^{(m)}_{N}t)$, introduced above
\eqref{r_m}, to the $S_m$-valued Markov chain characterized by the
jump rates $r_m$, or to prove the convergence of the last-visit
process.

\smallskip\noindent{\bf Last-visit process}. Denote by $Y^{m, \rm
  V}_N(t)$ the process which at time $t$ records the last set $\mc
E^i_N$, $i\in S_m$, visited by the process $X_N(r)$ before time
$t$. More precisely, let
\begin{equation*}
\sigma_N(t) \;:=\; \sup \big \{s\le t : X_N(s) \in \mc E^{(m)}_N \big\}\;, 
\end{equation*}
with the convention that $\sigma_N(t) = 0$ if the set $\{s\le t :
X_N(s) \in \mc E^{(m)}_N \}$ is empty. Assume that $X_N(0)$ belongs to
$\mc E^{(m)}_N$ and define $Y^{m, \rm V}_N(t)$ by
\begin{equation*}
Y^{m, \rm V}_N(t) \;=\; 
\begin{cases}
\widehat{\Psi}_{N}^{(m)}(X_N (\sigma_N(t))) & \text{if $\;X_N (\sigma_N(t))\in \mc E^{(m)}_N$,} \\
\widehat{\Psi}_{N}^{(m)}(X_N (\sigma_N (t)-)) & \text{if $\;X_N(\sigma_N(t))\not\in \mc E^{(m)}_N$.} 
\end{cases}
\end{equation*}
We refer to $Y^{m, \rm V}_N(t)$ as the \emph{last-visit process} (to
$\mc E^{(m)}_N$).  The next result, which is Proposition 4.4 in
\cite{BL1}, asserts that if the process $Y^{m,T}_N (\beta^{(m)}_{N}
t)$ converges in the Skorohod topology, and if the time spent by
$X_N(\beta^{(m)}_{N} t)$ outside $\mc E^{(m)}_N$ is negligible, then
the process $Y^{m, \rm V}_N(\beta^{(m)}_{N} t)$ converges in the
Skorohod topology to the same limit.

\begin{proposition}[{\cite[Proposition 4.4]{BL1}}]
\label{bl2-p1}
Fix $i\in S_m$ and a sequence $\{\bs x_N : N \ge 1\}$, $\bs x_N \in
\ms E^i_N$. Suppose that under the measure $\bb P^N_{\bs x_N}$ the
process $Y^{m,T}_N (\beta^{(m)}_{N} t)$ converges in the Skorohod
topology to a $S_m$-valued Markov chain, denoted by
$Y^{(m)}(t)$. Suppose, furthermore, that {\rm ({\bf M3})} is
fulfilled.  Then, the last-visit process $Y^{m, \rm
  V}_N(\beta^{(m)}_{N} t)$ also converges in the Skorohod topology to
the Markov chain $Y^{(m)}(t)$.
\end{proposition}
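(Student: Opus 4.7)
The plan is to show that the last-visit process and the trace process are related by an explicit random time-change, and that this time-change converges to the identity in the scaling under consideration. This then allows us to transfer the convergence in Skorohod topology from $Y^{m,T}_N(\beta^{(m)}_N t)$ to $Y^{m,\rm V}_N(\beta^{(m)}_N t)$.

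First, I would establish the pathwise identity
\begin{equation*}
Y^{m,\rm V}_N(t) \;=\; Y^{m,T}_N\big(\bb T_m(t)\big)\;, \qquad t\ge 0\;.
\end{equation*}
Indeed, if $X_N(t) \in \mc E^{(m)}_N$ then $\sigma_N(t) = t$, and since $X^{(m)}_N(\bb T_m(t)) = X_N(\bb S_m(\bb T_m(t))) = X_N(t)$ in that case, both sides equal $\widehat{\Psi}^{(m)}_N(X_N(t))$. If instead $X_N(t) \notin \mc E^{(m)}_N$, then on the excursion interval $(\sigma_N(t), t]$ the clock $\bb T_m$ is constant, so $\bb T_m(t)=\bb T_m(\sigma_N(t))$; applying the previous case at $\sigma_N(t)-$ (using right-continuity of the paths and the convention in the definition of $Y^{m,\rm V}_N$) gives the same equality. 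So the identity holds pathwise up to a negligible set of times.

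Second, I would verify that the rescaled clock converges to the identity. Setting $\theta_N = \beta^{(m)}_N$ and fixing $T>0$, by (M3) and the bound $t - \bb T_m(t)\ge 0$ together with monotonicity,
\begin{equation*}
\sup_{\bs x\in \mc E^{(m)}_N}\,\mathbb{E}^N_{\bs x}\Big[ \sup_{0\le t\le T} \Big| t - \theta_N^{-1} \bb T_m(\theta_N t) \Big| \Big]
\;=\; \sup_{\bs x\in \mc E^{(m)}_N}\,\mathbb{E}^N_{\bs x}\Big[ \theta_N^{-1}\!\int_0^{\theta_N T}\!\!\mathbf{1}\{X_N(s)\notin\mc E^{(m)}_N\}\,ds\Big]\;,
\end{equation*}
which tends to $0$ by {\bf (M3)}. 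Hence $t\mapsto \theta_N^{-1}\bb T_m(\theta_N t)$ converges uniformly on $[0,T]$, in $\mathbb{P}^N_{\bs x_N}$-probability, to the identity map.

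Third, I would combine the two ingredients. By the identity above,
\begin{equation*}
Y^{m,\rm V}_N(\theta_N t) \;=\; Y^{m,T}_N\big(\theta_N \cdot \theta_N^{-1}\bb T_m(\theta_N t)\big)\;.
\end{equation*}
By hypothesis, $t\mapsto Y^{m,T}_N(\theta_N t)$ converges in Skorohod topology to $Y^{(m)}(t)$, whose paths are càdlàg and jump at isolated random times. The continuous mapping theorem for the Skorohod $J_1$ topology applied to the composition with the time-change (whose limit is the identity, a strictly increasing continuous map) then yields the stated convergence.

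The main obstacle I anticipate is the last step: continuity of composition in the Skorohod topology is a subtle issue for general càdlàg limits and time-changes. Here it works because the limiting time-change is the identity, so composition is continuous at the pair $(Y^{(m)},\mathrm{id})$; one has to invoke a standard composition lemma (e.g.\ Whitt's results) and check that the uniform convergence in probability of $\theta_N^{-1}\bb T_m(\theta_N\cdot)$ to the identity is enough, which it is precisely because the limit of the clock is continuous.
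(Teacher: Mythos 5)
You should first note that the paper itself does not prove this statement: it is imported verbatim as \cite[Proposition 4.4]{BL1}, so there is no in-paper argument to compare against, and your proposal has to stand on its own. It does not, because the pathwise identity on which everything rests is false. Write $b=\sigma_N(t)$ for the exit time ending the last visit to $\mc E^{(m)}_N$ before $t$, and $a'$ for the subsequent re-entrance time. On the excursion interval the clock is flat, $\bb T_m(t)=\bb T_m(b)$, and by the definition $\bb S_m(u)=\sup\{s\ge 0:\bb T_m(s)\le u\}$ one gets $\bb S_m(\bb T_m(b))=a'$, not $b$. Hence $Y^{m,T}_N(\bb T_m(t))=\widehat{\Psi}^{(m)}_N(X_N(a'))$ is the label of the well the process \emph{next} enters (the trace process, being right-continuous, has already jumped at trace-time $\bb T_m(b)$), whereas $Y^{m,\rm V}_N(t)=\widehat{\Psi}^{(m)}_N(X_N(b-))$ is the well it \emph{last} left. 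On every excursion connecting two distinct wells the correct relation is $Y^{m,\rm V}_N(t)=Y^{m,T}_N(\bb T_m(t)-)$, a left limit, not the value. Your hedge that the identity holds ``up to a negligible set of times'' does not help: in the $J_1$ topology two c\`adl\`ag paths that place the same jumps at different times are not close merely because the set where they disagree has small Lebesgue measure.

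Your steps 2 and 3 (uniform convergence in probability of $t\mapsto\theta_N^{-1}\bb T_m(\theta_N t)$ to the identity via {\bf (M3)}, and Whitt's composition theorem at a continuous strictly increasing limit) are sound and do yield Skorohod convergence of $Y^{m,T}_N(\bb T_m(\theta_N t))$. What is missing is the bridge from that process to $Y^{m,\rm V}_N(\theta_N t)$: the two have identical jumps, but the last-visit process executes each jump at the end of the corresponding excursion rather than at its beginning, i.e.\ delayed by the excursion duration. To close the gap you must exhibit Skorohod time-changes $\lambda_N$ with $\lambda_N(a'_k/\theta_N)=b_k/\theta_N$ for each connecting excursion $(b_k,a'_k)$ in $[0,\theta_N T]$ and show $\sup_t|\lambda_N(t)-t|\le\theta_N^{-1}\max_k(a'_k-b_k)\to 0$ in probability; this follows from {\bf (M3)} (which bounds the \emph{total} excursion time, hence the maximum) together with tightness of the number of inter-well jumps, which comes from the assumed convergence of $Y^{m,T}_N(\beta^{(m)}_N t)$ to a Markov chain with a.s.\ finitely many jumps on compacts. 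Only with this additional estimate does the argument become a proof.
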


\smallskip\noindent{\bf Soft topology}. We introduced in \cite{Lan1}
the soft topology. As the precise definition requires much notation we
do not reproduce it here. Next result is Theorem 5.1 in
\cite{Lan1}. In the present context, it asserts that the hidden Markov
chain $Y^{(m)}_N(\beta^{(m)}_{N} t)$ converges in the soft topology to
a $S_m$-valued Markov chain $Y^{(m)}(t)$ if the process $Y^{m,T}_N
(\beta^{(m)}_{N} t)$ converges in the Skorohod topology to
$Y^{(m)}(t)$, and if condition \textbf{(M3)} is in force.

\begin{proposition}[{\cite[Theorem 5.1]{Lan1}}]
\label{l2-p1}
Fix $i\in S_m$ and a sequence $\{\bs x_N : N \ge 1\}$, $\bs x_N \in
\ms E^i_N$. Suppose that under the measure $\bb P^N_{\bs x_N}$ the
process $Y^{m,T}_N (\beta^{(m)}_{N} t)$ converges in the Skorohod
topology to a $S_m$-valued Markov chain, denoted by
$Y^{(m)}(t)$. Suppose, furthermore, that {\rm ({\bf M3})} is
fulfilled.  Then, the hidden Markov chain $Y^{(m)}_N(\beta^{(m)}_{N}
t)$ converges in the soft topology to $Y^{(m)}(t)$.
\end{proposition}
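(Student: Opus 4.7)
The plan is to reduce the soft convergence of the hidden chain $Y^{(m)}_N(\beta^{(m)}_N t)$ to the already-given Skorohod convergence of the trace $Y^{m,T}_N(\beta^{(m)}_N t)$, with condition \textbf{(M3)} bridging the gap via the time change between the two processes. The key observation is that if $\bb T_m(t)$ and $\bb S_m(t)$ are the occupation and inverse occupation functionals introduced at the beginning of Section \ref{sec8}, then $Y^{m,T}_N(t) = \widehat{\Psi}^{(m)}_N(X_N(\bb S_m(t)))$ whereas $Y^{(m)}_N(t) = \widehat{\Psi}^{(m)}_N(X_N(t))$ when $X_N(t) \in \mc{E}^{(m)}_N$ and $Y^{(m)}_N(t)=N$ otherwise. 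Hence, on the random set $\{t : X_N(\beta^{(m)}_N t) \in \mc E^{(m)}_N\}$, one can write
\begin{equation*}
Y^{(m)}_N(\beta^{(m)}_N t) \;=\; Y^{m,T}_N\big(\bb T_m(\beta^{(m)}_N t)\big)\;,
\end{equation*}
while on the complement $Y^{(m)}_N$ sits at the auxiliary state $N$.

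First I would extract from \textbf{(M3)} the fact that the rescaled time change $\tau_N(t) := \bb T_m(\beta^{(m)}_N t)/\beta^{(m)}_N$ converges to the identity uniformly on compacts in $\bb P^N_{\bs x_N}$-probability. Indeed $0 \le t - \tau_N(t)$ is exactly the Lebesgue measure of the set of excursion times in $[0, \beta^{(m)}_N t]$ divided by $\beta^{(m)}_N$, so Markov's inequality applied to the quantity in \textbf{(M3)} forces $\sup_{t \le T}|t - \tau_N(t)|\to 0$ in probability. The second consequence of \textbf{(M3)} is that the Lebesgue measure of $\{t \le T : Y^{(m)}_N(\beta^{(m)}_N t) = N\}$ also vanishes in probability. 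Combining these with the standing Skorohod convergence of $Y^{m,T}_N(\beta^{(m)}_N \cdot)$ to $Y^{(m)}$ and a continuous-mapping-type argument (composition with a time change that tends to the identity uniformly is continuous on the Skorohod space at paths of $Y^{(m)}$, which are càdlàg step functions), one deduces that the $S_m$-valued process $Y^{m,T}_N(\beta^{(m)}_N \tau_N(t))$ still converges to $Y^{(m)}$ in the Skorohod sense.

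The final and subtlest step is translating this into \emph{soft} convergence of the full hidden chain $Y^{(m)}_N(\beta^{(m)}_N t)$, which carries the extra auxiliary state $N$ corresponding to the excursions outside $\mc E^{(m)}_N$. The soft topology of \cite{Lan1} is engineered precisely for this situation: two càdlàg paths valued in $S_m \cup \{N\}$ are declared close if, after deleting the (Lebesgue-negligible, in our asymptotic regime) portions of time spent at $N$, the resulting $S_m$-valued trajectories are close in the Skorohod sense. The previous paragraph produces exactly this matching via the coupling $Y^{(m)}_N(\beta^{(m)}_N t) = Y^{m,T}_N(\beta^{(m)}_N \tau_N(t))$ off the excursion set, so the conclusion follows from the definition of the topology. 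The main obstacle, and the reason the authors quote this as an external result from \cite{Lan1}, is the careful verification that the soft topology is metric-measurable and that the above time-change argument realizes convergence in it: this involves choosing a good modulus of continuity adapted to $N$-excursions and showing that weak convergence in the associated metric follows from the two ingredients above, which is the substance of \cite[Theorem 5.1]{Lan1}.
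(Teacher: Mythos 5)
The paper does not prove this proposition at all: it is imported verbatim as \cite[Theorem 5.1]{Lan1}, and the soft topology itself is deliberately left undefined in the text (``as the precise definition requires much notation we do not reproduce it here''). So there is no internal argument to compare yours against; what you have written is an attempted reconstruction of the proof of the cited theorem.

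As such a reconstruction, the first two thirds are sound. The identity $Y^{(m)}_N(\beta^{(m)}_N t)=Y^{m,T}_N\big(\bb T_m(\beta^{(m)}_N t)\big)$ on the event $\{X_N(\beta^{(m)}_N t)\in\mc E^{(m)}_N\}$ is correct (almost surely, since $\bb S_m(\bb T_m(u))=u$ whenever $X_N(u)\in\mc E^{(m)}_N$), the monotonicity of $t\mapsto t-\tau_N(t)$ does upgrade {\bf (M3)} to uniform-on-compacts convergence of $\tau_N$ to the identity in probability, and a random-time-change lemma then yields Skorohod convergence of $Y^{m,T}_N(\beta^{(m)}_N\tau_N(\cdot))$ to $Y^{(m)}$. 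The gap is the final step. Your description of the soft topology (``delete the time spent at $N$ and compare the remainders in the Skorohod sense'') is a heuristic, not the definition in \cite{Lan1}, which is built from metrics comparing time-integrated occupation functionals of the form $\int_s^t \mathbf 1\{y(r)=j\}\,dr$, engineered precisely so that convergence follows from (i) convergence of the trace and (ii) negligibility of the time spent at the auxiliary state. Having replaced the definition by this gloss, you close by appealing to ``the substance of \cite[Theorem 5.1]{Lan1}'' --- that is, to the very statement being proved --- which makes the argument circular if read as a standalone proof. As an explanation of why the citation applies (which is all the authors themselves offer), your outline is accurate; as a proof of the proposition, the soft-topology step remains unestablished.
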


The assertion of Theorem \ref{mr1} is a consequence of {\rm ({\bf
    H0})}, {\rm ({\bf H1})}, {\rm ({\bf M3})} and of Theorem
\ref{bl2-th1} and Proposition \ref{l2-p1}.

\section{Proof of Theorem \ref{ek}}

Fix two disjoint subsets $\mc A$, $\mc B$ of $\Xi_N$.
Define the harmonic measure $\nu_{\mc A, \mc B}$, $\nu^*_{\mc A, \mc
  B}$ on $\mc A$ as
\begin{align*}
& \nu_{\mc A, \mc B}(\bs x)\,=\, \frac{\mu_N (\bs x) \, \lambda_N (\bs
  x)\, \bb P^N_{\bs x} \big[ \, H^+_{\mc B} < H^+_{\mc A} \, \big]
}{\Cap_N(\mc A,\mc B)}\,, \\
& \quad \nu^*_{\mc A, \mc B}(\bs x)\,=\, \frac{\mu_N (\bs x) \, \lambda_N (\bs
  x)\, \bb P^{*,N}_{\bs x} \big[ \, H^+_{\mc B} < H^+_{\mc A} \, \big]}
{\Cap_N^*(\mc A,\mc B)} \;, \quad \bs x\in \mc A\,.
\end{align*}

Denote by ${\bb E}^N_{\nu}$ the expectation associated to the Markov
process $\{X_N(t) : t\ge 0\}$ with initial distribution $\nu$, where
$\nu$ is a probability measure on $\Xi_N$. Next result is
\cite[Proposition A.2]{BL2}.

\begin{proposition}
\label{pbl}
Fix two disjoint subsets $\mc A$, $\mc B$ of $\Xi_N$.  Let $g:\Xi_N \to\bb R$
be a $\mu_N$-integrable function. Then,
\begin{equation*}
\bb E^N_{\nu^*_{\mc A, \mc B}} \Big[ \int_0^{H_{\mc B}} g(X_N(t))\,dt \Big] 
\;=\; \frac{\langle\, g \,,\, V^*_{\mc A, \mc B}\rangle_{\mu_N}\, }
{\Cap_N(\mc A,\mc B)}\;,  
\end{equation*}
where $\<\cdot , \cdot \>_{\mu_N}$ represents the scalar product in
$L^2(\mu_N)$.
\end{proposition}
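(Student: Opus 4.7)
The plan is to apply Proposition \ref{pbl} with $\mc A=\{[\bs m_i]_N\}$, $\mc B=[\mc M_i]_N$ and $g\equiv 1$. Since $\mc A$ is a singleton, the harmonic measure $\nu^*_{\mc A,\mc B}$ is forced to be the Dirac mass at $[\bs m_i]_N$, so the formula reduces to
\begin{equation*}
\mathbb{E}_{[\bs m_i]_N}^N\big[H_{[\mc M_i]_N}\big]
\;=\;
\frac{\sum_{\bs x\in\Xi_N}V^*_{[\bs m_i]_N,[\mc M_i]_N}(\bs x)\,\mu_N(\bs x)}
{\textup{cap}_N\big([\bs m_i]_N,[\mc M_i]_N\big)}\;.
\end{equation*}
The proof then splits into sharp asymptotics for the denominator and the numerator, which fit together via \eqref{thm: estimate of meta mass} and the resulting identity $\mu_N(\mc E^i_N)/\kappa_N=[1+o_N(1)]\nu_i\beta_N^{(u)}$, where $\kappa_N=(2\pi N)^{d/2-1}e^{-NH}/Z_N$.

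For the denominator I first replace each singleton by the full well containing it: by assertion \textbf{(H1)} of Section \ref{sec9} combined with the sector bound (Lemma \ref{lem: sector condition} and \eqref{sc1}), capacities from a point $\bs x\in\mc E^i_N$ to the interior bottom $[\bs m_i]_N$ polynomially dominate inter-well capacities of order $\kappa_N$, and likewise for the target set; a standard Dirichlet-principle perturbation then yields
\begin{equation*}
\textup{cap}_N\big([\bs m_i]_N,[\mc M_i]_N\big)
\;=\;[1+o_N(1)]\,\textup{cap}_N\big(\mc E^i_N,\mc E_N(S_u\setminus\{i\})\big)\;.
\end{equation*}
Theorem \ref{s133} evaluates the right-hand side as $[1+o_N(1)]\kappa_N\,\textup{cap}_Y(\{i\},S_u\setminus\{i\})$, and the identity $\textup{cap}_Y(\{i\},S_u\setminus\{i\})=\sum_{j\in S_u\setminus\{i\}}c_u(i,j)$ follows from the trivial trace decomposition $\lambda_N^{(u)}(i)=\sum_j r_N^{(u)}(i,j)$ compared asymptotically through \eqref{10} and Theorem \ref{s04}.

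For the numerator I aim for $\sum_{\bs x}V^*(\bs x)\mu_N(\bs x)=[1+o_N(1)]\mu_N(\mc E^i_N)$, partitioning $\Xi_N$ into $\mc E^i_N$, the wells $\mc E^j_N$ with $j\in S_u\setminus\{i\}$, the shallow wells with $j\notin S_u$, and the saddle region $\Xi_N\setminus\mc E_N$. On $\mc E^i_N$, the adjoint process equilibrates within the well and reaches $[\bs m_i]_N$ in polynomial time, well before the metastable exit time, so $V^*=1-o_N(1)$ at $\mu_N$-most points and the contribution matches $\mu_N(\mc E^i_N)$ to leading order. On each $\mc E^j_N$ with $j\in S_u\setminus\{i\}$, the inclusion $[\bs m_j]_N\in[\mc M_i]_N$ together with the same intra-well equilibration force $V^*$ to be exponentially small. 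The shallow wells contribute negligibly because $h_j>h_i$ makes $\mu_N(\mc E^j_N)$ exponentially smaller than $\mu_N(\mc E^i_N)$, and the saddle region carries negligible Gibbs mass.

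The main obstacle is controlling the contribution of the \emph{deepest} wells among those in $S_u\setminus\{i\}$: if $j\in T_{u'}$ with $u'>u$, the mass ratio $\mu_N(\mc E^j_N)/\mu_N(\mc E^i_N)=\Theta(e^{N(\theta_{u'}-\theta_u)})$ is exponentially large, so the crude bound $V^*\le 1$ is insufficient and one must produce quantitatively $V^*(\bs x)\le e^{-N(\theta_{u'}-\theta_u)+o(N)}$ for $\bs x\in\mc E^j_N$. Two routes are available: apply the collapse-and-compare machinery of Section \ref{sec8} (Proposition \ref{s05}) to the adjoint chain, identifying the limit of $V^*$ on $\mc E^j_N$ with the vanishing value at $j$ of the limiting reversible equilibrium potential between $\{i\}$ and $S_u\setminus\{i\}$; or bound $V^*(\bs x)$ directly through a capacity comparison and then invoke \textbf{(H1)} with Theorem \ref{s133}. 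Either route supplies the exponential decay required to beat the mass ratio and closes the proof.
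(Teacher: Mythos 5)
Your proposal does not prove Proposition \ref{pbl}; it \emph{uses} it. The very first sentence reads ``the plan is to apply Proposition \ref{pbl} with $\mc A=\{[\bs m_i]_N\}$, $\mc B=[\mc M_i]_N$ and $g\equiv 1$,'' and everything that follows is a (reasonable) sketch of the derivation of Theorem \ref{ek} from Proposition \ref{pbl} together with Lemmata \ref{lem102} and \ref{lem103} --- i.e., you have written out the content of the last section of the paper, not a proof of the occupation-time identity that was actually asked for. The statement to be proved is a general potential-theoretic identity for an arbitrary finite-state (non-reversible) Markov chain: the expectation, under the adjoint harmonic measure $\nu^*_{\mc A,\mc B}$, of the additive functional $\int_0^{H_{\mc B}}g(X_N(t))\,dt$ equals $\langle g, V^*_{\mc A,\mc B}\rangle_{\mu_N}/\Cap_N(\mc A,\mc B)$. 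Nothing in your text addresses this identity; in particular no argument is given for why the adjoint equilibrium potential $V^*_{\mc A,\mc B}$ (rather than $V_{\mc A,\mc B}$) appears, which is the whole non-reversible subtlety here. The paper itself does not prove the proposition either --- it is quoted verbatim as Proposition A.2 of \cite{BL2} --- but a self-contained proof is short and completely different from what you wrote: set $h(\bs x)=\bb E^N_{\bs x}\bigl[\int_0^{H_{\mc B}}g(X_N(t))\,dt\bigr]$, note that $h$ solves $(-\mc L_N h)=g$ on $\mc B^c$ with $h\equiv 0$ on $\mc B$, and then compute $\langle g,V^*_{\mc A,\mc B}\rangle_{\mu_N}=\langle -\mc L_N h, V^*_{\mc A,\mc B}\rangle_{\mu_N}=\langle h, -\mc L^*_N V^*_{\mc A,\mc B}\rangle_{\mu_N}$; since $V^*_{\mc A,\mc B}$ is $\mc L^*_N$-harmonic off $\mc A\cup\mc B$ and $h$ vanishes on $\mc B$, the sum localizes on $\mc A$, where $\mu_N(\bs x)(-\mc L^*_N V^*_{\mc A,\mc B})(\bs x)=\mu_N(\bs x)\lambda_N(\bs x)\,\bb P^{*,N}_{\bs x}[H^+_{\mc B}<H^+_{\mc A}]=\Cap_N(\mc A,\mc B)\,\nu^*_{\mc A,\mc B}(\bs x)$, yielding the claim after dividing by the capacity.

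As a secondary point: even read as a proof of Theorem \ref{ek}, your sketch is heavier than necessary at the one place you flag as ``the main obstacle.'' The paper's Lemma \ref{lem103} does not need the quantitative bound $V^*\le e^{-N(\theta_{u'}-\theta_u)+o(N)}$ on deep wells via Proposition \ref{s05}; it controls $V^*_{\{\bs m_i^N\},\mc M_i^N}$ on every competing well directly through the elementary capacity bound \eqref{e1032} combined with a good-path Thomson estimate for the symmetric chain, which gives a decay of order $e^{-N\epsilon}$ --- enough to beat any polynomial-in-$e^{N\theta}$ mass ratio since $\mc M_i$ contains only minima at height $\le F(\bs m_i)$ and the relevant Gibbs masses are then comparable. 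But this is moot: the task was to prove Proposition \ref{pbl}, and that proof is absent.
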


In view of this proposition, in order to estimate
$\bb{E}_{\bs{m}_{i}^{N}}^{N}[H_{\mc{M}_{i}^{N}}]$ it suffices to
estimate the capacity
$\textup{cap}_{N}(\{\bs{m}_{i}^{N}\},\mc{M}_{i}^{N})$ and the
expectation of the equilibrium potential
${E}_{\mu_{N}}[V_{\{\bs{m}_{i}^{N}\},\mc{M}_{i}^{N}}^{*}]$.  The
following two lemmata provide these desired estimates.

\begin{lemma}
\label{lem102}
We have that 
\begin{equation*}
\textup{cap}_{N}\left(\{\bs{m}_{i}^{N}\},\,\mc{M}_{i}^{N}\right)
\;=\;\left[1+o_{N}(1)\right]\,Z_{N}^{-1}\, (2\pi N)^{\frac{d}{2}-1}
\, e^{-NH}\, \sum_{j\in S_{u}}c_{u}(i,j)\;.
\end{equation*}
\end{lemma}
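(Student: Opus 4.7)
The plan is to adapt the Dirichlet and Thomson bounds from Section \ref{sub_pt}, with the choice $A=\{i\}$, $B=S_u\setminus\{i\}$, and then to identify the resulting constant via a trace-process computation. I would first observe that $[\mc M_i]_N\subset\mc E_N(S_u\setminus\{i\})$: a local minimum of $F$ on $\Xi\setminus W_i$ at height at most $h_i=H-\theta_u$ must lie in some component $W_j$ with $h_j\le h_i$, which means $j\in S_u\setminus\{i\}$, and for $\epsilon$ small enough such a minimum then sits inside $W_j^\epsilon$. With $q=q_{\{i\},S_u\setminus\{i\}}$ the $Y$-equilibrium potential, the function $h_N^{A,B}$ of Section \ref{sub_pc} equals $q(j)$ on $\mc W_N^j$, hence $1$ at $\bs m_i^N$ and $0$ on $\mc M_i^N$, so $f_N^{A,B}=(h_N^{A,B}+h_N^{*,A,B})/2$ lies in $\mathfrak{C}_{1,0}(\{\bs m_i^N\},\mc M_i^N)$ and $g_N^{A,B}$ lies in $\mathfrak{C}_{0,0}(\{\bs m_i^N\},\mc M_i^N)$.

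The flows $\phi_N^{A,B}$ and $\psi_N^{A,B}$ from Section \ref{sub_pt} are not directly admissible in Theorems \ref{s07}, \ref{s10}, because their divergences spread over the entire sets $\mc E_N^j$, $j\in S_u$. The next step is to massage them via Lemma \ref{s16}: inside each well $\mc E_N^j$, $j\in S_u$, transfer the divergence to the chosen local minima (to $\bs m_i^N$ if $j=i$, to the points $[\bs m_{j,k}]_N\in\mc M_i^N$ if $j\neq i$) along discretizations of the gradient-descent paths of $F$, exactly as in Step 2 of the proof of Proposition \ref{s15}. These paths are non-increasing for $F_N$ up to $O(1/N)$, and their length and multiplicity are polynomial in $N$. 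Since the total divergence of $\phi_N^{A,B}$ on each $\mc E_N^j$ is $O(\kappa_N)$ and is concentrated near the saddle neighbourhoods where $F_N\approx H$, Lemma \ref{s16} yields a correction flow of $\mc F_N$-norm $\kappa_N\, o_N(1)$. Lemma \ref{lem:neg_lemma} then allows us to replace $\phi_N^{A,B}$, $\psi_N^{A,B}$ by their corrected versions without changing the asymptotics of the squared norms, and Theorems \ref{s07}, \ref{s10} give
\[
\textup{cap}_N(\{\bs m_i^N\},\mc M_i^N)\;=\;\big[1+o_N(1)\big]\,\kappa_N\,\textup{cap}_Y(\{i\},S_u\setminus\{i\})\;.
\]

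It remains to identify $\textup{cap}_Y(\{i\},S_u\setminus\{i\})$ with $\sum_{j\in S_u}c_u(i,j)$. This is a computation on the reversible chain $Y$ which reuses exactly the argument at the end of Section \ref{sec8}: the trace $Y^{(u)}$ of $Y$ on $S_u$ has the same capacity between subsets of $S_u$ as $Y$ itself, and by \cite[Proposition 6.2]{BL1} together with \cite[display (A.8)]{BL2} this capacity equals $\mu(i)\sum_{j\in S_u\setminus\{i\}} r^{(u)}(i,j)=\sum_{j\in S_u}c_u(i,j)$. The main obstacle I expect is the control of the correction flow in the second step: the divergence of $\phi_N^{A,B}$ on a well $\mc E_N^j$ is generated by every saddle point on $\partial W_j$, so one has to check that the gradient-descent paths from all these saddle exits down to the deepest local minima of $W_j^\epsilon$ can be chosen simultaneously with polynomial multiplicity, and that the weighted $\ell^2$-bound of Lemma \ref{s16}, now integrated over an $O(\#\mathfrak{S})$ family of saddle boundaries rather than a single one as in Section \ref{sec:flow}, still produces a correction of order $\kappa_N\,o_N(1)$.
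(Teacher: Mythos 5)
Your proposal is correct and, at its core, follows the paper's route: reuse the test objects of Section \ref{sub_pt} for $A=\{i\}$, $B=S_u\setminus\{i\}$ in the Dirichlet and Thomson principles with the point sets $\{\bs m_i^N\}$, $\mc M_i^N$ in place of $\mc E_N(A)$, $\mc E_N(B)$, and then identify $\textup{cap}_Y(\{i\},S_u\setminus\{i\})$ with $\sum_{j\in S_u}c_u(i,j)$ via the trace chain $Y^{S_u}$ and \cite[display (A.8)]{BL2} — this last computation is exactly the paper's. The one place where you part ways is your second step: you assert that the flows $\phi_N^{A,B}$, $\psi_N^{A,B}$ are not admissible for the point-to-point variational problems because "their divergences spread over the entire sets $\mc E_N^j$", and you therefore re-run the divergence-transfer machinery of Lemma \ref{s16} along gradient-descent paths. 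This premise is false: by Theorem \ref{s24} the building blocks $\widetilde\Phi_N^{\bs\sigma}$ already have their divergence concentrated at the single points $\bs m_1^N$, $\bs m_2^N$ (that transfer was performed once and for all in Step 2 of the proof of Proposition \ref{s15}), and Lemma \ref{con73} states explicitly that $\Upsilon_N^q$ is divergence-free on $\{\bs m_N^j: j\in S\}^c$, with divergence $-\kappa_N\mu(j)(\mc L_Y q)(j)$ at $\bs m_N^j$ — which vanishes for $j\notin A\cup B$ by \eqref{qq2}. Hence the flows are admissible as they stand, the proof of Theorem \ref{s133} applies verbatim to $\textup{cap}_N(\{\bs m_i^N\},\mc M_i^N)$, and the "main obstacle" you anticipate (simultaneous control of gradient-descent paths from all saddles of $\partial W_j$ with polynomial multiplicity) never has to be confronted. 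Your extra step is harmless — transferring a divergence already sitting at $\bs m_j^N$ to $\bs m_j^N$ is a no-op — but it is redundant, and recognizing the stronger statement of Lemma \ref{con73} collapses the argument to the paper's two-line observation.
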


\begin{proof}
Note in Lemma \ref{con73} that the test flows used in the proof of
Theorem \ref{s133} were divergence free not only on $\mc{E}_{N}(A\cup
B)^{c}$ but also on $\{\bs{m}_{i}^{N}:i\in A\cup B\}^{c}$.  Hence, the
conclusion of Theorem \ref{s133} is not affected if we replace
$\textup{cap}_{N}(\mc{E}_{N}(A),\,\mc{E}_{N}(B))$ by
$\textup{cap}_{N}(\{\bs{m}_{j}^{N}:j\in A\},\,\{\bs{m}_{j}^{N}:j\in
B\})$.  By inserting $A=\{i\}$ and $B=S_{u}\setminus\{i\}$ into this
modified version of Theorem \ref{s133}, we obtain that
\begin{equation*}
\textup{cap}\left(\{\bs{m}_{i}^{N}\},\,\mc{M}_{i}^{N}\right)
\;=\;\left[1+o_{N}(1)\right] \,Z_{N}^{-1}\, 
(2\pi N)^{\frac{d}{2}-1}e^{-NH}
\,\textup{cap}_{Y}(\{i\},\,S_{u}\setminus\{i\})\; .
\end{equation*}
Thus, it suffices to show that 
\begin{equation*}
\sum_{j\in S_{u}}c_{u}(i,j)\;=\;
\textup{cap}_{Y}(\{i\},\,S_{u}\setminus\{i\})\;.
\end{equation*}

Denote by $Y^{S_{u}}(\cdot)$ the trace process of $Y(\cdot)$ on the
set $S_{u}$, and by $r^{S_{u}}(\cdot,\cdot)$ the jump rate of
$Y^{S_{u}}(\cdot)$. By \eqref{c_m} and \cite[Display (A.8)]{BL2}, we
have that $c_{u}(i,j)=\mu(i)r^{S_{u}}(i,j)$ for all $i,\,j\in S_{u}$.
Hence,
\begin{equation*}
\sum_{j\in S_{u}}c_{u}(i,j)\;=\;\mu(i)\sum_{j\in S_{u}}r^{S_{u}}(i,j)
\;=\;\mu(i)\, r^{S_{u}}(i,S_{u}\setminus\{i\})
\;=\;\textup{cap}_{Y}(\{i\},\,S_{u}\setminus\{i\})\;,
\end{equation*}
where the last equality follows again from \cite[display
(A.8)]{BL2}. This completes the proof of the lemma.
\end{proof}

We recall the following well-known estimate on the equilibrium
potential (cf. \cite[display (3.3)]{LL}): for all $\bs{x}\in\Xi_{N}$
and disjoint sets $A,\,B\subset\Xi_{N}$ with $\bs{x}\notin A\cup B$,
\begin{equation}
\label{e1032}
V_{A,B}(\bs{x}) \;\le\; \frac{\textup{cap}_{N}(\{\bs{x}\},\,A)}
{\textup{cap}_{N}(\{\bs{x}\},\,B)}\;\cdot
\end{equation}
Since $\textup{cap}_{N}=\textup{cap}_{N}^{*}$, the same inequality
also holds for $V_{A,B}^{*}(\bs{x})$. 

\begin{lemma}
\label{lem103}
We have that
\begin{equation*}
{E}_{\mu_{N}} \big[V_{\{\bs{m}_{i}^{N}\},\mc{M}_{i}^{N}}^{*}\big]
\;=\;\left[1+o_{N}(1)\right]\, \vartheta_N\, \nu_{i}\;,
\end{equation*}
where $\vartheta_N=Z_{N}^{-1}\, (2\pi N)^{\frac{d}{2}-1}\, e^{-Nh_{i}}$.  
\end{lemma}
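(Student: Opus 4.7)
The plan is to show that the equilibrium potential $V^*_{\{\bs m_i^N\},\mc M_i^N}$ is close to $1$ on $\mc E_N^i$ and negligible (in $\mu_N$-measure) elsewhere, so that $E_{\mu_N}[V^*_{\{\bs m_i^N\},\mc M_i^N}]$ is asymptotically equal to $\mu_N(\mc E_N^i)$, from which the claim follows via the mass estimate \eqref{thm: estimate of meta mass}. First, I would decompose
\begin{equation*}
E_{\mu_N}\big[V^*_{\{\bs m_i^N\},\mc M_i^N}\big] \;=\; \sum_{\bs x\in \mc E_N^i}\!\mu_N(\bs x) V^*(\bs x) \,+\, \sum_{j\neq i}\sum_{\bs x\in \mc E_N^j}\!\mu_N(\bs x) V^*(\bs x) \,+\, \sum_{\bs x\in \Xi_N\setminus\bigcup_j \mc E_N^j}\!\mu_N(\bs x) V^*(\bs x)
\end{equation*}
and treat the three contributions separately.

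For the first (dominant) sum I would show $V^*(\bs x)=1-o_N(1)$ uniformly on $\mc E_N^i$. Indeed, by \eqref{e1032},
\begin{equation*}
1-V^*_{\{\bs m_i^N\},\mc M_i^N}(\bs x) \;=\; V^*_{\mc M_i^N,\{\bs m_i^N\}}(\bs x) \;\le\; \frac{\Cap_N(\{\bs x\},\mc M_i^N)}{\Cap_N(\{\bs x\},\{\bs m_i^N\})}\,.
\end{equation*}
The numerator is bounded by monotonicity of the capacity together with Lemma~\ref{lem102}: $\Cap_N(\{\bs x\},\mc M_i^N)\le \Cap_N(\mc E_N^i,\mc M_i^N)=O(Z_N^{-1}N^{d/2-1}e^{-NH})$. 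For the denominator, the sector condition \eqref{sc1} reduces the task to a lower bound on the symmetric capacity $\Cap_N^s(\{\bs x\},\{\bs m_i^N\})$, which I would produce by the Thomson principle applied to a unitary flow supported on a good path (in the sense of \textbf{(P1)}--\textbf{(P2)}) connecting $\bs x$ to $\bs m_i^N$ inside $W_i^\epsilon$; such a path satisfies $F_N\le H-\epsilon$ along its length because $W_i^\epsilon$ is one connected component of $\{F\le H-\epsilon\}$. This yields $\Cap_N(\{\bs x\},\{\bs m_i^N\})\ge N^{-c}Z_N^{-1}e^{-N(H-\epsilon)}$, so the ratio is $O(N^{c}e^{-N\epsilon})=o_N(1)$, and hence the first sum equals $[1+o_N(1)]\mu_N(\mc E_N^i)$.

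For the second sum I split wells by depth. If $h_j\le h_i$ then $\bs m_j\in\mc M_i$ and the symmetric version of the preceding argument (with $\mc M_i^N$ now playing the role of the nearby set) gives $V^*(\bs x)=o_N(1)$ uniformly on $\mc E_N^j$. If $h_j>h_i$ then \eqref{thm: estimate of meta mass} yields $\mu_N(\mc E_N^j)=O(e^{-N(h_j-h_i)})\mu_N(\mc E_N^i)$, which is exponentially smaller, and the crude bound $V^*\le 1$ suffices. For the third sum, the region $\Xi_N\setminus\bigcup_j \mc E_N^j$ is contained in $\{F_N\ge H-\epsilon+O(1/N)\}$, so its $\mu_N$-mass is at most $CN^{d}Z_N^{-1}e^{-N(H-\epsilon)}$, which is exponentially smaller than $\mu_N(\mc E_N^i)$ provided $\epsilon<H-h_i$. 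Combining the three estimates and invoking \eqref{thm: estimate of meta mass} gives $E_{\mu_N}[V^*_{\{\bs m_i^N\},\mc M_i^N}]=[1+o_N(1)]\mu_N(\mc E_N^i)$, which, after substituting the mass asymptotics, is the stated formula. The main technical obstacle is producing the uniform lower bound on $\Cap_N(\{\bs x\},\{\bs m_i^N\})$ for $\bs x\in\mc E_N^i$: one must exhibit good paths of polynomial length connecting every such $\bs x$ to $\bs m_i^N$ while staying below the level $H-\epsilon$, which can be done by descending along $-\nabla F$ inside $W_i^\epsilon$ and then moving through this connected sub-level set to $\bs m_i^N$, in the same spirit as the constructions in Section~\ref{sec:flow}.
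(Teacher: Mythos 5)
Your proposal follows essentially the same route as the paper's proof: the same three-way decomposition of $E_{\mu_N}[V^*]$, the same use of \eqref{e1032} combined with monotonicity of capacities and the sharp capacity asymptotics for the numerator, the same Thomson-principle lower bound on $\textup{cap}_N^s(\{\bs x\},\{\bs m_i^N\})$ via a good path staying below level $H-\epsilon$, and the same crude mass bound on the region outside the wells. The only (harmless) differences are cosmetic: you spell out the depth-splitting for the wells $j\neq i$, which the paper dismisses as "similar arguments," and you quote Lemma \ref{lem102} where the paper uses Theorem \ref{s13} for the numerator bound.
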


\begin{proof}
In view of \eqref{thm: estimate of meta mass}, it suffices to prove
that  
\begin{equation*}
{E}_{\mu_{N}}\left[V_{\{\bs{m}_{i}^{N}\},\mc{M}_{i}^{N}}^{*}\right]
\,-\, \mu_{N}(\mc{E}_{N}^{i}) \;=\; o_{N}(1) \, \vartheta_N\;.
\end{equation*}
The left hand side can be written as
\begin{equation}
\label{e1031}
\begin{aligned}
&\sum_{\bs{x}\in\mc{E}_{N}^{i}}(V_{\{\bs{m}_{i}^{N}\},\mc{M}_{i}^{N}}^{*}(\bs{x})-1)
\, \mu_{N}(\bs{x}) \;+\; 
\sum_{j:j\neq i} \sum_{\bs{x}\in\mc{E}_{N}^{j}} 
V_{\{\bs{m}_{i}^{N}\},\mc{M}_{i}^{N}}^{*}(\bs{x}) \, \mu_{N}(\bs{x}) \\
&\quad + \, \sum_{\bs{x}\in(\mc{E}_{N})^{c}} 
V_{\{\bs{m}_{i}^{N}\},\mc{M}_{i}^{N}}^{*}(\bs{x})
\, \mu_{N}(\bs{x})\;.
\end{aligned}
\end{equation}
The first sum is equal to
\begin{equation*}
-\, \sum_{\bs{x}\in\mc{E}_{N}^{i}}
V_{\mc{M}_{i}^{N},\{\bs{m}_{i}^{N}\}}^{*}(\bs{x})
\, \mu_{N}(\bs{x})\;.
\end{equation*}
By \eqref{e1032}, by the monotonicity of capacity and by the fact that
the symmetric capacity is bounded by the capacity,
$\textup{cap}_{N}^{s} \le \textup{cap}_{N}$ (cf. \cite[Lemma 2.6]{GL}),
\begin{equation}
\label{e1033}
V_{\mc{M}_{i}^{N},\{\bs{m}_{i}^{N}\}}^{*}(\bs{x}) \; \le\;
\frac{\textup{cap}_{N}(\{\bs{x}\},\,\mc{M}_{i}^{N})}
{\textup{cap}_{N}(\{\bs{x}\},\,\{\bs{m}_{i}^{N}\})}
\; \le\;
\frac{\textup{cap}_{N}(\mc{E}_{N}^{i},\,\check{\mc{E}}_{N}^{i})}
{\textup{cap}_{N}^{s}(\{\bs{x}\},\,\{\bs{m}_{i}^{N}\})}\;\cdot
\end{equation}
To estimate the denominator, let $l_{\bs{x}}$ be a continuous path
connecting $\bs{x}$ and $\bs{m}_{i}^{N}$ such that $F(\bs{y})\le
H-\epsilon$ for all $\bs{y}\in l_{\bs{x}}$.  By discretizing this
path, as in the proof of Proposition \ref{s15}, we obtain a good path
(cf. Section \ref{sub52}) $\Gamma_{x} = (\bs{x}_{0}, \,\bs{x}_{1},
\,\cdots, \,\bs{x}_{L})$ connecting $\bs{x}$ to $\bs{m}_{i}^{N}$ where
$L=O(N)$. Denote by $\chi_{\Gamma_{\bs{x}},1}$ the unit flow from
$\bs{x}$ to $\bs{m}_{i}^{N}$ introduced in \eqref{eq:chi_Ga}. By the
Thomson principle for reversible Markov chains,
\begin{equation}
\label{e1034}
\frac{1}{\textup{cap}_{N}^{s}(\{\bs{x}\},\,\{\bs{m}_{i}^{N}\})}
\;\le\; \Vert \chi_{\Gamma_{\bs{x}},1}\Vert^{2}
\;=\; \sum_{i=0}^{L-1}
\frac{1}{\mu_{N}(\bs{x}_{i})\, R_{N}(\bs{x}_{i},\bs{x}_{i+1})}
\;\le\; C \, N\, Z_{N} \, e^{N(H-\epsilon)}\;.
\end{equation}
Hence, by \eqref{e1033}, \eqref{e1034} and by Theorem \ref{s13},
we obtain 
\begin{equation*}
V_{\mc{M}_{i}^{N},\{\bs{m}_{i}^{N}\}}^{*}(\bs{x})
\;\le\;C\, N^{\frac{d}{2}}\, e^{-N\epsilon}\;.
\end{equation*}
In particular, by \eqref{thm: estimate of meta mass}, the absolute
value of the first sum in \eqref{e1031} is bounded by $o_{N}(1)\,
\vartheta_N$ . By similar arguments, the second summation in
\eqref{e1031} is also of order $o_{N}(1)\vartheta_N$.

On the other hand, as $V_{\{\bs{m}_{i}^{N}\},\mc{M}_{i}^{N}}^{*} \le
1$, the last summation in \eqref{e1031} is bounded by
\begin{equation*}
\sum_{\bs{x}\in(\mc{E}_{N})^{c}} Z_{N}^{-1}\, e^{-NF(\bs{x})}
\;\le\;C\, N^{d}\, Z_{N}^{-1}\, e^{-N(H-\epsilon)}\;.
\end{equation*}
Since $H-\epsilon>h_{i}$, it is obvious that the last summation is
of order $o_{N}(1)\vartheta_N$, which completes the proof of the
lemma. 
\end{proof}

Theorem \ref{ek} follows immediately from Proposition \ref{pbl}
and Lemmata \ref{lem102}, \ref{lem103}.


\section{Appendix}

We present in this appendix a generalization of Sylvester's law of
inertia. 
 
\begin{lemma}
\label{lem: property of M}
Let $\mathbb{X}$, $\mathbf{\mathbb{Y}}$ be $n\times n$ matrices. Assume
that $\mathbb{Y}$ is a non-singular, symmetric matrix which has only one
negative eigenvalue, and that
\begin{equation}
\label{eq: cond U}
\mathbb{X}^{s}=(\mathbb{X}+\mathbf{\mathbb{X}}^{\dagger})/2
\mbox{ is positive definite}\;.
\end{equation}
Then, $\mathbf{\mathbb{X}\mathbb{Y}}$ has one negative
eigenvalue and $(n-1)$ positive eigenvalues.
\end{lemma}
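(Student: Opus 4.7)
My plan is to combine a uniform spectral bound that forbids eigenvalues on the imaginary axis with a straight-line homotopy reducing to the symmetric case, where Sylvester's law of inertia applies directly.

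First I would verify that no eigenvalue of $\mathbb{X}\mathbb{Y}$ has zero real part. Given $\mathbb{X}\mathbb{Y} v=\lambda v$ with $v\in\mathbb{C}^n\setminus\{0\}$, set $w=\mathbb{Y} v$, which is nonzero because $\mathbb{Y}$ is invertible, so that $\mathbb{X} w = \lambda\,\mathbb{Y}^{-1}w$. Taking the Hermitian inner product with $w$ yields $w^{*}\mathbb{X} w=\lambda\,w^{*}\mathbb{Y}^{-1}w$; since $\mathbb{Y}^{-1}$ is real symmetric, $w^{*}\mathbb{Y}^{-1}w$ is real, and the real part of this identity reads $w^{*}\mathbb{X}^{s} w=(\mathrm{Re}\,\lambda)\,w^{*}\mathbb{Y}^{-1}w$. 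By hypothesis \eqref{eq: cond U} the left-hand side is strictly positive, so $\mathrm{Re}\,\lambda\ne 0$ (and moreover $w^{*}\mathbb{Y}^{-1}w$ has the same sign as $\mathrm{Re}\,\lambda$).

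Next I would homotope via $\mathbb{X}_t:=t\mathbb{X}+(1-t)\mathbb{X}^{s}$ for $t\in[0,1]$. Since $(\mathbb{X}_t)^{s}=\mathbb{X}^{s}$ is positive definite for every $t$, the previous step applies uniformly along the path, so no eigenvalue of $\mathbb{X}_t\mathbb{Y}$ ever meets the imaginary axis. Because eigenvalues depend continuously on $t$ (they are roots of a polynomial with continuously varying coefficients), the algebraic count $N_{-}(t)$ of eigenvalues of $\mathbb{X}_t\mathbb{Y}$ with negative real part is constant on $[0,1]$.

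To finish I would compute the baseline $N_{-}(0)$. The matrix $\mathbb{X}^{s}\mathbb{Y}$ is similar via $(\mathbb{X}^{s})^{1/2}$ to the symmetric matrix $(\mathbb{X}^{s})^{1/2}\mathbb{Y}(\mathbb{X}^{s})^{1/2}$, which by the classical Sylvester law of inertia has the same signature as $\mathbb{Y}$: one negative and $n-1$ positive real eigenvalues. Hence $N_{-}(0)=1$, so $N_{-}(1)=1$, meaning $\mathbb{X}\mathbb{Y}$ has exactly one eigenvalue with negative real part counted with algebraic multiplicity. Since nonreal eigenvalues of a real matrix occur in complex conjugate pairs with equal real parts, this lone eigenvalue with negative real part is forced to be real\,---\,the ``negative eigenvalue'' of the statement\,---\,and the remaining $n-1$ eigenvalues necessarily lie in the open right half plane. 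The only step demanding care is the uniform no-imaginary-eigenvalue bound along the homotopy; once that is in hand, continuity of spectra plus Sylvester's law closes the argument.
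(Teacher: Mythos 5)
Your proof is correct, and it takes a genuinely different route from the paper's. The paper first reduces, by diagonalization and conjugation, to the case $\mathbb{Y}=\mathrm{diag}(-1,1,\dots,1)$; it then observes that $\det(\mathbb{X}\mathbb{Y})=-\det\mathbb{X}<0$, which forces at least one negative real eigenvalue, and finally rules out a second negative eigenvalue by an explicit computation with the quadratic forms $(\boldsymbol{u}+c\boldsymbol{w})^{\dagger}\mathbb{Y}\mathbb{X}\mathbb{Y}(\boldsymbol{u}+c\boldsymbol{w})$ evaluated at two cleverly chosen values of $c$ that kill the first coordinate. Your argument replaces that ad hoc computation with a spectral-exclusion-plus-homotopy scheme: the identity $w^{*}\mathbb{X}^{s}w=(\mathrm{Re}\,\lambda)\,w^{*}\mathbb{Y}^{-1}w$ keeps the spectrum of $\mathbb{X}_t\mathbb{Y}$ off the imaginary axis uniformly in $t$, and the symmetric endpoint is handled by the classical Sylvester law. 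Each approach has its merits. The paper's is entirely elementary and needs no continuity-of-roots input. Yours is more robust and in two respects sharper: it counts eigenvalues with algebraic multiplicity, so it is insensitive to the possibility of a defective (non-diagonalizable) negative eigenvalue, a case the paper's two-eigenvector argument passes over; and it shows that the remaining $n-1$ eigenvalues lie in the open right half-plane. That last point matters because $\mathbb{X}\mathbb{Y}$ need not have real spectrum at all (take $\mathbb{Y}=\mathrm{diag}(-1,1,1)$ and $\mathbb{X}=I+K$ with $K$ a large antisymmetric block in the last two coordinates), so the lemma's conclusion must be read as ``exactly one real negative eigenvalue, all others with positive real part''\,---\,which is exactly what your proof delivers and what the rest of the paper uses.
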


\begin{proof}
By usual diagonalization and conjugation argument, it suffices to
consider the case where
$\mathbf{\mathbb{Y}}=\mbox{diag}(-1, 1, \dots, 1)$.  It is
well-known that a matrix $\mathbf{\mathbb{X}}$ satisfying (\ref{eq:
  cond U}) does not have negative eigenvalue and that
$\det\mathbb{X}>0$. Hence,
$\det(\mathbf{\mathbb{X}\mathbb{Y}})=-\det\mathbb{X}<0$ and
$\mathbf{\mathbb{X}\mathbb{Y}}$ has at least one negative eigenvalue.

Assume that $\bb X \bb Y$ has two negative eigenvalues $-a,\,-b$ and
let $\boldsymbol{u},\,\boldsymbol{w}$ be the corresponding
eigenvectors.  Denote by $u_{1},\,w_{1}$ the first coordinates of
$\boldsymbol{u}$ and $\boldsymbol{w}$. If $u_{1}=0$, then
$\mathbf{\mathbb{Y}}\boldsymbol{u}=\boldsymbol{u}$ and hence
\begin{eqnarray*}
-a\boldsymbol{u} & = & \mathbf{\mathbb{X}\mathbb{Y}}
\boldsymbol{u}\;=\;\mathbf{\mathbb{X}}\boldsymbol{u}\;,
\end{eqnarray*}
which is a contradiction since $\mathbf{\mathbb{X}}$ does not have
negative eigenvalue. Thus, $u_{1}\neq0$ and similarly, $w_{1}\neq0$.

By definition of $a$, $b$ and by \eqref{eq: cond U}, or any
$c\in\mathbb{R}$, 
\begin{equation*}
(\boldsymbol{u}+c\boldsymbol{w})^{\dagger}\mathbf{\mathbb{Y}}
(a\boldsymbol{u}+bc\boldsymbol{w}) \;=\; 
-(\boldsymbol{u}+c\boldsymbol{w})^{\dagger}
\mathbf{\mathbb{Y}\mathbb{X}\mathbb{Y}}
(\boldsymbol{u}+c\boldsymbol{w})<0\;.
\end{equation*}
Let $p=-u_{1}/(bw_{1})$. By substituting $c$ by $ap$ in the previous
equation the first coordinate of $a\boldsymbol{u}+bc\boldsymbol{w}$
vanishes. Thus, since $\mathbf{\mathbb{Y}} \boldsymbol{z}=
\boldsymbol{z}$ if the first coordinate of $\bs{z}$ is zero, 
\begin{equation*}
(\boldsymbol{u}+ap\boldsymbol{w}) \cdot(a\boldsymbol{u} +abp\boldsymbol{w})
\;=\; (\boldsymbol{u}+ap\boldsymbol{w})^{\dagger} \mathbf{\mathbb{Y}}
(a\boldsymbol{u}+abp\boldsymbol{w}) \;<\;0\;.
\end{equation*}
Similarly, substituting $c$ by $bp$ in order to make the first
coordinate of $\boldsymbol{u}+c\boldsymbol{w}$ equal to $0$, we obtain
that
\begin{equation*}
\;(\boldsymbol{u}+bp\boldsymbol{w})\cdot
(a\boldsymbol{u}+b^{2}p\boldsymbol{w}) \;=\;
(\boldsymbol{u}+bp\boldsymbol{w})^{\dagger}\mathbf{\mathbf{\mathbb{Y}}}
(a\boldsymbol{u}+b^{2}p\boldsymbol{w}) \;<\; 0\;.
\end{equation*}
Summing the previous inequality with the penultimate one multiplied by
$b/a$ we obtain that
\begin{equation*}
(a+b)\, |\boldsymbol{u}+bp\boldsymbol{w}|^{2} \;<\; 0 \;,
\end{equation*}
which is a contradiction. 
\end{proof}

\smallskip\noindent{\bf Acknowledgements.} The authors would like to
thank the anonymous referee for her/his comments which helped to
improve the presentation of the results.

\end{document}